\renewcommand\labelenumi{(\roman{enumi})}
\renewcommand\theenumi\labelenumi
\newtheorem{lem}{Lemma}
\newtheorem{prop}{Proposition}
\newtheorem{thm}{Theorem}
\newtheorem{cor}{Corollary}
\theoremstyle{definition}
\newtheorem{defn}{Definition}
\newtheorem{rem}{Remark}
\newcounter{numl}
\newcommand{\labelnuml}{\textup{(\roman{numl})}}
\newenvironment{numlist}{\begin{list}{\labelnuml}%
{\usecounter{numl}\setlength{\leftmargin}{0pt}%
\setlength{\itemindent}{2\parindent}%
\setlength{\itemsep}{\smallskipamount}\def
\makelabel ##1{\hss \llap {\upshape ##1}}}}{\end{list}}
\DeclareSymbolFont{script}{U}{eus}{m}{n}
\DeclareSymbolFontAlphabet{\mathscr}{script}
\DeclareMathSymbol{\Wedge}{0}{script}{"5E}
\DeclareMathAlphabet{\mathrmsl}{OT1}{cmr}{m}{sl}
\newcommand{\R}{{\mathbb R}}
\newcommand{\Z}{{\mathbb Z}}
\newcommand{\Sph}{{\mathbb S}}
\newcommand{\T}{{\mathbb T}}
\newcommand{\cL}{{\mathcal L}}
\newcommand{\cO}{{\mathcal O}}
\newcommand{\grad}{\mathrm{grad}}
\newcommand{\tor}{{\mathfrak t}}
\newcommand{\Lab}{{\mathbf L}}
\renewcommand{\tt}{{\boldsymbol t}}
\newcommand{\pprim}{p^{\prime}}
\newcommand{\tprim}{t^{\prime}}
\newcommand{\Hess}{\mathop{\mathrm{Hess}}}
\newcommand{\bdelta}{\boldsymbol \delta}
\newcommand{\G}{\mathrm G}
\newcommand{\q}{{\mathrm Q}}
\newcommand{\xxi}{\boldsymbol \xi}
\newcommand{\ba}{\boldsymbol a}
\newcommand{\bH}{\boldsymbol H}
\newcommand{\bG}{\boldsymbol G}
\newcommand{\bA}{{\boldsymbol A}}
\newcommand{\vv}{\mathrm w}
\newcommand{\uu}{\mathrm v}
\newcommand{\PP}{\mathrm P}
\newcommand{\w}{{\mathrm u}}
\newcommand{\Hilb}{{\rm Hilb}}
\newcommand{\Autred}{{\rm Aut}_{\rm red}(X)}
\newcommand{\Tr}{{\rm Tr}}
\newcommand{\vol}{\mathrm{vol}}
\newcommand{\Ric}{{\rm Ric}}
\newcommand{\Scal}{{\rm Scal}}
\newcommand{\FS}{\omega_{\rm{FS}}}
\newcommand{\fs}{{\rm FS}}
\newcommand{\Ent}{{\rm Ent}}
\newcommand{\DF}{{\rm DF}}
\let\@@citation@@=\citation
\renewcommand{\citation}[1]{\@@citation@@{#1}%
\@for\@tempa:=#1\do{\@ifundefined{cit@\@tempa}%
  {\global\@namedef{cit@\@tempa}{}}{}}%
}
\def\@lbibitem[#1]#2#3\par{%
  \@ifundefined{cit@#2}{}{\@skiphyperreftrue
  \H@item[%
    \ifx\Hy@raisedlink\@empty
      \hyper@anchorstart{cite.#2\@extra@b@citeb}%
        \@BIBLABEL{#1}%
      \hyper@anchorend
    \else
      \Hy@raisedlink{%
        \hyper@anchorstart{cite.#2\@extra@b@citeb}\hyper@anchorend
      }%
      \@BIBLABEL{#1}%
    \fi
    \hfill
  ]%
  \@skiphyperreffalse}%
  \if@filesw
    \begingroup
      \let\protect\noexpand
      \immediate\write\@auxout{%
        \string\bibcite{#2}{#1}%
      }%
    \endgroup
  \fi
  \ignorespaces
  \@ifundefined{cit@#2}{}{#3}}
\def\@bibitem#1#2\par{%
  \@ifundefined{cit@#1}{}{\@skiphyperreftrue\H@item\@skiphyperreffalse
  \Hy@raisedlink{%
    \hyper@anchorstart{cite.#1\@extra@b@citeb}\relax\hyper@anchorend
    }}%
  \if@filesw
    \begingroup
      \let\protect\noexpand
      \immediate\write\@auxout{%
        \string\bibcite{#1}{\the\value{\@listctr}}%
      }%
    \endgroup
  \fi
  \ignorespaces
  \@ifundefined{cit@#1}{}{#2}}
\begin{document}

\author[A. Lahdili]{Abdellah Lahdili}
\address{Abdellah Lahdili\\ D{\'e}partement de Math{\'e}matiques\\
UQAM\\ C.P. 8888 \\ Succursale Centre-ville \\ Montr{\'e}al (Qu{\'e}bec) \\
H3C 3P8 \\ Canada}
\email{lahdili.abdellah@gmail.com}

\title[Weighted cscK metrics and weighted K-stability]{K\"ahler metrics with constant weighted scalar curvature and weighted K-stability}
\date{\today}

\begin{abstract} We introduce a notion of a K\"ahler metric with constant weighted scalar curvature on a compact K\"ahler manifold $X$, depending on a fixed real torus $\T$ in the reduced group of automorphisms of $X$, and two smooth (weight) functions $\uu>0$ and $\vv$, defined on the momentum image (with respect to a given K\"ahler class $\alpha$ on $X$) of $X$ in the dual Lie algebra of $\T$. A number of natural problems in K\"ahler geometry, such as the existence of extremal K\"ahler metrics and conformally K\"ahler, Einstein--Maxwell metrics, or prescribing the scalar curvature on a compact toric manifold reduce to the search of K\"ahler metrics with constant weighted scalar curvature in a given K\"ahler class $\alpha$, for special choices of the weight functions $\uu$ and $\vv$.

We show that a number of known results obstructing the existence of constant scalar curvature K\"ahler (cscK) metrics can be extended to the weighted setting. In particular, we introduce a functional $\mathcal M_{\uu, \vv}$ on the space of $\T$-invariant K\"ahler metrics in $\alpha$, extending the Mabuchi energy in the cscK case, and show (following  the arguments in \cite{li,ST} in the cscK and extremal cases and \cite{lahdili2} in the case of conformally K\"ahler Einstein-Maxwell) that if $\alpha$ is Hodge, then constant weighted scalar curvature metrics in $\alpha$ are minima of $\mathcal M_{\uu,\vv}$. Motivated by the recent work \cite{Dervan, Dervan-Ross, Dyr1, Dyr2} in the cscK and extremal cases, we define a $(\uu,\vv)$-weighted Futaki invariant of a $\T$-compatible smooth K\"ahler test configuration associated to $(X, \alpha, \T)$, and show that the boundedness from below of the $(\uu,\vv)$-weighted Mabuchi functional $\mathcal M_{\uu, \vv}$ implies a suitable notion of a $(\uu,\vv)$-weighted K-semistability.

We illustrate our theory with specific computations on smooth toric varieties and on the toric fibre bundles introduced in \cite{ACGT}. As an application, we obtain a Yau--Tian--Donaldson type correspondence for $(\uu,\vv)$-extremal K\"ahler classes on  ${\mathbb P}^1$-bundles over products of compact Hodge cscK manifolds, thus extending  some of the results in \cite{ACGT3, AMT} to the $(\uu, \vv)$-weighted setting.
\end{abstract}
\maketitle

\section{Introduction}\label{sec-1}
In this paper, we define a notion of a (weighted) $\uu$-scalar curvature ${\rm Scal}_{\uu}(\omega)$,  associated to a K\"ahler metric  $\omega$ on a smooth compact complex manifold $X$, a real torus $\T$ in the reduced  group  ${\rm Aut}_{\rm red}(X)$ of automorphisms of $X$,  and  a positive smooth function  $\uu(p)$  defined over the image $\PP \subset \tor^*$ of $X$ under the moment map $m_{\omega}: X \to \tor^*$ of $\T$ with respect to $\omega$. Here $\tor^*$ stands for the dual vector space of the Lie algebra $\tor$ of $\T$ and $p$ for a point of $\tor^*$. Originally, we have identified in \cite{lahdili2} $\Scal_{\uu}(\omega)$ with a coefficient in the asymptotic expansion of a certain weighted Bergman kernel associated with a function $\uu$ (see \Cref{TYZ} for a precise statement) reminiscent to the celebrated results by Catlin \cite{catlin}, Ruan \cite{Ruan}, Tian \cite{Tian-Berg} and Zelditch \cite{Zeld} in the case $\uu=1$. Our main motivation in this paper for introducing and systematically studying the $\uu$-scalar curvature is the observation that the problem of finding a $\T$-invariant K\"ahler metric $\omega$ in a given K\"ahler class $\alpha$ on $X$, for which 
\begin{equation}\label{main}
{\rm Scal}_{\uu}(\omega) = c_{\uu,\vv}(\alpha) \vv(m_{\omega}), 
\end{equation}
where $\vv(p)$ is another given smooth function on $\PP$ and $c_{\uu,\vv}(\alpha)$ is a suitable real constant (depending only on $\alpha$, $\PP$, $\uu$ and $\vv$) englobes a number of problems in K\"ahler geometry of current interest, including the following well-studied cases:
\begin{enumerate}
\item\label{1i} Letting $\uu=\vv\equiv 1$, we obtain the Calabi problem of finding a K\"ahler metric of constant scalar curvature (cscK for short) in $\alpha$;
\item\label{ii} Letting  $\T$ be a maximal torus in ${\rm Aut}_{\rm red}(X)$, $\uu\equiv 1$ and $\vv(p)= \vv_{\rm ext}(p)$ be  a suitable affine-linear function on $\tor^*$, the solutions of \eqref{main} are the extremal K\"ahler metrics in the sense of Calabi~\cite{calabi} in $\alpha$;
\item If $X$ is a Fano manifold equipped with the K\"ahler class $\alpha=2\pi c_1(X)$, $\T$ a maximal torus in ${\rm Aut}_{\rm red}(X)$, $\uu(p)=e^{\langle\xi,p\rangle}$ for $\xi\in\tor$ and $\vv=\uu \vv_{\rm ext}$ for a suitable affine-linear function $\vv_{\rm ext}$, then solutions of \eqref{main} are gradiant K\"ahler-Ricci solitons on $X$ (see \cite{Eiji1});
\item\label{iii} Letting $\uu(p)= \big(\langle \xi, p\rangle + a\big)^{-2m+1}$ and $\vv(p) = \big(\langle \xi, p\rangle + a\big)^{-2m-1}$ for $\xi\in \tor$ and $a\in \R$ such that $\langle \xi, p\rangle + a>0$ over $\PP$, \eqref{main} describes the K\"ahler metrics in $\alpha$, which are conformal to Einstein--Maxwell metrics, see \cite{AM,LeB08,LeB0};
\item\label{iv} If $\alpha = 2\pi c_1(L)$ for an ample holomorphic line bundle $L$ over $X$, $\uu(p)= \big(\langle \xi, p\rangle + a\big)^{-m-1}$ and $\vv(p) = \big(\langle \xi, p\rangle + a\big)^{-m-3}$ for $\xi\in \tor$ and $a\in \R$ such that $\langle \xi, p\rangle + a>0$ over $\PP$, then \eqref{main} describes K\"ahler metrics on $X$ giving rise to extremal Sasaki metrics on the unit circle bundle associated to $L^*$, see \cite{AC};
\item\label{v} The search for extremal K\"ahler metrics, or more generally, prescribing the scalar curvature of a class of K\"ahler metrics on toric fibre-bundles given by the generalized Calabi anstaz \cite{ACGT} or on manifolds with free multiplicity \cite{donaldson-survey, R} reduces to finding solutions of \eqref{main} on the (toric) fibre. In this toric setting \eqref{main} is known as the {\it generalized  Abreu equation}, see \cite{LLS1, LSZ}.
\end{enumerate}

\bigskip
We shall refer to the solutions of \eqref{main} as {\it constant $(\uu,\vv)$-scalar curvature K\"ahler metrics} (or $(\uu,\vv)$-cscK for short) and our main thesis in this paper is that most of the known obstructions to the existence of cscK metrics extend naturally to the $(\uu,\vv)$-cscK case. Indeed, as we show in the Appendices A and B to this paper, some of our previous results in \cite{lahdili1, lahdili2} regarding \ref{iii} and \ref{iv} are just special cases of more general statements concerning $(\uu,\vv)$-cscK metrics, thus providing a more conceptual explanation for the arguments therein. In particular, there is a natural generalization ${\mathcal M}_{\uu,\vv}$ of the Mabuchi functional (see e.g.~\cite{Gauduchon, Sz-book}) on the space of $\T$-invariant K\"ahler metrics in $\alpha$ (which we call the $(\uu,\vv)$-{\it Mabuchi energy} and define in \Cref{sec-5} below) and we show that the arguments of \cite[Thm.~1]{lahdili2} (which in turn build on \cite{li,ST}) yield the following

\begin{thm}\label{thm:mabuchi-bounded} Let $(X, L)$ be a compact smooth polarized projective variety, $\T \subset {\rm Aut}_{\rm red}(X)$ a real torus, and suppose that $X$ admits a $(\uu,\vv)$-cscK metric $\omega$ in $\alpha=2\pi c_1(L)$ for some smooth functions $\uu>0$ and $\vv$ on the momentum image $\PP\subset \tor^*$ associated to $(\T, \alpha)$. Then, $\omega$ is a global minima of the $(\uu,\vv)$-Mabuchi energy ${\mathcal M}_{\uu,\vv}$ of $(X, \T, \alpha, \PP, \uu, \vv)$.
\end{thm}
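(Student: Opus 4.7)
The plan is a weighted quantization argument in the spirit of Li \cite{li}, Song--Tian \cite{ST} and \cite{lahdili2}, with the weighted Bergman kernel asymptotics of \Cref{TYZ} replacing the classical Tian--Yau--Zelditch expansion; the polarization $\alpha = 2\pi c_1(L)$ makes the finite-dimensional quantum spaces $H^0(X, L^k)$ available. The first step is to introduce, for each $k \gg 0$, a quantized functional $\mathcal{M}^{(k)}_{\uu,\vv}$ on the symmetric space $\mathcal{B}^\T_k$ of $\T$-invariant Fubini--Study metrics on $L^k$ obtained from $\T$-equivariant Kodaira embeddings $X \hookrightarrow \mathbb{P}(H^0(X,L^k)^*)$. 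One designs $\mathcal{M}^{(k)}_{\uu,\vv}$ so that (a) its critical points are the $(\uu,\vv)$-weighted balanced metrics, defined via a $\uu$-weighted $L^2$-inner product on $H^0(X,L^k)$, and (b) it is convex along the geodesics of $\mathcal{B}^\T_k$, which correspond to one-parameter subgroups of $GL(H^0(X,L^k))^\T$. Consequently, balanced metrics are automatically global minima of $\mathcal{M}^{(k)}_{\uu,\vv}$ on $\mathcal{B}^\T_k$.

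The second step is to apply the weighted asymptotic expansion of \Cref{TYZ} to the sequence $\omega_k = \frac{1}{k} h_k^*\FS$ produced by $\uu$-orthonormal bases of $H^0(X,L^k)$. After normalizing $\mathcal{M}^{(k)}_{\uu,\vv}$ by the correct power of $k$, the expansion yields a uniform $C^0$-estimate
\begin{equation*}
\mathcal{M}^{(k)}_{\uu,\vv}\bigl(\omega_k(\varphi_t)\bigr) = \mathcal{M}_{\uu,\vv}(\varphi_t) + o(1)
\end{equation*}
along any smooth path of $\T$-invariant potentials $\varphi_t$ in $\alpha$, with $\uu$ entering through the weighted Bergman measure and $\vv$ appearing in the subleading normalization constant $c_{\uu,\vv}(\alpha)$. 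Next, following the strategy of \cite[Prop.~4]{lahdili2}, one shows that the $(\uu,\vv)$-cscK metric $\omega$ in the statement is the $C^\infty$-limit of a sequence of $(\uu,\vv)$-weighted balanced metrics $\omega_k \in \mathcal{B}^\T_k$, by a quantitative implicit function argument combining the weighted TYZ expansion with the invertibility of the linearization of the weighted balanced condition at $\omega$.

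Combining these ingredients closes the argument: given any $\T$-invariant potential $\varphi$ in $\alpha$, approximate it by Bergman potentials $\varphi_k$ coming from $\mathcal{B}^\T_k$ (density of Bergman metrics in the $\T$-invariant K\"ahler potentials). The convexity/critical point property of Step~1 and the convergence of Step~2 then give
\begin{equation*}
\mathcal{M}_{\uu,\vv}(\omega) = \lim_{k\to\infty} \mathcal{M}^{(k)}_{\uu,\vv}(\omega_k) \;\le\; \lim_{k\to\infty} \mathcal{M}^{(k)}_{\uu,\vv}(\varphi_k) = \mathcal{M}_{\uu,\vv}(\varphi),
\end{equation*}
which is the claim. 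I expect the main obstacle to be the convergence in Step~3: the $(\uu,\vv)$-balanced condition is a nonlinear $GL(H^0(X,L^k))^\T$-equivariant vector equation, and its linearization at $\omega$ must be shown to be uniformly invertible with bounds that are independent of $k$. This requires a careful weighted refinement of Donaldson's quantization scheme in which $\uu$ simultaneously governs the $L^2$-inner product on $H^0(X, L^k)$ and the leading terms of the Bergman kernel expansion, while $\vv$ controls the normalizing constant in \eqref{main} at the next-to-leading order.
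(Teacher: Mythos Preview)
Your overall quantization strategy is correct and matches the paper's: introduce the weighted quantization maps $\Hilb^k_{\uu,\vv}$ and $\fs^k_{\uu,\vv}$, build finite-dimensional functionals $\mathcal{L}^k_{\uu,\vv}$ and $Z^k_{\uu,\vv}$ as in \eqref{L-Z}, use the weighted Bergman expansion of \Cref{TYZ} to show $\frac{2}{k^n}\mathcal{L}^k_{\uu,\vv}+b_k\to\mathcal{M}_{\uu,\vv}$ and $k^{-n}(\mathcal{L}^k_{\uu,\vv}-Z^k_{\uu,\vv}\circ\Hilb^k_{\uu,\vv})\to 0$, and exploit the convexity of $Z^k_{\uu,\vv}$ along geodesics of $\mathcal{B}^\T(\mathcal{H}_k)$.

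The divergence is in your Step~3. You propose to \emph{construct} genuine $(\uu,\vv)$-balanced metrics $\omega_k$ converging to the $(\uu,\vv)$-cscK metric $\omega$ via a quantitative implicit function argument, i.e.\ a weighted Donaldson theorem. The paper (following \cite{li,ST,lahdili2}) avoids this entirely: instead of producing exact minimizers of $Z^k_{\uu,\vv}$, one shows directly that $\Hilb^k_{\uu,\vv}(\phi^*)$ is an \emph{almost} minimizer, in the sense that for every $\phi\in\mathcal{K}^\T_\omega$,
\[
k^{-n}Z^k_{\uu,\vv}\circ\Hilb^k_{\uu,\vv}(\phi)\ \ge\ k^{-n}Z^k_{\uu,\vv}\circ\Hilb^k_{\uu,\vv}(\phi^*)+\varepsilon_\phi(k),\qquad \varepsilon_\phi(k)\to 0.
\]
This follows from the convexity of $Z^k_{\uu,\vv}$ together with the fact that, by the Bergman expansion and the $(\uu,\vv)$-cscK equation, the \emph{gradient} of $Z^k_{\uu,\vv}$ at $\Hilb^k_{\uu,\vv}(\phi^*)$ is $O(k^{-1})$. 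Chaining this with the two asymptotic identities above gives $\mathcal{M}_{\uu,\vv}(\phi)\ge\mathcal{M}_{\uu,\vv}(\phi^*)$ directly.

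So the ``main obstacle'' you flag---uniform-in-$k$ invertibility of the linearized balanced equation---is precisely what the paper's route circumvents. Your approach would also yield the theorem (and more: a weighted analogue of Donaldson's balanced-metric theorem), but it is substantially harder than necessary for the boundedness statement alone.
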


Instead of \eqref{main}, one can more generally consider the condition
\begin{equation}\label{ext}
{\rm Scal}_{\uu}(\omega) = \vv(m_{\omega})(m_{\omega}^{\xi} + c)
\end{equation}
for a $\T$-invariant K\"ahler metric $\omega$ in $\alpha$, where $\xi \in \tor$, $c\in \R$ and $m_{\omega}^{\xi}:= \langle m_{\omega}, \xi \rangle$ is the Killing potential associated to $\xi$. A $\T$-invariant K\"ahler metric satisfying \eqref{ext} generalizes the notion of an extremal K\"ahler metric (see \ref{ii} above), and will be referred to as a {\it $(\uu,\vv)$-extremal} K\"ahler metric. As it is apparent from the example \ref{ii} above, and as we establish more generally in \Cref{sec-3}, when $\vv>0$ the smooth function $(m_{\omega}^{\xi} + c)$ in the RHS of \eqref{ext} must be of the form $\vv_{\rm ext} (m_{\omega})$ for an affine-linear function $\vv_{\rm ext}(p)= \langle \xi, p \rangle + c$ on $\tor^*$ defined in terms of  $(\T, \alpha, \PP, \uu, \vv)$. Thus, the problem \eqref{ext} of finding $(\uu,\vv)$-extremal K\"ahler metrics in $\alpha$ reduces to the problem \eqref{main} of finding $(\uu, \vv \vv_{\rm ext})$-cscK metrics. Furthermore, as we show in \Cref{rem-Mab-rel}, the corresponding Mabuchi energy ${\mathcal M}_{\uu, \vv \vv_{\rm ext}}$ coincides with the $(\uu,\vv)$-weighted {\it relative} Mabuchi energy (see e.g. \cite{Gauduchon,Sz-book} for the definition of the relative Mabuchi energy in the case $\uu=\vv=1$ and \Cref{def-Mab-rel} below for its definition for general values of $\uu,\vv$), so we obtain as a consequence of Theorem~\ref{thm:mabuchi-bounded} the following

\begin{cor}\label{c:extremal} Let $(X,L)$ be a compact smooth polarized projective variety, $\T \subset {\rm Aut}_{\rm red}(X)$ a real torus, and suppose that $X$ admits a $(\uu,\vv)$-extremal metric in $\alpha=2\pi c_1(L)$ for some positive smooth functions $\uu, \vv$ defined on the momentum image $\PP\subset \tor^*$ associated to $(\T,\alpha)$. Then, the relative $(\uu,\vv)$-Mabuchi energy ${\mathcal M}_{\uu,\vv}^{\rm rel}$ of $(X, \T, \alpha, \PP, \uu, \vv)$ is bounded from below.
\end{cor}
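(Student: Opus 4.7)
The plan is to deduce the corollary from Theorem~\ref{thm:mabuchi-bounded} via the reduction that is set up in the paragraph preceding the statement. First, I would unpack the definitions: by the results of \Cref{sec-3}, the hypothesis that $\omega$ is $(\uu,\vv)$-extremal in $\alpha$ means precisely
\[
{\rm Scal}_{\uu}(\omega) \;=\; \vv(m_\omega)\,\vv_{\rm ext}(m_\omega),
\]
where $\vv_{\rm ext}(p) = \langle \xi, p\rangle + c$ is the canonical affine-linear function on $\tor^*$ determined by the data $(\T,\alpha,\PP,\uu,\vv)$ (the hypothesis $\vv>0$ being used to single it out). Setting $\tilde{\vv} := \vv\,\vv_{\rm ext}$, which is a smooth function on $\PP$, the extremal equation is, by construction of $\vv_{\rm ext}$, the $(\uu,\tilde\vv)$-cscK equation~\eqref{main} with the correct proportionality constant $c_{\uu,\tilde\vv}(\alpha)$. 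Hence $\omega$ is a $(\uu,\tilde\vv)$-cscK metric on $(X,\alpha)$.

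Next, I would apply Theorem~\ref{thm:mabuchi-bounded} to the weight pair $(\uu,\tilde\vv)$. The hypotheses are in place: $\alpha = 2\pi c_1(L)$ is Hodge by assumption, $\uu$ is positive, and $\tilde\vv$ is a smooth function on $\PP$ (although it may change sign), which is all that Theorem~\ref{thm:mabuchi-bounded} requires. One concludes that $\omega$ is a global minimum of the $(\uu,\tilde\vv)$-Mabuchi energy ${\mathcal M}_{\uu,\tilde\vv}$ on the space of $\T$-invariant K\"ahler potentials in $\alpha$; in particular, ${\mathcal M}_{\uu,\tilde\vv}$ is bounded from below.

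The final step is to invoke \Cref{rem-Mab-rel}, which identifies ${\mathcal M}_{\uu,\vv\,\vv_{\rm ext}}$ with the relative $(\uu,\vv)$-Mabuchi energy ${\mathcal M}_{\uu,\vv}^{\rm rel}$ of \Cref{def-Mab-rel}. Transferring the lower bound obtained in the previous step through this identification yields the stated boundedness from below of ${\mathcal M}^{\rm rel}_{\uu,\vv}$, completing the argument.

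The proof is essentially book-keeping: the substantive work has been pushed into Theorem~\ref{thm:mabuchi-bounded} together with the two preparatory facts (a) the equivalence between the $(\uu,\vv)$-extremal equation and the $(\uu,\vv\,\vv_{\rm ext})$-cscK equation with the correct normalization, and (b) the identification ${\mathcal M}_{\uu,\vv\,\vv_{\rm ext}} = {\mathcal M}^{\rm rel}_{\uu,\vv}$. Since both (a) and (b) are independently established elsewhere in the paper, the only point that deserves attention here is verifying that Theorem~\ref{thm:mabuchi-bounded} indeed accommodates a second weight $\tilde\vv$ that need not be positive; this is already built into the statement.
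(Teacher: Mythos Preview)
Your proposal is correct and follows exactly the paper's own argument: the proof in the paper reads simply ``This is a direct consequence of \Cref{rem-Mab-rel} and \Cref{thm:mabuchi-bounded},'' and you have faithfully unpacked those two ingredients (the identification $\mathcal{M}^{\rm rel}_{\uu,\vv}=\mathcal{M}_{\uu,\vv\vv_{\rm ext}}$ and the application of Theorem~\ref{thm:mabuchi-bounded} to the weight pair $(\uu,\vv\vv_{\rm ext})$). Your remark that Theorem~\ref{thm:mabuchi-bounded} does not require positivity of the second weight is exactly the point that makes the reduction work.
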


The above corollary provides a scope of extending \cite[Thm.~2]{AMT} to an obstruction to the existence of $(z+a,p)$-extremal metrics in the sense of \cite{AMT}, in rational admissible K\"ahler classes on admissible projective bundles. We explore this ramification in \Cref{p:calabi-type} below.

\bigskip With the above in mind, the main goal of this paper is to introduce a suitable notion of {\it $(\uu, \vv)$-K-stability} associated to $(X, \alpha, \T, \PP, \uu, \vv)$ as above, extending the corresponding notion in the cscK and the extremal cases, introduced by Tian~\cite{Tian-book, tian}, Donaldson~\cite{Do-02} and Sz{\'e}kelyhidi~\cite{Sz}, and extensively studied in recent times. Our inspiration comes mainly from the recent works \cite{Dervan-Ross, Dervan, Dyr1} which, in turn, build on the seminal work of Tian \cite{tian, tian-old} and a key observation by Wang~\cite{wang} and Odaka~\cite{odaka1, odaka2} that the Donaldson--Futaki invariant of a (suitably compactified) test configuration can be realized as an intersection number defined on the total space of the latter. In the cscK case, it is shown by Dervan--Ross \cite[Prop. 2.23]{Dervan-Ross} and \cite[Prop. 3.12]{Dyr1} that in order to test K-stability (or K-semi-stability) of a compact K\"ahler manifold, it is sufficient to control the sign of the Donaldson--Futaki invariant of test configurations which are smooth and whose central fibre is reduced. This allows one to rewrite the Donaldson--Futaki invariant as a global differential geometric quantity of the test configuration. This is precisely the setting in which we introduce the notion of a $(\uu,\vv)$-Futaki invariant of a smooth K\"ahler test configuration with reduced central fibre, compatible with $(X, \alpha, \T)$, and show in \Cref{Mabuchi-Slope} that it must be non-negative should the $(\uu,\vv)$-Mabuchi energy associated to $(X, \T, \alpha, \PP, \uu, \vv)$ is bounded from below. This, combined with Theorem~\ref{thm:mabuchi-bounded} and \Cref{fut-obs} in \Cref{sec-6} yields our main result, which establishes one direction of a Yau--Tian--Donaldson type correspondence for the existence of $(\uu,\vv)$-cscK metrics.

\begin{thm}\label{thm:main} 
Let $(X, L)$ be a compact smooth polarized projective variety, $\T \subset {\rm Aut}_{\rm red}(X)$ a real torus, and suppose that $X$ admits a $(\uu,\vv)$-cscK metric in $\alpha=2\pi c_1(L)$. Then $X$ is $(\uu,\vv)$-K-semistable on smooth, $\T$-compatible K\"ahler test configuration with reduced central fibre associated to $(X, \alpha)$, i.e. the $(\uu, \vv)$-Futaki invariant of any such test configuration is non-negative.
\end{thm}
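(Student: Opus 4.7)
The plan is to assemble the three ingredients already flagged in the introduction, namely Theorem~\ref{thm:mabuchi-bounded}, Proposition~\ref{Mabuchi-Slope}, and Proposition~\ref{fut-obs}, without performing any new global existence analysis. The scheme mirrors the Tian--Wang--Odaka--Dervan--Ross argument in the cscK setting: an existence hypothesis yields a lower bound on the Mabuchi functional; the slope of the Mabuchi functional along a ray produced by a test configuration is identified with the Futaki invariant of that configuration; and these two facts combine to give K-semistability.

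First I would invoke Theorem~\ref{thm:mabuchi-bounded}: the existence of a $(\uu,\vv)$-cscK metric $\omega$ in $\alpha=2\pi c_1(L)$ makes $\omega$ a global minimum of $\mathcal{M}_{\uu,\vv}$ on the space of $\T$-invariant K\"ahler potentials in $\alpha$. In particular $\mathcal{M}_{\uu,\vv}$ is bounded from below. Next, given an arbitrary smooth $\T$-compatible K\"ahler test configuration $(\mathcal{X},\mathcal{L})\to \C$ for $(X,L)$ with reduced central fibre, the restriction of a smooth $S^1$-invariant relatively K\"ahler form on $\mathcal{X}$ to the fibres produces a smooth subgeodesic ray $\omega_t$ of $\T$-invariant K\"ahler metrics in $\alpha$. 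The key step is Proposition~\ref{Mabuchi-Slope}, which asserts that
\[
\lim_{t\to\infty}\frac{d}{dt}\mathcal{M}_{\uu,\vv}(\omega_t)
\]
coincides, up to a positive universal constant, with the $(\uu,\vv)$-Futaki invariant of the configuration; this is the weighted analogue of the intersection-theoretic reformulation of the Donaldson--Futaki invariant. Since $\mathcal{M}_{\uu,\vv}$ is bounded below along this ray, the above slope cannot be negative.

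Finally, Proposition~\ref{fut-obs} provides the bridge between the slope formula and the intrinsic definition of the $(\uu,\vv)$-Futaki invariant attached to the test configuration, ensuring that the non-negative slope one has just extracted is indeed the quantity appearing in the definition of $(\uu,\vv)$-K-semistability. Since the test configuration was arbitrary among smooth, $\T$-compatible ones with reduced central fibre, this establishes the theorem.

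The hard part is entirely encoded in Proposition~\ref{Mabuchi-Slope}: extending the Wang--Odaka--Dervan--Ross slope formula to general smooth weights $\uu,\vv$. Concretely, one must write the $(\uu,\vv)$-Mabuchi energy in a Chen--Tian style decomposition whose entropy, pluripotential and $(\uu,\vv)$-twisted energy pieces admit well-controlled limits as $t\to\infty$ along the ray produced by $(\mathcal{X},\mathcal{L})$, and then recognize the resulting boundary terms as the intersection-type expression defining the weighted Futaki invariant. The presence of non-polynomial weights $\uu,\vv$ defined only on the momentum image $\PP$ means these limits cannot be computed by a direct degree calculation as in the cscK case; one should instead exploit $\T$-equivariance to reduce everything to integrals against $\uu$ and $\vv$ over $\PP$ (via the Duistermaat--Heckman measure of the $\T$-action on the central fibre), so that the Chen--Tian terms depend on the weights only through smooth functions of the moment map. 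This is the only place in the argument where substantial new analysis beyond the cited results is required.
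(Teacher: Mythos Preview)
Your overall assembly is correct and matches the paper: Theorem~\ref{thm:mabuchi-bounded} gives boundedness from below of $\mathcal{M}_{\uu,\vv}$, and Theorem~\ref{Mabuchi-Slope} then yields $\mathcal{F}_{\uu,\vv}(\mathcal{X},\mathcal{A})\ge 0$ directly (the limit there is $\lim_{t\to\infty}\mathcal{M}_{\uu,\vv}(\phi_t)/t$, with no extra constant).

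However, you mischaracterize the role of Proposition~\ref{fut-obs}. It is \emph{not} a bridge between the slope and the test-configuration Futaki invariant; Theorem~\ref{Mabuchi-Slope} already identifies the slope with $\mathcal{F}_{\uu,\vv}(\mathcal{X},\mathcal{A})$ itself. The reason Proposition~\ref{fut-obs} is invoked is that the paper's definition of $(\uu,\vv)$-K-semistability (Definition~12) also requires $\mathcal{F}_{\uu,\vv}(\mathcal{X}_0,\mathcal{A}_0)=0$ for the \emph{trivial} test configuration. By Remark~\ref{rem-Fut-sym}(iii), $\mathcal{F}_{\uu,\vv}(\mathcal{X}_0,\mathcal{A}_0)=-4\pi\int_X(\Scal_{\uu}(\omega)-c_{\uu,\vv}(\alpha)\vv(m_\omega))\omega^{[n]}$, and Proposition~\ref{fut-obs} asserts that this integral vanishes whenever a $(\uu,\vv)$-cscK metric exists (which is not automatic from the definition of $c_{\uu,\vv}(\alpha)$ in the degenerate case $\int_X\vv(m_\omega)\omega^{[n]}=0$). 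Your last paragraph about Duistermaat--Heckman reduction is also off: the paper's proof of Theorem~\ref{Mabuchi-Slope} works directly with weighted $(n+1)$-forms on the total space $\mathcal{X}$ via the Chen--Tian decomposition (Theorem~\ref{Chen-Tian}), not via integrals over $\PP$.
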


Several remarks are in order.

\smallskip
As we have already mentioned in the example \ref{1i} above, in the case  $\uu=\vv \equiv 1$ \Cref{thm:main} follows from the results in \cite{Dervan-Ross} and \cite{Dyr1} concerning the existence of cscK metrics in $\alpha$. Furthermore, as we show in \Cref{Rem-Dervan} below, taking $\uu=1$ and $\vv= \vv_{\rm ext}$ as in the example \ref{ii}, our definition of the Futaki invariant $\mathcal{F}_{1, \vv_{\rm ext}}$ reduces to the relative Futaki invariant of a smooth test configuration introduced in \cite{Dervan, Sz}, so in this special case \Cref{thm:main} above is established in \cite{Dervan}.

\smallskip
A natural question that arises in the case when the test-configuration is a polarized projective variety is the interpretation of the $(\uu,\vv)$-Futaki invariant in terms of a purely algebraic invariant defined on the central fibre $X_0$. This  was in fact the initial approach of Tian~\cite{tian} and Donaldson~\cite{Do-02} in the cscK case for defining an invariant of a test configuration, and a similar definition of a $(\uu,\vv)$-Donaldson--Futaki invariant  on $X_0$ has been proposed in \cite{AM, AMT} (regarding the cases \ref{iii} and \ref{iv}). At this point, it is not clear whether such an algebraic definition of a $(\uu,\vv)$-Donaldson--Futaki invariant can be given for any central fibre $X_0$, nor that it would agree with our differential geometric definition on the total space of a smooth test configuration. In fact, when $\uu$ and $\vv$ are not polynomials, the proposed algebraic definition of a $(\uu,\vv)$-Donaldson--Futaki invariant of $X_0$ involves transcendental quantities leading to difficulties reminiscent (but somewhat more complex) to the ones involved in the definition of the $L^p$-norm of a test configuration for positive real values of $p$, see the discussion at the end of \cite{Do-06}. Nevertheless, we prove that the two approaches give the same invariant in two special cases: when the test configuration is a smooth submersion (\Cref{DF=F}) and when $(X, \alpha, \T)$ is a smooth toric variety, and the test configuration is a toric test configuration in the sense of \cite{Do-02} (\Cref{DF=F-toric}).

\smallskip
Using a blowup technique and the glueing theorem of Arezzo--Pacard--Singer~\cite{APS}, Stoppa~\cite{stoppa} and Stoppa--Sz{\'e}kelyhidi~\cite{SSz} have shown that the existence of a cscK or extremal K\"ahler metric in $2\pi c_1(L)$ does actually imply $(1, \vv_{\rm ex})$-$K$-stability relative to $\T$, i.e. that the corresponding Futaki invariant of a non-product polarized normal test configuration is strictly positive. Similar results hold true for K\"ahler test configurations by \cite{Dervan, Dyr2}. At this point, it is not clear to us whether or not these techniques can be extended to the $(\uu,\vv)$-case.

\smallskip
Finally, one might hope to extend Theorem~\ref{thm:main} beyond the polarized case. Indeed, in the cscK and extremal cases such extensions have been found in \cite{Dervan, Dervan-Ross, Dyr2} by using a deep result of Berman--Berndtsson~\cite{BB} on the convexity and boundedness of the Mabuchi functional. We expect that along the method of \cite{BB} (and using \Cref{Chen-Tian} below) similar properties can possibly be established for the $(\uu,\vv)$-Mabuchi functional, but the details go beyond the scope of the present article. We however notice that the arguments in~\cite{BB} hold true in the case when $\uu\equiv1$ and $\vv$ is arbitrary, see \Cref{Mab-bound-1-w}. We thus have (by virtue of \Cref{Mabuchi-Slope})

\begin{thm}\cite{BB}\label{thm:BB} Let $X$ be a smooth compact K\"ahler manifold, $\T \subset {\rm Aut}_{\rm red}(X)$ a real torus, and suppose that $X$ admits a $(1,\vv)$-cscK metric $\omega$ in the K\"ahler class $\alpha$ for some smooth function $\vv$ on the momentum image $\PP\subset \tor^*$ associated to $(\T, \alpha)$. Then, the $(1,\vv)$-Mabuchi energy ${\mathcal M}_{1,\vv}$ of $(X, \T, \alpha, \PP, \vv)$ is bounded from below, and $X$ is $(1,\vv)$-K-semistable on smooth, $\T$-compatible K\"ahler test configuration with reduced central fibre associated to $(X, \alpha)$.
\end{thm}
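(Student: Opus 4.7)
The plan divides naturally into two parts: first, establish that $\mathcal{M}_{1,\vv}$ is bounded from below by adapting the Berman--Berndtsson convexity argument of \cite{BB}; second, deduce the K-semistability statement from this boundedness via \Cref{Mabuchi-Slope}.

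For the boundedness claim, the key structural simplification when $\uu\equiv 1$ is that $\Scal_{\uu}(\omega)=\Scal(\omega)$, so the defining equation reads $\Scal(\omega)=c_{1,\vv}(\alpha)\,\vv(m_\omega)$ and the weight $\vv$ enters only as a $\T$-invariant smooth multiplier pulled back from the compact momentum polytope $\PP$. I would start from the Chen--Tian type decomposition of $\mathcal{M}_{1,\vv}$ provided by \Cref{Chen-Tian}, writing it as the sum of a weighted entropy functional (relative to a fixed reference measure of the form $\vv(m_{\omega_0})\,\omega_0^n$) and a weighted pluripotential energy functional. Convexity of the entropy part along $\T$-invariant weak $C^{1,1}$-geodesics in the space of K\"ahler potentials in $\alpha$ then follows from Berndtsson's subharmonicity theorem for direct images, the factor $\vv\circ m_\omega$ being a smooth positive $\T$-invariant weight that does not interfere with the direct-image computation. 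Convexity of the energy part is obtained by the same integration-by-parts identities as in \cite{BB}, with the extra factor $\vv\circ m_\omega$ contributing only terms controlled by the smoothness and positivity of $\vv$ on the compact polytope $\PP$. Once convexity is in place, the assumed $(1,\vv)$-cscK metric $\omega$, being a critical point of $\mathcal{M}_{1,\vv}$ by the Euler--Lagrange derivation of \Cref{sec-5}, is forced to be a global minimum along every weak geodesic issuing from it, giving the sought boundedness from below.

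The main technical hurdle is to carry the regularity and approximation machinery of \cite{BB}---convergence of smooth to weak geodesics, lower semicontinuity and integrability of the weighted entropy, and subharmonicity estimates along $C^{1,1}$-paths---through the insertion of the weight $\vv\circ m_\omega$. Since $\vv$ is smooth, depends only on the moment-map image, and is bounded with a positive lower bound on the compact polytope $\PP$, these adaptations should be essentially cosmetic and require no new pluripotential input beyond what is already available in \cite{BB}; this is exactly why the argument breaks down for general $\uu$, where the modification of $\Scal$ itself would disrupt the Berndtsson-type positivity.

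With boundedness from below of $\mathcal{M}_{1,\vv}$ in hand, the K-semistability conclusion is immediate: by \Cref{Mabuchi-Slope}, boundedness from below of the $(1,\vv)$-Mabuchi energy forces the $(1,\vv)$-Futaki invariant of every smooth $\T$-compatible K\"ahler test configuration with reduced central fibre to be non-negative, which is precisely $(1,\vv)$-K-semistability in the sense of the paper.
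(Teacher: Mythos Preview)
Your deduction of K-semistability from boundedness via \Cref{Mabuchi-Slope} matches the paper exactly. The boundedness argument, however, misidentifies where the weight $\vv$ sits, and this leads you to propose unnecessary work and an unwarranted positivity hypothesis.

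For $\uu\equiv 1$ the Chen--Tian formula of \Cref{Chen-Tian} reads
\[
\mathcal{M}_{1,\vv}=\mathcal{H}_{1}-2\mathcal{E}_{1}^{\Ric(\omega)}+c_{1,\vv}(\alpha)\,\mathcal{E}_{\vv},
\]
so the entropy term $\mathcal{H}_{1}$ is the \emph{unweighted} entropy and $\vv$ enters only through the functional $\mathcal{E}_{\vv}$. Your description of a ``weighted entropy relative to $\vv(m_{\omega_0})\omega_0^{[n]}$'' is therefore not what the decomposition gives, and your plan to thread a weight through Berndtsson's direct-image subharmonicity is aimed at the wrong piece. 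Relatedly, you assume $\vv>0$ on $\PP$; the statement does not require this, and the argument below does not use it.

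The paper's route (\Cref{Mab-bound-1-w}) is shorter and avoids reopening any of the analysis in \cite{BB}. Since $\mathcal{H}_{1}-2\mathcal{E}_{1}^{\Ric(\omega)}$ is, up to a multiple of $\mathcal{E}_{1}$, the classical Mabuchi energy, one writes
\[
\mathcal{M}_{1,\vv}=\mathcal{M}+\mathcal{E}_{\tilde\vv},\qquad \tilde\vv:=c_{1,\vv}(\alpha)(1-\vv),
\]
with $\mathcal{M}$ the $(1,c_{1,\vv}(\alpha))$-Mabuchi energy. Along weak $C^{1,1}$ geodesics, $t\mapsto\mathcal{M}(\phi_t)$ is convex by \cite[Theorem~3.4]{BB}, while $t\mapsto\mathcal{E}_{\tilde\vv}(\phi_t)$ is \emph{affine} by \cite[Proposition~10.d]{Bern}; the latter holds for any smooth $\tilde\vv$, with no sign condition. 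Hence $\mathcal{M}_{1,\vv}$ is convex along weak geodesics, and the sub-slope inequality applied at the $(1,\vv)$-cscK critical point (together with \cite[Lemma~3.5]{BB}) yields the global lower bound. No adaptation of the pluripotential machinery is needed; this is precisely why the case $\uu\equiv 1$ is special.
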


\subsection{Outline of the paper} In \Cref{sec-2} we introduce the weighted $\uu$-scalar curvature of a $\T$-invariant K\"ahler metric and the constant $c_{\uu,\vv}(\alpha)$ in \eqref{main}, in terms of the data $(\alpha, \PP, \uu, \vv)$ on $(X, \T)$. As our definitions are new, in \Cref{sec-3} we describe in some more detail the examples listed in \ref{1i}--\ref{v} above. In \Cref{sec-4}, we generalize the arguments of Donaldson~\cite{donaldson} and Fujiki~\cite{fujiki} in the cscK case, and of Apostolov--Maschler~\cite{AM} in the conformally K\"ahler, Einstein--Maxwell case, thus providing a formal GIT interpretation of the problem of finding solutions of \eqref{main} within a given K\"ahler class $\alpha$ on $X$. In Section 5.1 we introduce the $(\uu,\vv)$-Mabuchi energy on $(X, \alpha, \T)$  associated to \eqref{main}. Our main result here is \Cref{Chen-Tian} which extends the Chen--Tian formula for the Mabuchi functional to the general $(\uu,\vv)$-case. In Section~{5.2}, assuming $\vv>0$, we define the relative $(\uu,\vv)$-Mabuchi energy $\mathcal{M}_{\uu,\vv}^{\rm rel}$ associated to the problem \eqref{ext} and show that it is given by the $(\uu,\vv\vv_{\rm ext})$-Mabuchi energy for a suitable affine linear function $\vv_{\rm ext}$ on $\tor^*$. In Section 5.3 we show the boundedness of the $(1,\vv)$-Mabuchi energy. In \Cref{sec-6} we define the differential-geometric $(\uu,\vv)$-Futaki invariant on $(X,\alpha,\T)$ and show in \Cref{fut-obs} that it provides a first obstruction of the existence of a solution of \eqref{main}. In the next \Cref{sec-7} we introduce a global invariant, which we call the {\it $(\uu,\vv)$-Futaki invariant}, on a $\T$-compatible smooth K\"ahler test configuration associated to $(X, \T, \alpha)$. We observe in \Cref{submersion-case}, that by an adaptation of the original arguments of \cite{ding-tian}, when the test configuration is a smooth submersion, the corresponding $(\uu,\vv)$-Futaki invariant agrees with the differential-geometric $(\uu,\vv)$-Futaki invariant of $X_0$. Our main result here is \Cref{Mabuchi-Slope} which shows how in the case when the central fibre is reduced the $(\uu,\vv)$-Futaki invariant of a $\T$-compatible smooth K\"ahler test configuration associated to $(X, \T, \alpha)$ is related to the $(\uu,\vv)$-Mabuchi functional of $(X, \T, \alpha)$. It yields \Cref{thm:main} from the introduction, modulo Theorem 1 which we establish in Appendix A. The arguments in the proof of \Cref{Mabuchi-Slope} go back to the foundational works \cite{ding-tian,tian} and are very close to the ones in \cite{Dervan-Ross, Dyr1}. In \Cref{sec-8}, we discuss the alternative approach to defining a Futaki invariant of a $\T$-compatible polarized test configuration in terms of algebraic constructions on the central fibre $X_0$, as suggested in \cite{AM,AMT}. In the case when the test configuration is a smooth submersion, by using the asymptotic expension of the $(\uu,\vv)$-equivariant Bergman kernel, we show in \Cref{DF=F} that two approaches produce the same invariant. Similar result is established in \Cref{DF=F-toric} in the case when $(X,\alpha,\T)$ is a smooth toric variety and we consider toric test configurations in the sense of \cite{Do-02}.
In \Cref{sec-10}, we consider the case when $(X,\alpha,\T)$ is a toric fiber-bundle over the product of cscK smooth projective manifolds, given by the generalized Calabi construction of \cite{ACGT}. We compute the $(\uu,\vv)$-Futaki invariant of certain test configurations of $(X,\alpha,\T)$, defined in terms of the toric geometry of the fiber. As an application of our theory, in the case when $X$ is a $\mathbb{P}^1$-bundle over a product of cscK smooth projective manifolds we derive a Yau-Tian-Donaldson type correspondence for $(\uu,\vv)$-extremal K\"ahler classes in terms of the positivity of a single function of one variable over the interval $(-1,1)$. In the Appendices A and B, we extend some of our previous results obtained for special values of $\uu$ and $\vv$ to the general case, including the proof of Theorem 1 from the introduction, a structure result for the automorphism group of a $(\uu,\vv)$-extremal metric with $\vv>0$, as well as a stability under deformation of $\uu,\vv$ and $\alpha$ of the solution of \eqref{ext} (again assuming $\vv>0$). 
\section*{Acknowledgement} 
I would like to thank my supervisor V. Apostolov for his guidance and constant help. I am grateful to E. Legendre for bringing to my attention the work \cite{Mcduff-Tolman}, and to G. Tian and X. Zhu for their interest and comments. Finally, I want to thank the referee for his/her careful reading of the manuscript and suggesting improvements, especially regarding the material in \Cref{sec-8}.      
\section{The $\uu$-scalar curvature}\label{sec-2}
Let $X$ be a compact K\"ahler manifold of complex dimension $n\geq 2$. We denote by $\Autred$ the reduced automorphism group of $X$ whose Lie algebra $\mathfrak{h}_{\rm red}$ is given by real holomorphic vector fields with zeros (see \cite{Gauduchon}). Let $\T$ be an $\ell$-dimentional real torus in ${\rm Aut}_{\rm red}(X)$ with Lie algebra $\mathfrak{t}$, and $\omega$ a $\T$-invariant K\"ahler form on $X$. We denote by $\mathcal{K}^{\T}_{\omega}$ the space of $\mathbb{T}$-invariant K\"ahler potentials with respect to $\omega$, and for any $\phi\in\mathcal{K}_{\omega}^{\T}$, by $\omega_\phi=\omega+dd^{c}\phi$ the corresponding K\"ahler form in the K\"ahler class $\alpha$. It is well-known that the $\T$-action on $X$ is $\omega_\phi$-Hamiltonian (see \cite{Gauduchon}) and we choose $m_{\phi}:X\rightarrow\mathfrak{t}^{*}$ to be a $\omega_\phi$-momentum map of $\T$. It is also known \cite{Atiyah, Guil-Stern} that $\PP_{\phi}:=m_{\phi}(X)$ is a convex polytope in $\mathfrak{t}^{*}$. Furthermore, the following is true

\begin{lem}\label{lem-mom-norm}
The following facts are equivalent:
\begin{enumerate}
\item\label{m1} For any $\phi\in\mathcal{K}^{\T}_\omega$ we have $\PP_\phi=\PP_\omega$.
\item\label{m2} For any $\phi\in\mathcal{K}^{\T}_\omega$ we have $\int_X m_\phi\omega^{[n]}_\phi=\int_X m_\omega\omega^{[n]}$, where $\omega^{[n]}_\phi:=\frac{\omega^{n}_\phi}{n!}$ is the volume form.
\item\label{m3} For any $\xi\in\mathfrak{t}$ and $\phi\in\mathcal{K}^{\T}_\omega$ we have $m^{\xi}_\phi=m^{\xi}_\omega+d^{c}\phi(\xi)$, where $m_{\phi}^\xi:=\langle m_\phi,\xi\rangle$.
\end{enumerate}
\end{lem}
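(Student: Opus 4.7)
\medskip

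The plan is to first establish the key normalizing identity: for every $\phi\in\mathcal{K}^\T_\omega$ and $\xi\in\tor$, the function $\tilde m_\phi^\xi:=m_\omega^\xi+d^c\phi(\xi)$ is a Hamiltonian potential for $\xi_X$ with respect to $\omega_\phi$, and moreover
\[
\int_X \tilde m_\phi^\xi\,\omega_\phi^{[n]}=\int_X m_\omega^\xi\,\omega^{[n]}.
\]
The first assertion is a short Cartan-formula computation: $\T$-invariance of $\phi$ together with holomorphy of $\xi_X$ gives $\mathcal{L}_{\xi_X}d^c\phi=0$, whence $\iota_{\xi_X}dd^c\phi=-d(d^c\phi(\xi))$, so $d\tilde m_\phi^\xi=-\iota_{\xi_X}\omega_\phi$. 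For the second assertion I consider the family $\omega_t:=\omega+t\,dd^c\phi$ with $m_t^\xi:=m_\omega^\xi+t\,d^c\phi(\xi)$ and show $\frac{d}{dt}\int_X m_t^\xi\omega_t^{[n]}=0$. Differentiating and integrating by parts once reduces the claim to the pointwise identity $\iota_{\xi_X}\omega_t\wedge d^c\phi\wedge\omega_t^{[n-1]}=-d^c\phi(\xi)\,\omega_t^{[n]}$, which I get by contracting the vanishing top-degree form $d^c\phi\wedge\omega_t^{[n]}$ with $\xi_X$ and using $\iota_{\xi_X}\omega_t^{[n]}=\iota_{\xi_X}\omega_t\wedge\omega_t^{[n-1]}$.

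Next, since both $m_\phi$ and $\tilde m_\phi$ are $\omega_\phi$-moment maps for $\T$ on the connected manifold $X$, their difference $c(\phi):=m_\phi-\tilde m_\phi$ is a constant in $\tor^*$. The equivalence $\eqref{m2}\Leftrightarrow\eqref{m3}$ is then immediate from the normalizing identity: condition \eqref{m3} is $c(\phi)=0$, which upon integration against $\omega_\phi^{[n]}$ yields \eqref{m2}; conversely \eqref{m2} combined with the identity gives $c(\phi)\int_X\omega_\phi^{[n]}=0$, so $c(\phi)=0$.

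For the equivalence with \eqref{m1} I use the Atiyah–Guillemin–Sternberg description of $\PP_\phi$ as the convex hull of the images of $\T$-fixed points. At any fixed point $x\in X^\T$ one has $\xi_X(x)=0$ for all $\xi$, hence $\tilde m_\phi(x)=m_\omega(x)$; taking convex hulls yields $\tilde m_\phi(X)=\PP_\omega$. Therefore $\PP_\phi=\PP_\omega+c(\phi)$. Assuming \eqref{m3} gives $c(\phi)=0$ and hence \eqref{m1}; conversely, \eqref{m1} forces $\PP_\omega+c(\phi)=\PP_\omega$, and since $\PP_\omega$ is compact the only translation leaving it invariant is the trivial one, giving $c(\phi)=0$ and thus \eqref{m3}.

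The main obstacle is the normalizing identity $\int_X \tilde m_\phi^\xi\,\omega_\phi^{[n]}=\int_X m_\omega^\xi\,\omega^{[n]}$, which couples the linear deformation of the moment-map potential with the nonlinear deformation of the volume form along the path $\omega_t$; once that computation is in hand, the equivalences reduce to algebraic observations about the translational ambiguity of moment maps and the rigidity of a compact convex polytope under translations.
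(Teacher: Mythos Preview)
Your proof is correct and follows essentially the same approach as the paper: both establish that $m_\phi^\xi-(m_\omega^\xi+d^c\phi(\xi))$ is a constant $c(\phi)\in\tor^*$ via Cartan's formula, prove the integral invariance $\int_X(m_\omega^\xi+d^c\phi(\xi))\,\omega_\phi^{[n]}=\int_X m_\omega^\xi\,\omega^{[n]}$ by differentiating along a path and integrating by parts, and handle \eqref{m1} via Atiyah--Guillemin--Sternberg at the fixed points. The only organizational difference is that you isolate the normalizing identity as a standalone lemma before deducing the equivalences, whereas the paper threads the same computation through the proof of \eqref{m2}$\Leftrightarrow$\eqref{m3}; your version is arguably cleaner, and you also make explicit the (easy) fact that a compact polytope admits no nontrivial self-translation, which the paper uses implicitly.
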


\begin{proof}
Presumably, \Cref{lem-mom-norm} is well-known, see e.g. \cite[Section 4]{APS} and  \cite[Section 3.1]{Sz-blow} for the case of a single hamiltonian. We include an argument for the sake of completeness. We start by proving that \ref{m2} is equivalent with \ref{m3}. By the very definition of the momentum map, Cartan's formula and the fact that $\xi$ is a real holomorphic vector field we have
\begin{equation}\label{eq-m-phi-omega}
d(m^{\xi}_\omega-m^{\xi}_\phi)=-d(d^{c}\phi(\xi)).
\end{equation}
Thus, there exist a $\alpha_\phi\in \mathfrak{t}^{*}$ such that
\begin{equation}\label{eq-a}
m^{\xi}_\phi=m^{\xi}_\omega+d^{c}\phi(\xi) +\alpha_\phi(\xi).
\end{equation}
Suppose that \ref{m2} holds. Then $\alpha_\phi$ is given by
\begin{equation*}
\alpha_\phi(\xi)=\frac{1}{{\rm Vol}(X,\alpha)}\left(\int_X m^{\xi}_\omega\omega^{[n]}-\int_X(m^{\xi}_\omega+d^{c}\phi(\xi))\omega^{[n]}_\phi\right).
\end{equation*}
For a variation $\dot{\phi}$ of $\phi$ in $\mathcal{K}^{\T}_\omega$, the corresponding variation of $\alpha_\phi$ is given by
\begin{align*}
-{\rm Vol}(X,\alpha)\dot{\alpha}_\phi(\xi)=&\int_X m^{\xi}_\omega dd^{c}\dot{\phi}\wedge\omega^{[n-1]}_\phi+\int_Xd^{c}\dot{\phi}(\xi)\omega^{[n]}_\phi+\int_X d^{c}\phi(\xi)dd^{c}\dot{\phi}\wedge\omega^{[n-1]}_\phi\\
=&\int_X d^{c}\phi(\xi)dd^{c}\dot{\phi}\wedge\omega^{[n-1]}_\phi+\int_X dm^{\xi}_\phi\wedge d^{c}\dot{\phi}\wedge\omega^{[n-1]}_\phi\\&
+\int_X (-dm^{\xi}_\phi+d(d^{c}\phi(\xi))) \wedge d^{c}\dot{\phi}\wedge\omega^{[n-1]}_\phi\\
=&\int_X d(d^{c}\phi(\xi)) \wedge d^{c}\dot{\phi}\wedge\omega^{[n-1]}_\phi+\int_X d^{c}\phi(\xi)dd^{c}\dot{\phi}\wedge\omega^{[n-1]}_\phi=0,
\end{align*}
where we have used \eqref{eq-m-phi-omega}, the fact that $d^{c}\dot{\phi}(\xi)\omega^{[n]}_\phi=dm^{\xi}_\phi\wedge d^{c}\dot{\phi}\wedge\omega^{[n-1]}_\phi$, and integration by parts. It follows that $\alpha_\phi=\alpha_\omega=0$ which gives the implication \enquote{\ref{m2}$\Rightarrow$\ref{m3}}. Conversely if we suppose that \ref{m3} holds, then for any variation $\dot{\omega}=dd^{c}\dot{\phi}$ of $\omega_\phi$ in $\mathcal{K}^{\T}_\omega$, we get
\begin{equation*}
\frac{d}{dt}\int_X m^{\xi}_{\phi_t}\omega^{[n]}_{\phi_t}=\int_X -m^{\xi}_{\phi_t} dd^{c}\dot{\phi}\wedge\omega^{[n-1]}_{\phi_t}+d^{c}\dot{\phi}(\xi)\omega^{[n]}_{\phi_t}=0.
\end{equation*} 
It follows that $
\int_X m^{\xi}_{\phi}\omega^{[n]}_{\phi}=\int_X m^{\xi}_\omega\omega^{[n]}$ for any $\xi\in \mathfrak{t}$, which yields \ref{m2} .\\
Now we prove the equivalence between \ref{m1} and \ref{m3}. Suppose that \ref{m1} is true and let $x\in X$ be a fixed point for the $\T$-action on $X$. Then we have
\begin{equation}\label{eq-m-m}
m_\phi(x)-m_\omega(x)=(d^{c}\phi)_x+\alpha_\phi=\alpha_\phi.
\end{equation}
By a result of Atiyah and Guillemin--Sternberg (see \cite{Atiyah, Guil-Stern}) $\PP_\phi$ (resp. $\PP_\omega$) is the convex hull of the image by $m_\phi$ (resp. $m_\omega$) of the fixed points for the $\T$-action. It then follows from \eqref{eq-m-m} that $\PP_\phi=\PP_\omega+\alpha_\phi$. Using $\PP_\omega=\PP_\phi$, we get $\alpha_\phi=0$ which proves \ref{m3}.
For the inverse implication if $m_\phi(x)-m_\omega(x)=(d^{c}\phi)_{x}$ for any $x\in X$, then $m_\phi(x)=m_\omega(x)$ for any point $x\in X$ fixed by the $\T$-action and we have $\PP_\phi=\PP_\omega$ by \cite{Atiyah, Guil-Stern}. 
\end{proof}

It follows from \Cref{lem-mom-norm} that for each $\phi\in\mathcal{K}^{\T}_\omega$ we can normalize $m_\phi$ such that the momentum polytope $\PP=m_\phi(X)\subset \mathfrak{t}^{*}$ is $\phi$-independent.

\begin{defn}
For $\uu\in C^{\infty}(\PP,\mathbb{R}_{>0})$ we define the $\uu$-{\it scalar curvature} of the K\"ahler metric $g_\phi=\omega_\phi(\cdot,J\cdot)$ for $\phi\in\mathcal{K}^{\T}_{\omega}$ to be
\begin{equation}\label{Scal-v}
{\rm Scal}_{\uu}(\phi):=\uu(m_{\phi}){\rm Scal}(g_\phi)+2\Delta_\phi(\uu(m_{\phi}))+\Tr(\G_\phi\circ \left(\Hess(\uu)\circ m_{\phi}\right)),
\end{equation}
where $m_\phi$ is the momentum map of $\omega_\phi$ normalized as in \Cref{lem-mom-norm}, ${\rm Scal}(g_\phi)$ is the scalar curvature, $\Delta_\phi$ is the Riemannian Laplacian on functions of the K\"ahler metric $\omega_\phi$ and $\Hess(\uu)$ is the hessian of $\uu$, viewed as bilinear form on $\mathfrak{t}^{*}$ whereas $\G_\phi$ is the bilinear form with smooth coefficients on $\mathfrak{t}$, given by the restriction of the Riemannian metric $g_\phi$ on fundamental vector fields. 

In a basis $\xxi=(\xi_i)_{i=1,\cdots,\ell}$ of $\tor$ we have
\begin{equation*}
\Tr(\G_\phi\circ \left(\Hess(\uu)\circ m_{\phi}\right)):=\sum_{1\leq i,j\leq\ell}\uu_{,ij}(m_\phi)g_\phi(\xi_i,\xi_j),
\end{equation*}
where $\uu_{,ij}$ stands for the partial derivatives of $\uu$ with respect the dual basis of $\xxi$. 
\end{defn}

\begin{defn}\label{theta-mom}
Let $\theta$ be a $\T$-invariant closed $(1,1)$-form on $X$. A {\it $\theta$-momentum map} for the action of $\T$ on $X$ is a smooth $\T$-invariant function $m_\theta:X\rightarrow \mathfrak{t}^{*}$ with the property $\theta(\xi,\cdot)=-dm^{\xi}_\theta$ for all $\xi\in\mathfrak{t}$. 
\end{defn}

\begin{lem}\label{top-c}
Let $\theta$ be a fixed $\T$-invariant closed $(1,1)$-form and $m_\theta$ a momentum map for $\theta$. Then with the normalization for $m_\phi$ given by \Cref{lem-mom-norm}, the following integrals are independent of the choice of $\phi\in\mathcal{K}^{\T}_\omega$,
\begin{align*}
A_{\uu}(\phi):=&\int_X \uu(m_\phi)\omega^{[n]}_\phi,\\
B_{\uu}^{\theta}(\phi):=&\int_X \uu(m_\phi)\theta\wedge\omega^{[n-1]}_\phi+\langle (d\uu)(m_\phi),m_\theta\rangle\omega^{[n]}_\phi,\\
C_{\uu}(\phi):=&\int_X \Scal_{\uu}(\phi)\omega^{[n]}_\phi.
\end{align*} 
\end{lem}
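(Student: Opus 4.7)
For each of the three integrals I will take a smooth path $\phi_t\in\mathcal{K}^{\T}_\omega$ with variation $\dot\phi:=\tfrac{d}{dt}\phi_t$ and show that the derivative of the integral vanishes. The two ingredients that are used throughout are:
\begin{enumerate}
\item[(a)] The identity $\dot m_{\phi}^{\xi}=d^{c}\dot\phi(\xi)$ for all $\xi\in\tor$, provided by Lemma~\ref{lem-mom-norm}(iii);
\item[(b)] The pointwise identity $d^{c}\dot\phi(\xi)\,\omega^{[n]}_\phi=dm_\phi^{\xi}\wedge d^{c}\dot\phi\wedge\omega^{[n-1]}_\phi$ for $\xi\in\tor$, a direct consequence of $dm_\phi^\xi=-\iota_\xi\omega_\phi$.
\end{enumerate}

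\textbf{The integral $A_{\uu}$.} Differentiating in $t$ yields
$\frac{d}{dt}A_{\uu}(\phi_t)=\int_X\langle d\uu(m_\phi),\dot m_\phi\rangle\omega^{[n]}_\phi+\int_X\uu(m_\phi)\,dd^{c}\dot\phi\wedge\omega^{[n-1]}_\phi.$
Using (a) to write the first integrand in a basis as $\sum_i\uu_{,i}(m_\phi)\,d^{c}\dot\phi(\xi_i)\,\omega^{[n]}_\phi$, then applying (b) term by term, it becomes $\int_X d(\uu(m_\phi))\wedge d^{c}\dot\phi\wedge\omega^{[n-1]}_\phi$, which by integration by parts cancels the second.

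\textbf{The integral $B_{\uu}^{\theta}$.} The derivative has four terms: two from the first summand (variation of $\uu(m_\phi)$ and of $\omega_\phi^{[n-1]}$) and two from the second summand (variation of $d\uu(m_\phi)$ evaluated on the fixed $m_\theta$, and variation of $\omega_\phi^{[n]}$). Using (a), (b), the fact that $\theta$ and $m_\theta$ do not depend on $\phi$, and the key identity $\iota_\xi\theta=-dm_\theta^\xi$ (which comes from $\theta$ being closed and $m_\theta$ being a $\theta$-momentum map), repeated Stokes integration produces pairwise cancellations identical in structure to the ones in $A_\uu$ but now carried out on forms of one lower degree in $\omega_\phi$.

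\textbf{The integral $C_{\uu}$.} This is the most delicate step. First, using $\int_X\Delta_\phi(\uu(m_\phi))\,\omega^{[n]}_\phi=0$ and $\Scal(g_\phi)\omega_\phi^{[n]}=n\rho_\phi\wedge\omega_\phi^{[n-1]}$, rewrite
\begin{equation*}
C_{\uu}(\phi)=n\int_X \uu(m_\phi)\rho_\phi\wedge\omega^{[n-1]}_\phi+\int_X \Tr(\G_\phi\circ(\Hess(\uu)\circ m_\phi))\,\omega^{[n]}_\phi.
\end{equation*}
For the second integral I use the pointwise identity $g_\phi(\xi_i,\xi_j)\omega^{[n]}_\phi=dm_\phi^{\xi_i}\wedge d^{c}m_\phi^{\xi_j}\wedge\omega^{[n-1]}_\phi$ to rewrite it as $\sum_{i,j}\int_X \uu_{,ij}(m_\phi)\,dm_\phi^{\xi_i}\wedge d^{c}m_\phi^{\xi_j}\wedge\omega^{[n-1]}_\phi$. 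For the first integral, the Ricci form plays the role of $\theta$ in step $B_\uu^\theta$, but now $\rho_\phi$ varies: one uses $\dot\rho_\phi=-\tfrac{1}{2}dd^{c}\Delta_\phi\dot\phi$ and the identification of the $\rho_\phi$-momentum map of $\xi$ with $\tfrac{1}{2}\Delta_\phi m_\phi^{\xi}$ (which follows from $\iota_\xi\rho_\phi=-d(\tfrac{1}{2}\Delta_\phi m_\phi^{\xi})$ for any Killing potential on a K\"ahler manifold). Differentiating $C_\uu(\phi_t)$ and combining the two summands, the terms involving $\dot\rho_\phi$ and the Hessian variation of $\uu$ reorganize, after several integrations by parts using (a), (b) and the closedness of $\rho_\phi$, into a complete cancellation.

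\textbf{Main obstacle.} The variational calculation for $A_\uu$ and $B_\uu^\theta$ is essentially a direct Stokes exercise, but $C_\uu$ requires combining the variation of the Ricci form with the variation of the second-order term $\Tr(\G_\phi\circ(\Hess(\uu)\circ m_\phi))$ and checking that the many resulting terms cancel; this is the heart of the lemma. Conceptually what is being verified is that $A_\uu$, $B_\uu^\theta$, $C_\uu$ are evaluations of equivariantly closed forms built from the equivariant K\"ahler class $[\omega+m_\omega]_\T$ (fixed by Lemma~\ref{lem-mom-norm}) and the equivariant first Chern class $c_1^{\T}(X)$, and hence depend only on $(X,\T,\alpha,\PP)$; the explicit variational argument is simply an un-packaging of this fact.
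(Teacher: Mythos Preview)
Your treatment of $A_{\uu}$ is correct and matches the standard computation (and in fact the paper's commented-out direct argument). Your sketch for $B_{\uu}^{\theta}$ is also on the right track; the paper handles it by an indirect route, observing that $B_{\uu}^{\theta}(\phi)=(\mathcal B_{\uu}^{\theta})_\phi(1)$ for the $1$-form $\mathcal B_{\uu}^{\theta}$ on $\mathcal K_{\omega}^{\T}$ appearing in the proof of \Cref{Lem-E-theta}, and then reading off $\boldsymbol\delta B_{\uu}^{\theta}=0$ from the closedness computation there.

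For $C_{\uu}$ your route diverges substantially from the paper's. The paper does not decompose $C_{\uu}$ at all: it invokes the first-variation formula for the weighted scalar curvature (established in Appendix~B),
\[
(\boldsymbol\delta\Scal_{\uu})_\phi(\dot\phi)=-2(D^{-}d)^{*}\big(\uu(m_\phi)(D^{-}d)\dot\phi\big)+(d\Scal_{\uu}(\phi),d\dot\phi)_\phi,
\]
from which $(\boldsymbol\delta C_{\uu})_\phi(\dot\phi)=0$ follows in two lines by integration by parts (the first term integrates against $1$ to zero, and the second cancels the volume variation). Your plan---drop the Laplacian term, split into a Ricci piece and a Hessian-trace piece, then vary each using $\dot\rho_\phi=-\tfrac12dd^{c}\Delta_\phi\dot\phi$ and the identification $m_{\Ric(\omega_\phi)}=\tfrac12\Delta_\phi m_\phi$---is in principle workable and more self-contained (it avoids importing the Lichnerowicz-type formula from Appendix~B), but it is considerably heavier and you have only asserted the final cancellation rather than displayed it. A small correction: with the paper's convention $\omega^{[k]}=\omega^{k}/k!$ one has $\Ric(\omega_\phi)\wedge\omega_\phi^{[n-1]}=\tfrac12\Scal(g_\phi)\,\omega_\phi^{[n]}$, so the coefficient in your first integral should be $2$, not $n$. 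Your closing equivariant-cohomology interpretation is apt; the paper's approach can be read as packaging exactly that invariance into the linearization formula for $\Scal_{\uu}$.
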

\begin{proof}
The fact that $A_{\uu}(\phi)$ is constant is well known, see e.g. \cite[Theorem 3.14]{Dervan}. The constancy of $B_{\uu}^{\theta}(\phi)$ can be easily established by a direct computation, but it also follows from the arguments in the proof of \Cref{Lem-E-theta} below. Indeed, we note that $B_{\uu}^{\theta}(\phi)=(\mathcal{B}_{\uu}^{\theta})_\phi(1)$ where $\mathcal{B}_{\uu}^{\theta}$ is the $1$-form on $\mathcal{K}^{\T}_\omega$ given by \eqref{B-theta}. By taking $\dot{\phi}=1$ in \eqref{dA} we get $(\bdelta B_{\uu}^\theta)_\phi(\dot{\psi})=0$ where $\dot{\psi}$ is a $\T$-invariant function on $X$ defining a $\T$-invariant variation $\dot{\omega}=dd^{c}\dot{\psi}$ of $\omega_\phi$. From this we infer that $B_{\uu}^{\theta}(\phi)$ is constant. For the last function $C_{\uu}(\phi)$, we will calculate its variation $(\bdelta  C_{\uu})_\phi(\dot{\phi})$ with respect to a $\T$-invariant variation $\dot{\omega}=dd^{c}\dot{\phi}$ of $\omega_\phi$. For this, we use that the variation of $\Scal_{\uu}(\phi)$ is given by 
\begin{equation}\label{s-var}
(\bdelta\Scal_{\uu})_\phi(\dot{\phi})=-2(D^{-}d)^{*}\uu(m_\phi)(D^{-}d)\dot{\phi}+(d\Scal_{\uu}(\phi),d\dot{\phi})_\phi,
\end{equation}
where $D$ is the Levi-Civita connection of $\omega_\phi$, $D^-d$ denotes the $(2,0)+(0,2)$ part of $Dd$ with respect to $J$ and $(D^-d)^*$ is the formal adjoint operator of $(D^-d)$ (see \cite[Section 1.23]{Gauduchon}). Formula \eqref{s-var} will be established in the Appendix B, see \eqref{var-Scal-u-v}. By \eqref{s-var}, we calculate 
\begin{align*}
(\bdelta C_{\uu})_\phi(\dot{\phi})=&\int_X-2(D^{-}d)^{*}\uu(m_\phi)(D^{-}d)(\dot{\phi})\omega^{[n]}_\phi+\int_Xd\Scal_{\uu}(\phi)\wedge d^{c}\dot{\phi}\wedge\omega^{[n-1]}_\phi\\
&-\int_X\Scal_{\uu}(\phi)dd^{c}\dot{\phi}\wedge\omega^{[n-1]}_\phi.
\end{align*}
Integration by parts yields $(\bdelta C_{\uu})_\phi=0$. Thus $C_{\uu}$ does not depend on the choice of $\phi\in\mathcal{K}^{\T}_\omega$.
\end{proof}

\begin{defn}\label{Def-Top-const}
Let $(X,\alpha)$ be a compact K\"ahler manifold, $\T\subset {\rm Aut}_{\rm red}(X)$ a real torus with momentum image $\PP\subset\tor^*$ associated to $\alpha$ as in \Cref{lem-mom-norm}, and $\uu\in C^{\infty}(\PP,\mathbb{R}_{>0})$, $\vv\in C^{\infty}(\PP,\mathbb{R})$. The $(\uu,\vv)$-{\it slope} of $(X,\alpha)$ is the constant given by
\begin{equation}\label{Top-const}
c_{(\uu,\vv)}(\alpha):=\begin{cases} \frac{\int_X \Scal_{\uu}(\omega)\omega^{[n]}}{\int_X \vv(m_\omega)\omega^{[n]}},&\text{if }\int_X \vv(m_\omega)\omega^{[n]}\neq 0\\ 
1,&\text{if } \int_X \vv(m_\omega)\omega^{[n]}= 0,
\end{cases}
\end{equation}
which is independent from the choice of $\omega\in \alpha$ by virtue of \Cref{top-c}.
\end{defn}  
\begin{rem}\label{rem-1}
If $\phi\in\mathcal{K}_{\omega}^{\T}$ defines a K\"ahler metric which satisfies $
\Scal_{\uu}(\phi)=c\vv(m_{\phi})$ for some real constant $c$ and $\int_X \vv(m_\omega)\omega^{[n]}\neq0$, then we must have $c=c_{(\uu,\vv)}(\alpha)$ with $c_{\uu,\vv}(\alpha)$ given by \eqref{Top-const}.
\end{rem}
Because of \Cref{rem-1} above, and to simplify the notation in the case when $ \int_X \vv(m_\omega)\omega^{[n]}= 0$, we adopt the following definition
\begin{defn}
Let $(X,\alpha)$ be a compact K\"ahler manifold, $\T\subset {\rm Aut}_{\rm red}(X)$ a real torus with momentum image $\PP\subset\tor^*$ associated to $\alpha$ as in \Cref{lem-mom-norm}, and $\uu\in C^{\infty}(\PP,\mathbb{R}_{>0})$, $\vv\in C^{\infty}(\PP,\mathbb{R})$. A $(\uu,\vv)$-{\it cscK} metric $\omega\in\alpha$ is a $\T$-invariant K\"ahler metric satisfying \eqref{main}, where $c_{\uu,\vv}(\alpha)$ is given by \eqref{Top-const}
\end{defn} 

\section{Examples}\label{sec-3}
We list below some geometrically significant examples of $(\uu,\vv)$-cscK metrics,  obtained for special values of the weight functions $\uu, \vv$.

\subsection{Constant scalar curvature and extremal K\"ahler metrics}\label{s:extremal} When $\uu\equiv 1$, ${\rm Scal}_{\uu}(\phi)= {\rm Scal}(\phi)$ is the usual scalar curvature of the K\"ahler metric $\omega_{\phi} \in {\mathcal K}_{\omega}^{\T}$, so letting $\vv\equiv 1$ the problem \eqref{main} reduces to the Calabi problem of finding a cscK metric in the K\"ahler class $\alpha=[\omega]$. In this case, we can take $\T\subset {\rm Aut}_{\rm red}(X)$ to be a maximal torus by a result of Calabi~\cite{calabi}. More generally, for a fixed maximal torus $\T \subset {\rm Aut}_{\rm red}(X)$ we can consider the more general problem of the existence of an {\it extremal K\"ahler metric} in ${\mathcal K}_{\omega}^{\T}$, i.e. a K\"ahler metric $\omega_{\phi}$ such that ${\rm Scal}(\phi)$ is a Killing potential for $\omega_{\phi}$. As the Killing vector field $\xi_{\rm ext}$ generated by ${\rm Scal}(\phi)$  is $\T$-invariant, it belongs to the Lie algebra $\tor$ of $\T$ (by the maximality of $\T$). More generally, Futaki--Mabuchi~\cite{FM} observed that for any $\phi\in {\mathcal K}_{\omega}^{\T}$, the $L^2$ projection ${\check {\rm Scal}}(\phi)$ (with respect to the global inner product on smooth functions defined by $\omega_{\phi}$) of ${\rm Scal}(\phi)$ to the sub-space 
$\{m^{\xi}_{\phi} + c,  \ c\in \R\}$ of Killing potentials for $\xi \in \tor$  defines a $\phi$-independent element $\xi_{\rm ext}\in \tor$, i.e. ${\check {\rm Scal}}(\phi)= m_{\phi}^{\xi_{\rm ext}} + c_{\phi}$. The vector field $\xi_{\rm ext}$ is  called the {\it extremal vector field} of $(X, \alpha, \T)$. Furthermore, using the normalization for the moment map $m_{\phi}$ in \Cref{lem-mom-norm}, we see that 
\begin{equation*}
\begin{split}
4\pi c_1(X) \cup\alpha^{[n-1]} &=\int_X {\rm Scal}(\phi) \omega_{\phi}^{[n]} =  \int_X {\check{\rm Scal}}(\phi) \omega_{\phi}^{[n]}  \\
                                               &= \int_X m_{\phi}^{\xi_{\rm ext}} \omega_{\phi}^{[n]} + c_{\phi}{\rm Vol}(X, \alpha),
                                                \end{split}
\end{equation*}
showing that the real constant $c_{\rm ext}=c_{\phi}$ is independent of $\omega_{\phi}$ too. Thus, there exists an affine-linear function ${\vv}_{\rm ext}(p)=\langle \xi_{\rm ext}, p \rangle + c_{\rm ext}$ on $\tor^*$, such that $\omega_{\phi} \in {\mathcal K}^{\T}_{\omega}$ is extremal if and only if $\Scal_{\uu}(\phi)=\vv_{\rm ext}(m_\phi)$ i.e. if and only if $\omega_{\phi}$ is $(1, \vv_{\rm ext})$-cscK (as 
$c_{1,\vv_{\rm ext}}(\alpha)=1$ by definition of $\vv_{\rm ext}$).

\subsection{$(\uu,\vv)$-extremal K\"ahler metrics}\label{s:(u,v)-extremal}   
As mentioned in the Introduction (and motivated by the previous example) one can consider instead of \eqref{main} the more general problem \eqref{ext} of finding a $(\uu,\vv)$-extremal K\"ahler metric $\omega_\phi$ for $\phi \in {\mathcal K}^{\T}_{\omega}$. It turns out that if $\vv(p)>0$ on $\PP$, similarly to the previous example, one can reduce the problem \eqref{ext} to the problem \eqref{main} with the same $\uu$ but a different $\vv$. This essentially follows from \Cref{moment-map} below, which implies that for any $\T$-invariant, $\omega$-compatible K\"ahler metric $g$, the orthogonal projection of ${\rm Scal}_{\uu}(g)/\vv(m_{\omega})$ to the space of affine-linear functions in momenta with respect to the $\vv$-weighted global inner product \eqref{v-prod} is independent of $g$. Using the $\T$-equivariant Moser lemma for a K\"ahler metric $\omega_\phi \in {\mathcal K}_{\omega}^{\T}$ and the normalization for $m_{\phi}$ given by \Cref{lem-mom-norm}, one can conclude as in the proof \cite[Cor.~2]{AM} that there exist a $\phi$-independent affine-linear function $\vv_{\rm ext}(p)$ such that  $m_{\phi}^{\xi} + c=\vv_{\rm ext}(m_{\phi})$ for any metric in $\mathcal K_{\omega}^{\T}$ satisfying \eqref{ext}. In other words, if $\phi \in {\mathcal K}^{\T}_{\omega}$ is $(\uu,\vv)$-extremal then $\omega$ is $(\uu, \vv \vv_{\rm ext})$-cscK. Conversely if $\omega_\phi$ is $(\uu, \vv \vv_{\rm ext})$-cscK, then $\Scal_{\uu}(\omega_\phi)=c_{\uu,\vv\vv_{\rm ext}}(\alpha)\vv(m_\phi)\vv_{\rm ext}(m_\phi)$ where $c_{\uu,\vv\vv_{\rm ext}}(\alpha)$ is given by \eqref{Top-const}. We claim that $c_{\vv,\vv\vv_{\rm ext}}(\alpha)=1$, which in turn implies that $\omega_\phi$ is $(\uu,\vv)$-extremal. Indeed, if $\int_X\vv(m_\phi)\vv_{\rm ext}(m_\phi)\omega^{[n]}_\phi= 0$, then $c_{\uu,\vv\vv_{\rm ext}}(\alpha)=1$ by \Cref{Def-Top-const}. Otherwise, if $\int_X\vv(m_\phi)\vv_{\rm ext}(m_\phi)\omega^{[n]}_\phi\neq 0$, we get
\begin{align*}
c_{\uu,\vv\vv_{\rm ext}}(\alpha)\int_X\vv(m_\phi)\vv_{\rm ext}(m_\phi)\omega^{[n]}_\phi&=\int_X(\Scal_{\uu}(\phi)/\vv(m_\phi))\vv(m_\phi)\omega^{[n]}_\phi\\
&=\int_X\vv_{\rm ext}(m_\phi)\vv(m_\phi)\omega^{[n]}_\phi,
\end{align*}
showing again that $c_{\uu,\vv\vv_{\rm ext}}(\alpha)=1$.
\subsection{The K\"ahler-Ricci solitons}
This is the case when $X$ is a smooth Fano manifold, $\alpha = 2\pi c_1(X)$ corresponds to the anti-canonical polarization, $\T \subset {\rm Aut}_{\rm red}(X)$ is a maximal torus with momentum image $\PP$, and $\uu(p)=\vv(p)=e^{\langle \xi, p \rangle}$  for some $\xi\in\tor$. It was shown recently in \cite{Eiji1} that a solutions of \eqref{main}
with $\vv_{\rm ext}(p) = 2(\langle \xi, p\rangle + c)$ (for some real constant $c$) corresponds to a K\"ahler metric $\omega \in \alpha$ which is a {\it gradient K\"ahler--Ricci soliton} with respect to $\xi$, i.e. satisfies
\begin{equation}\label{KRS}
{\rm Ric}({\omega}) - \omega = -\frac{1}{2} \mathcal{L}_{J\xi} \omega,
\end{equation}
where ${\rm Ric}(\omega)$ is the Ricci form of $\omega$. Thus, the theory of gradient K\"ahler-Ricci solitons (see e.g. \cite{B-N,CTZ, TZ1,TZ2}) fits in to our setting too. Further ramifications of this setting appear in \cite{Eiji2}.

\subsection{K\"ahler metrics conformal to Einstein--Maxwell metrics} This class of K\"ahler metrics was first introduced in \cite{LeB08} and more recently studied in \cite{AM, AMT, FO, FO1, KoTo16,  lahdili1, lahdili2, LeB0, LeB1}. These are $(\uu,\vv)$-cscK metrics with

\begin{equation*}
\uu(p)=(\langle \xi, p \rangle + a)^{-2m+1} \ \textrm{and} \ \  \vv(p)=(\langle \xi, p \rangle + a)^{-2m-1},
\end{equation*}  
where $\langle \xi, p \rangle + a$  is positive affine-linear function on $\PP$. In this case, ${\rm Scal}_{\uu}(\phi)/\vv(m_{\phi})$ equals to the usual scalar curvature of the Hermitian metric ${\tilde g}_{\phi}= \frac{1}{(m_{\phi}^{\xi} +a)^2} g_{\phi}$. Thus, a $(\uu,\vv)$-cscK metric $\omega_{\phi}$ gives rise to a conformally K\"ahler, Hermitian metric $\tilde{g}_{\phi}$ which has Hermitian Ricci tensor and constant scalar curvature. The latter include the conformally K\"ahler, Einstein metrics classified in \cite{CLW, DM}. In real dimension 4, conformally K\"ahler, Einstein--Maxwell metrics give rise to analogues, in riemannian signature, of the Einstein--Maxwell field equations with a cosmological constant in general relativity.

\subsection{Extremal Sasaki metrics}
Following \cite{AC}, let $(X,L)$ be a smooth compact polarized variety and $\alpha = {2\pi} c_1(L)$ the corresponding K\"ahler class. Recall that for any K\"ahler metric $\omega\in \alpha$, there exits a unique Hermitian metric $h$ on $L$, whose curvature is $\omega$. We denote by $h^*$ the induced Hermitian metric on the dual line bundle $L^*$. It is well-known (see e.g. \cite{BG-book}) that the principal circle bundle $\pi : S \to X$  of  vectors of unit norm of $(L^*, h^*)$ has the structure of a Sasaki manifold, i.e. there exists a contact $1$-form $\theta$ on $S$ with $d\theta = \pi^* \omega$, defining a contact distribution $D \subset TS$ and a Reeb vector field $\chi$ given by the generator of the $\mathbb{S}^1$-action on the fibres of $S$, and a CR-structure $J$ on $D$ induced from the complex structure of $L^*$. The Sasaki structure $(\theta, \chi, D,  J)$ on $S$ in turn defines a transversal K\"ahler structure $(g_{\chi}, \omega_{\chi})$ on $D$ by letting $\omega_{\chi} = (d\theta)_{D}$ and $g_{\chi} = -(d\theta)_{D}\circ J,$ where the subscript $D$ denotes restriction to $D\subset TS$; it is a well-known fact that $(g_{\chi}, \omega_{\chi})$ coincides with the restriction to $D$ of the pull-back of the K\"ahler structure $(g,\omega)$ on $X$ or, equivalently, that $(g_{\chi}, \omega_{\chi})$ induces the initial K\"ahler structures $(g,\omega)$ on the orbit space $X= S/\mathbb{S}^{1}_{\chi}$ for the $\mathbb{S}^1$-action $\mathbb{S}^{1}_{\chi}$ generated by $\chi$.

Let $\T \subset {\rm Aut}_{\rm red}(X)$ be a maximal torus, with a fixed momentum polytope $\PP \subset \tor^*$ associated to the K\"ahler class $\alpha$ as in Lemma 1.  We suppose that $\omega$ is a $\T$-invariant K\"ahler metric in $\alpha$. For any positive affine-linear function $\langle \xi, p \rangle + a$ on $\PP$, we consider  the corresponding Killing potential $f= m_{\omega}^\xi + a$ of $\omega$ and define the lift $\xi_f$ of the Killing vector field $\xi\in\tor$ on $X$ to $S$ by
\begin{equation*}
\xi_f = \xi^{D} + (\pi^* f) \chi,
\end{equation*}
where the super-scrip $D$ stands for the horizontal lift. It is  easily checked that $\xi_f$  preserves the contact distribution  $D$ and the CR-structure $J$, and defines a new Sasaki structure $((\pi^*f )^{-1}\theta, \xi_f, D, J)$ on $S$. In general, the flow of $\xi_f$ is not periodic, and the orbit space of $\xi_f$ is not Hausdorff, but when it is, $X_f:=S/\mathbb{S}^{1}_{\xi_f}$ is a compact complex orbifold endowed with a K\"ahler structure $(g_f, \omega_f)$. In \cite{AC}, the triple $(X_f, g_f, \omega_f)$ is referred to as a {\it CR $f$-twist} of $(X, \omega, g)$ and it is shown there that $(X_f, g_f, \omega_f)$ is an extremal K\"ahler manifold or orbifold in the sense of Sect.~\ref{s:extremal} iff $(X, \omega, g)$ is $(\uu,\vv)$-extremal in the sense of Sect.~\ref{s:(u,v)-extremal} with  
\begin{equation}\label{(m+2)-extremal}
\uu(p)=(\langle \xi, p \rangle + a)^{-m-1} \ \ \textrm{and} \ \ \vv(p)=(\langle \xi, p \rangle + a)^{-m-3}.
\end{equation}
\begin{comment}
 In general, by using the $\T$-equivariant Moser lemma, a $(\uu,\vv)$-extremal metric $\omega_{\phi}\in {\mathcal K}_{\omega}^{\T}$  (with $\uu, \vv$ given by \eqref{(m+2)-extremal}) gives rise to an extremal Sasaki structure $(\xi_f, D, J_{\phi})$ on $S$ within a class of $(\xi_f, D)$-compatible CR structures parametrized by $\phi \in {\mathcal K}_{\omega}^{\T}$. By the discussion in Sect.~\ref{s:(u,v)-extremal}, \Cref{thm:main} provides an obstruction to the existence of such Sasaki structures. We note that a similar obstruction theory has been developed in \cite{CSz} and \cite{BC} for a somewhat different class of $(\xi_f, D)$-compatible CR structures. It will be interesting to understand how these theories are inter-related.
\end{comment}
\subsection{The generalized Calabi construction and manifolds without multiplicities} 

In \cite{ACGT} the authors consider smooth compact manifolds $X$, which are fibre-bundles over the product of cscK Hodge manifolds $(B, \omega_B)= (B_1, \omega_1) \times \cdots \times (B_N, \omega_N)$ with fibre a smooth $\ell$-dimensional compact toric  K\"ahler manifold $(V, \omega_V, \T)$. More precisely, $X$ is a $V$-fibre bundle associated to a certain principle $\T$-bundle over $B$. They introduce a class of $\T$-invariant K\"ahler metrics on $X$, compatible with the bundle structure, which are parametrized by $\omega_{V}$-compatible toric K\"ahler metrics on $V$, and refer to them as K\"ahler metrics given by the {\it generalized Calabi construction}. The condition for the metric $\omega$ on $X$ to be extremal is computed in \cite{ACGT} and can be re-written in our formulation as (see \eqref{u-scalar} below)
\begin{equation}\label{generalized-Calabi-extremal}
{\rm Scal}_{\uu} (g_V) = \vv(m),
\end{equation}
where $g_V$ is the corresponding toric K\"ahler metric on $(V, \omega_V)$, with 
\begin{equation}\label{(u,v)-generalized-Calabi}
\begin{split}
\uu(p) &= \prod_{j=1}^N (\langle \xi_j, p \rangle + c_j)^{d_j}, \\
\vv(p) & = (\langle \xi_0, p \rangle + c_0)\prod_{j=1}^N (\langle \xi_j, p \rangle + c_j)^{d_j} - \sum_{j=1}^N {\rm Scal}_j \Big(\frac{\prod_{k=1}^N (\langle \xi_k, p \rangle + c_k)^{d_j}}{(\langle \xi_j, p \rangle + c_j)}\Big),
         \end{split}
         \end{equation}
in the above expressions,  $m :  V \to \tor^*$ stands for the momentum map of $(V, \omega_V, \T)$,  $d_j$ and ${\rm Scal}_j$ denote the complex dimension  and (constant) scalar curvature of $(B_j, \omega_j)$, respectively, whereas the affine-linear functions $(\langle \xi_k, p \rangle + c_k), k=1,\cdots, N$  on $\tor^*$ are determined by the topology and the K\"ahler class $\alpha=[\omega]$ of $X$, and satisfy $(\langle \xi_j, p \rangle + c_j)>0$  for $ j=1, \cdots, N$ on the Delzant polytope $\PP=m(V)$. Thus, a K\"ahler metric $\omega$ on $X$ given by the generalized Calabi ansatz is extremal if and only if the corresponding toric K\"ahler metric $g_V$ on $V$ is $(\uu, \vv)$-extremal for the values of $\uu,\vv$ given in \eqref{(u,v)-generalized-Calabi}. More generally, considering an arbitrary weight function $\vv$ in \eqref{generalized-Calabi-extremal} allows one to prescribe the scalar curvature of the K\"ahler metrics given by the geberalized Calabi construction on $X$. We note that a very similar equation for a toric K\"ahler metric on $V$ appears in the construction of K\"ahler manifolds without multiplicities, see \cite{donaldson-survey,R}. We refer the Reader to \cite{LLS1,LLS2,LSZ} for a comprehensive study of the equation \eqref{generalized-Calabi-extremal} on a toric variety, for arbitrary weight functions $\uu(p)>0$ and $\vv(p)$, which is referred to as the {\it generalized Abreu equation}.

\section{A formal momentum map picture}\label{sec-4}

In this section we extend the momentum map interpretation, originally introduced Donaldson \cite{donaldson} and Fujiki \cite{fujiki} in the cscK case and generalized by Apostolov--Maschler \cite{AM} to the case \ref{iii} from the Introduction, to arbitrary positive weights $\uu,\vv$ on $\PP$.

\smallskip

In the notation of \Cref{sec-2}, let $\mathcal{AC}^{\T}_\omega$ be the space of all $\omega$-compatible, $\T$-invariant almost complex structures on $(X,\omega)$ and $\mathcal{C}^{\T}_\omega\subset\mathcal{AC}^{\T}_\omega$ the subspace of $\T$-invariant K\"ahler structures. We consider the natural action on $\mathcal{AC}^{\T}_\omega$ of the infinite dimensional group ${\rm Ham}^{\T}(X,\omega)$ of $\T$-equivariant Hamiltonian transformations of $(X,\omega)$, which preserves $\mathcal{C}^{\T}_\omega$. We identify ${\rm Lie}\left({\rm Ham}^{\T}(X,\omega)\right)\cong C^{\infty}(X,\mathbb{R})^{\T}/\mathbb{R}$ where $C^{\infty}(X,\mathbb{R})^{\T}/\mathbb{R}$ is endowed with the Poisson bracket.\\

\smallskip
For any $\uu\in C^{\infty}(\PP,\mathbb{R}_{>0})$, the space $\mathcal{AC}^{\T}_\omega$ carries a weighted formal K{\"a}hler structure $(\bold{J},\bold{\Omega}^{\uu})$ given by (\cite{AM, donaldson, fujiki})
\begin{align*}
\bold{\Omega}^{\uu}_J(\dot{J}_1,\dot{J}_2):=&\frac{1}{2}\int_X {\rm Tr}(J\dot{J}_1\dot{J}_2)\uu(m_{\omega})\omega^{[n]},\\
\bold{J}_J(\dot{J}):=&J\dot{J},
\end{align*}
in which the tangent space of $\mathcal{AC}^{\T}_\omega$ at $J$ is identified with the space of smooth $\T$-invariant sections $\dot{J}$ of ${\rm End}(TX)$ satisfying
\begin{equation*}
\dot{J}J+J\dot{J}=0,\quad \omega(\dot{J}\cdot,\cdot)+\omega(\cdot,\dot{J}\cdot)=0.
\end{equation*}
In what follows, we denote by $g_J:=\omega(\cdot,J\cdot)$ the almost K{\"a}hler metric corresponding to $J\in\mathcal{AC}^{\T}_\omega$, and index all objects calculated with respect to $J$ similarly. On $C^{\infty}(X,\mathbb{R})^{\T}$, for $\vv\in C^{\infty}(\PP,\mathbb{R}_{>0})$, we consider the scalar product given by,
\begin{equation}\label{v-prod}
\langle\phi,\psi\rangle_{\vv}:=\int_X\phi\psi \vv(m_{\omega})\omega^{[n]},
\end{equation}
 
\begin{thm}\label{moment-map}\cite{AM, donaldson, fujiki}
The action of ${\rm Ham}^{\T}(X,\omega)$ on $\left(\mathcal{AC}^{\T}_\omega,\bold{J},\bold{\Omega}^{\uu}\right)$ is Hamiltonian whose momentum map at $J\in\mathcal{C}^{\T}_\omega$ is the $\langle.,.\rangle_{\vv}$-dual of $\Big(\frac{{\rm Scal}_{\uu}(J)}{\vv(m_{\omega})}-c_{\uu,\vv}([\omega])\Big)$, where ${\rm Scal}_{\uu}(J)$ is the $\uu$-scalar curvature of $g_J$ given by \eqref{Scal-v} and the real constant $c_{\uu,\vv}([\omega])$ is given by \eqref{Top-const}.
\end{thm}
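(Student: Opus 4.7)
The plan is to argue along the lines of Donaldson--Fujiki and Apostolov--Maschler, directly verifying the moment map equation for the functional $\mu(J)=\Scal_{\uu}(J)/\vv(m_\omega)-c_{\uu,\vv}([\omega])$. A crucial structural observation is that throughout the deformation of $J \in \mathcal{AC}^{\T}_\omega$ the symplectic form $\omega$ and the $\T$-action are fixed, so the moment map $m_\omega$, the volume form $\omega^{[n]}$, and the quantity $c_{\uu,\vv}([\omega])\int_X \phi\,\vv(m_\omega)\omega^{[n]}$ are all $J$-independent. In particular $\vv$ plays a purely formal role: it is used only to identify $\Lie(\mathrm{Ham}^{\T}(X,\omega))^*$ with $C^\infty(X,\R)^{\T}/\R$, and the constant $c_{\uu,\vv}([\omega])$ ensures that $\mu(J)$ has zero $\langle\cdot,\cdot\rangle_\vv$-pairing with the constants (by \Cref{top-c} and the definition \eqref{Top-const}). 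Under this reduction the moment map identity becomes
\begin{equation}\label{prop-mm}
\tfrac{1}{2}\int_X \Tr\bigl(J\,\dot{J}_\phi\,\dot{J}\bigr)\,\uu(m_\omega)\,\omega^{[n]} \;=\; \int_X \phi\,(\bdelta\Scal_{\uu})_J(\dot{J})\,\omega^{[n]},
\end{equation}
where $\dot{J}_\phi=\mathcal{L}_{X_\phi}J$ is the infinitesimal action of $\phi\in\Lie(\mathrm{Ham}^{\T}(X,\omega))$ via its Hamiltonian vector field $X_\phi$.

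To establish \eqref{prop-mm}, I would split $\Scal_{\uu}(J)$ into its three constituents using \eqref{Scal-v}: $\uu(m_\omega)\Scal(J)+2\Delta_J(\uu(m_\omega))+\Tr(\G_J\circ(\Hess(\uu)\circ m_\omega))$. The contribution of the first term is handled by carrying the $\T$-invariant weight $\uu(m_\omega)$ through every integration by parts in the classical Donaldson--Fujiki computation for the unweighted case; because $m_\omega$ is $J$-independent, $\uu(m_\omega)$ acts as a passive weight on the Hamiltonian volume form $\omega^{[n]}$. The correction terms produced when $d(\uu(m_\omega))$ is paired against the Hamiltonian generators of $\T$ (through $\dot{J}$) yield first- and second-order contributions involving $d\uu$ and $\Hess(\uu)$ composed with $m_\omega$. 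By design, these are precisely the contributions matched on the right-hand side of \eqref{prop-mm} by the pairing of $\phi$ against the $\dot{J}$-variations of the two additional pieces $2\Delta_J(\uu(m_\omega))$ and $\Tr(\G_J\circ(\Hess(\uu)\circ m_\omega))$, explaining a posteriori the precise form of the definition \eqref{Scal-v}.

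I expect the main obstacle to be the bookkeeping required to organise these cancellations cleanly. A conceptual way to do this is to introduce the $\uu$-weighted formal adjoint of the $J$-anti-invariant Hessian operator $D^{-}d$, so that the variation of $\Scal_{\uu}(J)$ in both the $\phi$- and $\dot{J}$-directions can be recast through a single $\uu$-weighted Lichnerowicz-type operator acting on $\phi$; the variational formula \eqref{s-var} (to be established in Appendix B) is one manifestation of this operator and will be reused almost verbatim. Identity \eqref{prop-mm} then follows from the formal self-adjointness of this weighted operator, together with the standard identification, valid for Hamiltonian vector fields, of the infinitesimal action $\mathcal{L}_{X_\phi}J$ with a $J$-conjugate of the $(2,0)+(0,2)$-part of $Dd\phi$ in $T_J\mathcal{AC}^{\T}_\omega$. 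This strategy specialises to the Donaldson--Fujiki proof when $\uu\equiv1$ and to the Apostolov--Maschler proof in the case $\uu(p)=(\langle\xi,p\rangle+a)^{-2m+1}$ from \cite{AM}.
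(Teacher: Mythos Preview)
Your proposal is correct and follows exactly the approach the paper has in mind: the paper's own proof consists solely of the sentence ``The proof follows from the computation of \cite[Theorem~1]{AM} and \cite[Section~9.6]{Gauduchon} and will be left to the reader,'' and your outline is precisely a fleshed-out version of that deferred computation, including the key observations that $m_\omega$, $\omega^{[n]}$ and the $\vv$-pairing are $J$-independent, and that the $\uu$-weighted Lichnerowicz operator (manifested in \eqref{s-var}) organises the bookkeeping.
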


\begin{proof}
The proof follows from the computation of \cite[Theorem 1]{AM} and \cite[Section 9.6]{Gauduchon} and will be left to the reader.
\end{proof}

\section{A variational setting}\label{sec-5}

\subsection{The $(\uu,\vv)$-Mabuchi energy}In this section we suppose that $\uu\in C^{\infty}(\PP,\mathbb{R}_{>0})$ and $\vv\in C^{\infty}(\PP,\mathbb{R})$ is an arbitrary smooth function. We consider $\mathcal{K}^{\T}_\omega$ as a Fr\'echet space with tangent space $T_\phi\mathcal{K}^{\T}_\omega=C^{\infty}(X,\R)^{\T}$ the space of $\T$-invariant smooth functions $\dot{\phi}$ on $X$. 
\begin{defn}\label{Defn-Mabuchi}
The $(\uu,\vv)$-Mabuchi energy $\mathcal{M}_{\uu,\vv}:\mathcal{K}^{\T}_{\omega}\rightarrow\R $ is defined by 
\begin{equation}\label{Mabuchi}
\begin{cases} (d\mathcal{M}_{\uu,\vv})_{\phi}(\dot{\phi})={\displaystyle-\int_X\dot{\phi}\big(\Scal_{\uu}(\phi)-c_{(\uu,\vv)}(\alpha)\vv(m_{\phi})\big)\omega^{[n]}_{\phi}},\\ 
\mathcal{M}_{\uu,\vv}(0)=0,
\end{cases}
\end{equation}
for all $\dot{\phi}\in T_\phi \mathcal{K}^{\T}_{\omega}$, where $c_{(\uu,\vv)}(\alpha)$ is the constant given by \eqref{Top-const}.   
\end{defn}
\begin{rem} The critical points of $\mathcal{M}_{\uu,\vv}$ are precisely the $\T$-invariant K\"ahler potentials $\phi\in\mathcal{K}^{\T}_{\omega}$ such that $\omega_\phi$ is a solution to the equation \eqref{main}.
\end{rem}

We will show that the $(\uu,\vv)$-Mabuchi energy is well-defined by establishing in  \Cref{Chen-Tian} below an analogue of the Chen-Tian formula (see \cite{chen, Tian-book}). We start with few lemmas.

\begin{lem}\label{E-lem-def}
The functional $\mathcal{E}_{\vv}:\mathcal{K}^{\T}_{\omega}\rightarrow\mathbb{R}$ given by 
\begin{equation}\label{E}
\begin{cases} \left(d\mathcal{E}_{\vv}\right)_{\phi}(\dot{\phi})={\displaystyle\int_X\dot{\phi}\vv(m_{\phi})\omega^{[n]}_{\phi}},\\ 
\mathcal{E}_{\vv}(0)=0,
\end{cases}
\end{equation} 
for any $\dot{\phi}\in T_\phi\mathcal{K}^{\T}_\omega$ is well-defined.
\end{lem}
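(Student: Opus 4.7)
The strategy is to show that the 1-form $\beta$ on $\mathcal{K}^{\T}_{\omega}$ defined by $\beta_\phi(\dot\phi) := \int_X \dot\phi\, \vv(m_\phi)\,\omega^{[n]}_\phi$ is exact, so that it has a primitive $\mathcal{E}_{\vv}$ normalized to vanish at $0$. The space $\mathcal{K}^{\T}_{\omega}$ is convex inside the Fr\'echet space $C^{\infty}(X,\R)^{\T}$, since for $s\in[0,1]$ the K\"ahler form $\omega+dd^c(s\phi)=(1-s)\omega+s\omega_\phi$ is positive; hence the Poincar\'e lemma applies and it suffices to verify that $\beta$ is closed. Once this is done, I would set $\mathcal{E}_{\vv}(\phi) := \int_0^1 \beta_{s\phi}(\phi)\,ds$ and check directly, using closedness of $\beta$ and an integration in $s$, that $(d\mathcal{E}_{\vv})_\phi=\beta_\phi$ and $\mathcal{E}_{\vv}(0)=0$.

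To prove closedness, I would compute the mixed variation $(\delta\beta)_\phi(\dot\phi_2,\dot\phi_1):=\frac{d}{dt}\big|_{t=0}\beta_{\phi+t\dot\phi_2}(\dot\phi_1)$ and verify that it is symmetric in $(\dot\phi_1,\dot\phi_2)$. Two ingredients enter: the standard formula $\delta_{\dot\phi_2}\omega^{[n]}_\phi = dd^c\dot\phi_2\wedge \omega^{[n-1]}_\phi$, and the variation of $\vv(m_\phi)$. For the latter, I would fix a basis $(\xi_i)$ of $\tor$ with $\vv_{,i}$ denoting the partial derivatives of $\vv$ in the dual basis; by condition \ref{m3} of \Cref{lem-mom-norm} one has $\delta_{\dot\phi_2}m_\phi^{\xi_i}=d^c\dot\phi_2(\xi_i)$, hence $\delta_{\dot\phi_2}\vv(m_\phi)=\sum_i\vv_{,i}(m_\phi)\,d^c\dot\phi_2(\xi_i)$. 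Applying the momentum-map identity $d^c\dot\phi_2(\xi_i)\,\omega^{[n]}_\phi = dm_\phi^{\xi_i}\wedge d^c\dot\phi_2\wedge \omega^{[n-1]}_\phi$ and summing over $i$, this contribution collapses to $\int_X\dot\phi_1\,d(\vv(m_\phi))\wedge d^c\dot\phi_2\wedge\omega^{[n-1]}_\phi$.

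Combining both contributions and using closedness of $\omega^{[n-1]}_\phi$, the result reads $\int_X\dot\phi_1\,d[\vv(m_\phi)d^c\dot\phi_2]\wedge\omega^{[n-1]}_\phi$, and a single Stokes' theorem step yields
\[
(\delta\beta)_\phi(\dot\phi_2,\dot\phi_1) = -\int_X \vv(m_\phi)\,d\dot\phi_1\wedge d^c\dot\phi_2\wedge \omega^{[n-1]}_\phi.
\]
The required symmetry in $(\dot\phi_1,\dot\phi_2)$ then follows from the observation that for real $\dot\phi_1,\dot\phi_2$ only the $(1,1)$-part $i(\partial\dot\phi_1\wedge\bar\partial\dot\phi_2+\partial\dot\phi_2\wedge\bar\partial\dot\phi_1)$ of $d\dot\phi_1\wedge d^c\dot\phi_2$ pairs non-trivially with $\omega^{[n-1]}_\phi$, and this $(1,1)$-form is manifestly symmetric. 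The only mildly delicate step is the manipulation that turns the variation of $\vv(m_\phi)$ together with the variation of the volume form into an exact form modulo a symmetric remainder; but this is precisely the device already used to prove the constancy of $B_{\uu}^{\theta}(\phi)$ in \Cref{top-c}, so no new ideas are required.
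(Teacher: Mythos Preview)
Your proposal is correct and follows the standard route: show that the $1$-form $\beta_\phi(\dot\phi)=\int_X\dot\phi\,\vv(m_\phi)\,\omega_\phi^{[n]}$ is closed on the contractible space $\mathcal{K}^{\T}_\omega$ by computing the mixed variation and observing its symmetry in $(\dot\phi_1,\dot\phi_2)$. The paper itself does not spell this out but simply cites \cite[Lemma~2.14]{B-N}; your computation is exactly the argument underlying that reference, and it also parallels the proof the paper gives for the neighbouring \Cref{Lem-E-theta}.
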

\begin{proof}
See \cite[Lemma 2.14]{B-N}.
\begin{comment}
\begin{equation*}
(\mathcal{A}_{\vv})_\phi(\dot{\phi})=\int_X\dot{\phi}\vv(m_\phi)\omega^{[n]}_\phi,
\end{equation*}
is closed. For $\dot{\phi},\dot{\psi}\in T_\phi\mathcal{K}^{\T}_\omega$ we compute
\begin{align*}
(\bdelta \mathcal{A}_{\vv}(\dot{\phi}))_{\phi}(\dot{\psi})=&\int_X\dot{\phi}d(\vv(m_\phi))\wedge d^{c}\dot{\psi}\wedge\omega_{\phi}^{[n-1]}+\int_X\dot{\phi}\vv(m_\phi)dd^{c}\dot{\psi}\wedge\omega_{\phi}^{[n-1]}\\
=&\int_X(d\dot{\phi},d\dot{\psi})_\phi\vv(m_\phi)\omega_{\phi}^{[n]}.
\end{align*}
It follows that
\begin{equation*}
(d\mathcal{A}_{\vv})_\phi(\dot{\phi},\dot{\psi})=(\bdelta \mathcal{A}_{\vv}(\dot{\phi}))_{\phi}(\dot{\psi})-(\bdelta \mathcal{A}_{\vv}(\dot{\psi}))_{\phi}(\dot{\phi})=0,
\end{equation*}
which shows that $\mathcal{A}_{\vv}$ is closed and therefore $\mathcal{E}_{\vv}:\mathcal{K}^{\T}_{\omega}\rightarrow\mathbb{R}$ is well-defined.
\end{comment}
\end{proof}

\begin{lem}\label{Lem-E-theta}
Let $\theta$ be a fixed $\T$-invariant closed $(1,1)$-form and $m_\theta:X\rightarrow \mathfrak{t}^{*}$ a momentum map with respect to $\theta$, see \Cref{theta-mom}. Then the functional $\mathcal{E}_{\uu}^{\theta}:\mathcal{K}^{\T}_\omega\rightarrow\mathbb{R}$ given by 
\begin{equation}\label{E-theta}
\begin{cases} (d\mathcal{E}_{\uu}^{\theta})_{\phi}(\dot{\phi})={\displaystyle\int_X \dot{\phi} \left[\uu(m_{\phi})\theta\wedge\omega^{[n-1]}_\phi+\langle (d\uu)(m_{\phi}),m_{\theta}\rangle\omega^{[n]}_{\phi}\right]},\\ 
\mathcal{E}_{\uu}^{\theta}(0)=0,
\end{cases}
\end{equation}
for any $\dot{\phi}\in T_\phi\mathcal{K}^{\T}_\omega$ is well-defined.
\end{lem}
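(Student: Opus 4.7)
The goal is to show that the $1$-form
\begin{equation}\label{B-theta}
(\mathcal{B}_{\uu}^{\theta})_\phi(\dot{\phi}) := \int_X \dot{\phi}\Big[\uu(m_{\phi})\theta\wedge\omega^{[n-1]}_\phi+\langle (d\uu)(m_{\phi}),m_{\theta}\rangle\omega^{[n]}_{\phi}\Big]
\end{equation}
on $\mathcal{K}^{\T}_{\omega}$ is closed. Since $\mathcal{K}^{\T}_{\omega}$ is a convex open subset of a Fr\'echet space, closedness of \eqref{B-theta} will immediately yield a well-defined primitive $\mathcal{E}_{\uu}^{\theta}$ obtained by integration along the affine path $t\mapsto t\phi$ with the normalization $\mathcal{E}_{\uu}^{\theta}(0)=0$. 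This is the same strategy employed in \Cref{E-lem-def} for $\mathcal{E}_\vv$.

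The plan is to compute the second variation $(\bdelta\mathcal{B}_{\uu}^{\theta}(\dot{\phi}))_\phi(\dot{\psi})$ and verify it is symmetric in $(\dot{\phi},\dot{\psi})$. The key ingredient is \Cref{lem-mom-norm}\ref{m3}, which under the chosen normalization of $m_\phi$ gives the variational formula $\bdelta m_\phi^\xi=d^c\dot{\psi}(\xi)$ for $\xi\in\tor$. Combined with $\bdelta\omega^{[k]}_\phi=dd^c\dot{\psi}\wedge\omega^{[k-1]}_\phi$ and the chain rule applied to $\uu(m_\phi)$ and its derivatives $\uu_{,i}(m_\phi)$ in a fixed basis $\xxi=(\xi_i)$ of $\tor$, this yields four terms:
\begin{equation*}
\begin{split}
(\bdelta\mathcal{B}_{\uu}^{\theta}(\dot{\phi}))_\phi(\dot{\psi})=\int_X\dot{\phi}\Big[&\textstyle\sum_i\uu_{,i}(m_\phi)d^c\dot{\psi}(\xi_i)\,\theta\wedge\omega^{[n-1]}_\phi+\uu(m_\phi)\theta\wedge dd^c\dot{\psi}\wedge\omega^{[n-2]}_\phi\\
&\textstyle+\sum_{i,j}\uu_{,ij}(m_\phi)m^{\xi_i}_\theta d^c\dot{\psi}(\xi_j)\omega^{[n]}_\phi+\sum_i\uu_{,i}(m_\phi)m^{\xi_i}_\theta dd^c\dot{\psi}\wedge\omega^{[n-1]}_\phi\Big].
\end{split}
\end{equation*}

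The next step is to eliminate the contractions $d^c\dot{\psi}(\xi_i)$ using the pointwise identities obtained from the fact that the $(2n+1)$-form $d^c\dot{\psi}\wedge\eta$ vanishes for any top-degree $\eta$: contracting with $\xi_i$ and using $\iota_{\xi_i}\omega_\phi=-dm^{\xi_i}_\phi$ and $\iota_{\xi_i}\theta=-dm^{\xi_i}_\theta$ gives
$d^c\dot{\psi}(\xi_i)\omega^{[n]}_\phi=dm^{\xi_i}_\phi\wedge d^c\dot{\psi}\wedge\omega^{[n-1]}_\phi$
and a similar relation for $d^c\dot{\psi}(\xi_i)\theta\wedge\omega^{[n-1]}_\phi$ producing, after summation against $\uu_{,i}(m_\phi)$ and $\uu_{,ij}(m_\phi)m^{\xi_i}_\theta$, the total differentials $d(\uu(m_\phi))$ and $d\langle(d\uu)(m_\phi),m_\theta\rangle$. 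Integrating by parts against $\dot{\phi}$ (using $d\theta=0$ and compactness of $X$) then lets one transfer derivatives from $\dot{\psi}$ to $\dot{\phi}$ and $\omega^{[k]}_\phi$.

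The main difficulty is the bookkeeping: one must see that the cross-terms coming from differentiating $\uu(m_\phi)$ against the Hessian of $\uu(m_\phi)$ contracted with $m_\theta$ recombine so that the final expression becomes symmetric in $(\dot{\phi},\dot{\psi})$. This is precisely where the two seemingly unrelated summands of the integrand in \eqref{B-theta} conspire: the $\langle(d\uu)(m_\phi),m_\theta\rangle\omega^{[n]}_\phi$ correction is exactly what is needed to cancel the asymmetric pieces produced by the variation of $\uu(m_\phi)\theta\wedge\omega^{[n-1]}_\phi$. Once symmetry is established, $d\mathcal{B}_{\uu}^{\theta}=0$ and the functional is well-defined. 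As a bonus (used in \Cref{top-c}), specializing $\dot{\phi}\equiv 1$ in $(\bdelta\mathcal{B}_{\uu}^{\theta}(\dot{\phi}))_\phi(\dot{\psi})$ gives the $\phi$-independence of $B_{\uu}^{\theta}(\phi)=(\mathcal{B}_{\uu}^{\theta})_\phi(1)$.
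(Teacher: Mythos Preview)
Your proposal is correct and follows essentially the same approach as the paper: both reduce the well-definedness to closedness of the $1$-form $\mathcal{B}_{\uu}^{\theta}$, compute the variation $(\bdelta\mathcal{B}_{\uu}^{\theta}(\dot{\phi}))_\phi(\dot{\psi})$ via \Cref{lem-mom-norm}\ref{m3}, and use integration by parts together with the contraction identities relating $\iota_{\xi_i}\omega_\phi$, $\iota_{\xi_i}\theta$ to $dm_\phi^{\xi_i}$, $dm_\theta^{\xi_i}$ to reach a manifestly symmetric expression. The paper carries out the computation explicitly and records the resulting symmetric formula \eqref{dA}, which is also what furnishes the constancy of $B_{\uu}^{\theta}(\phi)$ in \Cref{top-c}, exactly as you note.
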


\begin{proof}
As the Frech\'et space $\mathcal{K}^{\T}_\omega$ is contractible, we have to show that the $1$-form on $\mathcal{K}^{\T}_\omega$ 
\begin{equation}\label{B-theta}
(\mathcal{B}_{\uu})_\phi(\dot{\phi}):=\int_X \dot{\phi}\left[\uu(m_{\phi})\theta\wedge\omega^{[n-1]}_\phi+\langle (d\uu)(m_{\phi}),m_{\theta}\rangle\omega^{[n]}_{\phi}\right],
\end{equation}
is closed. For $\dot{\phi},\dot{\psi}\in T_\phi\mathcal{K}^{\T}_{\omega}$ we compute
\begin{align*}
(\bdelta\mathcal{B}_{\uu}(\dot{\phi})&)_\phi(\dot\psi)=\int_X\dot{\phi} (d(\uu(m_{\phi})),d\dot{\psi})_\phi\theta\wedge\omega^{[n-1]}_\phi+\int_X\dot{\phi}\uu(m_{\phi})\theta\wedge dd^{c}\dot{\psi}\wedge\omega^{[n-2]}_\phi\\&+\int_X\sum_{j=1}^{\ell} \dot{\phi} m_{\theta}^{\xi_j}d(\uu_{,j}(m_{\phi}))\wedge d^{c}\dot{\psi}\wedge\omega^{[n-1]}_\phi-\int_X\sum_{j=1}^{\ell} \dot{\phi}\uu_{,j}(m_{\phi})m_{\theta}^{\xi_j}dd^{c}\dot{\psi}\wedge\omega_{\phi}^{[n-1]}\\
=&\int_X\dot{\phi} (d(\uu(m_{\phi})),d\dot{\psi})_\phi\theta\wedge\omega^{[n-1]}_\phi+\int_X\dot{\phi}\uu(m_{\phi})\theta\wedge dd^{c}\dot{\psi}\wedge\omega^{[n-2]}_\phi\\&
-\int_X \sum_{j=1}^{\ell}\dot{\phi}\uu_{,j}(m_{\phi})(dm^{\xi_j}_{\theta},d\dot{\psi})_\phi\omega_{\phi}^{[n]}-\int_X(d\dot{\phi},d\dot{\psi})_\phi\langle (d\uu)( m_{\phi}),m_{\theta}\rangle\omega^{[n]}_{\phi},
\end{align*}
where $\xxi:=(\xi_j)_{j=1,\cdots,\ell}$ is a basis of $\tor$. Integrating by parts, we obtain
\begin{align*}
&\int_X\dot{\phi}\uu(m_{\phi})\theta\wedge dd^{c}\dot{\psi}\wedge\omega^{[n-2]}_\phi\\
=&-\int_X \uu(m_{\phi})\theta\wedge d\dot\phi\wedge d^{c}\dot{\psi}\wedge\omega^{[n-2]}_\phi-\int_X\dot{\phi}\theta\wedge d(\uu(m_{\phi}))\wedge d^{c}\dot{\psi}\wedge\omega^{[n-2]}_\phi\\
=&-\int_X(d\dot{\phi},d\dot{\psi})_\phi \uu(m_{\phi})\theta\wedge\omega^{[n-1]}_\phi+\int_X(\theta,d\dot\phi\wedge d^{c}\dot{\psi})_\phi \uu(m_{\phi})\omega^{[n]}_\phi\\&-\int_X \dot{\phi}(d(\uu(m_{\phi})),d\dot{\psi})_\phi\theta\wedge\omega^{[n-1]}_\phi-\int_X\sum_{j=1}^{\ell}\dot{\phi}\uu_{,j}(m_{\phi})(dm_{\theta}^{\xi_j},d\dot{\psi})_\phi \omega^{[n]}_\phi,
\end{align*}
where we used that
\begin{align*}
&\theta\wedge d(\uu(m_\phi)) \wedge d^{c}\dot{\psi}\wedge \omega^{[n-2]}_\phi\\
=&(d(\uu(m_\phi)),d\dot{\psi})_\phi\theta \wedge \omega^{[n-1]}_\phi-(\theta,d(\uu(m_\phi))\wedge d^{c}\dot{\psi})_\phi\omega^{[n]}_\phi\\
=&(d(\uu(m_\phi)),d\dot{\psi})_\phi\theta \wedge \omega^{[n-1]}_\phi-\sum_{j=1}^{\ell}\uu_{,j}(m_\phi)(\theta,dm^{\xi_j}_\phi\wedge d^{c}\dot{\psi})_\phi\omega^{[n]}_\phi\\
=&(d(\uu(m_\phi)),d\dot{\psi})_\phi\theta \wedge \omega^{[n-1]}_\phi-\sum_{j=1}^{\ell}\uu_{,j}(m_{\phi})(dm_{\theta}^{\xi_j},d\dot{\psi})_\phi \omega^{[n]}_\phi.
\end{align*}
showing that
\begin{align}
\begin{split}\label{dA}
(\bdelta\mathcal{B}_{\uu}(\dot{\phi}))_\phi(\dot\psi)=&-\int_X \uu(m_{\phi})(d\dot{\phi},d\dot{\psi})_\phi \theta\wedge\omega^{[n-1]}_\phi\\
&-\int_X(d\dot{\phi},d\dot{\psi})_\phi\langle (d\uu)(m_{\phi}),m_{\theta}\rangle\omega^{[n]}_{\phi}\\&+\int_X(\theta,d\dot\phi\wedge d^{c}\dot{\psi})_\phi \uu(m_{\phi})\omega^{[n]}_\phi,
\end{split}
\end{align}
so that 
\begin{equation*}
(d\mathcal{B}_{\uu})_\phi(\dot{\phi},\dot{\psi})=(\bdelta\mathcal{B}_{\uu}(\dot{\phi}))_\phi(\dot\psi)-(\bdelta\mathcal{B}_{\uu}(\dot{\psi}))_\phi(\dot\phi)=0.
\end{equation*}
Thus, $\mathcal{B}_{\uu}$ is closed and therefore $\mathcal{E}_{\uu}^{\theta}:\mathcal{K}^{\T}_{\omega}\rightarrow\mathbb{R}$ is well-defined.
\end{proof}

\begin{defn}\label{Ent}
We let
\begin{equation*}
\mathcal{H}_{\uu}(\phi):=\int_X\log\left(\frac{\omega^{n}_\phi}{\omega^{n}}\right)\uu(m_{\phi})\omega^{[n]}_\phi
\end{equation*}
be the $\uu$-{\it entropy} functional $\mathcal{H}_{\uu}:\mathcal{K}^{\T}_{\omega}\to \mathbb{R}$.
\end{defn}

\begin{rem}
If $\tilde\mu$ is an absolutely continuous measure with respect to $\mu_\omega:=\omega^{[n]}$, then the entropy of $\tilde\mu$ relatively to $\mu$ is defined by,
\begin{equation*}
\Ent_{\mu_\omega}(\tilde\mu):=\int_X\log\left(\frac{d\tilde\mu}{d\mu_\omega}\right)d\tilde\mu.
\end{equation*}
The entropy is convex on the space of finite measures $\tilde{\mu}$ endowed with its natural affine structure. In the case when $\uu\in C^{\infty}(\PP,\mathbb{R}_{>0})$, the $\uu$-entropy functional in \Cref{Ent} is given by
\begin{equation*}
\mathcal{H}_{\uu}(\phi)=\Ent_{\mu}\left(\uu(m_{\phi})\omega^{[n]}_\phi\right)+c(\alpha,\uu)
\end{equation*}
for all $\phi\in\mathcal{K}^{\T}_\omega$, where $c(\alpha,\uu)=\int_X (\uu\log\circ \uu)(m_{\phi})\omega^{[n]}_\phi$ is a constant depending only on $(\alpha,\uu)$ (see \Cref{top-c}). 
\end{rem}

\begin{lem}\label{lm-m}
\begin{enumerate}
\item\label{lm-m-i} For any $\T$-invariant K\"ahler form $\omega$ on $X$, we have
\begin{equation*}
\Ric(\omega)(\xi,\cdot)=-\frac{1}{2}d\langle\Delta_\omega(m_\omega),\xi\rangle.
\end{equation*}
\item\label{lm-m-ii} For any $\phi\in\mathcal{K}^{\T}_{\omega}$ and $\xi\in \tor$, we have  
\begin{align*}
\Ric(\omega_\phi)=&\Ric(\omega)-\frac{1}{2}dd^{c}\Psi_\phi,\\
m_{\Ric(\omega_\phi)}^{\xi}=&m_{\Ric(\omega)}^{\xi}-\frac{1}{2}\big(d^{c}\Psi_\phi\big)(\xi),
\end{align*}
where $m_{\Ric(\omega)}:=\frac{1}{2}\Delta_\omega(m_\omega)$ is the $\Ric(\omega)$-momentum map of the action of $\T$ on $X$ and $\Psi_\phi=\log\left(\frac{\omega^{n}_\phi}{\omega^{n}}\right)$.
\end{enumerate}
\end{lem}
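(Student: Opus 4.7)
The first identity of part (ii) is the classical transformation of the Ricci form under a K\"ahler potential change: writing $\Ric(\omega') = -\tfrac{1}{2} dd^c\log(\omega'^n/dV_0)$ locally for a reference volume $dV_0$ and combining with $\omega_\phi^n = e^{\Psi_\phi}\omega^n$ yields $\Ric(\omega_\phi) = \Ric(\omega) - \tfrac{1}{2}dd^c\Psi_\phi$ as a global identity of $(1,1)$-forms.

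For part (i), the plan is as follows. Since $\T$ acts by K\"ahler automorphisms, $\Ric(\omega)$ is $\T$-invariant, so $\mathcal{L}_\xi\Ric(\omega) = 0$ for $\xi\in\tor$; together with $d\Ric(\omega)=0$, Cartan's formula gives $d(\iota_\xi\Ric(\omega))=0$, and an averaging argument over $\T$ shows that $\iota_\xi\Ric(\omega)$ is in fact globally exact. To identify the potential with $\tfrac{1}{2}\Delta_\omega m_\omega^\xi$ I would invoke the classical K\"ahler identity that for any Killing potential $h$ with associated Killing vector field $\xi_h = J\grad_\omega h$,
\[
\iota_{\xi_h}\Ric(\omega) = -\tfrac{1}{2}\,d(\Delta_\omega h),
\]
see e.g.~\cite{Gauduchon}. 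Applied to $h = m_\omega^\xi$ (which is a Killing potential because the defining relation $\iota_\xi\omega = -dm_\omega^\xi$ rearranges to $\xi = J\grad_\omega m_\omega^\xi$), this gives part (i) after fixing the constant via the normalization of \Cref{lem-mom-norm}. If one wishes to reprove the cited identity, the plan is to work in a $\T$-equivariant chart with a $\T$-invariant reference volume $dV_0$, write $F:=\log(\omega^n/dV_0)$, use $\Ric(\omega)=-\tfrac{1}{2}dd^c F$ and $\mathcal{L}_\xi F=0$ to conclude $\iota_\xi\Ric(\omega)=\tfrac{1}{2}d((d^c F)(\xi))$ by Cartan's formula, and then verify the pointwise K\"ahler identity $(d^cF)(\xi)=-\Delta_\omega m_\omega^\xi$ in local holomorphic coordinates using $\partial_k F = g^{i\bar j}\partial_k g_{i\bar j}$ together with the holomorphicity of $\xi = J\grad_\omega m_\omega^\xi$.

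The second identity of part (ii) follows by contracting the first with $\xi$. Because $\omega$ and $\omega_\phi$ are both $\T$-invariant, so is $\Psi_\phi$, hence $\mathcal{L}_\xi\Psi_\phi = 0$ and $\mathcal{L}_\xi(d^c\Psi_\phi)=d^c\mathcal{L}_\xi\Psi_\phi=0$; Cartan's formula then gives $\iota_\xi(dd^c\Psi_\phi) = -d((d^c\Psi_\phi)(\xi))$. Combined with the defining equation $\iota_\xi\Ric(\omega) = -dm_{\Ric(\omega)}^\xi$, this yields
\[
\iota_\xi\Ric(\omega_\phi) = -d\bigl(m_{\Ric(\omega)}^\xi - \tfrac{1}{2}(d^c\Psi_\phi)(\xi)\bigr),
\]
which identifies $m_{\Ric(\omega_\phi)}^\xi = m_{\Ric(\omega)}^\xi - \tfrac{1}{2}(d^c\Psi_\phi)(\xi)$ modulo an additive constant, absorbed into the normalization of \Cref{lem-mom-norm}. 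The main technical input is the K\"ahler-geometric identity cited in part (i); once it is granted, the rest is a routine Cartan-formula manipulation together with the $\T$-invariance of all the data.
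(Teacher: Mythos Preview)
Your argument for part (i) and the first identity of (ii) matches the paper's: both cite the classical identity $\iota_\xi\Ric(\omega)=-\tfrac12 d(\Delta_\omega m_\omega^\xi)$ from \cite{Gauduchon} and the standard transformation of the Ricci form under a potential change.

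For the second identity of (ii) your route differs slightly. You contract the $(1,1)$-form identity with $\xi$ and use Cartan's formula, obtaining the result only up to an additive constant, which you then claim is ``absorbed into the normalization of \Cref{lem-mom-norm}''. This last step is not quite right: the statement \emph{defines} $m_{\Ric(\omega_\phi)}:=\tfrac12\Delta_\phi m_\phi$, so once $m_\phi$ is normalized there is no further freedom, and you still owe a verification that the constant vanishes. The paper sidesteps this by computing directly
\[
-\tfrac12(d^c\Psi_\phi)(\xi)=\tfrac12\mathcal{L}_{J\xi}\Psi_\phi=\tfrac12\Big(\frac{\mathcal{L}_{J\xi}\omega_\phi^{[n]}}{\omega_\phi^{[n]}}-\frac{\mathcal{L}_{J\xi}\omega^{[n]}}{\omega^{[n]}}\Big)=\tfrac12\Delta_\phi m_\phi^\xi-\tfrac12\Delta_\omega m_\omega^\xi,
\]
using $\mathcal{L}_{J\xi}\omega_\phi^{[n]}=\Delta_\phi(m_\phi^\xi)\,\omega_\phi^{[n]}$; this gives the identity on the nose. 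Your approach can be repaired (e.g.\ by inserting exactly this one-line Lie-derivative computation to pin down the constant), but as written the appeal to \Cref{lem-mom-norm} does not close the gap.
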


\begin{proof}
We give a simple argument for the sake of exposition. The statement \ref{lm-m-i} is well known (see e.g. \cite[Remark 8.8.2]{Gauduchon} and \cite[Lemma 28]{Sz-blow}).  For the statement \ref{lm-m-ii}, let $\phi\in\mathcal{K}^{\T}_{\omega}$ and $\xi\in\tor$. Using that $\mathcal{L}_{J\xi}\omega_\phi=-dd^{c}m^{\xi}_\phi$ we obtain
$$\mathcal{L}_{J\xi}\omega^{[n]}_\phi=\Delta_\phi(m^{\xi}_\phi)\omega^{[n]}_\phi.$$
It follows that
\begin{equation*}
-\frac{1}{2}(d^c\Psi_\phi)(\xi)=\frac{1}{2}\mathcal{L}_{J\xi}\Psi_\phi
=\frac{1}{2}\frac{\mathcal{L}_{J\xi}\omega^{[n]}_\phi}{\omega^{[n]}_\phi}-\frac{1}{2}\frac{\mathcal{L}_{J\xi}\omega^{[n]}}{\omega^{[n]}}=m_{\Ric(\omega_\phi)}^{\xi}-m_{\Ric(\omega)}^{\xi}.
\end{equation*}
\end{proof}

We now extend a formula obtained in the case $\uu=\vv=1$ by Chen-Tian (see \cite{chen,Tian-book}) to general values of $\uu$ and $\vv$.

\begin{thm}\label{Chen-Tian}
We have the following expression for the $(\uu,\vv)$-Mabuchi energy,
\begin{equation}\label{C-T}
\mathcal{M}_{\uu,\vv}=\mathcal{H}_{\uu}-2\mathcal{E}^{\Ric(\omega)}_{\uu}+c_{(\uu,\vv)}(\alpha)\mathcal{E}_{\vv}.
\end{equation}
\end{thm}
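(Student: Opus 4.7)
The plan is to verify the identity by comparing differentials. Since $\mathcal{M}_{\uu,\vv}(0)=\mathcal{H}_\uu(0)=\mathcal{E}^{\Ric(\omega)}_\uu(0)=\mathcal{E}_\vv(0)=0$ and $\mathcal{K}^{\T}_\omega$ is (path-)connected, it suffices to show that the differentials of both sides of \eqref{C-T} agree at every $\phi\in\mathcal{K}^{\T}_\omega$. The differentials of $\mathcal{E}_\vv$ and $\mathcal{E}^{\Ric(\omega)}_\uu$ are already given by \eqref{E} and \eqref{E-theta}, the latter applied with $\theta=\Ric(\omega)$ and $m_\theta=\tfrac{1}{2}\Delta_\omega m_\omega$ by Lemma \ref{lm-m}(\ref{lm-m-i}). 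The term $c_{(\uu,\vv)}(\alpha)\vv(m_\phi)$ in \eqref{Mabuchi} matches the contribution of $c_{(\uu,\vv)}(\alpha)\mathcal{E}_\vv$, so everything reduces to proving
\begin{equation*}
(d\mathcal{H}_\uu)_\phi(\dot\phi) - 2(d\mathcal{E}^{\Ric(\omega)}_\uu)_\phi(\dot\phi) \;=\; -\int_X \dot\phi\,\Scal_\uu(\phi)\,\omega^{[n]}_\phi.
\end{equation*}

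Next I would compute $(d\mathcal{H}_\uu)_\phi(\dot\phi)$ using the three basic variations: (a) $\dot{\Psi}_\phi=\Delta_\phi\dot\phi$, coming from $\dot{\omega}^{[n]}_\phi=(\Delta_\phi\dot\phi)\omega^{[n]}_\phi= dd^c\dot\phi\wedge\omega^{[n-1]}_\phi$; (b) the normalization \ref{m3} of Lemma \ref{lem-mom-norm}, which gives $\dot{m}_\phi=d^c\dot\phi$ and hence $\dot{\uu(m_\phi)}=\langle (d\uu)(m_\phi),d^c\dot\phi\rangle$; and (c) the same volume-form variation for the outer $\omega^{[n]}_\phi$. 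This produces three integrals whose handling relies on integration by parts and on the identities $\Scal(g_\phi)\omega^{[n]}_\phi=2\Ric(\omega_\phi)\wedge\omega^{[n-1]}_\phi$ and, via Lemma \ref{lm-m}(\ref{lm-m-ii}), $dd^c\Psi_\phi=2\bigl(\Ric(\omega)-\Ric(\omega_\phi)\bigr)$ together with $d^c\Psi_\phi(\xi)=2(m^{\xi}_{\Ric(\omega)}-m^{\xi}_{\Ric(\omega_\phi)})$. Applying these identities after integration by parts will convert the $\Psi_\phi$-factors into Ricci data; the Ricci-form of the background metric $\omega$ will combine with $-2(d\mathcal{E}^{\Ric(\omega)}_\uu)_\phi(\dot\phi)$ and cancel, leaving only contributions of $\Ric(\omega_\phi)$.

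The crucial remaining computation is to identify the surviving terms with $-\dot\phi\,\Scal_\uu(\phi)\omega^{[n]}_\phi$. The term $-\int_X \dot\phi\,\uu(m_\phi)\,\Scal(g_\phi)\,\omega^{[n]}_\phi$ arises from the piece $-2\int_X \dot\phi\,\uu(m_\phi)\Ric(\omega_\phi)\wedge\omega^{[n-1]}_\phi$. The term $-2\int_X \dot\phi\,\Delta_\phi(\uu(m_\phi))\,\omega^{[n]}_\phi$ should come out of the mixed variation of $\uu(m_\phi)$ after integrating by parts twice (once to transfer $d^c\dot\phi$ and once to redistribute $\Psi_\phi$), using the contraction identity $dm^{\xi_j}_\phi\wedge d^c\dot\phi\wedge\omega^{[n-1]}_\phi=d^c\dot\phi(\xi_j)\,\omega^{[n]}_\phi$. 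Finally, the Hessian-trace term of \eqref{Scal-v} will be produced by expanding $dd^c(\uu(m_\phi))=\sum_{j,k}\uu_{,jk}(m_\phi)\,dm^{\xi_k}_\phi\wedge d^c m^{\xi_j}_\phi+\sum_j \uu_{,j}(m_\phi)\,dd^c m^{\xi_j}_\phi$ and using the pointwise identity $dm^{\xi_k}_\phi\wedge d^c m^{\xi_j}_\phi\wedge\omega^{[n-1]}_\phi=\G_\phi(\xi_j,\xi_k)\,\omega^{[n]}_\phi$.

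The main obstacle is purely computational bookkeeping: the differentiation of $\mathcal{H}_\uu$ generates a number of mixed terms of the form $\int_X (\text{two-form})\wedge d^c\dot\phi\wedge\omega^{[n-2]}_\phi$ and $\int_X \Psi_\phi\langle d\uu(m_\phi),d^c\dot\phi\rangle \omega^{[n]}_\phi$, and one has to integrate by parts carefully and apply Lemma \ref{lm-m}(\ref{lm-m-ii}) in the correct order so that every $\Psi_\phi$-factor disappears, every occurrence of $\Ric(\omega)$ is absorbed by the $-2\mathcal{E}^{\Ric(\omega)}_\uu$ term, and the remainder assembles into precisely the three constituents $\uu(m_\phi)\Scal(g_\phi)$, $2\Delta_\phi(\uu(m_\phi))$, and $\Tr(\G_\phi\circ(\Hess(\uu)\circ m_\phi))$ of $\Scal_\uu(\phi)$. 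A convenient trick to manage this bookkeeping is to first note that the same integration-by-parts identity \eqref{dA} used in the proof of Lemma \ref{Lem-E-theta} governs most of the cross-terms, so one can reuse it with $\theta=\Ric(\omega_\phi)$ as an intermediate step.
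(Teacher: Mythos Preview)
Your proposal is correct and follows essentially the same route as the paper: both reduce to showing $d(\mathcal{H}_\uu-2\mathcal{E}^{\Ric(\omega)}_\uu)_\phi(\dot\phi)=-\int_X\dot\phi\,\Scal_\uu(\phi)\,\omega^{[n]}_\phi$ by differentiating $\mathcal{H}_\uu$ and invoking Lemma~\ref{lm-m}. The only real difference is organizational: the paper uses the contracted identity $\Delta_\phi\Psi_\phi=\Scal(g_\phi)-2\Lambda_{\omega_\phi}\Ric(\omega)$ directly, so $\Ric(\omega_\phi)$ never appears and the computation is a few lines shorter than the detour through $\Ric(\omega_\phi)$ and \eqref{dA} that you outline. (A small sign slip: with the paper's convention $dd^c f\wedge\omega^{[n-1]}_\phi=-\Delta_\phi f\cdot\omega^{[n]}_\phi$, so $\dot\Psi_\phi=-\Delta_\phi\dot\phi$, not $+\Delta_\phi\dot\phi$.)
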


\begin{proof}
We compute
\begin{align*}
(d\mathcal{H}_{\uu})_{\phi}(\dot{\phi})=&-\int_X\dot\phi\Delta_\phi(\uu(m_{\phi}))\omega^{[n]}_\phi-\int_X(d\Psi_\phi,d\dot\phi)_{\phi}\uu(m_{\phi})\omega^{[n]}_\phi\\
=&-\int_X\sum_{j=1}^{\ell}\dot\phi \uu_{,j}(m_{\phi})\Delta_\phi(m_{\phi}^{\xi_j})\omega^{[n]}+\int_X\sum_{i,j=1}^{\ell}\dot{\phi}\uu_{,ij}(m_{\phi})(\xi_i,\xi_j)_\phi\omega^{[n]}_\phi\\
&+\int_X\sum_{j=1}^{\ell}\dot{\phi}(d\Psi_\phi,dm_{\phi}^{\xi_j})_\phi\uu_{,j}(m_{\phi})\omega^{[n]}_\phi-\int_X\dot{\phi}\Delta_\phi(\Psi_\phi)\uu(m_{\phi})\omega^{[n]}_\phi,
\end{align*}
where $\xxi:=(\xi_j)_{j=1,\cdots,\ell}$ is a basis for $\mathfrak{t}$. Using \Cref{lm-m} and the fact that 
\begin{equation*}
\Delta_\phi\left(\Psi_\phi\right)=-\Lambda_{\omega_\phi}dd^{c}\Psi_\phi=2\Lambda_{\omega_\phi}(\Ric(\omega_\phi)-\Ric(\omega))=\Scal_\phi-2\Lambda_{\omega_\phi}\Ric(\omega),
\end{equation*}
we get
\begin{align*}
(d\mathcal{H}_{\uu})_{\phi}(\dot{\phi})=&-\int_X\dot\phi \sum_{j=1}^{\ell}\uu_{,j}(m_{\phi})\Delta_\phi(m^{\xi_j}_{\phi})\omega^{[n]}+\int_X\sum_{i,j=1}^{\ell}\dot{\phi}\uu_{,ij}(m_{\phi})(\xi_i,\xi_j)_\phi\omega^{[n]}_\phi\\
&+\int_X\sum_{j=1}^{\ell}\dot{\phi}\left(\Delta_\omega(m_{\omega}^{\xi_j})-\Delta_\phi(m_{\phi}^{\xi_j})\right)\uu_{,j}(m_{\phi})\omega^{[n]}_\phi\\&-\int_X\dot{\phi}\left(\Scal_\phi-2\Lambda_{\omega_\phi}\Ric(\omega)\right)\uu(m_{\phi})\omega^{[n]}_\phi.
\end{align*}
It follows that
\begin{align}\label{H+E}
d(\mathcal{H}_{\uu}-2\mathcal{E}_{\uu}^{\Ric(\omega)})_{\phi}(\dot{\phi})=-\int_X\dot{\phi}\Scal_{\uu}(\phi)\omega^{[n]}_\phi,
\end{align}
which yields \eqref{C-T} via \eqref{H+E} and \eqref{E}.  
\end{proof}
By the work of Mabuchi \cite{Mabuchi1, Mabuchi2}, the space of $\T$-invariant K\"ahler potentials $\mathcal{K}^{\T}_\omega$ is an infinite dimensional riemannian manifold with a natural riemannian metric, called the {\it Mabuchi metric}, defined by
\begin{equation*}
\langle\dot{\phi}_1,\dot{\phi}_2\rangle_\phi=\int_X\dot{\phi}_1\dot{\phi}_2\omega^{[n]}_\phi,
\end{equation*}
for any $\dot{\phi}_1,\dot{\phi}_2\in T_\phi\mathcal{K}^{\T}_\omega$. The equation of a geodesic $(\phi_t)_{t\in[0,1]}\in\mathcal{K}^{\T}_\omega$ connecting two points $\phi_0,\phi_1\in\mathcal{K}^{\T}_\omega$ is given by 
\begin{equation*}
\ddot{\phi}_t=|d\dot{\phi}_t|^{2}_{\phi_t}.
\end{equation*}
\begin{prop}[\cite{Donaldson-geod, Guan}]
Let $X$ be a compact K\"ahler manifold with a fixed K\"ahler class $\alpha$, $\T\subset \Autred$ a real torus and suppose that $\omega\in\alpha$ is a $(\uu,\vv)$-cscK metric for smooth functions $\uu\in C^\infty(\PP,\R_{>0})$, $\vv\in C^\infty(\PP,\R)$ on the momentum image $\PP\subset\tor^*$ associated to $(\T,\alpha)$. Then for any $(\uu,\vv)$-cscK metric $\omega_\phi\in\alpha$ connected to $\omega$ by a geodesic segment in $\mathcal{K}^{\T}_\omega$, there exists $\Phi\in\Autred$ commuting with the action of $\T$, such that $\omega_\phi=\Phi^*\omega$.
\end{prop}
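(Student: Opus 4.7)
The plan is to exploit convexity of the $(\uu,\vv)$-Mabuchi energy along the given smooth geodesic $(\phi_t)_{t\in[0,1]}$ and then to extract an automorphism from the resulting rigidity. Since both $\omega_{\phi_0}=\omega$ and $\omega_{\phi_1}=\omega_\phi$ are $(\uu,\vv)$-cscK, they are critical points of $\mathcal{M}_{\uu,\vv}$ by \Cref{Defn-Mabuchi}, and consequently $\tfrac{d}{dt}\mathcal{M}_{\uu,\vv}(\phi_t)$ vanishes at both $t=0$ and $t=1$.

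The key step is computing the second derivative $\tfrac{d^2}{dt^2}\mathcal{M}_{\uu,\vv}(\phi_t)$ along the geodesic. Starting from the defining formula \eqref{Mabuchi} and using the geodesic equation $\ddot\phi_t=|d\dot\phi_t|^2_{\phi_t}$, the variational formula \eqref{s-var} for $\Scal_{\uu}$, the elementary variation of $\vv(m_{\phi_t})$ obtained from \Cref{lem-mom-norm}\ref{m3}, and repeated integration by parts, I would verify that the terms involving derivatives of $\vv$ and the $\Ric$-contributions hidden in $\Scal_\uu$ cancel---exactly as in the Chen--Tian identity of \Cref{Chen-Tian}---leaving
\[
\tfrac{d^2}{dt^2}\mathcal{M}_{\uu,\vv}(\phi_t)=2\int_X \uu(m_{\phi_t})\,\bigl|(D^{-}d)\dot\phi_t\bigr|^2_{\phi_t}\,\omega^{[n]}_{\phi_t}\;\ge\;0.
\]
This is the principal technical step and the main obstacle: in the weighted setting one has to check that all first-order terms involving derivatives of $\uu$ and $\vv$ assemble into the single weighted Lichnerowicz quadratic form on the right-hand side, the weight $\uu$ entering precisely as in the adjoint $(D^{-}d)^{*}\bigl(\uu(m_{\phi})(D^{-}d)\cdot\bigr)$ appearing in \eqref{s-var} (and interpreted via the moment map picture of \Cref{moment-map}).

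Convexity together with the vanishing of the first derivative at both endpoints forces $\tfrac{d}{dt}\mathcal{M}_{\uu,\vv}(\phi_t)\equiv 0$, hence $\tfrac{d^2}{dt^2}\mathcal{M}_{\uu,\vv}(\phi_t)\equiv 0$. Since $\uu>0$ on $\PP$, this yields $(D^{-}d)\dot\phi_t=0$ for every $t$; equivalently, $\dot\phi_t$ is a Killing potential for $\omega_{\phi_t}$, and its symplectic gradient $\xi_t$ with respect to $\omega_{\phi_t}$ is a real holomorphic vector field on $X$ which commutes with $\T$, because $\dot\phi_t$ is $\T$-invariant.

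Finally, I would integrate the time-dependent family $\xi_t$. Its flow $\Phi_t$ defines a smooth isotopy in $\Autred$ commuting with $\T$, and a direct verification identical to the classical Mabuchi/Semmes/Donaldson argument in the cscK case shows that $\Phi_t^{*}\omega=\omega_{\phi_t}$ for all $t\in[0,1]$. Setting $\Phi:=\Phi_1$ then produces the desired element of $\Autred$ with $\omega_\phi=\Phi^{*}\omega$.
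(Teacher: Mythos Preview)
Your proposal is correct and follows the same overall strategy as the paper: compute the second variation of $\mathcal{M}_{\uu,\vv}$ along the geodesic (the paper records this as \eqref{eq-Mab-pp}, valid along any path, with the geodesic equation killing the second integral), use convexity and the vanishing endpoints to force $(D^-d)\dot\phi_t\equiv 0$, and then integrate the resulting holomorphic vector fields to produce the automorphism.

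Two small points on the final step where the paper is sharper. First, the vector field one must integrate is the \emph{Riemannian} gradient $V_t=-\grad_{g_{\phi_t}}\dot\phi_t$ (a real holomorphic vector field since $(D^-d)\dot\phi_t=0$), not the symplectic gradient $\xi_t=JV_t$ you name: the latter is Killing for $\omega_{\phi_t}$ and its flow would \emph{preserve} each $\omega_{\phi_t}$ rather than carry $\omega$ to $\omega_{\phi_t}$. Second, rather than integrating a time-dependent family, the paper invokes \cite[Proposition~4.6.3]{Gauduchon} to conclude that $V_t\equiv V_0$ is actually constant along the geodesic; the automorphism is then simply the time-one map $\Phi_1^{V_0}$ of the autonomous flow of $V_0$. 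This is in fact what the ``classical Mabuchi/Semmes/Donaldson argument'' does, so your appeal to it implicitly contains this step.
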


\begin{proof}
By a straightforward calculation using the formula \eqref{var-Scal-u-v} in the Appendix B, we obtain the following expression for the second variation of the $(\uu,\vv)$-Mabuchi energy along a $\T$-invariant segment of K\"ahler potentials $(\phi_t)_{t\in[0,1]}\in\mathcal{K}^{\T}_\omega$:
\begin{align}
\begin{split}\label{eq-Mab-pp}
\frac{d^2\mathcal{M}_{\uu,\vv}(\phi_t)}{dt^2}=&2\int_X|D^-d\dot{\phi}_t|_{\phi_t}^{2}\uu(m_{\phi_t})\omega_{\phi_t}^{[n]}\\&-\int_X\left(\ddot{\phi}_t-|d\dot{\phi}_t|_{\phi_t}^{2}\right)\left(\Scal_{\uu}(\phi_t)-\vv(m_{\phi_t})\right)\omega_{\phi_t}^{[n]}.
\end{split}
\end{align}
Suppose now that $\omega_\phi$, $\phi\in\mathcal{K}^{\T}_\omega$ is a $(\uu,\vv)$-cscK metric connected to $\omega$ by a smooth geodesic $(\phi_t)_{t\in[0,1]}$, such that $\phi_0=0$ and $\phi_1=\phi$. Then $\left.\frac{d\mathcal{M}_{\uu,\vv}(\phi_t)}{dt}\right|_{t=0}=\left.\frac{d\mathcal{M}_{\uu,\vv}(\phi_t)}{dt}\right|_{t=1}=0$, and using \eqref{eq-Mab-pp} we obtain
\begin{equation*}
\frac{d^2\mathcal{M}_{\uu,\vv}(\phi_t)}{dt^2}=2\int_X|D^-d\dot{\phi}_t|_{\phi_t}^{2}\uu(m_{\phi_t})\omega_{\phi_t}^{[n]}\geq 0.
\end{equation*} 
It follows that $\frac{d^2\mathcal{M}_{\uu,\vv}(\phi_t)}{dt^2}\equiv0$ and $D^-d\dot{\phi}_t\equiv0$. Thus, we have a family of real holomorphic vector vector fields $V_t:=-\grad_{g_t}\dot{\phi}_t$, $t\in[0,1]$. By \cite[Proposition 4.6.3]{Gauduchon}, $V_t=V_0$ for all $t$, and $\omega_\phi=\big(\Phi_{1}^{V_0}\big)^*\omega$ where $\Phi_{t}^{V_0}\in \Autred$ is the flow of the real holomorphic vector field $V_0$.
\end{proof}
\begin{comment}
\begin{cor}
If $(X,\T)$ is a compact toric K\"ahler manifold, then the $(\uu,\vv)$-cscK metrics with $\uu\in C^\infty(\PP,\R_{>0})$, $\vv\in C^\infty(\PP,\R)$, are unique modulo ${\rm Aut}_{\rm red}^{\T}(X)$ the $\T$-equivariant automorphisms of $X$.
\end{cor}
\end{comment}
\begin{rem}
In general, the space $\mathcal{K}^{\T}_\omega$ is not geodesically connected by smooth geodesics (see \cite[Theorem 1.2]{Darvas}). However, by a result of Chen \cite{Chen0}, the space $\mathcal{K}^{\T}_\omega$ is geodesically connected by $\T$-invariant weak $C^{1,1}$-geodesics, i.e. in the space $\big(\mathcal{K}^{1,1}_\omega\big)^{\T}$ of $\T$-invariant real valued functions $\phi$ such that $\omega+dd^{c}\phi$ is a positive current with bounded coefficients. Using the formula $m_\phi=m_\omega+d^{c}\phi$ and \Cref{Chen-Tian}, one can extend the $(\uu,\vv)$-Mabuchi energy to a functional $\mathcal{M}_{\uu,\vv}:\big(\mathcal{K}^{1,1}_\omega\big)^{\T}\rightarrow\mathbb{R}$. One thus might hope to obtain a uniqueness up to a $\T$-equivariant isometry of $(\uu,\vv)$-cscK metrics along the lines of the proof of \cite[Theorem 1.1]{BB}, but this goes beyond the scope of this article.
\end{rem}

\subsection{The relative $(\uu,\vv)$-Mabuchi energy}
In this section we assume that both $\uu$ and $\vv$ are positive smooth functions on $\PP$.
\begin{defn}\label{def-Mab-rel}
The $(\uu,\vv)$-relative Mabuchi energy $\mathcal{M}^{\rm rel}_{\uu,\vv}:\mathcal{K}^{\T}_\omega\to\mathbb{R}$ is defined by 
\begin{equation}
\begin{cases} (d\mathcal{M}^{\rm rel}_{\uu,\vv})_{\phi}(\dot{\phi})={\displaystyle-\int_X\dot{\phi}\big(\Scal_{\uu}(\phi)/\vv(m_{\phi})-\vv_{\rm ext}(m_{\phi})\big)\vv(m_{\phi})\omega^{[n]}_{\phi}},\\ 
\mathcal{M}^{\rm rel}_{\uu,\vv}(0)=0,
\end{cases}
\end{equation}
for any $\dot{\phi}\in T_\phi\mathcal{K}^{\T}_\omega$, where $\vv_{\rm ext}$ is the affine linear function on $\PP$ defined in Section 3.2.
\end{defn}

\begin{lem}\label{rem-Mab-rel}
We have $\mathcal{M}^{\rm rel}_{\uu,\vv}=\mathcal{M}_{\uu,\vv\vv_{\rm ext}}$.
\end{lem}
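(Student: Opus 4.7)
The plan is to compare the differentials of the two functionals directly and invoke the fact that a Mabuchi-type functional is uniquely determined by its differential together with its normalization $\mathcal{M}(0)=0$.

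First I would expand the defining differential of $\mathcal{M}^{\rm rel}_{\uu,\vv}$ from \Cref{def-Mab-rel}, writing
\begin{equation*}
(d\mathcal{M}^{\rm rel}_{\uu,\vv})_\phi(\dot\phi)=-\int_X\dot\phi\,\Scal_{\uu}(\phi)\,\omega_\phi^{[n]}+\int_X\dot\phi\,\vv(m_\phi)\vv_{\rm ext}(m_\phi)\,\omega_\phi^{[n]},
\end{equation*}
and similarly expand the differential of $\mathcal{M}_{\uu,\vv\vv_{\rm ext}}$ from \Cref{Defn-Mabuchi}, obtaining
\begin{equation*}
(d\mathcal{M}_{\uu,\vv\vv_{\rm ext}})_\phi(\dot\phi)=-\int_X\dot\phi\,\Scal_{\uu}(\phi)\,\omega_\phi^{[n]}+c_{(\uu,\vv\vv_{\rm ext})}(\alpha)\int_X\dot\phi\,\vv(m_\phi)\vv_{\rm ext}(m_\phi)\,\omega_\phi^{[n]}.
\end{equation*}
Hence the equality of the two differentials reduces to the identity $c_{(\uu,\vv\vv_{\rm ext})}(\alpha)=1$.

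Next I would establish this identity by invoking the defining property of $\vv_{\rm ext}$ from Section 3.2: namely, $\vv_{\rm ext}$ is the $\vv$-weighted $L^2$-projection of $\Scal_{\uu}(\phi)/\vv(m_\phi)$ onto the space of affine-linear functions of $m_\phi$. Testing this projection identity against the constant affine-linear function $\ell\equiv 1$ yields
\begin{equation*}
\int_X\Scal_{\uu}(\phi)\,\omega_\phi^{[n]}=\int_X\vv_{\rm ext}(m_\phi)\vv(m_\phi)\,\omega_\phi^{[n]}.
\end{equation*}
If the right-hand side is nonzero, then by \Cref{Def-Top-const} this ratio equals $c_{(\uu,\vv\vv_{\rm ext})}(\alpha)=1$; if it vanishes, then $c_{(\uu,\vv\vv_{\rm ext})}(\alpha)=1$ by the convention in \eqref{Top-const}. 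Either way the constant equals $1$, as already observed in Section 3.2.

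Finally I would conclude: since the two functionals $\mathcal{M}^{\rm rel}_{\uu,\vv}$ and $\mathcal{M}_{\uu,\vv\vv_{\rm ext}}$ are both well-defined on the contractible Fr\'echet space $\mathcal{K}^{\T}_\omega$ (the latter by \Cref{Chen-Tian} together with \Cref{E-lem-def} and \Cref{Lem-E-theta}), have identical differentials, and both vanish at $\phi=0$, they coincide. I do not anticipate a serious obstacle here; the only substantive point is the verification $c_{(\uu,\vv\vv_{\rm ext})}(\alpha)=1$, which is a direct consequence of testing the $\vv$-weighted orthogonality defining $\vv_{\rm ext}$ against the constant function.
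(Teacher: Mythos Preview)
Your proposal is correct and follows essentially the same approach as the paper: both reduce the equality of the two functionals to the identity $c_{(\uu,\vv\vv_{\rm ext})}(\alpha)=1$, which is established in Section~3.2 by exactly the projection argument you describe, and then conclude from the matching differentials and the common normalization at $\phi=0$. The paper's proof is simply a terser version of yours, citing Section~3.2 rather than re-deriving the constant.
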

\begin{proof}
In Section 3.2, we showed that $c_{\uu,\vv\vv_{\rm ext}}(\alpha)=1$. From the definitions of $\mathcal{M}_{\uu,\vv}$ and $\mathcal{M}_{\uu,\vv}^{\rm rel}$, it then follows that $\mathcal{M}^{\rm rel}_{\uu,\vv}=\mathcal{M}_{\uu,\vv\vv_{\rm ext}}+c$ and using $\mathcal{M}^{\rm rel}_{\uu,\vv}(0)=\mathcal{M}_{\uu,\vv\vv_{\rm ext}}(0)=0$ we get $c=0$.
\end{proof}
\subsection{Boundednes of the $(1,\vv)$-Mabuchi energy}
Now we show how the results of Berman-Berndtsson in \cite{BB} can be extended to the $(1,\vv)$-cscK metrics.
\begin{thm}\cite{BB}\label{Mab-bound-1-w}
Let $X$ be a smooth compact K\"ahler manifold, $\T \subset {\rm Aut}_{\rm red}(X)$ a real torus, and suppose that $X$ admits a $(1,\vv)$-cscK metric $\omega$ in the the K\"ahler class $\alpha$ for some smooth function $\vv$ on the momentum image $\PP\subset \tor^*$ associated to $(\T, \alpha)$. Then, $\omega$ is a global minima of ${\mathcal M}_{1,\vv}$.
\end{thm}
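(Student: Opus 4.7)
The plan is to adapt the Berman-Berndtsson convexity argument from \cite{BB} to the weighted setting when $\uu\equiv 1$, and deduce that any critical point of $\mathcal{M}_{1,\vv}$ is necessarily a global minimum. The starting point is to decompose $\mathcal{M}_{1,\vv}$ via \Cref{Chen-Tian}, obtaining
\[
\mathcal{M}_{1,\vv} \;=\; \mathcal{H}_{1} \;-\; 2\,\mathcal{E}^{\Ric(\omega)}_{1} \;+\; c_{(1,\vv)}(\alpha)\,\mathcal{E}_{\vv}.
\]
The key observation is that for $\uu\equiv 1$ the entropy piece $\mathcal{H}_{1}(\phi)=\int_X\log(\omega_\phi^n/\omega^n)\,\omega_\phi^{[n]}$ is precisely the classical \emph{unweighted} relative entropy treated in \cite{BB}, and $\mathcal{E}^{\Ric(\omega)}_{1}$ is a standard twisted Aubin-Mabuchi energy with fixed reference form $\Ric(\omega)$. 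Only the last summand $c_{(1,\vv)}(\alpha)\mathcal{E}_{\vv}$ genuinely involves the weight.

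Next, I would extend $\mathcal{M}_{1,\vv}$ to the space $(\mathcal{K}^{1,1}_{\omega})^{\T}$ of $\T$-invariant $C^{1,1}$ $\omega$-plurisubharmonic potentials. The identification $m_\phi=m_\omega+d^c\phi$ from \Cref{lem-mom-norm} makes $\vv(m_\phi)$ a well-defined bounded measurable function on $X$ for every $\phi\in(\mathcal{K}^{1,1}_{\omega})^{\T}$, and all integrals appearing in the Chen-Tian expression make sense as mixed Bedford-Taylor Monge-Amp\`ere pairings. By Chen's theorem \cite{Chen0}, any $\phi\in\mathcal{K}^{\T}_{\omega}$ is joined to $0$ by a unique weak $C^{1,1}$ geodesic $(\phi_t)_{t\in[0,1]}\subset(\mathcal{K}^{1,1}_{\omega})^{\T}$.

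The core step is to establish convexity of $t\mapsto\mathcal{M}_{1,\vv}(\phi_t)$ on $[0,1]$, verified term by term: (a) convexity of $\mathcal{H}_{1}$ along weak $C^{1,1}$ geodesics is the central contribution of \cite{BB}, obtained via a Pr\'ekopa-type argument after complexifying $(\phi_t)$ into an $S^1$-invariant $\omega$-psh function on $X\times\{1<|\zeta|<e\}$; (b) affineness of $\mathcal{E}^{\Ric(\omega)}_{1}$ along weak geodesics is classical, since it is a sum of mixed Monge-Amp\`ere integrals with a \emph{fixed} reference current $\Ric(\omega)$; (c) affineness of $\mathcal{E}_{\vv}$ along weak geodesics can be proven by a direct smooth computation of $\frac{d^2}{dt^2}\mathcal{E}_{\vv}(\phi_t)$ combined with the geodesic equation $\ddot\phi_t=|d\dot\phi_t|^2_{\phi_t}$, followed by a limiting argument using uniform polynomial approximation of $\vv$ on the compact polytope $\PP$ and weak continuity of mixed Monge-Amp\`ere operators on bounded $\omega$-psh functions. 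Granting (a)-(c), convexity of $\mathcal{M}_{1,\vv}$ together with the fact that $\omega$ is a critical point (so that the right derivative of $\mathcal{M}_{1,\vv}(\phi_t)$ at $t=0^+$ is non-negative) yields $\mathcal{M}_{1,\vv}(\phi)\geq\mathcal{M}_{1,\vv}(0)=0$ for every $\phi\in\mathcal{K}^{\T}_{\omega}$.

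The main obstacle I anticipate is step (c): rigorously establishing affineness of $\mathcal{E}_{\vv}$ along weak $C^{1,1}$ geodesics when $\vv$ is an arbitrary smooth function on $\PP$, rather than the classical case $\vv\equiv 1$ (in which $\mathcal{E}_{1}$ is the usual Aubin-Mabuchi energy and affineness is well-known). The subtlety is that $d^c\phi_t$ is merely bounded, so the nonlinear expression $\vv(m_\omega+d^c\phi_t)$ must be differentiated in the sense of currents, and all integration-by-parts manipulations must be justified through uniform polynomial approximation on $\PP$ combined with the Bedford-Taylor convergence theory. Modulo this technical point, the argument closely parallels \cite{BB}.
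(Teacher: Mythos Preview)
Your approach is correct and essentially the same as the paper's, with two minor differences worth noting.

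First, the paper groups the terms differently: instead of handling $\mathcal{H}_1$, $-2\mathcal{E}_1^{\Ric(\omega)}$ and $c_{(1,\vv)}(\alpha)\mathcal{E}_{\vv}$ separately, it writes $\mathcal{M}_{1,\vv}=\mathcal{M}+\mathcal{E}_{\tilde\vv}$ where $\mathcal{M}$ is the classical (unweighted) Mabuchi energy and $\tilde\vv$ is a suitable smooth function on $\PP$. This lets one invoke \cite[Theorem~3.4]{BB} directly for the convexity of $\mathcal{M}$ along weak geodesics, rather than arguing separately that the entropy is convex and the twisted energy is affine.

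Second, and more importantly, the ``main obstacle'' you anticipate in step~(c) is not actually an obstacle: affineness of $t\mapsto\mathcal{E}_{\vv}(\phi_t)$ along weak $C^{1,1}$ geodesics, for an arbitrary smooth weight $\vv$ on $\PP$, is precisely \cite[Proposition~10.d]{Bern}, which the paper cites without further ado. So there is no need for the polynomial approximation / Bedford--Taylor argument you sketch. Finally, for the endpoint derivative the paper invokes \cite[Lemma~3.5]{BB} to get the sub-slope inequality
\[
\mathcal{M}_{1,\vv}(\phi_1)-\mathcal{M}_{1,\vv}(\phi_0)\ \ge\ \int_X\bigl(\Scal(\phi_0)-c_{(1,\vv)}(\alpha)\vv(m_{\phi_0})\bigr)\dot\phi\,\omega_{\phi_0}^{[n]},
\]
which vanishes when $\omega_{\phi_0}$ is $(1,\vv)$-cscK; this makes your final step precise.
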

\begin{proof}
We denote by $\mathcal{M}_{\vv}$ the $(1,\vv)$-Mabuchi energy and by $\mathcal{M}$ the $(1,c_{1,\vv}(\alpha))$-Mabuchi energy. From the definition of the Mabuchi energy we have the following relation
\begin{equation*}
\mathcal{M}_{\vv}=\mathcal{M}+\mathcal{E}_{\tilde{\vv}},
\end{equation*}
where $\tilde{\vv}:=c_{1,\vv}(\alpha)(1-\vv)$ and $\mathcal{E}_{\tilde{\vv}}$ is the functional \eqref{E}. Let $\phi_0,\phi_1\in\mathcal{K}^{\T}_\omega$ be two smooth K\"ahler potentials and $\phi_t$ the weak geodesic connecting $\phi_0$ and $\phi_1$ (see \cite{BB, chen-tian} and the references therein for the definition of a weak geodesic). By \cite[Proposition 10.d]{Bern} the function $t\mapsto \mathcal{E}_{\tilde{\vv}}(\phi_t)$ is affine on $[0,1]$, whereas by \cite[Theorem 3.4]{BB}, the function $t\mapsto \mathcal{M}(\phi_t)$ is convex. It follows that $t\mapsto \mathcal{M}_{\vv}(\phi_t)$ is convex. By \cite[Lemma 3.5]{BB} and its proof, we get 
\begin{equation*}
\underset{t\rightarrow 0^+}{\lim}\frac{\mathcal{M}_{\vv}(\phi_1)-\mathcal{M}_{\vv}(\phi_0)}{t}\geq \int_X\big(\Scal(\phi_0)-c_{1,\vv}(\alpha)\vv(m_{\phi_0})\big)\dot{\phi}\omega_{\phi_0}^{[n]}.
\end{equation*}
where $\dot{\phi}:=\frac{d\phi_t}{dt}_{|t=0^+}$. Using the sub-slope inequality for convex functions and the Cauchy--Shwartz inequality we get 
\begin{align*}
\mathcal{M}_{\vv}(\phi_1)-\mathcal{M}_{\vv}(\phi_0)\geq& \underset{t\rightarrow 0^+}{\lim}\frac{\mathcal{M}_{\vv}(\phi_1)-\mathcal{M}_{\vv}(\phi_0)}{t}\\
\geq& \int_X\big(\Scal(\phi_0)-c_{1,\vv}(\alpha)\vv(m_{\phi_0})\big)\dot{\phi}\omega_{\phi_0}^{[n]}\\
\geq&-d(\phi_0,\phi_1)\Big(\int_X\big(\Scal(\phi_0)-c_{(1,\vv)}(\alpha)\vv(m_{\phi_0}))^{2}\omega_{\phi_0}^{[n]}\Big)^{\frac{1}{2}},
\end{align*} 
where $d(\phi_0,\phi_1)^{2}=\int_X\dot{\phi}^{2}\omega_{\phi_0}^{[n]}$ is the Mabuchi distance between $\phi_0$ and $\phi_1$. In particular, if $\omega_{\phi_0}$ is a $(1,\vv)$-cscK metric in the K\"ahler class $\alpha$, then $\mathcal{M}_{\vv}(\phi)\geq\mathcal{M}_{\vv}(\phi_0)$ 
for any $\phi\in\mathcal{K}^{\T}_\omega$.
\end{proof}

\section{The $(\uu,\vv)$-Futaki invariant for a K\"ahler class}\label{sec-6}
Let $(X,\alpha)$ be a compact K\"ahler manifold and $\T\subset {\rm Aut}_{\rm red}(X)$ a real torus with momentum polytope $\PP$ with respect to $\alpha$ as in \Cref{lem-mom-norm}. For any $\phi\in\mathcal{K}_{\omega}^{\T}$ and $V\in\mathfrak{h}_{{\rm red}}^{\T}$ in the Lie algebra of the centralizer of $\T$ in $\Autred$, we denote by  $h_{\phi}^{V}+\sqrt{-1}f_{\phi}^{V}\in C^{\infty}_{0,\phi}(X,\mathbb{C})$ the normalized holomorphy potantial of $\xi$, i.e. $h^{V}_{\phi}$ and $f^{V}_{\phi}$ are smooth functions such that, 
\begin{align*}
&V=\grad_{g_\phi}(h_{\phi}^{V})+J\grad_{g_\phi} (f_{\phi}^{V}),\\
&\int_X f_{\phi}^{V}\omega^{[n]}_\phi=\int_X h_{\phi}^{V}\omega^{[n]}_\phi=0.
\end{align*}
Using that the tangent space in $\phi$ of $\mathcal{K}_{\omega}^{\T}$ is given by $T_{\phi}\big(\mathcal{K}_{\omega}^{\T})\cong C^{\infty}_{0,\phi}(X,\mathbb{R})^{\T}\oplus\R$, the vector field $JV$ defines a vector field $\widehat{JV}$ on $\mathcal{K}_{\omega}^{\T}$, given by:
\begin{equation*}
\phi\mapsto\mathcal{L}_{JV}\omega_\phi=-dd^{c}f_{\phi}^{V},
\end{equation*}
so that $\widehat{JV}_{\phi}=f_{\phi}^{V}$. We consider the 1-form $\sigma$ on $\mathcal{K}_{\omega}^{\T}$, defined by
\begin{equation*}
\sigma_{\phi}(\dot{\phi}):= \left(d\mathcal{M}_{\uu,\vv}\right)_\phi(\dot{\phi})
\end{equation*}
where $\mathcal{M}_{\uu,\vv}$ is the $(\uu,\vv)$-Mabuchi energy associated to the smooth functions $\uu\in C^{\infty}(\PP,\mathbb{R}_{>0})$ and $\vv\in C^{\infty}(\PP,\mathbb{R})$ (see \eqref{Mabuchi}). By the invariance of $\sigma$ under the ${\rm Aut}_{\rm red}^{\T}(X)$-action and Cartan's formula, we get
\begin{equation*}
\mathcal{L}_{\widehat{JV}}\sigma=d\big(\sigma(\widehat{JV})\big)=0.
\end{equation*}
Then $\phi\mapsto\sigma_{\phi}(\widehat{JV})$ is constant on $\mathcal{K}_{\omega}^{\T}$, and we define
\begin{defn}\label{Def-Fut}
We let 
\begin{equation}\label{Fut-uv}
\mathcal{F}_{\uu,\vv}^{\alpha}(V):=\sigma_\omega(\widehat{JV})=\int_X\big(\Scal_{\uu}(\omega)-c_{(\uu,\vv)}(\alpha)\vv(m_{\omega})\big)f_{\omega}^{V}\omega^{[n]},
\end{equation}
be the real constant associated to $V\in\mathfrak{h}_{\rm red}^{\T}$. We thus get a linear map $\mathcal{F}^{\alpha}_{\uu,\vv}:\mathfrak{h}_{\rm red}^{\T}\to \mathbb{R}$ called the $(\uu,\vv)$-Futaki invariant associated to $(\alpha,\PP,\uu,\vv)$.
\end{defn}
By its very definition, we have
\begin{prop}\label{fut-obs}
If $(X,\alpha,\T)$ admits a $(\uu,\vv)$-cscK metric then 
\begin{equation}\label{eq-fut-obs}
\int_X\Scal_{\uu}(\omega)\omega^{[n]}=c_{(\uu,\vv)}(\alpha)\int_X\vv(m_{\omega})\omega^{[n]}\text{ and }\mathcal{F}^{\alpha}_{\uu,\vv}\equiv 0.
\end{equation}
\end{prop}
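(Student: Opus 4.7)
The plan is to exploit two structural facts already in place: the functionals $A_{\vv}$ and $C_{\uu}$ of \Cref{top-c} are K\"ahler--potential independent, and the map $\phi\mapsto \sigma_\phi(\widehat{JV})$ was shown (just above \Cref{Def-Fut}) to be constant on $\mathcal{K}^{\T}_\omega$. Both conclusions of the proposition then reduce to evaluating integrals at the hypothesized $(\uu,\vv)$-cscK potential, where the defining equation \eqref{main} collapses the integrand.

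First I would handle the numerical identity. Let $\phi\in\mathcal{K}^{\T}_\omega$ be such that $\omega_\phi$ is $(\uu,\vv)$-cscK, so $\Scal_{\uu}(\phi)=c_{(\uu,\vv)}(\alpha)\vv(m_\phi)$ by definition. Integrating against $\omega_\phi^{[n]}$ and invoking the $\phi$-independence of both $A_{\vv}(\phi)=\int_X \vv(m_\phi)\omega_\phi^{[n]}$ and $C_{\uu}(\phi)=\int_X \Scal_{\uu}(\phi)\omega_\phi^{[n]}$ from \Cref{top-c} yields
\begin{equation*}
\int_X\Scal_{\uu}(\omega)\omega^{[n]}=\int_X\Scal_{\uu}(\phi)\omega_\phi^{[n]}=c_{(\uu,\vv)}(\alpha)\int_X\vv(m_\phi)\omega_\phi^{[n]}=c_{(\uu,\vv)}(\alpha)\int_X\vv(m_\omega)\omega^{[n]},
\end{equation*}
as required. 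Note that in the degenerate case $\int_X\vv(m_\omega)\omega^{[n]}=0$, where $c_{(\uu,\vv)}(\alpha)=1$ by convention (\Cref{Def-Top-const}), the same computation shows $\int_X\Scal_{\uu}(\omega)\omega^{[n]}=0$, so \eqref{eq-fut-obs} still holds, consistent with \Cref{rem-1}.

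For the vanishing of the Futaki invariant, I would use the key fact, established just before \Cref{Def-Fut} via $\Autred^{\T}(X)$-invariance of $\sigma$ and Cartan's formula, that $\phi\mapsto \sigma_\phi(\widehat{JV})$ is constant on $\mathcal{K}^{\T}_\omega$. Since by \eqref{Mabuchi} we have $\sigma_\phi(\widehat{JV}_\phi)=-\int_X f_\phi^V(\Scal_{\uu}(\phi)-c_{(\uu,\vv)}(\alpha)\vv(m_\phi))\omega_\phi^{[n]}$ (up to the sign convention already present in \eqref{Fut-uv}), evaluating at the $(\uu,\vv)$-cscK potential $\phi$ gives an integrand that vanishes identically. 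Hence $\mathcal{F}^{\alpha}_{\uu,\vv}(V)=0$ for every $V\in\mathfrak{h}_{\rm red}^{\T}$.

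The argument is essentially a tautology given the machinery already developed: no new estimates or PDE input are needed. The only point worth flagging is the bookkeeping in the degenerate case $\int_X\vv(m_\omega)\omega^{[n]}=0$, which is resolved by returning to \Cref{Def-Top-const} and \Cref{rem-1}. No step poses a real obstacle.
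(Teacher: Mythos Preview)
Your proof is correct and follows exactly the approach the paper takes: the paper simply writes ``By its very definition, we have'' before stating the proposition, treating both conclusions as immediate from evaluating at the $(\uu,\vv)$-cscK potential. You have just spelled out the details of that tautology.
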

\begin{rem}
The first condition in \eqref{eq-fut-obs} is satisfied when $\int_X\vv(m_\omega)\omega^{[n]}\neq 0$ by the very definition of $c_{\uu,\vv}(\alpha)$ (see \Cref{Def-Top-const}). Furthermore, in the case of a $(\uu,\vv)$-extremal K\"ahler metric considered in Section 3.1, the both conditions in \eqref{eq-fut-obs} hold true with respect to the weights $\uu$ and $\vv\vv_{\rm ext}$, by the  very definition of $\vv_{\rm ext}$.
\end{rem}
\section{The $(\uu,\vv)$-Futaki invariant of a smooth test configuration}\label{sec-7}
Let $X$ be a compact K\"ahler manifold endowed with an $\ell$-dimensional real torus $\T\subset {\rm Aut}_{\rm red}(X)$ and a K\"ahler class $\alpha\in H^{1,1}(X,\mathbb{R})$. 
Following \cite{Dervan-Ross, Dyr1, Dyr2} we give the following
\begin{defn}
A smooth $\T$-compatible K\"ahler test configuration for $(X,\alpha)$ is a compact smooth $(n+1)$-dimensional K\"ahler manifold $(\mathcal{X},\mathcal{A})$, endowed with a holomorphic action of a real torus $\hat{\T}\subset {\rm Aut}_{\rm red}(\mathcal{X})$ with Lie algebra $\hat{\tor}$ and
\begin{itemize}
\item a surjective holomorphic map $\pi:\mathcal{X}\rightarrow \mathbb{P}^{1}$ such that the torus action $\hat{\T}$ on $\mathcal{X}$ preserves each fiber $X_\tau:=\pi^{-1}(\tau)$ and $(X_1,\mathcal{A}_{|X_1},\hat{\T})\cong (X,\alpha,\T)$,
\item a $\mathbb{C}^{\star}$-action $\rho$ on $\mathcal{X}$ commuting with $\hat{\T}$ and covering the usual $\mathbb{C}^{\star}$-action on $\mathbb{P}^{1}$,
\item a biholomorphism  
\begin{equation}\label{lambda}
\lambda : \mathcal{X}\setminus X_0\simeq X\times\left(\mathbb{P}^{1}\setminus \{0\}\right),
\end{equation}
which is equivariant with respect to the actions of $\hat{\mathbb{G}}:=\hat{\T}\times \mathbb{S}^{1}_{\rho}$ on $\mathcal{X}\setminus X_0$ and the action of $\mathbb{G}:=\T\times \mathbb{S}^{1}$ on $X\times\left(\mathbb{P}^{1}\setminus \{0\}\right)$. 
\end{itemize}
\end{defn}
In what follows we shall tacitly identify $\hat{\T}$ with $\T$ and $\hat{\mathbb{G}}$ with $\mathbb{G}$.

\begin{defn}\label{Test-triv-prod}
A {\it smooth $\T$-compatible K\"ahler test configuration} $(\mathcal{X},\mathcal{A},\rho,\T)$ for $(X,\alpha,\T)$ is called 
\begin{itemize}
\item {\it trivial} if it is given by $(\mathcal{X}_0=X\times \mathbb{P}^{1},\mathcal{A}_0=\pi^{*}_X\alpha+\pi^{*}_{\mathbb{P}^{1}}[\FS],\T)$ and $\mathbb{C}^\star$-action $\rho_{0}(\tau)(x,z)=(x,\tau z)$ for any $\tau\in\mathbb{C}^{\star}$ and $(x,z)\in X\times\mathbb{P}^{1}$.
\item {\it product} if it is given by $(\mathcal{X}_{\rm prod},\mathcal{A}_{\rm prod},\rho_{\rm prod},\T)$ where $\mathcal{X}_{\rm prod}$ is the compactification (in the sense of \cite{odaka1, wang}, see also \cite[Example 2.8]{Bouk} and \cite[p. 12-13]{Mcduff-Tolman}) of $X\times\mathbb{C}$ with $\mathbb{C}^\star$-action $\rho_{\rm prod}(\tau)(x,z)=(\rho_X(\tau)x,\tau z)$ where $\rho_X$ is a $\mathbb{C}^\star$-action on $X$ and $\mathcal{A}_{\rm prod}$ is a K\"ahler class on $\mathcal{X}_{\rm prod}$ which restricts to $\alpha$ on $X_1\cong X$.  
\end{itemize}
\end{defn}

Let $(\mathcal{X},\mathcal{A},\T)$ be a smooth $\T$-compatible K\"ahler test configuration for $(X,\alpha,\T)$ and $\Omega\in\mathcal{A}$ a $\mathbb{G}$-invariant K\"ahler form. The action of $\T$ on $\mathcal{X}$ is Hamiltonian with $\Omega$-momentum map $m_{\Omega}:\mathcal{X}\rightarrow\mathfrak{t}^{*}$, normalized by $m_{\Omega}(X_1)=\PP$, where $\PP$ is a fixed momentum polytope for the induced $\T$-action on $X_1\cong X$. 

For any $\tau\in\mathbb{C}^{\star}$, we denote by 
\begin{equation}\label{not}
\Omega_{\tau}:=\Omega_{|X_\tau},\,\,\,\Omega_1=:\omega \text{ and }\,\omega_\tau:=\rho(\tau)^{*}\Omega_{\tau},
\end{equation}
where $\rho(\tau):X_1\overset{\sim}{\rightarrow} X_\tau$ is the restriction of $\rho(\tau)\in {\rm Aut}_{\rm red}(\mathcal{X})$ to $X_1$. The action of $\T$ on $X_\tau$ is Hamiltonian with $\Omega_{\tau}$-momentum map $(m_{\Omega})_{|X_\tau}$. Pulling the structure on $X_\tau$ back to $X_1$ via $\rho(\tau)$, we get a $\omega_{\tau}$-momentum map for the $\T$-action on $X_1$, given by 

\begin{equation}\label{m-tau}
m_\tau=m_{\Omega_\tau}\circ \rho(\tau).
\end{equation}

\begin{lem}\label{lm-mom}
For any $\tau\in\mathbb{C}^{\star}$, we have
\begin{equation*}
\int_{X_\tau}m_{\Omega_\tau}\Omega_{\tau}^{[n]}=\int_{X_1}m_{\tau}\omega_{\tau}^{[n]}=\int_{X_1}m_1\omega^{[n]}.
\end{equation*}
It follows that $\PP_\tau=\PP$ for any $\tau\in\mathbb{C}^{\star}$, where $\PP_\tau:=m_{\Omega}(X_\tau)=m_\tau(X_1)$ is the momentum polytope of the induced action of $\T$ on $X_\tau$ and $\Omega_{\tau}:=\Omega_{|X_\tau}$.
\end{lem}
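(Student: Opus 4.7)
The first equality $\int_{X_\tau}m_{\Omega_\tau}\Omega_\tau^{[n]}=\int_{X_1}m_\tau\omega_\tau^{[n]}$ is a direct change of variables under the biholomorphism $\rho(\tau)\colon X_1\to X_\tau$, using the definitions $\omega_\tau=\rho(\tau)^*\Omega_\tau$ and $m_\tau=m_{\Omega_\tau}\circ\rho(\tau)$ from \eqref{not} and \eqref{m-tau}. My plan for the second equality is to first establish the polytope identity $\PP_\tau=\PP$ as a standalone fact, and then deduce the integral identity by appealing to the equivalence \ref{m1}$\Leftrightarrow$\ref{m2} in \Cref{lem-mom-norm}.

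To verify $\PP_\tau=\PP$, I would exploit that the $\mathbb{C}^{*}$-action $\rho$ commutes with $\T$. If $p\in X_1$ is a $\T$-fixed point, then the entire orbit $\tau\mapsto\rho(\tau)\cdot p$ consists of $\T$-fixed points, and every $\T$-fixed point of $X_\tau$ arises this way. At any $\T$-fixed point the fundamental vector field of $\xi\in\tor$ vanishes, so the momentum map equation $dm_\Omega^\xi=-\iota_\xi\Omega$ forces $dm_\Omega^\xi=0$ everywhere along that $\mathbb{C}^{*}$-orbit, and hence $m_\Omega(\rho(\tau)\cdot p)=m_\Omega(p)$ for every $\tau\in\mathbb{C}^{*}$. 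By the Atiyah--Guillemin--Sternberg convexity theorem \cite{Atiyah,Guil-Stern} applied to $X_\tau$, $\PP_\tau$ is the convex hull of the images of the $\T$-fixed points of $X_\tau$ under $m_\Omega$, and by the above this set coincides with $m_\Omega(X_1^{\T})$, whose convex hull is $\PP$.

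For the integral equality I observe that $\rho(\tau)$ is an automorphism of $\mathcal{X}$ isotopic to the identity and hence acts trivially on cohomology, so $[\omega_\tau]=\rho(\tau)^{*}[\mathcal{A}|_{X_\tau}]=[\mathcal{A}|_{X_1}]=\alpha$. We may therefore write $\omega_\tau=\omega+dd^{c}\phi_\tau$ for some $\T$-invariant potential $\phi_\tau\in\mathcal{K}^{\T}_\omega$. The identity $\PP_{\phi_\tau}=\PP_\omega$ we have just established is precisely condition \ref{m1} of \Cref{lem-mom-norm} applied to the momentum map $m_\tau$; the equivalence \ref{m1}$\Leftrightarrow$\ref{m2} then gives $\int_{X_1}m_\tau\omega_\tau^{[n]}=\int_{X_1}m_1\omega^{[n]}$, completing the proof. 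The crux of the argument — the step I expect to be the only delicate one — is the innocuous-looking propagation of the normalization $m_\Omega(X_1)=\PP$ to all fibers, which rests entirely on the commutation of $\rho$ with $\T$ together with the vanishing of $\iota_\xi\Omega$ at $\T$-fixed points.
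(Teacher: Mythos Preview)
Your proof is correct but proceeds in the opposite order from the paper. The paper first shows the integral identity by a direct computation: it differentiates $\int_{X_1} m_\tau \omega_\tau^{[n]}$ with respect to $t=-\log|\tau|$, using $\frac{d}{dt}m_\tau = -\rho(\tau)^*(dm_\Omega,dh^\rho)_\Omega$ and $\frac{d}{dt}\rho(\tau)^*\Omega = -\rho(\tau)^* dd^c h^\rho$, and integrates by parts on $X_\tau$ to see that the two terms cancel. The polytope equality $\PP_\tau=\PP$ is then read off as a consequence (via \Cref{lem-mom-norm} \ref{m2}$\Rightarrow$\ref{m1}).

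You instead establish $\PP_\tau=\PP$ first, by the geometric observation that $\T$-fixed points of $X_1$ flow under $\rho$ to $\T$-fixed points of $X_\tau$ with the same $m_\Omega$-value (since $dm_\Omega^\xi=-\iota_\xi\Omega$ vanishes along such an orbit), and then invoke Atiyah--Guillemin--Sternberg; the integral identity then follows from \Cref{lem-mom-norm} \ref{m1}$\Rightarrow$\ref{m2}. Your route is cleaner and avoids the integration-by-parts computation, at the cost of relying on the convexity theorem and on the (easy but implicit) fact that $[\omega_\tau]=\alpha$ so that \Cref{lem-mom-norm} applies. The paper's route, by contrast, is self-contained and does not need the fixed-point description of the polytope. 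Both are valid; your argument also makes transparent why the normalization $m_\Omega(X_1)=\PP$ propagates to all fibres, which the paper's proof leaves implicit.
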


\begin{proof}
Since $\Omega$ is $\mathbb{S}^{1}_{\rho}$-invariant, the following integral depends only on $t=-\log{|\tau|}$,
\begin{equation*}
\int_{X_\tau}m_{\Omega_\tau}\Omega_{\tau}^{[n]}=\int_{X_1}m_\tau\omega^{[n]}_\tau=\int_{X_1}m_\tau(\omega+dd^{c}\phi_\tau)^{[n]}.
\end{equation*}
Let $V_\rho$ be the generator of the $\mathbb{S}^{1}_\rho$-action. By \eqref{m-tau} we have
\begin{equation*}
\frac{d}{dt}m_\tau=\frac{d}{dt}(m_\Omega\circ\varphi^{t}_{\mathcal{J}V_\rho})
=(\varphi^{t}_{\mathcal{J}V_\rho})^{*}(dm_\Omega,\mathcal{J} V_\rho)_{\Omega}=-(\varphi^{t}_{\mathcal{J}V_\rho})^{*}(dm_\Omega,dh^{\rho})_{\Omega},
\end{equation*}
where $\mathcal{J}$ denotes the complex structure on $\mathcal{X}$, $\varphi^{t}_{\mathcal{J}V_\rho}=\rho(e^{-t})$ is the flow of $\mathcal{J}V_\rho$ and $h^{\rho}$ is a $\Omega$-Hamiltonian function for $V_\rho$. 
On the other hand, we have
\begin{equation*}
\frac{d}{dt}\rho(\tau)^{*}\Omega=(\varphi_{\mathcal{J}V_\rho}^{t})^{*}\mathcal{L}_{\mathcal{J}V_\rho}\Omega=-(\varphi_{\mathcal{J}V_\rho}^{t})^{*}dd^ch^\rho.
\end{equation*}
It follows that 
\begin{align*}
\frac{d}{dt}\int_{X_1}m_\tau\omega^{[n]}_\tau=&\frac{d}{dt}\int_{X_1}m_\tau((\rho(\tau)^{*}\Omega)_{|X_1})^{[n]}\\
=&-\int_{X_\tau}((dm_\Omega,dh^\rho)_{\Omega})_{|X_\tau}\Omega_{\tau}^{[n]}-\int_{X_\tau}m_{\Omega_\tau} dd^{c}h^{\rho}_{|X_\tau} \wedge\Omega_{\tau}^{[n-1]}\\
=&-\int_{X_\tau}((dm_\Omega,dh^\rho)_{\Omega})_{|X_\tau}\Omega_{\tau}^{[n]}+\int_{X_\tau}m_{\Omega_\tau} \Delta_{\Omega_{\tau}}\big(h_{|X_\tau}^\rho\big) \Omega_{\tau}^{[n]}\\
=&-\int_{X_\tau}((dm_\Omega,dh^\rho)_{\Omega})_{|X_\tau}\Omega_{\tau}^{[n]}+\int_{X_\tau}\big(dm_{\Omega_\tau},
dh_{|X_\tau}^\rho\big)_{\Omega_{\tau}} \Omega_{\tau}^{[n]}=0, 
\end{align*}
where we have used that $((dm_\Omega,dh^\rho)_{\Omega})_{|X_\tau}=\big(dm_{\Omega_\tau},
dh_{|X_\tau}^\rho\big)_{\Omega_{\tau}}$
since the symplectic gradient of $m_\Omega:\mathcal{X}\rightarrow\tor^{*}$ is given by the $\tor$-valued fundamental vector field for the $\T$-action, and thus is tangent to the fibers. It follows that
\begin{equation*}
\int_{X_1}m_\tau\omega^{[n]}_\tau=\int_{X_1}m_1\omega^{[n]}.
\end{equation*}
\end{proof}

Since $m_\Omega:\mathcal{X}\rightarrow\tor^*$ is continuous it follows from  \Cref{lm-mom} that $m_{\Omega}(\mathcal{X})=\PP$.

\begin{defn}\label{Def-Fut-TC}
Let $(\mathcal{X},\mathcal{A},\T)$ be a smooth $\T$-compatible K\"ahler test configuration for the compact K\"ahler manifold $(X,\alpha)$ and $\uu\in C^{\infty}(\PP,\mathbb{R}_{>0})$, $\vv\in C^{\infty}(\PP,\mathbb{R})$. The $(\uu,\vv)$-{\it Futaki invariant} of $(\mathcal{X},\mathcal{A},\T)$ is defined to be the real number
\begin{align}
\begin{split}\label{Fut-TC}
\mathcal{F}_{\uu,\vv}(\mathcal{X},\mathcal{A})=&-\int_{\mathcal{X}}\big(\Scal_{\uu}(\Omega)-c_{(\uu,\vv)}(\alpha)\vv(m_{\Omega})\big)\Omega^{[n+1]}\\&+2\int_{\mathcal{X}}\uu(m_{\Omega})\pi^{\star}\FS\wedge \Omega^{[n]}
\end{split}
\end{align}
where $\Omega\in\mathcal{A}$ is a $\T$-invariant representative of $\mathcal{A}$, $\FS$ is the Fubini-Study metric on $\mathbb{P}^1$ with $\Ric(\FS)=\FS$, and $c_{(\uu,\vv)}(\alpha)$ is the $(\uu,\vv)$-slope of $(X,\alpha)$ given by \eqref{Top-const}.
\end{defn}
\begin{rem}\label{rem-Fut-sym}
\begin{enumerate}
\item\label{rem-Fut-sym-i} By \Cref{top-c}, \eqref{Fut-TC} is independent from the choice of a $\T$-invariant K\"ahler form $\Omega\in\mathcal{A}$. For $\uu=\vv\equiv 1$ we also recover the Futaki invariant of a smooth test configuration introduced in \cite{Dervan-Ross, Dyr1, Dyr2}.
\item\label{rem-Fut-sym-ii}  It is easy to show that 
\begin{comment}
Let $\Omega\in\mathcal{A}$ and $\omega\in\alpha$ be $\T$-invariant representatives. We denote by $\accentset{\circ}{\Omega}$ the two form on $\mathcal{X}\setminus X_0$ given given by $\Omega_{|T_{\hat x}X_{\pi(\hat{x})}}$ on $T_{\hat x}X_{\pi(\hat{x})}$, and by $0$ on $\mathcal{H}_{\hat{x}}:=(T_{\hat x}X_{\pi(\hat{x})})^{\perp_{\Omega_{\hat{x}}}}\cong T_{\pi(\hat{x})}\mathbb{P}^{1}$, for any $\hat{x}\in\mathcal{X}\setminus X_0$. Then there exist a $\mathbb{T}$-invariant function on $\mathcal{X}\setminus X_0$ such that $\Omega=\accentset{\circ}{\Omega}+Z\pi^{*}\FS$.
\end{comment}
\begin{align*}
2\int_{\mathcal{X}}\uu(m_{\Omega})\pi^{*}\FS\wedge \Omega^{[n]}=& 2\int_{\mathcal{X}\setminus X_0}\uu(m_{\Omega})\pi^{*}\FS\wedge \Omega^{[n]} \\
=&2\int_{\mathbb{P}^{1}\setminus\{0\}}\left(\int_{X_\tau}\uu(m_{\Omega_\tau})\Omega^{[n]}_\tau\right)\FS\\
=&2\vol(\mathbb{P}^{1})\Big(\int_{X_1}\uu(m_{\omega})\omega^{[n]}\Big)\\
=&(8\pi)\int_X\uu(m_{\omega})\omega^{[n]}, 
\end{align*}
where for passing from the second line to the third line we used that $\rho(\tau)^*\Omega_\tau$ and $\omega$ are in the same K\"ahler class $\mathcal{A}_{|X_1}$ on $X_1$, see \Cref{top-c}. Thus, we obtain the following equivalent expression for the $(\uu,\vv)$-Futaki invariant
\begin{align}\label{Fut-int+intX}
\begin{split}
\mathcal{F}_{\uu,\vv}(\mathcal{X},\mathcal{A})=&-\int_{\mathcal{X}}\big(\Scal_{\uu}(\Omega)-c_{(\uu,\vv)}(\alpha)\vv(m_{\Omega})\big)\Omega^{[n+1]}\\&+(8\pi)\int_X\uu(m_{\omega})\omega^{[n]}.
\end{split}
\end{align} 
\item It is easy to compute the $(\uu,\vv)$-Futaki invariant of the trivial test configuration $(\mathcal{X}_0,\mathcal{A}_0)$ (see \Cref{Test-triv-prod}), using that for a product K\"ahler form $\Omega_0:=\pi^{*}_X\omega+\pi_{\mathbb{P}^1}^{*}\FS$ we have $\Scal_{\uu}(\Omega_0)=\Scal_{\uu}(\omega)+2\uu(m_{\omega})$, then \eqref{Fut-TC} reduces to
\begin{equation*}
\mathcal{F}_{\uu,\vv}(\mathcal{X}_0,\mathcal{A}_0)=-4\pi\int_X\big(\Scal_{\uu}(\omega)-c_{(\uu,\vv)}(\alpha)\vv(m_{\omega})\big)\omega^{[n]}.
\end{equation*} 
\end{enumerate}
\end{rem}
\begin{defn}\cite{Dervan, Dyr2}
We say that $(X,\alpha,\T)$  is
\begin{enumerate}
\item $(\uu,\vv)$-{\it K-semistable} on smooth K\"ahler test configurations if $\mathcal{F}_{\uu,\vv}(\mathcal{X},\mathcal{A})\geq 0$ for any $\T$-compatible test configuration $(\mathcal{X},\mathcal{A},\T)$ of $(X,\alpha,\T)$ and $\mathcal{F}_{\uu,\vv}(\mathcal{X}_0,\mathcal{A}_0)=0$ for the trivial test configuration $(\mathcal{X}_0,\mathcal{A}_0)$.
\item $(\uu,\vv)$-{\it K-stable} on smooth  K\"ahler test configuations if it is $(\uu,\vv)$-{\it K-semistable} and $\mathcal{F}_{\uu,\vv}(\mathcal{X},\mathcal{A})=0$ if and only if $(\mathcal{X},\mathcal{A})=(\mathcal{X}_{\rm prod},\mathcal{A}_{\rm prod})$ is a product in the sense of \Cref{Test-triv-prod}.
\end{enumerate} 
\end{defn}

Following \cite{Dervan-Ross, Dyr1}, there is a family of $\T$-invariant K\"ahler potentials $\phi_\tau\in\mathcal{K}^{\T}_\omega(X_1)$, $\tau\in\mathbb{C}^*\subset\mathbb{P}^1$ given by the following Lemma.

\begin{lem}\label{lem-hat-Omega}
Let $\Omega\in\mathcal{A}$ be a $\mathbb{G}$-invariant K\"ahler form on $\mathcal{X}$. 
\begin{enumerate}
\item\label{lem-hat-Omega-i} On $\mathcal{X}^{\star}:=\mathcal{X}\setminus X_0$ we have 
\begin{equation}\label{Omega-Phi}
\Omega=\hat\omega+dd^{c}\Phi,
\end{equation}
where $\hat{\omega}:=(\pi_X\circ\lambda)^{\star}\omega$ with $\lambda$ the map given by \eqref{lambda} and $\pi_X$ is the projection on the first factor of $X\times (\mathbb{P}^1\setminus \{0\})$, and $\Phi$ is a smooth $\mathbb{G}$-invariant function on $\mathcal{X}^{\star}$, such that for any $\tau\in\mathbb{C}^*$, 
\begin{equation}\label{rho=ddc}
\phi_\tau:=\rho(\tau)^*(\Phi_{\mid X_\tau})\in \mathcal{K}^{\T}_\omega(X_1),
\end{equation}
satisfies
\begin{equation*}
\omega_\tau-\omega=dd^{c}\phi_\tau,
\end{equation*}
where we recall that $\omega_\tau$ is defined in \eqref{not}.
\item\label{lem-hat-Omega-iii} $m_{\hat{\omega}}^{\xi}:=m_{\Omega}^{\xi}-(d^c\Phi)(\xi)$, $\xi\in\tor$ is a moment map of $\hat{\omega}$ restrected to a fiber $X_\tau$ for the $\T$-action on $\mathcal{X}^\star$, satisfying $m_{\hat{\omega}}(\mathcal{X}^{\star})=\PP$. 
\end{enumerate}
\end{lem}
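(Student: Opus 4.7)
\textbf{Proof plan for Lemma~\ref{lem-hat-Omega}.}

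The strategy is essentially cohomological: show that $\Omega$ and $\hat\omega$ represent the same class on $\mathcal{X}^\star$, invoke a global $dd^c$-Lemma to produce a potential, and then read off the K\"ahler potential $\phi_\tau$ and the moment map statement by restriction. First, since $\lambda$ intertwines $\rho$ with the action $(x,z)\mapsto (x,\tau z)$ on $X\times(\mathbb{P}^1\setminus\{0\})$, the map $\pi_X\circ\lambda:\mathcal{X}^\star\to X$ is $\rho$-invariant, so $\hat\omega=(\pi_X\circ\lambda)^\star\omega$ is both $\T$-invariant and $\mathbb{S}^1_\rho$-invariant, hence $\mathbb{G}$-invariant. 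Moreover, $\pi_X\circ\lambda$ is a homotopy equivalence (as $\mathbb{P}^1\setminus\{0\}\cong\mathbb{C}$ is contractible), so the inclusion $X_1\hookrightarrow\mathcal{X}^\star$ induces an isomorphism on $H^{1,1}$. Since $\Omega|_{X_1}$ and $\hat\omega|_{X_1}$ both represent $\alpha$, the restriction of $\Omega-\hat\omega$ to $\mathcal{X}^\star$ is $d$-exact.

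Next, I apply the global $dd^c$-Lemma on the non-compact product $\mathcal{X}^\star\cong X\times\mathbb{C}$, valid because $X$ is compact K\"ahler and $\mathbb{C}$ is contractible Stein, to obtain a smooth real function $\Phi$ on $\mathcal{X}^\star$ with $\Omega=\hat\omega+dd^c\Phi$. Averaging $\Phi$ over the compact torus $\mathbb{G}=\T\times\mathbb{S}^1_\rho$ makes it $\mathbb{G}$-invariant without altering its $dd^c$. To verify \eqref{rho=ddc}, note that the $\rho$-invariance of $\pi_X\circ\lambda$ gives $\rho(\tau)^\star(\hat\omega|_{X_\tau})=\omega$ on $X_1$; restricting $\Omega=\hat\omega+dd^c\Phi$ to $X_\tau$ and pulling back by $\rho(\tau)$ yields $\omega_\tau=\omega+dd^c\phi_\tau$, and positivity of $\Omega_\tau$ shows $\phi_\tau\in\mathcal{K}^\T_\omega(X_1)$.

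For part \ref{lem-hat-Omega-iii}, I contract the identity $\Omega=\hat\omega+dd^c\Phi$ with $\xi\in\tor$. Since $\Phi$ is $\T$-invariant, $\iota_\xi d\Phi=0$ and $\mathcal{L}_\xi d^c\Phi=d^c\mathcal{L}_\xi\Phi=0$, so Cartan's formula reduces to
\begin{equation*}
\iota_\xi dd^c\Phi=-d\big((d^c\Phi)(\xi)\big).
\end{equation*}
Combining with $\iota_\xi\Omega=-dm_\Omega^\xi$ gives $\iota_\xi\hat\omega=-dm_{\hat\omega}^\xi$, so $m_{\hat\omega}$ is a momentum map for $\hat\omega$. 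To identify its image, I use that Lemma~\ref{lm-mom} already gives $m_\Omega(X_\tau)=\PP$ for every $\tau$; combining this with Lemma~\ref{lem-mom-norm}\ref{m3} applied to $\phi_\tau\in\mathcal{K}^\T_\omega$ yields $m_\tau^\xi=m_\omega^\xi+d^c\phi_\tau(\xi)$. Rearranging shows $\rho(\tau)^\star(m_{\hat\omega}|_{X_\tau})=m_\omega$, and hence $m_{\hat\omega}(X_\tau)=\PP$ for each $\tau\in\mathbb{C}^\star$, giving $m_{\hat\omega}(\mathcal{X}^\star)=\PP$.

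The main obstacle is the global $dd^c$-Lemma on the non-compact manifold $\mathcal{X}^\star$; this step is where the test-configuration structure (in particular the identification $\mathcal{X}^\star\cong X\times\mathbb{C}$) is essential, and is by now standard in the literature on K\"ahler test configurations (cf.~\cite{Dervan-Ross, Dyr1}). Once $\Phi$ is produced, all remaining assertions follow by restriction to fibers, equivariance of $\lambda$, and Cartan's formula, together with the previously established normalization results for moment maps.
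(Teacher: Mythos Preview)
Your proposal is correct and follows essentially the same approach as the paper: for part~\ref{lem-hat-Omega-i} the paper simply invokes \cite[Proposition 3.10]{Dyr1} for the existence of $\Phi$ (which is precisely the global $dd^c$-lemma on $X\times\mathbb{C}$ that you outline), then restricts to fibers and pulls back by $\rho(\tau)$; for part~\ref{lem-hat-Omega-iii} the paper likewise uses the Cartan-type contraction argument and then appeals to Lemmas~\ref{lem-mom-norm} and~\ref{lm-mom} to identify the image. Your write-up is slightly more expanded than the paper's but substantively identical.
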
 
\begin{proof}
(i) Using \cite[Proposition 3.10]{Dyr1} we can find a smooth function $\Phi$ on $\mathcal{X}^{\star}$ such that $\Omega=\hat{\omega}+dd^{c} \Phi$ on $\mathcal{X}^\star$. Taking the restriction of the latter equality to $X_\tau$ ($\tau\neq 0$) we have $\Omega_{\tau}=\rho(\tau^{-1})^{*}\omega+dd^{c}(\Phi_{|X_\tau})$, pulling back by $\rho(\tau)$ yields $\omega_\tau-\omega=dd^{c}\phi_\tau$.\\

(ii) By the relation \eqref{Omega-Phi} and the fact that the action of $\T$ preserves the fibers we obtain that $m_{\hat\omega}^{\xi}:=m_{\Omega}^{\xi}-(d^{c}\Phi)(\xi)$ is a momentum map of $(X_\tau,\hat{\omega}_{|X_\tau})$. It thus follows from Lemmas \ref{lem-mom-norm} and \ref{lm-mom} that $m_{\hat{\omega}}(X_\tau)=\PP$.
\end{proof}

The main result of this section is the following theorem which extends the results from \cite{Dervan-Ross, Dyr1} to arbitrary values of $\uu,\vv$:
\begin{thm}\label{Mabuchi-Slope}
Let $(\mathcal{X},\mathcal{A},\T)$ be a smooth $\T$-compatible K\"ahler test configuration, for a compact K\"ahler manifold $(X,\alpha,\T)$ and $\uu\in C^{\infty}(\PP,\mathbb{R}_{>0})$, $\vv\in C^{\infty}(\PP,\mathbb{R})$ are weight functions. If the central fiber $X_0$ is reduced, then 
\begin{equation*}
\underset{t\rightarrow +\infty}{\lim}\frac{\mathcal{M}_{\uu,\vv}(\phi_t)}{t}=\mathcal{F}_{\uu,\vv}(\mathcal{X},\mathcal{A}).
\end{equation*}
where $\phi_t:=\phi_{\tau}$ with $\tau=e^{-t+is}$ is given by \eqref{rho=ddc}. In particular if $\mathcal{M}_{\uu,\vv}$ is bounded from bellow, then $$\mathcal{F}_{\uu,\vv}(\mathcal{X},\mathcal{A})\geq 0.$$
\end{thm}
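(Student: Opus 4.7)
The strategy is to apply the Chen--Tian-type decomposition from \Cref{Chen-Tian} and analyze the asymptotic slope of each summand separately. Writing
\begin{equation*}
\frac{\mathcal{M}_{\uu,\vv}(\phi_t)}{t} = \frac{\mathcal{H}_\uu(\phi_t)}{t} - 2\frac{\mathcal{E}_\uu^{\Ric(\omega)}(\phi_t)}{t} + c_{(\uu,\vv)}(\alpha)\frac{\mathcal{E}_\vv(\phi_t)}{t},
\end{equation*}
the plan is to identify each limit as an intersection-theoretic quantity on the total space $(\mathcal{X},\mathcal{A})$, which then reassemble into the expression \eqref{Fut-int+intX} for $\mathcal{F}_{\uu,\vv}(\mathcal{X},\mathcal{A})$. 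The key geometric input is \Cref{lem-hat-Omega}: on $\mathcal{X}^{\star}$ we have $\Omega=\hat\omega+dd^c\Phi$ with $\phi_t=\rho(\tau)^*(\Phi_{|X_\tau})$ for $\tau=e^{-t+is}$, so $\dot{\phi}_t$ identifies with $\rho(\tau)^*(h^\rho_{|X_\tau})$ where $h^\rho$ is the $\Omega$-Hamiltonian potential of the generator $V_\rho$ of the $\mathbb{S}^1_\rho$-action. Moreover, by \Cref{lm-mom}, $m_{\phi_t}(X_1)=\PP$ for all $t$, so the weights $\uu(m_{\phi_t})$ and $\vv(m_{\phi_t})$ remain uniformly bounded along the family.

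I would first treat the ``linear'' terms $\mathcal{E}_\vv(\phi_t)$ and $\mathcal{E}_\uu^{\Ric(\omega)}(\phi_t)$. Differentiating along $t$ via \eqref{E} and \eqref{E-theta}, substituting $\dot{\phi}_t = \rho(\tau)^*(h^\rho_{|X_\tau})$ and applying Fubini on the $\pi:\mathcal{X}\to\mathbb{P}^1$ fibration converts the $1$-parameter fibre integrals into total-space integrals on $\mathcal{X}$, with boundary contributions controlled by $\pi^*\FS$. Following the approach of Dervan--Ross and Dyrefelt adapted to the weighted setting, and using the computation in the proof of \Cref{Lem-E-theta}, this yields
\begin{align*}
\lim_{t\to\infty}\frac{\mathcal{E}_\vv(\phi_t)}{t} &= -\int_{\mathcal{X}}\vv(m_\Omega)\Omega^{[n+1]} + 4\pi\int_{X}\vv(m_\omega)\omega^{[n]}, \\
\lim_{t\to\infty}\frac{\mathcal{E}_\uu^{\Ric(\omega)}(\phi_t)}{t} &= -\int_{\mathcal{X}}\bigl[\uu(m_\Omega)\Ric(\Omega)\wedge\Omega^{[n]} + \langle d\uu(m_\Omega), m_{\Ric(\Omega)}\rangle\Omega^{[n+1]}\bigr] \\ &\quad + (\text{boundary term from }\pi^*\FS),
\end{align*}
after choosing $\Ric(\Omega)$ as the $\T$-invariant closed $(1,1)$-form extending $\Ric(\omega)$ to the total space.

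The main obstacle lies in the entropy $\mathcal{H}_\uu(\phi_t)$. Here one writes
\begin{equation*}
\log\!\left(\frac{\omega_{\phi_t}^{n}}{\omega^n}\right) = \log\!\left(\frac{\Omega^{n+1}}{\pi^*\FS\wedge\hat{\omega}^{n}}\right)\bigg|_{X_\tau}\circ\rho(\tau) + O(1)
\end{equation*}
along the family, and invokes the reduced central fibre hypothesis to ensure that $\Omega^{n+1}/(\pi^{*}\FS\wedge \hat{\omega}^n)$ extends log-integrably across $X_0$; this is exactly the content of the analysis in \cite[Prop.~3.12]{Dyr1} and \cite[Thm.~3.9]{Dervan-Ross}, and the uniform bound on $\uu(m_{\phi_t})$ afforded by \Cref{lm-mom} lets one carry the weight inside and pass to the limit by dominated convergence. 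Combining this with the computation for $\mathcal{E}_\uu^{\Ric(\omega)}$ and using that, along fibres, $\Ric(\Omega_\tau)$ differs from $\Ric(\Omega)_{|X_\tau}$ by an explicit correction proportional to $dd^c$ of a fibre density, one obtains
\begin{equation*}
\lim_{t\to\infty}\frac{\mathcal{H}_\uu(\phi_t) - 2\mathcal{E}_\uu^{\Ric(\omega)}(\phi_t)}{t} = -\int_{\mathcal{X}}\Scal_\uu(\Omega)\,\Omega^{[n+1]} + 8\pi\int_{X}\uu(m_\omega)\omega^{[n]},
\end{equation*}
where one uses the definition \eqref{Scal-v} of $\Scal_\uu(\Omega)$ to combine the fibrewise-Ricci and Hessian-of-$\uu$ contributions into the total-space weighted scalar curvature.

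Summing the three asymptotic slopes recovers
\begin{equation*}
\lim_{t\to\infty}\frac{\mathcal{M}_{\uu,\vv}(\phi_t)}{t} = -\int_{\mathcal{X}}\bigl[\Scal_\uu(\Omega) - c_{(\uu,\vv)}(\alpha)\vv(m_\Omega)\bigr]\Omega^{[n+1]} + 8\pi\int_{X}\uu(m_\omega)\omega^{[n]},
\end{equation*}
which by \eqref{Fut-int+intX} is exactly $\mathcal{F}_{\uu,\vv}(\mathcal{X},\mathcal{A})$. The final assertion is then immediate: if $\mathcal{M}_{\uu,\vv}\geq -C$ on $\mathcal{K}^{\T}_\omega$, then $\mathcal{M}_{\uu,\vv}(\phi_t)/t\geq -C/t \to 0$, forcing $\mathcal{F}_{\uu,\vv}(\mathcal{X},\mathcal{A})\geq 0$. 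The two points where care is required are (i) the entropy singularity at $X_0$, where the reduced central fibre hypothesis is essential, and (ii) the bookkeeping of the boundary term $8\pi\int_{X}\uu(m_\omega)\omega^{[n]}$, which arises naturally from $\int_{\mathbb{P}^1}\FS$ contributions once the fibre integrals are rewritten as integrals on $\mathcal{X}$.
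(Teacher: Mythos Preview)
Your overall architecture---Chen--Tian decomposition plus term-by-term asymptotic slope---is exactly what the paper does. But two of your intermediate formulas are wrong, and they do not sum to the final answer you claim.

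First, the slope of $\mathcal{E}_\vv$. The paper (\Cref{lim-E}) proves
\[
\lim_{t\to\infty}\frac{\mathcal{E}_\vv(\phi_t)}{t}=\int_{\mathcal{X}}\vv(m_\Omega)\,\Omega^{[n+1]},
\]
with \emph{positive} sign and \emph{no} boundary term. Your formula has the opposite sign and a spurious $4\pi\int_X\vv(m_\omega)\omega^{[n]}$. If you add your two displayed slope limits together, you get $-\int_{\mathcal{X}}[\Scal_\uu(\Omega)+c_{(\uu,\vv)}(\alpha)\vv(m_\Omega)]\Omega^{[n+1]}+8\pi\int_X\uu+4\pi c_{(\uu,\vv)}(\alpha)\int_X\vv$, which is not \eqref{Fut-int+intX}. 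A related slip: from \eqref{Omega-Phi} one has $\mathcal{L}_{\mathcal{J}V_\rho}\Omega=-dd^c h^\rho$, so $\dot\phi_t=-\rho(\tau)^*(h^\rho_{|X_\tau})+a(\tau)$ for some $a(\tau)$; your identification drops both the sign and the additive constant, which is why your ``Fubini'' argument produces the wrong answer. The paper avoids this pitfall by never differentiating in $t$: it shows directly that $\pi_\star(\vv(m_\Omega)\Omega^{[n+1]})=dd^c\mathcal{E}_\vv(\phi_\tau)$ as currents on $\mathbb{C}^\star$, then applies Green--Riesz. The same pushforward technique is used for the $\mathcal{E}_\uu^{\Ric(\omega)}$ term (\Cref{lem-weak-ddcE0}).

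Second, the entropy. You are right that the reduced hypothesis enters here, but the paper does not take the limit of $\mathcal{H}_\uu(\phi_t)/t$ directly. Instead it replaces $\log(\omega_{\phi_t}^n/\omega^n)$ by the pullback $\psi_t$ of $\Psi=\log(\Omega^{n+1}/(\hat\omega^n\wedge\pi^*\FS))$, defines an auxiliary functional $\mathcal{M}^\Psi_{\uu,\vv}$ with this modified entropy, proves $\lim_{t\to\infty}\mathcal{M}^\Psi_{\uu,\vv}(\phi_t)/t=\mathcal{F}_{\uu,\vv}(\mathcal{X},\mathcal{A})$ via the pushforward technique, and then shows (\Cref{lem-int-bounded}) that $\mathcal{M}_{\uu,\vv}(\phi_t)-\mathcal{M}^\Psi_{\uu,\vv}(\phi_t)=\int_{X_\tau}\log(\Omega^n\wedge\pi^*\FS/\Omega^{n+1})\,\uu(m_\Omega)\Omega_\tau^{[n]}$ is bounded precisely when $X_0$ is reduced. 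Your sketch gestures at this but conflates the two steps; in particular the ``$O(1)$'' you write before invoking reducedness is exactly the quantity whose boundedness \emph{is} the content of the reduced hypothesis, not something available beforehand.
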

Before we give the proof we need a couple of technical lemmas.
\begin{lem}\label{lim-E}
Under the hypotheses of \Cref{Mabuchi-Slope} we have,
\begin{equation}\label{eq-lim-E}
\underset{t\rightarrow +\infty}{\lim}\dfrac{\mathcal{E}_{\vv}(\phi_t)}{t}=\int_{\mathcal{X}} \vv(m_{\Omega})\Omega^{[n+1]}.
\end{equation}
\end{lem}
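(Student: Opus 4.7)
The plan is to apply L'H\^opital's rule: since $\mathcal{E}_\vv(\phi_T)=\int_0^T\dot{\mathcal{E}}_\vv(\phi_t)\,dt$, it suffices to show that $\dot{\mathcal{E}}_\vv(\phi_t)$ converges to $\int_\mathcal{X}\vv(m_\Omega)\Omega^{[n+1]}$ as $t\to+\infty$. Differentiating $\phi_t = \rho(\tau)^*(\Phi|_{X_\tau})$ with $\tau = e^{-t+is}$ and using that $\rho(e^{-t})$ is the time-$t$ flow of the real vector field $-\mathcal{J}V_\rho$ (where $V_\rho$ generates $\mathbb{S}^1_\rho$), one finds $\dot\phi_t = -\rho(\tau)^*(H|_{X_\tau})$ with $H := \mathcal{L}_{\mathcal{J}V_\rho}\Phi \in C^\infty(\mathcal{X}^\star)$. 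Substituting into \eqref{E} and changing variables via $\rho(\tau):X_1\to X_\tau$, which by \Cref{lem-hat-Omega} and \Cref{lm-mom} intertwines $\omega_t$ with $\Omega_\tau$ and $m_{\phi_t}$ with $m_{\Omega_\tau}\circ\rho(\tau)$, yields
\[
\dot{\mathcal{E}}_\vv(\phi_t) = -\int_{X_\tau} H\,\vv(m_{\Omega_\tau})\,\Omega_\tau^{[n]}.
\]

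Next, I would show that $H$ extends smoothly to $\mathcal{X}$ and equals $h^\rho+c_2$, where $h^\rho\in C^\infty(\mathcal{X})$ is the $\Omega$-Hamiltonian for $V_\rho$ and $c_2\in\R$. The $\mathbb{G}$-equivariance of $\lambda$ renders $V_\rho$ vertical for the holomorphic map $\pi_X\circ\lambda$, hence $\mathcal{L}_{\mathcal{J}V_\rho}\hat\omega=0$ and therefore $dd^cH = \mathcal{L}_{\mathcal{J}V_\rho}\Omega = dd^ch^\rho$ on $\mathcal{X}^\star$. Thus $H-h^\rho$ is a $\mathbb{G}$-invariant pluriharmonic function on $\mathcal{X}^\star$. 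Fixing $\tau$ and using the maximum principle on the compact $X$ shows it depends only on $\tau$, and the $\mathbb{S}^1_\rho$-invariance together with smoothness at $\tau=\infty\in\mathbb{P}^1\setminus\{0\}$ rules out any $\log|\tau|^2$ contribution, forcing it to be a constant $c_2$. Since $h^\rho$ is globally smooth on $\mathcal{X}$, $H=h^\rho+c_2$ extends smoothly across $X_0$; combined with the reducedness of $X_0$ (which guarantees $\Omega_\tau\to\Omega_0$ smoothly), the fiber integral converges and
\[
\lim_{t\to+\infty}\dot{\mathcal{E}}_\vv(\phi_t) = -\int_{X_0} H\,\vv(m_{\Omega_0})\,\Omega_0^{[n]}.
\]

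The main obstacle I expect is to identify this limit with $\int_\mathcal{X}\vv(m_\Omega)\Omega^{[n+1]}$. My plan is to apply Stokes' theorem on the exhaustion $\mathcal{X}^\star_\epsilon:=\mathcal{X}\setminus\pi^{-1}(\{|\tau|<\epsilon\})$ to the $(2n+1)$-form $\vv(m_\Omega)\,d^c\Phi\wedge\Omega^{[n]}$, exploiting $dd^c\Phi = \Omega - \hat\omega$ together with $\hat\omega^{n+1}=0$. After repeated integration by parts using that $\hat\omega$ is closed, the bulk term reduces (up to pieces that vanish as $\epsilon\to 0$) to $\int_{\mathcal{X}^\star_\epsilon}\vv(m_\Omega)\Omega^{[n+1]}$, while the boundary term on $\pi^{-1}(\{|\tau|=\epsilon\})$, analyzed via the $\Phi = \Psi-c\log|\pi|^2$ asymptotics forced by the reducedness of $X_0$ and the identification $H=\mathcal{L}_{\mathcal{J}V_\rho}\Phi$, converges to $-\int_{X_0}H\,\vv(m_{\Omega_0})\Omega_0^{[n]}$ with the correct sign. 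Combining this matching with the previous limit and applying L'H\^opital's rule to $\mathcal{E}_\vv(\phi_T)/T$ then yields \eqref{eq-lim-E}.
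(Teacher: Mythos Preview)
Your overall L'H\^opital strategy matches the paper's endgame (the paper also concludes by showing $\lim_{t\to\infty}\frac{d}{dt}\mathcal{E}_\vv(\phi_t) = \int_\mathcal{X}\vv(m_\Omega)\Omega^{[n+1]}$), and your identification of $\dot\phi_t$ with the fiber restriction of $\pm h^\rho + \text{const}$ is correct up to signs. However, two points need correction.

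First, reducedness of $X_0$ plays no role in this lemma. Your claim that it ``guarantees $\Omega_\tau\to\Omega_0$ smoothly'' is false: $X_0$ can be singular even when reduced, so there is no smooth family of fibers near $\tau=0$. The fiber integrals $\int_{X_\tau}H\,\vv(m_{\Omega_\tau})\Omega_\tau^{[n]}$ do converge as $\tau\to 0$, but for a different reason---the integrand is the restriction to fibers of a globally smooth form on the compact $\mathcal{X}$, and fiber integration along a proper flat family is continuous (this is what the paper cites \cite{Barlet} for elsewhere). Likewise, the ``$\Phi = \Psi - c\log|\pi|^2$ asymptotics forced by reducedness'' you invoke in step~5 are neither established nor relevant here; reducedness enters the proof of \Cref{Mabuchi-Slope} only through \Cref{lem-int-bounded}, for the entropy term.

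Second---and this is the real gap---your step~5 is not carried out. Applying Stokes to $\vv(m_\Omega)\,d^c\Phi\wedge\Omega^{[n]}$ on $\mathcal{X}_\epsilon^\star$ produces, besides the desired $\vv(m_\Omega)\Omega^{[n+1]}$, two unwanted bulk terms: $-\vv(m_\Omega)\,\hat\omega\wedge\Omega^{[n]}$ (from $dd^c\Phi = \Omega - \hat\omega$) and $d\vv(m_\Omega)\wedge d^c\Phi\wedge\Omega^{[n]}$. Neither vanishes, and iterating the integration by parts to eliminate them is precisely the computation you are trying to shortcut. The paper sidesteps this by proving directly the push-forward identity
\[
\pi_*\bigl(\vv(m_\Omega)\Omega^{[n+1]}\bigr) \;=\; dd^c\mathcal{E}_\vv(\phi_\tau) \qquad \text{on } \mathbb{C}^*,
\]
obtained by writing $\mathcal{E}_\vv(\phi_\tau) = \int_0^1(\cdots)\,d\epsilon$ along the linear path $\Omega_\epsilon = \epsilon\Omega + (1-\epsilon)\hat\omega$ and integrating by parts in $\epsilon$ (which absorbs exactly the cross-terms above). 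Once this identity is in hand, integrating over $\mathbb{P}^1\setminus\mathbb{D}_\epsilon$ and applying Green--Riesz gives
\[
\left.\frac{d}{dt}\mathcal{E}_\vv(\phi_t)\right|_{t=-\log\epsilon} \;=\; \int_{\mathcal{X}\setminus\pi^{-1}(\mathbb{D}_\epsilon)}\vv(m_\Omega)\Omega^{[n+1]}
\]
at each finite $t$, and letting $\epsilon\to 0$ finishes the proof with no boundary analysis of $\Phi$ near $X_0$ required.
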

\begin{proof}
We will start by showing as in \cite{Dervan-Ross, Dyr1, tian} that,
\begin{equation}\label{pi-Omega-n}
\pi_{\star}\big(\vv(m_{\Omega})\Omega^{[n+1]}\big)=dd^{c}\mathcal{E}_{\vv}(\phi_\tau),
\end{equation}
on $\mathbb{C}^{\star}\subset\mathbb{P}^1$, in the sens of currents. From the very definition of the functional $\mathcal{E}_{\vv}$ (see \eqref{E}) we have 
\begin{align*}
\mathcal{E}_{\vv}(\phi_\tau)=&\int^{1}_0 \left(\int_X\phi_\tau \vv( m_{\epsilon\phi_\tau})\omega_{\epsilon\phi_\tau}^{[n]}\right)d\epsilon\\
=&\int^{1}_0 \left(\int_X\phi_\tau  \vv(\epsilon m_{\tau}+(1-\epsilon)m_\omega)(\epsilon\omega_{\tau}+(1-\epsilon)\omega)^{[n]}\right)d\epsilon\\
=&\int^{1}_0 \left(\int_{X_\tau}(\Phi\vv(m_{\Omega_\epsilon})\Omega_{\epsilon}^{[n]})_{|X_\tau}\right)d\epsilon 
\end{align*}
where $\Omega_\epsilon:=\epsilon\Omega+(1-\epsilon)\hat\omega$, $m_{\Omega_\epsilon}:=\epsilon m_{\Omega}+(1-\epsilon)m_{\hat\omega}$, and $\hat{\omega}$, $\Phi$ are given in \Cref{lem-hat-Omega}. It thus follows that $\mathcal{E}_{\vv}(\phi_\tau)$ extends to a smooth function on $\mathbb{P}^{1}\setminus\{0\}$. Let $f(\tau)$ be a smooth function with compact support in $\mathbb{C}^\star\subset\mathbb{P}^{1}$. Letting $\hat{f}:=\pi^{\star}f$ we have
\begin{align}
\begin{split}\label{eq-ddc-a}
\langle dd^{c}\mathcal{E}_{\vv}(\phi_\tau),f\rangle=&\int^{1}_0\left(\int_{\mathbb{C}^\star}dd^{c}f(\tau)\int_{X_\tau}(\Phi \vv(m_{\Omega_\epsilon})\Omega_{\epsilon}^{[n]})_{|X_\tau}\right)d\epsilon\\
=&\int^{1}_0\left(\int_{\mathcal{X}^{\star}}\Phi \vv(m_{\Omega_\epsilon})dd^{c}\hat{f}\wedge\Omega_{\epsilon}^{[n]}\right)d\epsilon\\
=& -\int^{1}_0\left(\int_{\mathcal{X}^{\star}}\Phi d\hat{f}\wedge d^{c}\vv(m_{\Omega_\epsilon})\wedge\Omega_{\epsilon}^{[n]}\right)d\epsilon\\
&-\int^{1}_0\left(\int_{\mathcal{X}^{\star}}\vv(m_{\Omega_\epsilon}) d\hat{f}\wedge d^{c}\Phi\wedge\Omega_{\epsilon}^{[n]}\right)d\epsilon\\
=&-\int^{1}_0\left(\int_{\mathcal{X}^{\star}}\Phi d\hat{f}\wedge d^{c}\vv(m_{\Omega_\epsilon})\wedge\Omega_{\epsilon}^{[n]}\right)d\epsilon\\
&+\int^{1}_0\left(\int_{\mathcal{X}^{\star}}\hat{f} \vv(m_{\Omega_\epsilon}) dd^{c}\Phi\wedge\Omega_{\epsilon}^{[n]}\right)d\epsilon\\
&+\int^{1}_0\left(\int_{\mathcal{X}^{\star}}\hat{f}d\vv(m_{\Omega_\epsilon}) \wedge d^{c}\Phi\wedge\Omega_{\epsilon}^{[n]}\right)d\epsilon,
\end{split}
\end{align}
The first integral in the last equality vanishes. Indeed, for a basis $(\xi_i)_{i=1,\cdots,\ell}$ of $\tor$ we have
\begin{equation*}
d\hat{f}\wedge d^{c}\vv(m_{\hat\Omega_\epsilon})\wedge\Omega_{\epsilon}^{[n]}=\sum_{i=1}^{\ell}\vv_{,i}(m_{\Omega_\epsilon})(df)(\pi_*\xi_i)\Omega_{\epsilon}^{[n+1]}=0,
\end{equation*}
since the action of $\T$ preserves the fibers of $\mathcal{X}\to \mathbb{P}^{1}$. 
For the remaining integrals in the last equality in \eqref{eq-ddc-a}, integration by parts in the variable $\epsilon$ gives
\begin{align}
\begin{split}\label{eq-ddc-b}
&\int^{1}_0\left(\int_{\mathcal{X}^{\star}}\hat{f} \vv(m_{\Omega_\epsilon}) dd^{c}\Phi \wedge\Omega_{\epsilon}^{[n]}\right)d\epsilon\\
=&\int^{1}_0\left(\int_{\mathcal{X}^{\star}}\hat{f} \vv(m_{\Omega_\epsilon}) \frac{d}{d\epsilon}\Omega_{\epsilon}^{[n+1]}\right)d\epsilon\quad \text{(since }\Omega_\epsilon:=\hat\omega+\epsilon dd^{c}\Phi\text{)}\\
=&\int_{\mathcal{X}^{\star}}\hat{f} \vv(m_{\Omega})\Omega^{[n+1]}-\int^{1}_0\left(\int_{\mathcal{X}^{\star}}\hat{f} \left(\frac{d}{d\epsilon}\vv(m_{\Omega_\epsilon})\right) \Omega_{\epsilon}^{[n+1]}\right)d\epsilon\\
=&\int_{\mathcal{X}^{\star}}\hat{f} \vv(m_{\Omega})\Omega^{[n+1]}-\int^{1}_0\left(\int_{\mathcal{X}^{\star}}\hat{f} d\vv(m_{\Omega_\epsilon})\wedge d^{c}\Phi\wedge \Omega_{\epsilon}^{[n]}\right)d\epsilon,
\end{split}
\end{align}
where for passing from the third line to the last line we used the following 
\begin{align*}
\left(\frac{d}{d\epsilon}\vv(m_{\Omega_\epsilon})\right) \Omega_{\epsilon}^{[n+1]}=&\sum_{i=1}^{\ell}\vv_{,i}(m_{\Omega_\epsilon})d^{c}\Phi(\xi_i)\Omega_{\epsilon}^{[n+1]}\\
=&\sum_{i=1}^{\ell}\vv_{,i}(m_{\Omega_\epsilon})dm^{\xi_i}_{\Omega_{\epsilon}}\wedge d^{c}\Phi\wedge\Omega_{\epsilon}^{[n]}\\
=&\vv(m_{\Omega_\epsilon})\wedge d^{c}\Phi\wedge \Omega_{\epsilon}^{[n]}.
\end{align*}
 By substituting \eqref{eq-ddc-b} in \eqref{eq-ddc-a} we get \eqref{pi-Omega-n}.

Now we establish \eqref{eq-lim-E} using \eqref{pi-Omega-n}, following the proof \cite[Theorem 4.9]{Dyr1}. Let $\mathbb{D}_\epsilon\subset\mathbb{C}$ be the disc of center $0$ and radius $\epsilon>0$. Using the change of coordinates $(t,s)$ given by $\tau=e^{-t+is}\in\mathbb{C}$ and the $\mathbb{S}^{1}$-invariance of $\mathcal{E}_{\vv}(\phi_\tau)$ we calculate
\begin{align*}
\int_{\mathcal{X}}\vv(m_{\Omega})\Omega^{[n+1]}=&\underset{\epsilon\to 0}{\lim}\int_{\mathcal{X}\setminus\pi^{-1}(\mathbb{D}_\epsilon)}\vv(m_{\Omega})\Omega^{[n+1]}\\
=&\underset{\epsilon\to 0}{\lim}\int_{\mathbb{P}^1\setminus\mathbb{D}_\epsilon}\pi_\star\big( \vv(m_{\Omega})\Omega^{[n+1]}\big)\\
=&\underset{\epsilon\to 0}{\lim}\int_{\mathbb{P}^1\setminus\mathbb{D}_\epsilon}dd^{c}\mathcal{E}_{\vv}(\phi_\tau)\quad\text{by \eqref{pi-Omega-n}}\\
=&\underset{\epsilon\to 0}{\lim}\left(\left.\frac{d}{dt}\right|_{t=-\log\epsilon}\mathcal{E}_{\vv}(\phi_t)\right)\quad\text{by the Green-Riesz formula} \\
=&\underset{t\rightarrow+\infty}{\lim}\frac{d}{dt}\mathcal{E}_{\vv}(\phi_t)=\underset{t\rightarrow+\infty}{\lim}\frac{\mathcal{E}_{\vv}(\phi_t)}{t}.
\end{align*}
\end{proof}

Let $\Omega\in\mathcal{A}$ be $\mathbb{G}$-invariant K\"ahler form. We consider the K\"ahler metric on $\mathcal{X}^{\star}$ given by
$\hat{\omega}+\pi^*\FS=\lambda^*(\pi^{*}_X\omega+\pi_{\mathbb{P}^{1}}^{*}\FS)$ (by the equivariance of $\lambda$), where $\hat{\omega}:=(\pi_X\circ\lambda)^{*}\omega$ with $\lambda$ the  map given by \eqref{lambda} and $\pi_X,\pi_{\mathbb{P}^{1}}$ denote the projections on the factors of $X\times(\mathbb{P}^{1}\setminus\{0\})$. Then we have on $\mathcal{X}^{\star}$
\begin{equation}\label{eq-ric-Om}
\Ric(\Omega)-\pi^{*}\FS-\widehat{\Ric(\omega)}=\frac{1}{2}dd^{c}\Psi,
\end{equation}
where $\Psi=\log\left(\frac{\Omega^{n+1}}{\hat{\omega}^{n}\wedge\pi^{*}\FS}\right)$ and $\widehat{\Ric(\omega)}:=(\pi_X\circ\lambda)^{*}\Ric(\omega)$. Using \eqref{eq-ric-Om} and \Cref{lm-m} \ref{lm-m-ii}, we obtain on $\mathcal{X}^\star$
\begin{equation}\label{m-Ric-psi}
m_{\widehat{\Ric(\omega)}}^{\xi}=m_{\Ric(\Omega)}^{\xi}+\frac{1}{2}(d^{c}\Psi)(\xi),
\end{equation}
for any $\xi\in \tor$, where $m_{\widehat\Ric(\Omega)}:=(\pi_X\circ\lambda)^*m_{\Ric(\omega)}$.

\begin{lem}\label{lem-weak-ddcE0}
Under the hypotheses of \Cref{Mabuchi-Slope}, we have 
\begin{equation}\label{weak-ddcE0}
dd^{c}\mathcal{E}^{\Ric(\omega)}_{\uu}(\phi_\tau)=\pi_*\left(\uu(m_{\Omega})\widehat{\Ric(\omega)}\wedge \Omega^{[n]}+\langle (d\uu)(m_{\Omega}),m_{\widehat{\Ric(\omega)}}\rangle\Omega^{[n+1]}\right).
\end{equation}
\end{lem}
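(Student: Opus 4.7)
The plan is to adapt the argument establishing \eqref{pi-Omega-n} in the proof of \Cref{lim-E}. Using the defining 1-form \eqref{E-theta} of $\mathcal{E}^{\Ric(\omega)}_\uu$ together with the identifications (via $\rho(\tau)$) of $(\hat\omega|_{X_\tau}, m_{\hat\omega}|_{X_\tau}, \widehat{\Ric(\omega)}|_{X_\tau}, m_{\widehat{\Ric(\omega)}}|_{X_\tau})$ with $(\omega, m_\omega, \Ric(\omega), m_{\Ric(\omega)})$ on $X_1$, I would first rewrite
\begin{equation*}
\mathcal{E}^{\Ric(\omega)}_\uu(\phi_\tau) = \int_0^1 \int_{X_\tau}\bigl(\Phi\,\eta_\epsilon\bigr)_{|X_\tau}\,d\epsilon,
\end{equation*}
where $\Omega_\epsilon := \hat\omega + \epsilon\,dd^c\Phi$ and
\begin{equation*}
\eta_\epsilon := \uu(m_{\Omega_\epsilon})\,\widehat{\Ric(\omega)}\wedge\Omega_\epsilon^{[n-1]} + \bigl\langle(d\uu)(m_{\Omega_\epsilon}),\,m_{\widehat{\Ric(\omega)}}\bigr\rangle\,\Omega_\epsilon^{[n]}.
\end{equation*}

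Then for a test function $f$ with compact support in $\mathbb{C}^\star$ and $\hat f := \pi^*f$, one has
\begin{equation*}
\bigl\langle dd^c\mathcal{E}^{\Ric(\omega)}_\uu(\phi_\tau),\,f\bigr\rangle = \int_0^1 \int_{\mathcal{X}^\star}\Phi\,dd^c\hat f\wedge\eta_\epsilon\,d\epsilon.
\end{equation*}
I would then integrate by parts to move $dd^c$ off $\hat f$, using that both $\widehat{\Ric(\omega)}$ and each $\Omega_\epsilon$ are closed. The key structural input---as in the proof of \Cref{lim-E}---is that $\T$ preserves the fibers of $\pi$, so $d\hat f(\xi) = 0$ for $\xi\in\tor$, and therefore every term generated in which $d\hat f$ wedges against $dm^\xi_{\Omega_\epsilon}$ (i.e.\ against the differential of $\uu(m_{\Omega_\epsilon})$ or of $\langle d\uu(m_{\Omega_\epsilon}), m_{\widehat{\Ric(\omega)}}\rangle$) vanishes after wedging with a fiber-top form.

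After these cancellations, the residual integrand is $\hat f\,dd^c\Phi\wedge\eta_\epsilon$. Using the identities $dd^c\Phi\wedge\Omega_\epsilon^{[k]} = \tfrac{d}{d\epsilon}\Omega_\epsilon^{[k+1]}$, integration by parts in $\epsilon$ collects two boundary contributions at $\epsilon = 1$, producing precisely the two summands on the right-hand side of \eqref{weak-ddcE0}; the boundary terms at $\epsilon = 0$ vanish since $\widehat{\Ric(\omega)}\wedge\hat\omega^{[n]} = 0$ and $\hat\omega^{[n+1]} = 0$ (both being pullbacks via $\pi_X\circ\lambda$ of forms of degree exceeding the real dimension of $X$). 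The bulk $\int_0^1$-integrals produced by the $\epsilon$-integration by parts then cancel the bulk remainders left over from the first integration by parts in $\hat f$, in structural parallel with the cancellation in \Cref{lim-E}.

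The main obstacle is the combinatorial bookkeeping matching these several remainder terms, particularly between the two pieces of $\eta_\epsilon$ and the two successive layers of integration by parts. Conceptually this is a ``parametrized, pushforward'' analogue of the closedness identity \eqref{dA} used in \Cref{Lem-E-theta}; the new ingredient over \Cref{lim-E} is the systematic exchange of derivatives of $\uu(m_{\Omega_\epsilon})$ and of $\langle d\uu(m_{\Omega_\epsilon}), m_{\widehat{\Ric(\omega)}}\rangle$ for wedges against $d^c\Phi$ along the $\tor$-directions, via $dm^\xi_{\Omega_\epsilon} = -\iota_\xi\Omega_\epsilon$ and $\mathcal{L}_\xi\Phi = 0$.
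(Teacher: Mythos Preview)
Your outline is correct and follows essentially the same route as the paper's proof: express $\mathcal{E}^{\Ric(\omega)}_\uu(\phi_\tau)$ as a fiber integral of $\Phi\,\eta_\epsilon$, move $dd^c$ off $\hat f$ by parts, then integrate by parts in $\epsilon$ using $dd^c\Phi\wedge\Omega_\epsilon^{[k]}=\tfrac{d}{d\epsilon}\Omega_\epsilon^{[k+1]}$, with the $\epsilon=1$ boundary giving the answer and the bulk term cancelling the cross terms from the first integration by parts. One point to sharpen: the vanishing of the ``$I_1$'' block (where $d\hat f$ meets $d^c$ of the coefficient functions) is not purely a consequence of $d\hat f(\xi)=0$; the differential of $\langle d\uu(m_{\Omega_\epsilon}),m_{\widehat{\Ric(\omega)}}\rangle$ produces a term $\uu_{,i}(m_{\Omega_\epsilon})\,d\hat f\wedge d^c m^{\xi_i}_{\widehat{\Ric(\omega)}}\wedge\Omega_\epsilon^{[n]}$ that does not vanish on its own---it cancels against the contraction $(d\hat f\wedge d^c\uu(m_{\Omega_\epsilon}),\widehat{\Ric(\omega)})_{\Omega_\epsilon}\Omega_\epsilon^{[n+1]}$ coming from the other summand of $\eta_\epsilon$, via the moment-map identity for $\widehat{\Ric(\omega)}$.
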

\begin{proof}
From the very definition of $\mathcal{E}^{\Ric(\omega)}_{\uu}$ (see \eqref{E-theta}) we have
\begin{equation*}
\mathcal{E}^{\Ric(\omega)}_{\uu}(\phi_\tau)=\int^{1}_0\Big(\int_{X_\tau}\big[\Phi \big(\uu(m_{\Omega_\epsilon})\widehat{\Ric(\omega)}\wedge \Omega^{[n-1]}_\epsilon+\langle (d\uu)(m_{\Omega_\epsilon}),m_{\widehat{\Ric(\omega)}}\rangle\Omega^{[n]}_\epsilon\big)\big]_{|X_\tau}\Big)d\epsilon,
\end{equation*}
where $\Omega_\epsilon:=\epsilon\Omega+(1-\epsilon)\hat\omega$, $m_{\Omega_\epsilon}:=\epsilon m_{\Omega}+(1-\epsilon)m_{\hat\omega}$, and $\hat{\omega}$, $\Phi$ are given in \Cref{lem-hat-Omega}. As in the proof of \Cref{lim-E}, we see that $\mathcal{E}_{\uu}^{\Ric(\omega)}(\phi_\tau)$ extends to a smooth function on $\mathbb{P}^{1}\setminus\{0\}$. Furthermore, for any smooth function $f(\tau)$ with compact support in $\mathbb{C}^\star\subset\mathbb{P}^{1}$, we have
\begin{align*}
&\langle dd^{c}\mathcal{E}^{\Ric(\omega)}_{\uu}(\phi_\tau),f\rangle=\int_{\mathbb{C}^\star}\mathcal{E}^{\Ric(\omega)}_{\uu}(\phi_\tau) dd^{c}f=\\
=&\int^{1}_0\Big(\int_{\mathbb{C}^\star} dd^{c}f\int_{X_\tau}\big[\Phi \big(\uu(m_{\Omega_\epsilon})\widehat{\Ric(\omega)}\wedge \Omega^{[n-1]}_\epsilon+\langle (d\uu)(m_{\Omega_\epsilon}),m_{\widehat{\Ric(\omega)}}\rangle\Omega^{[n]}_\epsilon\big)\big]_{|X_\tau}\Big)d\epsilon\\
=&\int^{1}_0\Big(\int_{\mathcal{X}^{\star}} \hat{f}dd^{c}\big[\Phi \big(\uu(m_{\Omega_\epsilon})\widehat{\Ric(\omega)}\wedge \Omega^{[n-1]}_\epsilon+\langle (d\uu)(m_{\Omega_\epsilon}),m_{\widehat{\Ric(\omega)}}\rangle\Omega^{[n]}_\epsilon\big)\big]\Big)d\epsilon\\
=&-\int^{1}_0\Big(\int_{\mathcal{X}^{\star}} \Phi \big[d\hat{f}\wedge
 d^c(\uu(m_{\Omega_\epsilon}))\wedge \widehat{\Ric(\omega)}\wedge \Omega^{[n-1]}_\epsilon+d\hat{f}\wedge
 d^c(\langle (d\uu)(m_{\Omega_\epsilon}),m_{\widehat{\Ric(\omega)}}\rangle)\wedge \Omega^{[n]}_\epsilon\big]\Big)d\epsilon\\
&+\int^{1}_0\Big(\int_{\mathcal{X}^{\star}} \hat{f}\big[d(\langle (d\uu)(m_{\Omega_\epsilon}),m_{\widehat{\Ric(\omega)}}\rangle)\wedge
 d^c\Phi\wedge\Omega^{[n]}_\epsilon+d(\uu(m_{\Omega_\epsilon}))\wedge
 d^c\Phi\wedge \widehat{\Ric(\omega)}\wedge \Omega^{[n-1]}_\epsilon\big]\Big)d\epsilon\\
&+\int^{1}_0\Big(\int_{\mathcal{X}^{\star}} \hat{f}\big[\uu(m_{\Omega_\epsilon})\widehat{\Ric(\omega)}\wedge(dd^{c}\Phi)\wedge \Omega^{[n-1]}_\epsilon+\langle (d\uu)(m_{\Omega_\epsilon}),m_{\widehat{\Ric(\omega)}}\rangle(dd^{c}\Phi)\wedge\Omega^{[n]}_\epsilon\big)\big]\Big)d\epsilon\\
=&I_1+I_2+I_3,
\end{align*}
where $I_1,I_2,I_3$ respectively denote the integrals on the first, second and third lines of the last equality. Now we compute each integral individually. We have 
\begin{align*}
&d\hat{f}\wedge
 d^c(\langle (d\uu)(m_{\Omega_\epsilon}),m_{\widehat{\Ric(\omega)}}\rangle)\wedge \Omega^{[n]}_\epsilon+d\hat{f}\wedge
 d^c(\uu(m_{\Omega_\epsilon}))\wedge \widehat{\Ric(\omega)}\wedge \Omega^{[n-1]}_\epsilon\\
=&\sum_{i,j}\uu_{,ij}(m_{\Omega_\epsilon})(d\hat{f})(\xi_j)m^{\xi_i}_{\widehat{\Ric(\omega)}} \Omega^{[n+1]}_\epsilon +\sum_{i}\uu_{,i}(m_{\Omega_\epsilon})d\hat{f}\wedge
 d^c m^{\xi_i}_{\widehat{\Ric(\omega)}}\wedge \Omega^{[n]}_\epsilon\\
&+\sum_i \uu_{,i}(m_{\Omega_\epsilon})(d\hat f)(\xi_i)(\Lambda_{\Omega_\epsilon}\widehat{\Ric(\omega)})\Omega^{[n+1]}_\epsilon-(d\hat{f}\wedge
 d^c(\uu(m_{\Omega_\epsilon})),\widehat{\Ric(\omega)})_{\Omega_\epsilon}\Omega^{[n+1]}_\epsilon\\
=&\sum_{i}\uu_{,i}(m_{\Omega_\epsilon})(d\hat{f}\wedge
 d^c m_{\Omega_\epsilon}^{\xi_i},\widehat{\Ric(\omega)})\Omega^{[n+1]}_\epsilon-(d\hat{f}\wedge
 d^c(\uu(m_{\Omega_\epsilon})),\widehat{\Ric(\omega)})\Omega^{[n+1]}_\epsilon=0,
\end{align*}
where $\xxi=(\xi_i)_{i=1,\cdots,\ell}$ is a basis of $\tor$. It follows that $I_1=0$. For the integral $I_2$, a similar calculation gives
\begin{equation*}
I_2=\int^{1}_0\Big(\int_{\mathcal{X}^{\star}} \hat{f}\big[\sum_i\uu_{,i}(m_{\Omega_\epsilon})(d^c\Phi)(\xi_i) \widehat{\Ric(\omega)}\wedge \Omega^{[n]}_\epsilon+\sum_{i,j}\uu_{,ij}(m_{\Omega_\epsilon})m^{\xi_i}_{\widehat{\Ric(\omega)}}
 (d^c\Phi)(\xi_j)\Omega^{[n]}_\epsilon\big]\Big)d\epsilon,
\end{equation*}
Now we consider the integral $I_3$. Using the fact that $\Omega_\epsilon=\hat{\omega}+\epsilon dd^{c}\Phi$, an integration by parts with respect to $\epsilon$ gives
\begin{align*}
I_3=&\int^{1}_0\Big(\int_{\mathcal{X}^{\star}} \hat{f}\big[\uu(m_{\Omega_\epsilon})\widehat{\Ric(\omega)}\wedge\big(\frac{d}{d\epsilon}\Omega^{[n]}_\epsilon\big)+\langle (d\uu)(m_{\Omega_\epsilon}),m_{\widehat{\Ric(\omega)}}\rangle\big(\frac{d}{d\epsilon}\Omega^{[n+1]}_\epsilon\big)\big]\Big)d\epsilon\\
=&\int_{\mathcal{X}^{\star}} \hat{f}\big[\uu(m_{\Omega})\widehat{\Ric(\omega)}\wedge\Omega^{[n]}+\langle (d\uu)(m_{\Omega}),m_{\widehat{\Ric(\omega)}}\rangle\Omega^{[n+1]}\big]\\
&-\int^{1}_0\Big(\int_{\mathcal{X}^{\star}}\hat{f}\big[\big(\frac{d}{d\epsilon}\uu(m_{\Omega_\epsilon})\big)\widehat{\Ric(\omega)}\wedge\Omega^{[n]}_\epsilon+\big(\frac{d}{d\epsilon}\langle (d\uu)(m_{\Omega_\epsilon}),m_{\widehat{\Ric(\omega)}}\rangle\big)\Omega^{[n+1]}_\epsilon\big]\Big)d\epsilon,
\end{align*}
By \Cref{lem-hat-Omega} \ref{lem-hat-Omega-iii} the second integral on the second equality is given by
\begin{align*}
&\int^{1}_0\Big(\int_{\mathcal{X}^{\star}}\hat{f}\big[\big(\frac{d}{d\epsilon}\uu(m_{\Omega_\epsilon})\big)\widehat{\Ric(\omega)}\wedge\Omega^{[n]}_\epsilon+\big(\frac{d}{d\epsilon}\langle (d\uu)(m_{\Omega_\epsilon}),m_{\widehat{\Ric(\omega)}}\rangle\big)\Omega^{[n+1]}_\epsilon\big]\Big)d\epsilon\\
=&\int^{1}_0\Big(\int_{\mathcal{X}^{\star}} \hat{f}\big[\sum_i\uu_{,i}(m_{\Omega_\epsilon})
 (d^c\Phi)(\xi_i) \widehat{\Ric(\omega)}\wedge \Omega^{[n]}_\epsilon+\sum_{i,j}\uu_{,ij}(m_{\Omega_\epsilon})m^{\xi_i}_{\widehat{\Ric(\omega)}}
 (d^c\Phi)(\xi_j)\Omega^{[n]}_\epsilon\big]\Big)d\epsilon\\
 =&I_2.
\end{align*}
It follows that 
\begin{equation*}
I_1+I_2+I_3=\int_{\mathcal{X}^{\star}} \hat{f}\big[\uu(m_{\Omega})\widehat{\Ric(\omega)}\wedge\Omega^{[n]}+\langle (d\uu)(m_{\Omega}),m_{\widehat{\Ric(\omega)}}\rangle\Omega^{[n+1]}\big].
\end{equation*}
This completes the proof. 
 \end{proof}

\begin{lem}\label{lim-(H+E)}
Under the hypotheses of \Cref{Mabuchi-Slope},
\begin{align}
\begin{split}\label{eq-lim-(H+E)}
&\underset{t\rightarrow +\infty}{\lim}\dfrac{1}{t}\left(\int_{X_1} \psi_t \uu(m_{\phi_t})\omega_{\phi_t}^{[n]}-2\mathcal{E}^{\Ric(\omega)}_{\uu}(\phi_t)\right)\\=&-2\int_{\mathcal{X}} \uu(m_{\Omega})(\Ric(\Omega)-\pi^{\star}\FS)\wedge \Omega^{[n]}+\langle (d\uu)(m_{\Omega}),m_{\Ric(\Omega)}\rangle\Omega^{[n+1]}
\end{split}
\end{align}
where $\phi_t$ is given by \eqref{rho=ddc} and $\psi_t=\psi_\tau$ with $\tau=e^{-t+is}$ is given by
\begin{equation}\label{h-psi}
\psi_{\tau}:=\rho(\tau)^{*}\big(\Psi_{|X_\tau}\big)\in C^{\infty}(X_1,\mathbb{\R})^{\T}.
\end{equation}
\end{lem}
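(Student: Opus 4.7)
The plan is to mirror the proof of \Cref{lim-E}: compute $dd^c_\tau$ of the quantity inside the limit as a current on $\mathbb{P}^1$, identify it as a fiberwise push-forward of a globally defined $(n+1,n+1)$-form on $\mathcal{X}$, and then apply the Green--Riesz formula together with the projection formula $\int_{\mathbb{P}^1}\pi_*\alpha = \int_{\mathcal{X}}\alpha$, exactly as in the concluding steps of \Cref{lim-E}.

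First I would observe that, thanks to the relation $m_{\phi_\tau} = m_\Omega\circ\rho(\tau)$ and the very definition of $\psi_\tau$ in \eqref{h-psi}, the quantity
$\int_{X_1}\psi_t\,\uu(m_{\phi_t})\omega_{\phi_t}^{[n]}$
equals the fiberwise push-forward $\pi_*\bigl(\Psi\,\uu(m_\Omega)\Omega^{[n]}\bigr)(\tau)$, which is smooth on $\mathbb{P}^1\setminus\{0\}$. Testing against compactly supported functions on $\mathbb{C}^*$ and integrating by parts twice on $\mathcal{X}^\star$, using that $\Omega^{[n]}$ is closed and $\pi^*f$ has compact support, yields
\[
dd^c_\tau\pi_*\bigl(\Psi\,\uu(m_\Omega)\Omega^{[n]}\bigr) = \pi_*\bigl(dd^c[\Psi\,\uu(m_\Omega)]\wedge\Omega^{[n]}\bigr).
\]
I would then expand $dd^c[\Psi\,\uu(m_\Omega)]$ by Leibniz and use the pointwise symmetry identity
$d\Psi\wedge d^c\uu(m_\Omega)\wedge\Omega^{[n]} = d\uu(m_\Omega)\wedge d^c\Psi\wedge\Omega^{[n]}$,
which holds because only the $(1,1)$-parts of both two-forms contribute after wedging with $\Omega^{[n]}$, and those parts are symmetric under exchange of the two functions.

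The second main input is \Cref{lem-weak-ddcE0}, which gives a parallel expression for $dd^c_\tau\mathcal{E}^{\Ric(\omega)}_\uu(\phi_t)$ as a pushforward involving $\widehat{\Ric(\omega)}$ and $m_{\widehat{\Ric(\omega)}}$. Into this I substitute the identities $\widehat{\Ric(\omega)} = \Ric(\Omega) - \pi^*\FS - \tfrac12 dd^c\Psi$ from \eqref{eq-ric-Om} and $m^\xi_{\widehat{\Ric(\omega)}} = m^\xi_{\Ric(\Omega)} + \tfrac12(d^c\Psi)(\xi)$ from \eqref{m-Ric-psi}, and I use the basic identity $\sum_i\uu_{,i}(m_\Omega)(d^c\Psi)(\xi_i)\,\Omega^{[n+1]} = d\uu(m_\Omega)\wedge d^c\Psi\wedge\Omega^{[n]}$. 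Combining this with the computation of the previous paragraph, the "bulk" terms add up to
\[
-2\pi_*\bigl[\uu(m_\Omega)(\Ric(\Omega)-\pi^*\FS)\wedge\Omega^{[n]} + \langle (d\uu)(m_\Omega),m_{\Ric(\Omega)}\rangle\Omega^{[n+1]}\bigr],
\]
while the contributions involving $\Psi$, $d\Psi$ and $dd^c\Psi$ reorganize via Leibniz into the exterior derivative of $\bigl(2\uu(m_\Omega)\,d^c\Psi + \Psi\,d^c\uu(m_\Omega)\bigr)\wedge\Omega^{[n]}$, whose push-forward to $\mathbb{P}^1$ is an exact $1$-form.

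To conclude, I apply the Green--Riesz argument of \Cref{lim-E} verbatim: integrate the identity for $dd^c_\tau(\cdots)$ over $\mathbb{P}^1\setminus\mathbb{D}_\epsilon$, rewrite the LHS using $\mathbb{S}^1$-invariance as $\frac{d}{dt}|_{t=-\log\epsilon}(\cdots)$, and let $\epsilon\to 0$. The projection formula converts the bulk term into exactly the right-hand side of \eqref{eq-lim-(H+E)}; the exact correction contributes a boundary integral over $\partial\mathbb{D}_\epsilon$ which vanishes as $\epsilon\to 0$. The main technical obstacle, and the place where the reduced central fiber hypothesis is essential, is the control of the logarithmic singularity of $\Psi$ along $X_0$ (caused by the fact that $\hat\omega$ does not extend across $X_0$): reducedness ensures that all the Stokes-type computations above can be justified against appropriate cut-offs and that the boundary contributions at $\partial\mathbb{D}_\epsilon$ are $o(1)$, exactly as in the cscK analogues \cite[Prop.~3.12]{Dyr1} and \cite[Prop.~2.23]{Dervan-Ross}.
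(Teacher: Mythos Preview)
Your overall strategy is correct and coincides with the paper's: compute $dd^c$ of each piece, combine using \Cref{lem-weak-ddcE0}, and conclude via the Green--Riesz argument of \Cref{lim-E}. However, there are two related errors in the execution.

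First, your handling of the $\Psi$-terms is off. You claim they reorganize into $d\bigl[(2\uu(m_\Omega)d^c\Psi+\Psi\,d^c\uu(m_\Omega))\wedge\Omega^{[n]}\bigr]$ and then argue this exact piece contributes a vanishing boundary term. In fact the $\Psi$-terms cancel \emph{exactly} as currents on $\mathbb{C}^*$, with no exact residue. The mechanism you are missing is the pointwise vanishing
\[
d\uu(m_\Omega)\wedge d^c\hat f\wedge\Omega^{[n]}=\sum_i\uu_{,i}(m_\Omega)\,dm_\Omega^{\xi_i}\wedge d^c\hat f\wedge\Omega^{[n]}=-\sum_i\uu_{,i}(m_\Omega)(d^c\hat f)(\xi_i)\,\Omega^{[n+1]}=0,
\]
which holds because the $\T$-action preserves fibres (so $\xi_i$ is tangent to $X_\tau$ while $\hat f=\pi^*f$ is constant along fibres). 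This kills two of your four Leibniz terms in $dd^c[\Psi\,\uu(m_\Omega)]\wedge\Omega^{[n]}$ upon push-forward, after which the remaining $\Psi$-contributions from $dd^c\mathcal H$ and from $-2\,dd^c\mathcal{E}^{\Ric(\omega)}_\uu$ cancel on the nose. Your claimed exact primitive does not reproduce the correct combination (check the coefficients), and more importantly is unnecessary.

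Second, and more significantly, the reduced central fibre hypothesis plays \emph{no role} in this lemma. Once the $\Psi$-terms have been eliminated as above, the integrand on the right-hand side of \eqref{eq-lim-(H+E)} is built from $\Omega$, $\Ric(\Omega)$, $m_\Omega$, $m_{\Ric(\Omega)}$ and $\pi^*\omega_{\rm FS}$, all of which are smooth on \emph{all} of $\mathcal{X}$; the Green--Riesz argument from \Cref{lim-E} then applies verbatim with no singularity to control. The reducedness of $X_0$ enters the proof of \Cref{Mabuchi-Slope} only through \Cref{lem-int-bounded}, where it bounds the separate discrepancy $\mathcal{M}_{\uu,\vv}(\phi_\tau)-\mathcal{M}^\Psi_{\uu,\vv}(\phi_\tau)=\int_{X_\tau}\log\bigl(\Omega^n\wedge\pi^*\omega_{\rm FS}/\Omega^{n+1}\bigr)\uu(m_\Omega)\Omega_\tau^{[n]}$. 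You have conflated that later step with the present one.
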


\begin{proof}
We define on $\mathbb{C}^\star$ the function $\mathcal{H}(\tau):=\int_X \psi_\tau \uu(m_{\tau})\omega_{\tau}^{[n]}$. Let $f(\tau)$ be a test function with support in $\mathbb{C}^\star\subset\mathbb{P}^{1}$ and $\hat{f}:=\pi^{\star}f$. We have
\begin{align*}
\langle dd^{c}\mathcal{H},f\rangle=&\int_{\mathbb{C}^\star}dd^{c}f\int_{X_\tau} (\Psi\uu(m_{\Omega})\Omega^{[n]})_{|X_\tau}\\
=&\int_{\mathcal{X}^{\star}} \Psi\uu(m_{\Omega})dd^{c}\hat{f}\wedge\Omega^{[n]}\\
=&\int_{\mathcal{X}^{\star}}\Psi d(\uu(m_{\Omega}))\wedge d^{c}\hat{f}\wedge\Omega^{[n]}-\int_{\mathcal{X}^{\star}} \uu(m_{\Omega})d\Psi\wedge d^{c}\hat{f}\wedge\Omega^{[n]}
\end{align*}
Notice that $d(\uu(m_{\Omega}))\wedge d^{c}\hat{f}\wedge\Omega^{n}=0$ since the $1$-form $d^{c}\hat{f}$ is zero on the fundamental vector fields of the $\T$-action. Integration by parts gives
\begin{align*}
\langle dd^{c}\mathcal{H},f\rangle=&\int_{\mathcal{X}^{\star}}\hat{f}d\Psi\wedge d^c \uu(m_{\Omega})\wedge \Omega^{[n]}+\int_{\mathcal{X}^{\star}}\hat{f}\uu(m_{\Omega})dd^{c}\Psi\wedge\Omega^{[n]}.
\end{align*}
Using the equations \eqref{eq-ric-Om} and \eqref{m-Ric-psi} we obtain
\begin{align}
\begin{split}\label{weak-ddc-Ent}
\langle dd^{c}\mathcal{H},f\rangle =&-2\int_{\mathcal{X}^{\star}}\hat{f} \langle (d\uu)(m_{\Omega}),m_{\Ric(\Omega)}-m_{\widehat{\Ric(\omega)}}\rangle\Omega^{[n+1]}\\&-\int_{\mathcal{X}^{\star}}\hat{f}\uu(m_{\Omega})(\Ric(\Omega)-2\pi^*\FS-\widehat{\Ric(\omega)})\wedge\Omega^{[n]}.
\end{split}
\end{align}
Combining \eqref{weak-ddcE0} and \eqref{weak-ddc-Ent} gives
\begin{align*}
dd^{c}\big(\mathcal{H}(\tau)&-2\mathcal{E}^{\Ric(\omega)}_{\uu}(\phi_\tau)\big)\\
=&-2\pi_\star\left(\uu(m_{\Omega})(\Ric(\Omega)-\pi^{\star}\FS)\wedge \Omega^{[n]}+\langle (d\uu)(m_{\Omega}),m_{\Ric(\Omega)}\rangle\Omega^{[n+1]}\right).
\end{align*}
We conclude in the same way as in the proof of \Cref{lim-E}.
\end{proof}

We consider the following function on $\mathbb{C}^\star$:
\begin{equation}\label{Mabuchi-hat}
\mathcal{M}^{\Psi}_{\uu,\vv}(\phi_\tau):=\int_X \psi_\tau \uu(m_{\phi_\tau})\omega_{\phi_\tau}^{[n]}-2\mathcal{E}^{\Ric(\omega)}_{\uu}(\phi_\tau)+c_{(\uu,\vv)}(\alpha)\mathcal{E}_{\uu}(\phi_\tau),
\end{equation}
where $\phi_\tau$ and $\psi_\tau$ are given by \eqref{rho=ddc} and \eqref{h-psi} respectively. From the definition of $\mathcal{M}^{\Psi}_{\uu,\vv}(\phi_\tau)$ and Lemmas \ref{lim-E} and \ref{lim-(H+E)} we see that
\begin{equation}\label{slop-Mabuchi-hat}
\underset{t\rightarrow +\infty}{\lim}\frac{\mathcal{M}^{\Psi}_{\uu,\vv}(\phi_t)}{t}=\mathcal{F}_{\uu,\vv}(\mathcal{X},\mathcal{A}).
\end{equation}

\begin{lem}\label{lem-int-bounded}
If the central fiber $X_0$ is reduced, then the integral
\begin{equation*}
\Upsilon(\tau):=\int_{X_\tau} \log\left(\frac{\Omega^{n}\wedge\pi^*\FS}{\Omega^{n+1}}\right)\uu(m_{\Omega})\Omega_{\tau}^{[n]},
\end{equation*}
is bounded on $\mathbb{C}^{\star}$.
\end{lem}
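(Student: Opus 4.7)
The plan is to bound $\Upsilon$ separately near the two punctures $\tau=0$ and $\tau=\infty$, since $\Upsilon$ is manifestly smooth on $\mathbb{C}^{\star}$ and hence bounded on compact subsets. By the $\mathbb{S}^{1}_\rho$-invariance of $\Omega$, $\Upsilon(\tau)=\Upsilon(|\tau|)$, so we may work in the single real variable $t:=-\log|\tau|\in\mathbb{R}$. The upper bound on $\Upsilon$ is immediate: the nonnegative smooth function $R:=\frac{\Omega^{n}\wedge\pi^{\star}\FS}{\Omega^{n+1}}$ attains a finite maximum $M$ on the compact manifold $\mathcal{X}$, hence $\log R\leq \log M$ wherever it is defined, so $\Upsilon(\tau)\leq (\log M)\int_{X_\tau}\uu(m_\Omega)\Omega^{[n]}_\tau$, which by \Cref{top-c} and \Cref{lm-mom} is a $\tau$-independent constant. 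The behavior as $\tau\to\infty$ is then handled by the trivialization $\lambda$: it identifies a neighborhood of $X_\infty$ in $\mathcal{X}$ with a neighborhood of $X\times\{\infty\}$ in $X\times\mathbb{P}^{1}$, on which $\pi$ is the second projection and $\Omega$ extends to a smooth K\"ahler form. In this model both $\Omega^{n+1}$ and $\Omega^{n}\wedge\pi^\star\FS$ are smooth nowhere-vanishing volume forms, so $\log R$ is smooth and bounded across $X_\infty$, yielding boundedness of $\Upsilon$ on $\{|\tau|\geq C\}$ for any $C>0$.

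The real work is to control $\Upsilon$ from below as $\tau\to 0$. The critical locus $K:=\mathrm{Crit}(\pi)$ lies in $X_0$, since $\pi$ is a submersion on $\mathcal{X}^{\star}$, and away from a small open neighborhood $V$ of $K$ the integrand is bounded, so its contribution to $\Upsilon$ is uniformly bounded as $\tau\to 0$. To handle the contribution from $V$, cover $K$ by finitely many holomorphic charts $U$ on which $\pi=g$ for a reduced holomorphic germ $g$; such an expression exists precisely because $X_0$ is a reduced divisor in the smooth ambient manifold $\mathcal{X}$. Using the local expression $\FS=(1+|w|^2)^{-2}\tfrac{i}{2}\,dw\wedge d\bar w$ on a coordinate disc of $\mathbb{P}^1$, a direct wedge product computation yields on $U$ the identity
\begin{equation*}
R=\frac{|dg|^{2}_{\Omega}}{(n+1)(1+|g|^{2})^{2}},
\end{equation*}
where $|dg|^2_\Omega$ denotes the pointwise squared norm of $dg$ taken with respect to the dual of the K\"ahler metric $\Omega$. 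Hence, up to a smooth bounded additive term, the integrand of $\Upsilon$ on $X_\tau\cap U$ equals $\log|dg|^{2}_\Omega$. Taking a log resolution $\mu:\tilde U\to U$ of the germ $g$, so that $\mu^{\star}g$ is locally of the form $z_0^{a_0}\cdots z_k^{a_k}$ times a unit, and applying the coarea formula along $g\circ\mu$, the desired uniform lower bound on $\int_{X_\tau\cap U}\log|dg|^{2}_\Omega\,\uu(m_\Omega)\Omega^{[n]}_\tau$ reduces to the classical fact that $\log|z|^{2}\in L^{1}_{\rm loc}(\mathbb{C})$, combined with uniform boundedness of the volumes of the exceptional loci in the fibers $\tilde X_\tau$.

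The principal obstacle is precisely this last uniform fiber estimate near $K$, and the reducedness of $X_0$ is what makes it work. Indeed, if the central fiber were non-reduced, say $\pi=h^{k}$ locally with $k\geq 2$, then $\log|dh^{k}|^{2}_\Omega=\log(k^{2}|h|^{2k-2}|dh|^{2}_\Omega)$ would carry an extra term $(k-1)\log|h|^{2}$ whose restriction to $X_\tau=\{h^{k}=\tau\}$ is essentially $\tfrac{k-1}{k}\log|\tau|$, producing an unbounded divergence of order $|\log|\tau||$ in $\Upsilon(\tau)$ as $\tau\to 0$. The reducedness hypothesis is precisely what rules out such divergences; the presence of the positive smooth weight $\uu$ has no bearing on the analysis since $\uu$ is bounded above and below by positive constants on the compact polytope $\PP$, and the argument parallels the cscK case $\uu\equiv 1$ treated in \cite{Dervan-Ross, Dyr1}.
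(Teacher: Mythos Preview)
Your approach is correct in spirit but takes a considerably longer route than the paper. The paper's proof is a three-line reduction: writing $Z:=\frac{\Omega^{n}\wedge\pi^{\star}\FS}{\Omega^{n+1}}$, one first observes (as you do) that $\Upsilon$ is bounded above because $Z$ is smooth and nonnegative on $\mathcal{X}$ while $\int_{X_\tau}\uu(m_\Omega)\Omega_\tau^{[n]}$ is $\tau$-independent. The key step is then purely measure-theoretic: since $\uu\in C^{\infty}(\PP,\mathbb{R}_{>0})$ is bounded above and below by positive constants on the compact $\PP$, the boundedness of $\Upsilon(\tau)$ is equivalent to that of $\int_{X_\tau}|\log Z|\,\Omega_\tau^{[n]}$, which in turn (by the same upper-bound trick, splitting $|\log Z|=(\log Z+|\log Z|)-\log Z$) is equivalent to the boundedness of the \emph{unweighted} integral $\int_{X_\tau}\log Z\,\Omega_\tau^{[n]}$. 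The latter is exactly the quantity shown to be $\mathcal{O}(1)$ in \cite[Remark~4.12]{Dervan-Ross} under the hypothesis that $X_0$ is reduced, and the paper simply cites that result.

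What you do instead is essentially re-derive the Dervan--Ross estimate from scratch: local expression $R=\frac{|dg|^{2}_{\Omega}}{(n+1)(1+|g|^{2})^{2}}$, log resolution of $g$, coarea formula, and $L^{1}_{\rm loc}$-integrability of $\log|z|^{2}$. This is the right mechanism and your heuristic about non-reduced fibers producing a $\log|\tau|$ divergence is exactly the point. However, your resolution step is only sketched: the phrase ``combined with uniform boundedness of the volumes of the exceptional loci in the fibers $\tilde X_\tau$'' hides nontrivial work (control of the pullback of $\Omega_\tau^{[n]}$ under $\mu$ as $\tau\to 0$, and the behaviour of $\mu^{\star}|dg|^{2}_\Omega$ along exceptional divisors where $\mu^{\star}g$ may acquire multiplicities). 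The paper sidesteps all of this by reducing to the already-established unweighted case; your route is more self-contained but would need those local estimates spelled out to be complete.
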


\begin{proof}
The integral $\Upsilon(\tau)$ is bounded from above since $Z(\hat x)=\frac{\Omega^{n}\wedge\pi^*\FS}{\Omega^{n+1}}$ is a non-negative smooth function on $\mathcal{X}$ and the integral $\int_{X_\tau} \uu(m_{\Omega})\Omega_{\tau}^{[n]}$ is independent from $\tau$ (see \Cref{lm-mom}). Notice that $\Upsilon(\tau)$ is bounded if and only if $\int_{X_\tau} |\log(Z)|\uu(m_{\Omega})\Omega_{\tau}^{[n]}$ is bounded. Indeed, if $\Upsilon(\tau)=O(1)$ then 
\begin{equation*}
\int_{X_\tau}|\log(Z)|\uu(m_{\Omega})\Omega_{\tau}^{[n]}=\int_{X_\tau} (\log(Z)+|\log(Z)|)\uu(m_{\Omega})\Omega_{\tau}^{[n]}-\Upsilon(\tau)=\mathcal{O}(1).
\end{equation*}
It follows that $\int_{X_\tau} |\log(Z)|\uu(m_{\Omega})\Omega_{\tau}^{[n]}=\mathcal{O}(1)$. The converse follows from 
\begin{equation*}
|\Upsilon(\tau)|\leq \int_{X_\tau} |\log(Z)|\uu(m_{\Omega})\Omega_{\tau}^{[n]}.
\end{equation*}
Using that $\uu(m_\Omega)$ is a smooth function on $\mathcal{X}$ we see that $\int_{X_\tau} |\log(Z)|\uu(m_{\Omega})\Omega_{\tau}^{[n]}=\mathcal{O}(1)$ if and only if $\int_{X_\tau} |\log(Z)|\Omega_{\tau}^{[n]}=\mathcal{O}(1)$, which is also equivalent to $\int_{X_\tau} \log(Z)\Omega_{\tau}^{[n]}=\mathcal{O}(1)$. By \cite[Remark 4.12]{Dervan-Ross}, if the central fiber $X_0$ is reduced then $\int_{X_\tau} \log(Z)\Omega_{\tau}^{[n]}=\mathcal{O}(1)$ which implies that $\Upsilon(\tau)=\mathcal{O}(1)$.
\end{proof}

Now we are in position to give a proof for \Cref{Mabuchi-Slope}.

\begin{proof}[\bf Proof of \Cref{Mabuchi-Slope}]
From the modified Chen-Tian formula in \Cref{Chen-Tian}, \eqref{Mabuchi-hat} and by \Cref{lem-int-bounded} we get
\begin{align*}
\mathcal{M}_{\uu,\vv}(\phi_\tau)-\mathcal{M}^{\Psi}_{\uu,\vv}(\phi_\tau)=&\int_X \left(\log\left(\frac{\omega^{n}_\tau}{\omega^{n}}\right)- \psi_\tau \right)\uu(m_\tau)\omega^{[n]}_\tau\\
=&\int_{X_\tau} \left(\log\left(\frac{\Omega^{n}\wedge\pi^*\FS}{\hat\omega^{n}\wedge\pi^*\FS}\right)- \Psi \right)\rho(\tau^{-1})^{*}(\uu(m_{\tau})\omega^{[n]}_\tau)\\
=&\int_{X_\tau} \log\left(\frac{\Omega^{n}\wedge\pi^*\FS}{\Omega^{n+1}}\right)\uu(m_{\Omega})\Omega_{\tau}^{[n]}=\mathcal{O}(1).
\end{align*}
Dividing by $t$ (where we recall $\tau=e^{-t+is}$) and passing to the limit when $t$ goes to infinity concludes the proof. 
\end{proof}

\begin{proof}[\bf Proof of Theorems \ref{thm:main} and \ref{thm:BB}]
These are direct corollaries of Theorems \ref{thm:mabuchi-bounded} and \ref{Mab-bound-1-w} respectively, together with \Cref{Mabuchi-Slope} and \Cref{fut-obs}.
\end{proof}

\begin{prop}\label{submersion-case}
If $(\mathcal{X},\mathcal{A},\T)$ is a K\"ahler test configuration of $(X,\alpha,\T)$ such that $\pi:\mathcal{X}\rightarrow \mathbb{P}^{1}$ is a smooth submersion then
\begin{equation*}
\begin{split}
\mathcal{F}_{\uu,\vv}(\mathcal{X},\mathcal{A})=\mathcal{F}_{\uu,\vv}^\alpha(V_\rho)-\frac{{\rm Vol}(\mathcal{X},\mathcal{A})}{{\rm Vol}(X,\alpha)}\int_{X}\big(\Scal_{\uu}(\omega)-c_{\uu,\vv}(\alpha)\vv(m_{\omega})\big)\omega^{[n]},
\end{split}
\end{equation*}
where $V_\rho$ is the generator of the $\mathbb{S}^{1}_\rho$-action on $X_0$, and $\mathcal{F}_{\uu,\vv}^{\alpha}(V_\rho)$ is the $(\uu,\vv)$-Futaki invariant of the smooth central fibre $(X_0,\alpha)$  introduced in \Cref{Def-Fut}. In particular if $(X,\alpha,\T)$ is $(\uu,\vv)$-semistable on smooth test configurations, then 
$$\int_X\Scal_{\uu}(\omega)\omega^{[n]}=c_{(\uu,\vv)}(\alpha)\int_X\vv(m_{\omega})\omega^{[n]}\text{ and }\mathcal{F}^{\alpha}_{\uu,\vv}\equiv 0.$$
\end{prop}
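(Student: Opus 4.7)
The strategy is to compute $\mathcal{F}_{\uu,\vv}(\mathcal{X},\mathcal{A})$ by a direct fiberwise decomposition, following the approach of Ding--Tian~\cite{ding-tian}. As $\pi:\mathcal{X}\rightarrow \mathbb{P}^{1}$ is a smooth submersion, the $\Omega$-orthogonal splitting $T\mathcal{X} = \mathcal{V} \oplus \mathcal{H}$ with $\mathcal{V} = \ker d\pi$ is well-defined, and we can write $\Omega = \Omega_V + Z\,\pi^{*}\FS$ where $Z>0$ is a smooth function on $\mathcal{X}$ encoding the horizontal metric. Using $(\Omega_V)^{n+1} = (\pi^{*}\FS)^{2} = 0$ we obtain $\Omega^{[n+1]} = Z\,\Omega_V^{[n]} \wedge \pi^{*}\FS$ and $\pi^{*}\FS \wedge \Omega^{[n]} = \pi^{*}\FS \wedge \Omega_V^{[n]}$, while $\Omega_V|_{X_\tau} = \Omega|_{X_\tau}$ recovers the fiber K\"ahler form.

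The second term in the expression \eqref{Fut-TC} for $\mathcal{F}_{\uu,\vv}(\mathcal{X},\mathcal{A})$ has already been evaluated in \Cref{rem-Fut-sym}\ref{rem-Fut-sym-ii} as $4\pi\int_X\uu(m_\omega)\omega^{[n]}$. For the remaining integrals, the plan is to use Ehresmann's theorem to smoothly identify $X_0$ with $X$, then apply fiber integration together with the O'Neill formulas (suitably adapted to the weighted scalar curvature \eqref{Scal-v}) to get a pointwise decomposition $\Scal_\uu(\Omega) = \Scal_\uu(\Omega|_{X_\tau}) + R(Z,\Omega)$, where $R(Z,\Omega)$ is a horizontal correction built from $Z$ and the second fundamental form of the fibers. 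By \Cref{top-c}, the fiberwise integrals $\int_{X_\tau}\Scal_\uu(\Omega|_{X_\tau})\Omega|_{X_\tau}^{[n]}$ and $\int_{X_\tau}\vv(m_\Omega|_{X_\tau})\Omega|_{X_\tau}^{[n]}$ are $\tau$-independent and equal the corresponding integrals on $X$; integration against the horizontal density $Z\,\pi^{*}\FS$ then produces the slope factor $\frac{{\rm Vol}(\mathcal{X},\mathcal{A})}{{\rm Vol}(X,\alpha)}\int_X\big(\Scal_\uu(\omega)-c_{(\uu,\vv)}(\alpha)\vv(m_\omega)\big)\omega^{[n]}$. The correction $R(Z,\Omega)$, integrated fiberwise via the divergence theorem, should assemble into $\mathcal{F}_{\uu,\vv}^{\alpha}(V_\rho)-4\pi\int_X\uu(m_\omega)\omega^{[n]}$, cancelling the earlier $4\pi\int_X\uu(m_\omega)\omega^{[n]}$ contribution and leaving the claimed identity.

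The main technical obstacle is the precise identification of the correction $R(Z,\Omega)$ with the Futaki pairing on $X_0$. The key point is that the horizontal lift (via the $\Omega$-connection) of the generator of the $\mathbb{C}^{*}$-action on $\mathbb{P}^{1}$ coincides with $V_\rho$ on $X_0$, and that $-\log Z$, modulo an additive function constant along the fibers, serves as the fiberwise $\Omega|_{X_\tau}$-Hamiltonian of this lift, thereby recovering $f^{V_\rho}$ on $X_0$ after the Ehresmann identification and passage to $\tau\to 0$. An alternative route is to specialize \Cref{Mabuchi-Slope}: in the submersion case $\omega_\tau = \rho(\tau)^{*}\Omega|_{X_\tau}$ extends smoothly across $\tau=0$ to a limiting K\"ahler form corresponding to $\Omega|_{X_0}$, and one evaluates $\lim_{t\to+\infty}\mathcal{M}_{\uu,\vv}(\phi_t)/t$ by combining the Chen--Tian formula \eqref{C-T} with \Cref{lim-E} and \Cref{lim-(H+E)}.

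For the final assertion, $(\uu,\vv)$-semistability applied to the trivial test configuration $(\mathcal{X}_0,\mathcal{A}_0)$ forces $\mathcal{F}_{\uu,\vv}(\mathcal{X}_0,\mathcal{A}_0)=0$, and combined with the explicit formula for the trivial configuration given in the last item of \Cref{rem-Fut-sym} this yields $\int_X\Scal_\uu(\omega)\omega^{[n]} = c_{(\uu,\vv)}(\alpha)\int_X\vv(m_\omega)\omega^{[n]}$. With this slope identity, the main formula reduces to $\mathcal{F}_{\uu,\vv}(\mathcal{X},\mathcal{A}) = \mathcal{F}_{\uu,\vv}^\alpha(V_\rho)$ for every smooth submersion test configuration. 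Given any $V\in\mathfrak{h}_{\rm red}^{\T}$ generating a $\mathbb{C}^{*}$-subgroup commuting with $\T$, the associated product test configuration $(\mathcal{X}_{\rm prod},\mathcal{A}_{\rm prod})$ is a smooth submersion with $V_\rho=V$, so semistability gives $\mathcal{F}_{\uu,\vv}^\alpha(V)\ge 0$; applying the same reasoning to the reversed $\mathbb{C}^{*}$-action (generator $-V$) and using the $\mathbb{R}$-linearity of $\mathcal{F}_{\uu,\vv}^\alpha$ (see \Cref{Def-Fut}) forces $\mathcal{F}_{\uu,\vv}^\alpha(V)=0$. Since such vector fields $\mathbb{R}$-span $\mathfrak{h}_{\rm red}^{\T}$, we conclude $\mathcal{F}_{\uu,\vv}^\alpha\equiv 0$.
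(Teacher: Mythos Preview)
Your primary strategy --- a pointwise O'Neill-type decomposition $\Scal_{\uu}(\Omega) = \Scal_{\uu}(\Omega|_{X_\tau}) + R(Z,\Omega)$ followed by fiber integration --- is not the route the paper takes, and the sketch has a real gap at exactly the place you flag. The claim that ``$-\log Z$, modulo an additive function constant along the fibers, serves as the fiberwise $\Omega|_{X_\tau}$-Hamiltonian'' of the horizontal lift of the $\mathbb{C}^*$-generator is not justified, and in fact is not how the Hamiltonian $h^\rho$ arises; the correct relation (which the paper derives) is $h^{\rho}|_{X_\tau} = -\rho(\tau^{-1})^{*}\dot{\phi}_\tau + a(\tau)$, obtained by comparing $\frac{d}{dt}\rho(\tau)^{*}\Omega = -dd^{c}(\rho(\tau)^{*}h^{\rho})$ with $\frac{d}{dt}(\rho(\tau)^{*}\Omega)|_{X_1} = dd^{c}\dot{\phi}_\tau$. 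Without this identity you cannot identify the correction term with the Futaki pairing, and the O'Neill remainder $R(Z,\Omega)$ does not naturally produce it.

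The paper's argument bypasses any curvature decomposition entirely. It computes $\frac{d}{dt}\mathcal{M}_{\uu,\vv}(\phi_\tau)$ directly from \Cref{Defn-Mabuchi}, substitutes the relation above to rewrite the integrand in terms of $h^{\rho}$ and the $\tau$-dependent constant $a(\tau)$, then lets $t\to\infty$: smoothness of the submersion gives convergence of all fiber integrals to their values on $X_0$, \Cref{lim-E} gives $\lim a(\tau) = \frac{1}{{\rm Vol}(X,\alpha)}\big(\int_{X_0}h^{\rho}\Omega^{[n]} + {\rm Vol}(\mathcal{X},\mathcal{A})\big)$, and \Cref{Mabuchi-Slope} identifies the limit with $\mathcal{F}_{\uu,\vv}(\mathcal{X},\mathcal{A})$. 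The normalization of $h^{\rho}$ then splits the result into $\mathcal{F}^{\alpha}_{\uu,\vv}(V_\rho)$ plus the volume-weighted slope term. Your ``alternative route'' gestures in this direction but misidentifies the mechanism: the Chen--Tian decomposition and \Cref{lim-(H+E)} are not used here --- what is used is the elementary formula for $d\mathcal{M}_{\uu,\vv}$ together with the key identity relating $\dot{\phi}_\tau$ to $h^{\rho}$.

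Your argument for the final assertion is correct and matches the paper's.
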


\begin{proof}
We just adapt the arguments from \cite{ding-tian} to our weighted setting. From \Cref{Defn-Mabuchi} we have 
\begin{align}
\begin{split}\label{M1}
\frac{d}{dt}\mathcal{M}_{\uu,\vv}(\phi_\tau)=&-\int_{X_1}\dot{\phi}_\tau\big(\Scal_{\uu}(\omega_\tau)-c_{\uu,\vv}(\alpha)\vv(m_\tau)\big)\omega^{[n]}_{\tau},\\
=&-\int_{X_\tau}\rho(\tau^{-1})^*\dot{\phi}_\tau\big(\Scal_{\uu}(\Omega_{\tau})-c_{\uu,\vv}(\alpha)\vv(m_{\Omega_{\tau}})\big)\Omega^{[n]},
\end{split}
\end{align} 
where $t=-\log|\tau|$, $\dot{\phi}_\tau=\frac{d\phi_\tau}{dt}$ and $\omega_\tau$, $\phi_\tau$, $m_\tau$ are given by \eqref{rho=ddc} and \eqref{m-tau}. Note that the flow of the vector field $\mathcal{J}V_\rho$ is $\varphi^{t}_{\mathcal{J}V_\rho}=\rho(e^{-t})$ where $\mathcal{J}$ denotes the complex structure of $\mathcal{X}$. Let  $h^{\rho}$ be a Hamiltonian function of $V_\rho$ with respect to $\Omega$. We have $\frac{d}{dt}\rho(\tau)^{*}\Omega=-dd^{c}(\rho(\tau)^*h^{\rho})$. On the other hand, using \eqref{rho=ddc} we get
$\frac{d}{dt}(\rho(\tau)^{*}\Omega)_{|X_1}=dd^{c}\dot{\phi}_\tau$. By taking the restriction on $X_1$ of the first equality and comparing to the secon, we get
\begin{equation}\label{M2}
h^{\rho}_{|X_\tau}=-\rho(\tau^{-1})^*\dot{\phi}_\tau+a(\tau),
\end{equation}
where $a(\tau)\in\R$ is a constant depending on $\tau\in\mathbb{C}^*$. By \eqref{M2} and \Cref{E-lem-def}, we have
\begin{equation*}
a(\tau)=\frac{1}{{\rm Vol}(X,\alpha)}\Big(\int_{X_\tau}h^\rho\Omega^{[n]}+\frac{d\mathcal{E}_{1}(\phi_\tau)}{dt}\Big).
\end{equation*}
Using that $\pi:\mathcal{X}\to\mathbb{P}^{1}$ is a smooth submersion and \Cref{lim-E}, we get
\begin{equation}\label{eq-lim-a}
\underset{t\rightarrow\infty}{\lim}a(\tau)=\frac{1}{{\rm Vol}(X,\alpha)}\Big(\int_{X_0}h^\rho\Omega^{[n]}+{\rm Vol}(\mathcal{X},\mathcal{A})\Big).
\end{equation}
Substituting \eqref{M2} in \eqref{M1}, we obtain
\begin{align}
\begin{split}\label{eq-M-tau}
\frac{d}{dt}\mathcal{M}_{\uu,\vv}(\phi_\tau)=&\int_{X_\tau}\big(\Scal_{\uu}(\Omega_{\tau})-c_{\uu,\vv}(\alpha)\vv(m_{\Omega_{\tau}})\big)h^\rho\Omega^{[n]}\\
&-a(\tau)\int_{X_\tau}\big(\Scal_{\uu}(\Omega_{\tau})-c_{\uu,\vv}([\Omega_\tau])\vv(m_{\Omega_{\tau}})\big)\Omega^{[n]}.
\end{split}
\end{align}
Passing to the limit when $t\to\infty$ in \eqref{eq-M-tau} and using \Cref{Mabuchi-Slope}, we obtain
\begin{align*}
&\mathcal{F}_{\uu,\vv}(\mathcal{X},\mathcal{A})=\underset{t\rightarrow\infty}{\lim}\frac{d}{dt}\mathcal{M}_{\uu,\vv}(\phi_\tau)\\
&=\int_{X_0}\big(\Scal_{\uu}(\Omega_0)-c_{\uu,\vv}(\alpha)\vv(m_{\Omega_0})\big)h^{\rho}\Omega^{[n]}\\
&-\frac{1}{{\rm Vol}(X,\alpha)}\Big(\int_{X_0}h^\rho\Omega^{[n]}+{\rm Vol}(\mathcal{X},\mathcal{A})\Big)\int_{X_0}\big(\Scal_{\uu}(\Omega_{0})-c_{\uu,\vv}([\Omega_0])\vv(m_{\Omega_{0}})\big)\Omega^{[n]}\\
&=\int_{X_0}\big(\Scal_{\uu}(\Omega_0)-c_{\uu,\vv}(\alpha)\vv(m_{\Omega_0})\big)\Big(h^{\rho}-\frac{1}{{\rm Vol}(X,\alpha)}\int_{X_0}h^\rho\Omega^{[n]}\Big)\Omega^{[n]}\\
&-\frac{{\rm Vol}(\mathcal{X},\mathcal{A})}{{\rm Vol}(X,\alpha)}\int_{X_0}\big(\Scal_{\uu}(\Omega_{0})-c_{\uu,\vv}([\Omega_0])\vv(m_{\Omega_{0}})\big)\Omega^{[n]}\\
&=\mathcal{F}_{\uu,\vv}^\alpha(V_\rho)-\frac{{\rm Vol}(\mathcal{X},\mathcal{A})}{{\rm Vol}(X,\alpha)}\int_{X}\big(\Scal_{\uu}(\omega)-c_{\uu,\vv}(\alpha)\vv(m_{\omega})\big)\omega^{[n]}.
\end{align*}
where $\Omega_0=\Omega_{|X_0}\in\mathcal{A}_{|X_0}$, and we have used in the last equality that for any $\tau\in\mathbb{C}^{\star}$ we have
\begin{align*}
&\int_{X_\tau}\Scal_{\uu}(\Omega_\tau)\Omega^{[n]}=\int_{X_1}\Scal_{\uu}(\omega_\tau)\omega^{[n]}_\tau=\int_{X}\Scal_{\uu}(\omega)\omega^{[n]},\\
&\int_{X_\tau}\vv(m_{\Omega_\tau})\Omega^{[n]}=\int_{X_1}\vv(m_{\omega_\tau})\omega^{[n]}_\tau=\int_{X}\vv(m_{\omega})\omega^{[n]},
\end{align*}
see \Cref{top-c}. 

For the second statement, as $\int_{X}\big(\Scal_{\uu}(\omega)-c_{\uu,\vv}(\alpha)\vv(m_{\omega})\big)\omega^{[n]}=0$ by the definition of semi-stability, we consider the product test configurations associated to $V$ and $-V$ for any $V\in\mathfrak{h}_{\rm red}$, we obtain $\mathcal{F}_{\uu,\vv}^\alpha(V)=-\mathcal{F}_{\uu,\vv}^\alpha(V)\geq 0$ i.e. $\mathcal{F}_{\uu,\vv}^\alpha\equiv0.$
\end{proof}

\begin{rem}\label{Rem-Dervan}
In \cite{Dervan}, Dervan defines a $\T$-relative Donaldson--Futaki invariant $\DF_{\T}(\mathcal{X},\mathcal{A})$ for a smooth $\T$-compatible K\"ahler test configuration $\mathcal{X}$ as follows
\begin{equation*}
\DF_{\T}(\mathcal{X},\mathcal{A}):=\mathcal{F}_{1,1}(\mathcal{X},\mathcal{A})-\sum_{i=1}^{\ell}\frac{\langle h_{\rho},h_{i}\rangle_{X_0}}{\langle h_{i},h_{i}\rangle_{X_0}}\mathcal{F}_{1,1}^{\alpha}(\xi_i),
\end{equation*}
where $\xxi:=(\xi_i)_{i=1,\cdots,\ell}$ is a  basis of $\tor$ with corresponding Killing potentials $h_i=f_i(m_\Omega)=\langle m_\Omega,\xi_i\rangle+\lambda_i$, such that $\langle h_{i},h_{j}\rangle_{X_0}=\int_{X_0}h_ih_j\Omega^{n}=0$ for $i\neq j$ and $\int_{X_0}h_i\Omega^{n}=0$, where the integration on $X_0$ is defined by $\int_{X_0}:=\sum_i m_i\int_{(X^{(i)}_0)_{\rm reg}}$ with $[X_0]=\sum_i m_i X^{(i)}_0$ being the analytic cycle associated to $X_0$  and $(X^{(i)}_0)_{\rm reg}$ standing for the regular part of the irreducible component $X^{(i)}_0$ of $X_0$. Using \Cref{lm-mom}, we have
\begin{align}
\begin{split}\label{eq-h}
&\int_{X}\vv_{\rm ext}(m_\omega)\omega^{[n]}=\int_{X_1}\vv_{\rm ext}(m_\Omega)\Omega^{[n]}=\int_{X_\tau}\vv_{\rm ext}(m_\Omega)\Omega^{[n]},\\
&\mathcal{F}_{1,1}(\xi_i)=\langle \vv_{\rm ext}(m_\omega),h_{i}\rangle_{X}=\langle \vv_{\rm ext}(m_\Omega),f_{i}(m_\Omega)\rangle_{X_1}=\langle \vv_{\rm ext}(m_\Omega),h_i\rangle_{X_\tau},
\end{split}
\end{align}
for any $\tau\in\mathbb{C}^{\star}\subset\mathbb{P}^{1}$. As the family $\pi:\mathcal{X}\to\mathbb{P}^{1}$ is proper and flat, the current of integration along the fibers $X_\tau$ is continuous and converges to the integration over the analytic cycle of the central fiber $[X_0]$ (see \cite{Barlet}). Passing to the limit when $\tau\to 0$ in \eqref{eq-h}, we thus obtain $\int_{X}\vv_{\rm ext}(m_\omega)\omega^{[n]}=\int_{X_0}\vv_{\rm ext}(m_\Omega)\Omega^{[n]}$ and $\mathcal{F}_{1,1}(\xi_i)=\langle \vv_{\rm ext}(m_\Omega),h_{(\rho_i,\Omega)}\rangle_{X_0}$.
Thus,
\begin{equation}\label{F-Dervan}
\DF_{\T}(\mathcal{X},\mathcal{A})=\mathcal{F}_{1,1}(\mathcal{X},\mathcal{A})-\langle \vv_{\rm ext}(m_\Omega),h_{\rho}\rangle_{X_0}.
\end{equation}
On the other hand, the $(1,\vv_{\rm ext})$-Futaki invariant of $(\mathcal{X},\mathcal{A})$ is given by
\begin{equation}\label{F-ext}
\mathcal{F}_{1,\vv_{\rm ext}}(\mathcal{X},\mathcal{A})=-\int_{\mathcal{X}}\Scal(\Omega)\Omega^{[n+1]}+2\int_{\mathcal{X}}\pi^{\star}\FS\wedge \Omega^{[n]}+\int_{\mathcal{X}}\vv_{\rm ext}(m_\Omega)\Omega^{[n+1]}.
\end{equation}
(Recall that $c_{(1,\vv_{\rm ext})}(\alpha)=1$, see Section~\ref{s:(u,v)-extremal}). From \eqref{F-Dervan} and \eqref{F-ext}, we infer
\begin{align*}
\begin{split}
\mathcal{F}_{1,\vv_{\rm ext}}(\mathcal{X},\mathcal{A})-\DF_{\T}(\mathcal{X},\mathcal{A})=&\langle \vv_{\rm ext}(m_\Omega),h_{\rho}\rangle_{X_0}+\int_{\mathcal{X}}(\vv_{\rm ext}(m_\Omega)-c_{1,1}(\alpha))\Omega^{[n+1]}\\
=&\langle \vv_{\rm ext}(m_\Omega),h_{\rho}\rangle_{X_0}+\underset{t\to\infty}{\lim}\frac{d\mathcal{E}_{\accentset{\circ}{\vv}_{\rm ext}}(\phi_\tau)}{dt}\\
=&\langle \vv_{\rm ext}(m_\Omega),h_{\rho}\rangle_{X_0}+\underset{t\to\infty}{\lim}\Big(\int_{X_1}\dot{\phi}_\tau\accentset{\circ}{\vv}_{\rm ext}(m_\tau)\omega^{[n]}_\tau\Big)\\
=&\langle \vv_{\rm ext}(m_\Omega),h_{\rho}\rangle_{X_0}-\underset{t\to\infty}{\lim}\Big(\int_{X_\tau}h_{\rho}\accentset{\circ}{\vv}_{\rm ext}(m_\Omega)\Omega^{[n]}\Big)\\
=&\langle \vv_{\rm ext}(m_\Omega),h_{\rho}\rangle_{X_0}-\int_{X_0}h_{\rho}\accentset{\circ}{\vv}_{\rm ext}(m_\Omega)\Omega^{[n]}=0,
\end{split}
\end{align*}
where in the second equality we used \Cref{lim-E} for 
$$\accentset{\circ}{\vv}_{\rm ext}=\vv_{\rm ext}-c_{1,1}(\alpha)=\vv_{\rm ext}-\frac{1}{{\rm Vol}(X,\alpha)}\int_{X_\tau}\vv_{\rm ext}(m_\Omega)\Omega^{[n]},$$
for any $\tau\in\mathbb{C}^\star$ and in the fourth equality we used \eqref{M2}. It follows that 
$$\mathcal{F}_{1,\vv_{\rm ext}}(\mathcal{X},\mathcal{A})=\DF_{\T}(\mathcal{X},\mathcal{A}).$$
\end{rem}

\section{Algebraic definition of a $(\uu,\vv)$-Donaldson-Futaki invariant}\label{sec-8}
  
\subsection{The $(\uu,\vv)$-Donaldson-Futaki invariant of a smooth polarized variety}\label{sec 8.1}

Let $(X,L)$ be a smooth compact polarized projective manifold, where $L$ is an ample holomorphic line bundle on $X$ and $\T\subset {\rm Aut}(X,L)$ is an $\ell$-dimensional real torus on the total space of $L$, which covers a torus action (still denoted by $\T$) in $\Autred\cong{\rm Aut}(X,L)/\mathbb{C}^{\star}$. Let $\xxi=(\xi_1,\cdots,\xi_\ell)\in \tor$ be a basis of $\mathbb{S}^{1}$-generators of $\T$ and $\bA_{\xxi}^{(k)}:=(A^{(k)}_{\xi_1},\ldots,A^{(k)}_{\xi_\ell})$ the induced infinitesimal actions of $\xi_i$ on the finite dimensional space $\mathcal{H}_k:=H^{0}(X,L^{k})$ of global holomorphic sections of $L^{k}$ for $k\gg 1$. For a $\T$-invariant Hermitian metric $h$ on $L$ with curvature two form $\omega\in 2\pi c_1(L)$ we have (see e.g. \cite[Proposition 8.8.2]{Gauduchon})
\begin{equation}\label{A-inf}
A^{(k)}_{\xi_i}+\sqrt{-1}\nabla_{\xi_i}= km^{\xi_i}_{\omega}{\rm Id}_{\mathcal{H}_k},
\end{equation}
where $\nabla$ is the Chern connection of $h^k:=h^{\otimes k}$ and $m^{\xi_i}_{\omega}$ is a $\omega$-Hamiltonian function of $\xi_i$. Using the basis $\xxi$ we identify $\tor\cong\mathbb{R}^{\ell}$ and we get a momentum map $m_\omega:=(m_{\omega}^{\xi_1},\cdots,m_{\omega}^{\xi_\ell}):X\to\R^\ell$ for the action of $\T$ on $X$ with momentum image $\PP:=m_\omega(X)$. Notice that if $h_\phi:=e^{-2\phi}h$ is another $\T$-invariant Hermitian metric on $L$ with positive curvature $\omega_\phi>0$, the corresponding momentum map satisfies $m^{\xi_i}_\phi=m_{\omega}^{\xi_i}+(d^c\phi)(\xi_i)$, thus showing, by virtue of \Cref{lem-mom-norm} \ref{m3}, that the image $m_{\phi}(X)=\PP$ is independent of the metric $h_\phi$. We thus have a polytope $\PP\subset \tor^*$ associated to the polarization $(X,L,\T)$.

The spectrum of $k^{-1}A_{\xi_j}^{(k)}$ is given by $\{\lambda^{(k)}_i(\xi_j),\,\lambda^{(k)}_i\in W_k\}$ where $W_k:=\{\lambda^{(k)}_i,\,i=1,\cdots, N_k\}\subset \Lambda^{*}$ is the finite set of weights of the complexified action of $\T$ on $\mathcal{H}_k$ and $\Lambda^{*}$ is the dual of the lattice $\Lambda\subset\tor$ of circle subgroups of $\T$ (see e.g. \cite{AM, B-N}).

\begin{lem}\label{W-P}
The set of weights $W_k$ is contained in the momentum polytope $\PP$ of the action of $\T$ on $(X,L)$.
\end{lem}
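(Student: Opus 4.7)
The plan is to show, by a maximum-principle argument applied to the Hermitian norm of a weight vector, that every $\lambda \in W_k$ is realized as $m_\omega(x_0)$ for some $x_0 \in X$, and hence lies in $\PP = m_\omega(X)$.

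First I would fix $\lambda \in W_k$ and pick a nonzero weight vector $s \in \mathcal{H}_k$, characterized by $A^{(k)}_{\xi}\, s = k\lambda(\xi)\, s$ for every $\xi \in \tor$. Substituting this into the relation~\eqref{A-inf} gives the pointwise identity
\begin{equation*}
\nabla_{\xi}\, s \;=\; -\iI\, k\bigl(m_\omega^{\xi} - \lambda(\xi)\bigr)\, s.
\end{equation*}
Since $s$ is a holomorphic section of $L^k$, the Chern-covariant derivative $\nabla s = \partial s$ is of type $(1,0)$, and a $(1,0)$-form takes on $J\xi$ the value $\iI$ times its value on $\xi$. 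Hence $\nabla_{J\xi}\, s = \iI\, \nabla_{\xi}\, s = k\bigl(m_\omega^{\xi} - \lambda(\xi)\bigr)\, s$, which is a \emph{real} multiple of $s$.

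Next, using the compatibility of the Chern connection with $h^k$, I would evaluate on the open set $\{s \neq 0\}$
\begin{equation*}
J\xi \cdot \log |s|^2_{h^k} \;=\; \frac{2\,\mathrm{Re}\,\langle \nabla_{J\xi}\, s,\, s\rangle_{h^k}}{|s|^2_{h^k}} \;=\; 2k\bigl(m_\omega^{\xi} - \lambda(\xi)\bigr).
\end{equation*}
Compactness of $X$ ensures that the smooth function $\log |s|^2_{h^k}$ attains its supremum at some point $x_0 \in X$ (necessarily with $s(x_0) \neq 0$), where the derivative along every $J\xi$ must vanish. This forces $\lambda(\xi) = m_\omega^{\xi}(x_0)$ for all $\xi \in \tor$, so, evaluating on the fixed basis $\xxi$ and using linearity, $\lambda = m_\omega(x_0) \in \PP$.

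There is no substantial obstacle here: the argument is a direct consequence of~\eqref{A-inf} together with the holomorphicity of $s$, and the conclusion then follows from the usual extremum principle on a compact manifold. The only point requiring a bit of care is the identity $\nabla_{J\xi}\, s = \iI\, \nabla_{\xi}\, s$, which relies on the $(1,0)$-type of $\nabla s$ for a holomorphic $s$ and fixes the correct sign in the key formula above.
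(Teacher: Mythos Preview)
Your proof is correct and follows essentially the same approach as the paper: both arguments use the identity~\eqref{A-inf} applied to an eigensection, combined with a maximum-principle argument on its Hermitian norm, to identify the weight with the value of the moment map at the maximum point. Your version is marginally cleaner in that you work with a joint weight vector for all of $\tor$ from the outset (so a single maximum point $x_0$ yields $\lambda = m_\omega(x_0)$ at once), whereas the paper phrases the computation one generator $\xi_j$ at a time; but this is a presentational difference, not a different method.
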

\begin{proof}
This Lemma is well known (see e.g. \cite[Section 5]{AM}, we give the proof for the sake of clarity. Let $\lambda^{(k)}_i\in W_k$, $\xi_j\in\xxi$ an $\mathbb{S}^1$-generator for the $\T$-action on $X$, and $s^{(k)}_{j,i}\in\mathcal{H}_k$ an eigensection associated to the eigenvalue $\lambda^{(k)}_i(\xi_j)$ of $k^{-1}A_{\xi_j}^{(k)}$. Using \eqref{A-inf}, we have 
\begin{align*}
(\lambda^{(k)}_i(\xi_j)-m^{\xi_j}_\omega)|s^{(k)}_{j,i}|^{2}_{h^k}=&(k^{-1}A_{\xi_j}^{(k)}s_{j,i},s_{j,i})_{h^{k}}-m^{\xi_j}_\omega|s^{(k)}_{j,i}|^{2}_{h^k}\\
=&-\frac{\sqrt{-1}}{2}(d|s^{(k)}_{j,i}|^{2}_{h^k})(\xi_j)-\frac{1}{2}(d|s^{(k)}_{j,i}|^{2}_{h^k})(J\xi_j).
\end{align*} 
At a point of global maximum $x_0$ of the smooth function $|s^{(k)}_{j,i}|^{2}_{h^k}$ on $X$, we obtain 
$$\lambda^{(k)}_i(\xi_j)=m^{\xi_j}_\omega(x_0)\in \PP.$$ 
It follows that $W_k\subset\PP$.
\end{proof}
Using the weight decomposition of $\mathcal{H}_k$
\begin{equation*}
\mathcal{H}_k=\bigoplus_{\lambda^{(k)}_i\in W_k}\mathcal{H}(\lambda^{(k)}_i),
\end{equation*}
and \Cref{W-P}, for any smooth function $\uu\in C^{\infty}(\PP,\mathbb{R})$ we can define the operator $\uu(k^{-1}\bA_{\xxi}^{(k)}):\mathcal{H}_{k}\rightarrow\mathcal{H}_{k}$ by 
\begin{equation}\label{w(A)-eq}
\uu(k^{-1}\bA_{\xxi}^{(k)})_{\mid \mathcal{H}(\lambda^{(k)}_i)}:=\uu(k^{-1}\lambda^{(k)}_i){\rm Id}_{\mathcal{H}(\lambda^{(k)}_i)}.
\end{equation}
 
\begin{defn}
We define the {\it $\uu$-weight} of the action of $\T$ on $(X,L)$ by
\begin{equation}\label{W-u}
W_{\uu}(L^{k}):=\Tr(\uu(k^{-1}\bA_{\xxi}^{(k)})).
\end{equation}
\end{defn}

\begin{lem}\label{lem-W-u}
The {\it $\uu$-weight} of the action of $\T$ on $(X,L)$ admits the following asymptotic expansion
\begin{equation}\label{w-u-exp}
(2\pi)^{n}W_{\uu}(L^{k})=k^{n}\int_X \uu(m_\omega)\omega^{[n]}+\frac{k^{n-1}}{4}\int_X\Scal_{\uu}(\omega)\omega^{[n]}+\mathcal{O}(k^{n-2}).
\end{equation}
for any smooth function $\uu$ with compact support containing $\PP$. 
\end{lem}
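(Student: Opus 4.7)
The plan is to rewrite $W_{\uu}(L^k)$ as the integral over $X$ of a $\uu$-weighted on-diagonal Bergman density, and then invoke the weighted analogue of the Tian--Yau--Zelditch expansion alluded to in the introduction (\Cref{TYZ}).

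First, I would fix a $\T$-invariant Hermitian metric $h$ on $L$ with positive curvature $\omega$, and pick an $L^{2}$-orthonormal basis $(s_{j}^{(k)})_{j=1,\dots,\dim\mathcal H_{k}}$ of $\mathcal H_{k}$ consisting of simultaneous eigensections of the commuting operators $A^{(k)}_{\xi_{1}},\dots,A^{(k)}_{\xi_{\ell}}$, with joint weight vectors $\lambda_{j}^{(k)}\in W_{k}\subset\PP$ (the inclusion being \Cref{W-P}). Orthonormality gives $\int_{X}|s_{j}^{(k)}|^{2}_{h^{k}}\,\omega^{[n]}=1$, so by the functional calculus \eqref{w(A)-eq},
\begin{equation*}
W_{\uu}(L^{k}) \;=\; \Tr\!\bigl(\uu(k^{-1}\bA_{\xxi}^{(k)})\bigr) \;=\; \sum_{j} \uu\!\bigl(k^{-1}\lambda_{j}^{(k)}\bigr) \;=\; \int_{X} B_{k,\uu}(x)\,\omega^{[n]}(x),
\end{equation*}
where
\begin{equation*}
B_{k,\uu}(x) \;:=\; \sum_{j} \uu\!\bigl(k^{-1}\lambda_{j}^{(k)}\bigr)\,|s_{j}^{(k)}(x)|^{2}_{h^{k}}
\end{equation*}
is the $\uu$-weighted on-diagonal Bergman density. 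The hypothesis that $\uu$ has compact support containing $\PP$ is needed only to legitimise \eqref{w(A)-eq}; by \Cref{W-P}, the values of $\uu$ outside $\PP$ do not affect $W_{\uu}(L^{k})$.

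Next, I would invoke the weighted Tian--Yau--Zelditch expansion (\Cref{TYZ}), which upgrades the classical on-diagonal Bergman expansion of \cite{catlin,Ruan,Tian-Berg,Zeld} to the $\uu$-weighted setting and asserts, uniformly in $x\in X$,
\begin{equation*}
(2\pi)^{n} B_{k,\uu}(x) \;=\; k^{n}\,\uu(m_{\omega}(x)) \,+\, \tfrac{1}{4}\,k^{n-1}\,\Scal_{\uu}(\omega)(x) \,+\, \mathcal{O}(k^{n-2}).
\end{equation*}
Integrating this pointwise expansion against $\omega^{[n]}$ yields \eqref{w-u-exp} immediately. The identification of the subleading coefficient as $\tfrac{1}{4}\Scal_{\uu}(\omega)$ is precisely the geometric content behind the definition \eqref{Scal-v}: Taylor-expanding $\uu$ at $m_{\omega}(x)$ and using \eqref{A-inf} to convert $k^{-1}A^{(k)}_{\xi_{i}}$ into $m_{\omega}^{\xi_{i}}\,\mathrm{Id}$ modulo $\mathcal{O}(k^{-1})$ first-order corrections, the zeroth-order term produces $\uu(m_{\omega})$ times the classical Bergman density, while the first-order corrections involving $d\uu$ and $\Hess(\uu)$, combined with the classical subleading term proportional to $\Scal(\omega)$, regroup into the three terms $\uu(m_{\omega})\Scal(\omega)+2\Delta_{\omega}(\uu(m_{\omega}))+\Tr(\G_{\omega}\circ(\Hess(\uu)\circ m_{\omega}))$ appearing in \eqref{Scal-v}.

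The genuinely non-trivial ingredient is \Cref{TYZ} itself, which relies on the local peak-section / equivariant heat-kernel analysis carried out in \cite{lahdili2}; once that is available, the present lemma is a direct trace-to-integral computation. As a consistency check, for $\uu\equiv 1$ the formula reduces to $(2\pi)^{n}\dim\mathcal H_{k}=k^{n}\int_{X}\omega^{[n]}+\tfrac{k^{n-1}}{4}\int_{X}\Scal(\omega)\omega^{[n]}+\mathcal O(k^{n-2})$, which matches the two leading terms of the Hirzebruch--Riemann--Roch formula applied to $L^{\otimes k}$ together with the Kähler identity $\int_{X}\Ric(\omega)\wedge\omega^{n-1}=\tfrac{(n-1)!}{2}\int_{X}\Scal(\omega)\omega^{[n]}$.
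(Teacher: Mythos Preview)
Your approach is exactly the paper's: write $W_{\uu}(L^{k})$ as the integral over $X$ of a weighted Bergman density and apply \Cref{TYZ}. The paper's one-line proof sets $\vv=\uu$ in \eqref{Bergman} and integrates.

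There is one small wrinkle worth noting. Your density $B_{k,\uu}$ is built from an orthonormal basis for the \emph{unweighted} $L^{2}$ product, whereas the $B_{\vv}(\uu,k\phi)$ in \Cref{TYZ} uses the $\uu$-weighted inner product \eqref{L2-inner}. In the paper's notation your $B_{k,\uu}$ equals $k^{n}B_{\uu}(1,k\phi)$, not $k^{n}B_{\uu}(\uu,k\phi)$, so \Cref{TYZ} as stated does not literally give your claimed pointwise expansion: running the same Toeplitz computation with inner-product weight $1$ produces the subleading coefficient $\tfrac{1}{4}\bigl(\uu(m_{\omega})\Scal(\omega)+\Tr(\G_{\omega}\circ\Hess(\uu)\circ m_{\omega})\bigr)$, which differs from $\tfrac{1}{4}\Scal_{\uu}(\omega)$ by $-\tfrac{1}{2}\Delta_{\omega}(\uu(m_{\omega}))$. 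This discrepancy is harmless after integration (the Laplacian term has zero integral), so your final formula is correct; but the cleanest route is the paper's: take the $\uu$-weighted inner product so that $\int_{X}B_{\uu}(\uu,k\phi)\,\omega^{[n]}=k^{-n}W_{\uu}(L^{k})$ on the nose, and then the pointwise second case of \Cref{TYZ} applies verbatim.
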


\begin{proof}
This is a direct consequence of \Cref{TYZ} below, by letting $\vv=\uu$ in \eqref{Bergman}, and integrating in both sides over $X$.
\end{proof}
The following result is a straightforward consequence of \Cref{lem-W-u}.
\begin{cor}
Let $(X,L)$ be a smooth polarized projective
variety endowed with a torus action $\T\subset {\rm Aut}(X,L)$ and $\uu,\vv\in C^{\infty}(\PP,\mathbb{R})$ smooth functions on the corresponding polytope $\PP\subset\tor^*$. For any $\mathbb{C}^{*}$-action $\rho$ commuting with $\T$ and a family $\xxi$ of $\mathbb{S}^1$-generators of $\T$, we consider the weight
\begin{equation*}
W_{\uu}^{(k)}(\xxi,\rho):=\Tr\left(\uu\big(k^{-1}\bA^{(k)}_{\xxi}\big)\cdot 
k^{-1}A^{(k)}_{\rho}\right),
\end{equation*}
where $A^{(k)}_{\rho}$ is the induced infinitisimal action of $\rho$ on $\mathcal{H}_k$. Then, $W_{\vv}(\xxi,\rho)$ admits an asymptotic expansion
\begin{equation*}
W_{\uu}^{(k)}(\xxi,\rho)=a_{\uu}^{(0)}(\xxi,\rho)k^{n}+a_{\uu}^{(1)}(\xxi,\rho)k^{n-1}+\mathcal{O}(k^{n-2}),
\end{equation*}
and the $(\uu,\vv)$-Futaki invariant introduced in \Cref{Def-Fut} with respect to the K\"ahler class $\alpha:=2\pi c_1(L)$ satisfies
\begin{equation*}
\frac{1}{4(2\pi)^n}\mathcal{F}^{\alpha}_{\uu,\vv}(V_\rho)=a_{\uu}^{(1)}(\xxi,\rho)-\frac{c_{\uu,\vv}(L)}{4}a_{\vv}^{(0)}(\xxi,\rho),
\end{equation*}
where $V_\rho$ is the generator of the $\mathbb{S}_{\rho}^1$-action on $X$, and $c_{\uu,\vv}(L)$ is the $(\uu,\vv)$-slope of $(X,2\pi c_1(L))$ defined in \eqref{Top-const}.
\end{cor}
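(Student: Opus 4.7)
The plan is to deduce the result directly from \Cref{lem-W-u} applied to the enlarged torus $\hat\T := \T \times \mathbb{S}^1_\rho \subset \mathrm{Aut}(X, L)$, which acts on $(X, L)$ since $\rho$ commutes with $\T$. This torus has dimension $\ell + 1$, natural family of $\mathbb{S}^1$-generators $\hat\xxi := (\xxi, V_\rho)$, momentum polytope $\hat\PP \subset \tor^* \oplus \R$, and infinitesimal action on $\mathcal{H}_k$ given by the commuting operators $(\bA^{(k)}_\xxi, A^{(k)}_\rho)$. I introduce the smooth weight functions $\tilde\uu(p, s) := s\,\uu(p)$ and $\tilde\vv(p, s) := s\,\vv(p)$ on a neighbourhood of the compact polytope $\hat\PP$, extended if necessary to have compact support on $\tor^* \oplus \R$. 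By the definition \eqref{w(A)-eq} of the operators $\uu(k^{-1}\bA^{(k)}_\xxi)$ applied to the enlarged torus, one has $W_{\tilde\uu}(L^k) = W^{(k)}_\uu(\xxi, \rho)$ and $W_{\tilde\vv}(L^k) = W^{(k)}_\vv(\xxi, \rho)$, and the asymptotic expansion \eqref{w-u-exp} applied to $(\hat\T, \tilde\uu)$ and $(\hat\T, \tilde\vv)$ then immediately produces the claimed asymptotic expansion of both weights, with
\begin{equation*}
a^{(0)}_\uu(\xxi, \rho) = \tfrac{1}{(2\pi)^n}\int_X \uu(m_\omega)\, m_\omega^{V_\rho}\, \omega^{[n]}, \quad a^{(1)}_\uu(\xxi, \rho) = \tfrac{1}{4(2\pi)^n}\int_X \Scal_{\tilde\uu}(\omega)\, \omega^{[n]},
\end{equation*}
and analogous formulas for $\vv$.

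The main computational step is then to establish the identity
\begin{equation*}
\int_X \Scal_{\tilde\uu}(\omega)\, \omega^{[n]} = \int_X m_\omega^{V_\rho}\, \Scal_\uu(\omega)\, \omega^{[n]}.
\end{equation*}
This is obtained by unfolding the definition \eqref{Scal-v} of $\Scal_{\tilde\uu}$ with $\tilde\uu(p, s) = s\,\uu(p)$ and the extended basis $\hat\xxi$: the summands proportional to $m_\omega^{V_\rho}$ recombine into $m_\omega^{V_\rho}\, \Scal_\uu(\omega)$, while the residual mixed terms coming from the Laplacian summand $2\Delta_\omega\bigl(\uu(m_\omega)\, m_\omega^{V_\rho}\bigr)$ (a multiple of $\uu(m_\omega)\Delta_\omega m_\omega^{V_\rho}$ and a cross-gradient term $g_\omega(d\uu(m_\omega), dm_\omega^{V_\rho})$) together with the mixed Hessian contribution $2\sum_i \uu_{,i}(m_\omega)\, g_\omega(\xi_i, V_\rho)$ cancel pairwise after integration by parts, by virtue of the elementary identity $g_\omega(dm_\omega^{\xi_i}, dm_\omega^{V_\rho}) = g_\omega(\xi_i, V_\rho)$, itself an immediate consequence of $\grad_{g_\omega} m_\omega^\xi = -J\xi$ for any Killing potential.

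Finally, I observe that $\int_X\bigl(\Scal_\uu(\omega) - c_{\uu,\vv}(L)\,\vv(m_\omega)\bigr)\omega^{[n]} = 0$ by the very definition \eqref{Top-const} of $c_{\uu,\vv}(L)$, so the normalized Killing potential $f_\omega^{V_\rho}$ in \eqref{Fut-uv} may be replaced by the Hamiltonian $m_\omega^{V_\rho}$ itself without altering the value, yielding
\begin{equation*}
\mathcal{F}^\alpha_{\uu,\vv}(V_\rho) = \int_X \Scal_\uu(\omega)\, m_\omega^{V_\rho}\, \omega^{[n]} - c_{\uu,\vv}(L)\int_X \vv(m_\omega)\, m_\omega^{V_\rho}\, \omega^{[n]} = 4(2\pi)^n\!\left( a^{(1)}_\uu(\xxi, \rho) - \tfrac{c_{\uu,\vv}(L)}{4}\, a^{(0)}_\vv(\xxi, \rho) \right),
\end{equation*}
which is the asserted identity. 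The hardest part of this plan is the pairwise cancellation of the cross-terms in the second paragraph, which requires fixing the signs in the product rule for $\Delta_\omega$ and in $\grad_{g_\omega} m_\omega^\xi = -J\xi$ consistently with the paper's conventions; everything else is a direct consequence of \Cref{lem-W-u} and of the definitions \eqref{Fut-uv}, \eqref{Top-const}.
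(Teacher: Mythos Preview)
Your proposal is correct and matches the paper's intended approach: the corollary is stated as a ``straightforward consequence of \Cref{lem-W-u}'' with no further details, and the enlarged-torus trick with weight $\tilde\uu(p,s)=s\,\uu(p)$ is precisely the natural way to reduce to that lemma. Your identification of the coefficients and the cancellation of the cross-terms in $\int_X\Scal_{\tilde\uu}(\omega)\,\omega^{[n]}$ are both right; the only minor caveat is that the replacement of the normalized potential $f_\omega^{V_\rho}$ by $m_\omega^{V_\rho}$ relies on $\int_X\bigl(\Scal_\uu(\omega)-c_{\uu,\vv}(L)\vv(m_\omega)\bigr)\omega^{[n]}=0$, which holds by \eqref{Top-const} in the generic case $\int_X\vv(m_\omega)\omega^{[n]}\neq 0$ but not literally in the degenerate case---a technicality the paper itself leaves implicit.
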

\subsection{The $(\uu,\vv)$-Donaldson-Futaki invariant of a polarized test configuration}
Following \cite{Do-02}, we consider a (possibly singular) polarized test configuration of exponent $r\in\mathbb{N}$, compatible with $(X,L,\T)$, defined as follows:
\begin{defn}
A $\T$-{\it compatible polarized test configuration} $(\mathcal{X},\mathcal{L})$ of exponent $r\in\mathbb{N}$ associated to the smooth polarized variety $(X,L)$ is a normal polarized variety $(\mathcal{X},\mathcal{L},\hat{\T})$ endowed with a torus $\hat\T\subset{\rm Aut}(\mathcal{X},\mathcal{L})$ and
\begin{itemize}
\item a flat morphism $\pi:\mathcal{X}\rightarrow \mathbb{P}^{1}$ such that the torus action $\hat\T$ on $\mathcal{X}$ preserves each fiber $X_\tau:=\pi^{-1}(\tau)$, and $(X_1,\mathcal{L}_{|X_1},\hat\T)$ is  equivariantly isomorphic to $(X,L^r,\T)$;
\item a $\mathbb{C}^{\star}$-action $\rho$ on $\mathcal{X}$ commuting with $\hat\T$ and covering the usual $\mathbb{C}^{\star}$-action on $\mathbb{P}^{1}$;
\item an isomorphism
\begin{equation}\label{lambda-1}
\lambda: (X\times(\mathbb{P}^{1}\setminus \{0\}),L^{r}\otimes \mathcal{O}_{\mathbb{P}^1}(r))\cong(\mathcal{X}\setminus X_0,\mathcal{L}),
\end{equation}
which is equivariant with respect to the actions of $\mathbb{G}:=\hat\T\times \mathbb{S}^{1}_{\rho}$ on $\mathcal{X}\setminus X_0$ and the action of $\T\times\mathbb{S}^{1}$ on $X\times\left(\mathbb{P}^{1}\setminus \{0\}\right)$. 
\end{itemize}
\end{defn}

To simplify the discussion, we shall assume in the sequel that $r=1$ and that $L$ is a very ample polarization on $X$. 

By the consideration in \Cref{sec 8.1}, for each $\tau\neq 0$,  $(X_\tau, \mathcal{L}_{|X_\tau}, \hat \T)$ gives rise to a momentum polytope $\PP_\tau \subset \tor^*$. Using the biholomorphism \eqref{lambda-1}, we know that  $(X_\tau, \mathcal{L}_{|X_\tau},\hat  \T)$  and $(X_1, \mathcal{L}_{|X_1}, \hat \T)$ are equivariantly isomorphic polarized varieties, and thus $\PP_\tau =\PP_1=\PP$ for all $\tau \neq 0$.

For any $\tau\in \mathbb{P}^1$, following \Cref{sec 8.1}, we let $\bA^{(k)}_{\xxi}(\tau):=(A^{(k)}_{\xi_1}(\tau),\ldots,A^{(k)}_{\xi_\ell}(\tau))$ be infinitisimal generators of the $\mathbb{S}^{1}$-actions on $\mathcal{H}_k(\tau):=H^0(X_\tau,\mathcal{L}^{k}_{|X_\tau})$, induced by the $\mathbb{S}^{1}$-generators $\xxi=(\xi_1,\cdots,\xi_\ell)$ for the $\hat{\T}$-action, on the fiber $(X_\tau,\mathcal{L}_{|X_\tau})$. We claim that the spectrum of the operators $A^{(k)}_{\xi_j}(\tau)$ is independent of $\tau\in \mathbb{P}^1$, and is contained in $\PP$.  To see this, we can use the observation from \cite[Sect. 2.3]{Do-02} which associates to any $\T$-compatible polarized test configuration $(\mathcal{X}, \mathcal{L}, \hat{\T})$ a continuous family  $\mathcal{V}_k(\tau) \subset {\rm Sym}^k(\mathbb{C}^{N+1})$ of $m$-planes in the Grassmanian ${\rm Gr}_m({\rm Sym}^k(\mathbb{C}^{N+1}))$, where ${\rm Sym}^k$ denotes the vector space of symmetric homogeneous polynomials in  $N +1$ complex variables. In this picture, $(X_\tau, \mathcal{L}_{|X_\tau})$ is seen as a polarized subvariety of $({\mathbb P}^{N},\mathcal{O}(1))$, and the space of sections $\mathcal{H}_k(\tau) := H^0(X_\tau, (\mathcal{L}_{|X_\tau})^k)$ is identified to ${\rm Sym}^k(\mathbb{C}^{N+1}) /\mathcal{V}_k(\tau)$. We can further assume that the action of $\hat{\T}$ on $(X_\tau,\mathcal{L}_{|X_\tau})$ comes from the restriction to $X_\tau$ of a subtorus of $\hat{\T}\subset {\rm SL}(N+1, \mathbb{C})$, and thus $\hat{\T}$ also acts on ${\rm Sym}^k(\mathbb{C}^{N+1})$; furthermore, writing $\widehat{{\bA}}^{(k)}_{\xxi}:=(\hat{A}^{(k)}_{\xi_1},\cdots\hat{A}^{(k)}_{\xi_\ell})$, where $\hat{A}^{(k)}_{\xi_j}$ is the infinitisimal generator of the circle action $\mathbb{S}_{\xi_j}^1$ on ${\rm Sym}^k(\mathbb{C}^{N+1})$, the operators
$$\hat{A}^{(k)}_{\xi_j}: {\rm Sym}^k(\mathbb{C}^{N+1}) \to {\rm Sym}^k(\mathbb{C}^{N+1}),$$ 
must preserve the $m$-planes $\mathcal{V}_k(\tau)$ (as the action preserves each $X_\tau$ viewed as the subspace of common zeroes of elements in $\mathcal{V}_k(\tau)$), and thus 
$$A_{\xi_j}^{(k)}(\tau) : {\rm Sym}^k(\mathbb{C}^{N+1}) /\mathcal{V}_k(\tau) \to {\rm Sym}^k(\mathbb{C}^{N+1}) /\mathcal{V}_k(\tau)$$
are the linear maps induced by $\widehat{{\bA}}^{(k)}_{\xxi}$ on the quotient spaces $\mathcal{H}_k(\tau)$. Introducing a $\hat{\T}$-invariant Hermitian product on ${\rm Sym}^k(\mathbb{C}^{N+1})$, we thus obtain a continuous $\hat{A}^{(k)}_{\xi_j}$-invariant decomposition 
$${\rm Sym}^k(\mathbb{C}^{N+1}) = \mathcal{V}_k(\tau) \oplus \mathcal{V}^{\perp}_k(\tau),$$
and the spectrum of $A_{\xi_i}^{(k)}(\tau)$ is nothing but the spectrum of $\hat{A}_{\xi_i}^{(k)}$ restricted to $\mathcal{V}^{\perp}_k(\tau)$. Using that $\mathcal{V}_{k}^{\perp}(\tau)$ vary continuously in the Gramsannian, we conclude that the spectrum of $\hat{A}^{(k)}_{\xi_j}$ restricted to $\mathcal{V}_{k}^{\perp}(\tau)$ is constant. It is contained in $\PP$ by \Cref{W-P}.

It follows that for any $\uu\in C^{\infty}(\PP,\mathbb{R})$, we can define $\uu(k^{-1}\bA^{(k)}_{\xxi}(0))$, where $\bA^{(k)}_{\xxi}(0)= (A_{\xi_1}^{(k)}(0),\cdots, A_{\xi_{\ell}}^{(k)}(0))$ denote the the generators of circle actions corresponding to the central fibre $(X_0, \mathcal{L}_{|X_0}, \hat{\T})$. Thus, for $\uu\in C^{\infty}(\PP,\mathbb{R})$ we can consider the following $\uu$-weight
\begin{equation}\label{W-0}
W_{\uu}^{(k)}(\xxi,\rho):=\Tr\left(\uu\big(k^{-1}\bA^{(k)}_{\xxi}(0)\big)\cdot 
k^{-1}A^{(k)}_\rho\right).
\end{equation}

\begin{defn}
Let $\uu\in C^{\infty}(\PP,\mathbb{R}_{>0})$ and $\vv\in C^{\infty}(\PP,\mathbb{R})$, and suppose that we have the following asymptotic expansions on the central fiber $(X_0,L_0)$
\begin{align}
\begin{split}\label{star}
W_{\vv}^{(k)}(\xxi,\rho)=& a_{\vv}^{(0)}(\xxi,\rho)k^{n}+\mathcal{O}(k^{n-1}),\\
W_{\uu}^{(k)}(\xxi,\rho)=&a_{\uu}^{(0)}(\xxi,\rho)k^{n}+a_{\uu}^{(1)}(\xxi,\rho)k^{n-1}+\mathcal{O}(k^{n-2}).
\end{split}
\end{align} 
Then we define the $(\uu,\vv)$-{\it Donaldson-Futaki} invariant of the normal $\T$-compatible polarized test configuration $(\mathcal{X},\mathcal{L})$ to be the number
\begin{equation}\label{Don-Fut-Alg}
\DF_{\uu,\vv}(\mathcal{X},\mathcal{L}):=a_{\uu}^{(1)}(\xxi,\rho)-\frac{c_{\uu,\vv}(L)}{4}a_{\vv}^{(0)}(\xxi,\rho),
\end{equation}
where $c_{\uu,\vv}(L)$ is the $(\uu,\vv)$-slope of $(X,2\pi c_1(L))$ given by \eqref{Top-const}.
\end{defn}

Using \Cref{lem-W-u}, we have the following

\begin{cor}\label{DF-smooth}
If $(\mathcal{X},\mathcal{L})$ is a $\T$-compatible polarized test configuration with smooth central fiber, then the expansions \eqref{star} hold, and
\begin{align*}
(2\pi)^{n}W_{\vv}^{(k)}(\xxi,\rho)=& k^{n}\int_{X_0}h_{\rho}\vv(m_{\Omega_0})\Omega^{[n]}_0+\mathcal{O}(k^{n-1}),\\
(2\pi)^{n}W_{\uu}^{(k)}(\xxi,\rho)=&k^{n}\int_{X_0}h_{\rho}\uu(m_{\Omega_0})\Omega^{[n]}_0+\frac{k^{n-1}}{4}\int_{X_0}h_{\rho}\Scal_{\uu}(\Omega_0)\Omega^{[n]}_0+\mathcal{O}(k^{n-2}),
\end{align*} 
where $h_\rho$ is the $\Omega$-Hamiltonian of the generator $V_\rho$ of the action $\mathbb{S}^{1}_\rho$ on $X_0$ with respect to a $\mathbb{G}$ invariant K\"ahler metric $\Omega\in 2\pi c_1(\mathcal{L})$ and $\Omega_0:=\Omega_{|X_0}$. In particular, the $(\uu,\vv)$-Donaldson-Futaki invariant \eqref{Don-Fut-Alg} of $(\mathcal{X},\mathcal{L})$ is given by
\begin{equation*}
\DF_{\uu,\vv}(\mathcal{X},\mathcal{L})=\frac{1}{4(2\pi)^{n}}\mathcal{F}^{\alpha}_{\uu,\vv}(V_\rho),
\end{equation*}
where $\mathcal{F}^{\alpha}_{\uu,\vv}(V_\rho)$ the Futaki invariant of the class $\alpha:=2\pi c_1(L)$, introduced in \Cref{Def-Fut}.
\end{cor}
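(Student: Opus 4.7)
The plan is to reduce the computation to a direct application of \Cref{lem-W-u} on the smooth central fibre $(X_0,\mathcal{L}_{|X_0})$, after enlarging the torus action to include the $\mathbb{C}^*$-action $\rho$. Since $\rho$ commutes with $\T$ and $X_0$ is smooth, the $(\ell+1)$-dimensional torus $\T' := \T \times \mathbb{S}^1_\rho$ acts on $(X_0, \mathcal{L}_{|X_0})$. Letting $h_\rho$ denote the $\Omega_0$-Hamiltonian of $V_\rho$ lifted through \eqref{A-inf}, the combined momentum map $M := (m_{\Omega_0}, h_\rho)$ has image a convex polytope $\PP'\subset \tor^*\oplus \R$ projecting onto $\PP$.

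For a weight function $\uu$ on $\PP$, I would set $\tilde\uu(p,s) := s\,\uu(p)$, extended to a smooth compactly supported function on $\tor^*\oplus\R$. Because the commuting operators $A^{(k)}_\rho$ and $A^{(k)}_{\xi_i}(0)$ act diagonally on the joint weight spaces of $\mathcal{H}_k(0)$, the functional calculus \eqref{w(A)-eq} yields
\[\tilde\uu\bigl(k^{-1}\bA^{(k)}_{\xxi}(0),\, k^{-1}A^{(k)}_\rho\bigr) = \uu\bigl(k^{-1}\bA^{(k)}_{\xxi}(0)\bigr)\cdot k^{-1}A^{(k)}_\rho,\]
and taking traces identifies $W^{(k)}_\uu(\xxi,\rho) = W_{\tilde\uu}(\mathcal{L}_{|X_0}^k)$ in the notation of \eqref{W-u}; the analogous identity holds for $\vv$. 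Applying \Cref{lem-W-u} to $(X_0, \mathcal{L}_{|X_0}, \T')$ with the weights $\tilde\uu$ and $\tilde\vv$ then immediately produces asymptotic expansions of the form \eqref{star}.

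The main technical step is to simplify the resulting coefficients. The leading-order term is immediate, as $\int_{X_0}\tilde\uu(M)\Omega_0^{[n]} = \int_{X_0} h_\rho\,\uu(m_{\Omega_0})\Omega_0^{[n]}$, and similarly for $\tilde\vv$. For the subleading coefficient I would compute $\Scal_{\tilde\uu}(\Omega_0)$ from \eqref{Scal-v}, using the Hessian block structure $\tilde\uu_{,ij} = s\,\uu_{,ij}$, $\tilde\uu_{,is} = \uu_{,i}$, $\tilde\uu_{,ss} = 0$ together with the identity $g_{\Omega_0}(\xi_i, V_\rho) = (dm^{\xi_i}_{\Omega_0}, dh_\rho)_{\Omega_0}$. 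A direct expansion gives
\[\Scal_{\tilde\uu}(\Omega_0) = h_\rho\Scal_\uu(\Omega_0) + 2\uu(m_{\Omega_0})\Delta_{\Omega_0}h_\rho - 2\bigl(d\uu(m_{\Omega_0}), dh_\rho\bigr)_{\Omega_0},\]
and the last two terms cancel after a single integration by parts on the compact manifold $X_0$. Carefully bookkeeping the various cross-derivative contributions from $\Hess(\tilde\uu)$ restricted to $\xxi \oplus V_\rho$ is where the main computational obstacle lies.

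Substituting these ingredients into \eqref{Don-Fut-Alg} produces
\[4(2\pi)^n \DF_{\uu,\vv}(\mathcal{X},\mathcal{L}) = \int_{X_0} h_\rho\bigl(\Scal_\uu(\Omega_0) - c_{\uu,\vv}(L)\vv(m_{\Omega_0})\bigr)\Omega_0^{[n]}.\]
To recognize the right-hand side as $\mathcal{F}^\alpha_{\uu,\vv}(V_\rho)$ per \eqref{Fut-uv}, I would replace $h_\rho$ by its mean-zero normalization $f^{V_\rho}_{\Omega_0}$; the difference is a real constant whose integral against $\bigl(\Scal_\uu(\Omega_0) - c_{\uu,\vv}(L)\vv(m_{\Omega_0})\bigr)\Omega_0^{[n]}$ vanishes by the defining relation \eqref{Top-const} of the slope, noting that \Cref{lm-mom} and \Cref{top-c} equate the integrals of $\Scal_\uu(\Omega_0)\Omega_0^{[n]}$ and $\vv(m_{\Omega_0})\Omega_0^{[n]}$ over $X_0$ with those over $X$.
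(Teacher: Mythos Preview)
Your approach is correct and is precisely what the paper intends: the paper offers no argument beyond the phrase ``Using \Cref{lem-W-u}'', and your proposal supplies the details one would fill in, namely enlarging the torus to $\T\times\mathbb{S}^1_\rho$ on the smooth fibre $(X_0,\mathcal{L}_{|X_0})$, taking the weight $\tilde\uu(p,s)=s\,\uu(p)$, and unwinding $\Scal_{\tilde\uu}$ from \eqref{Scal-v}. Your computation of $\Scal_{\tilde\uu}(\Omega_0)$ and the integration-by-parts cancellation are correct, as is the final normalization step via \eqref{Top-const}; this is exactly the content of the paper's preceding Corollary (stated for a general smooth polarized variety with a commuting $\mathbb{C}^*$-action), specialized to $X_0$.
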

We deduce from \Cref{DF-smooth} and \Cref{submersion-case}
\begin{cor}\label{DF=F}
If $(\mathcal{X},\mathcal{L})$ is a smooth $\T$-compatible polarized test configuartion such that $\pi:\mathcal{X}\to\mathbb{P}^1$ is a smooth submersion, then 
\begin{equation*}
\DF_{\uu,\vv}(\mathcal{X},\mathcal{L})=\frac{1}{4(2\pi)^{n}}\mathcal{F}_{\uu,\vv}(\mathcal{X},2\pi c_1(\mathcal{L})),
\end{equation*}
where $\mathcal{F}_{\uu,\vv}(\mathcal{X},2\pi c_1(\mathcal{L}))$ is the $(\uu,\vv)$-Futaki invariant of the $\T$-compatible K\"ahler test configuration $(\mathcal{X},2\pi c_1(\mathcal{L}))$ introduced in \Cref{Def-Fut-TC}.
\end{cor}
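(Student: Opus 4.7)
\medskip

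\noindent\textbf{Proof plan.}
The plan is to reduce the statement to an immediate algebraic identity by chaining together the two structural results that appear just above in the text: the differential-geometric formula in Proposition~\ref{submersion-case} and the central-fibre expansion in Corollary~\ref{DF-smooth}. The heuristic is that both invariants can be read off the restriction to the (smooth) central fibre, so the identity should amount to matching their expressions there, plus checking that a topological correction term vanishes by the very definition of the slope $c_{\uu,\vv}(L)$.

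First I would invoke Corollary~\ref{DF-smooth}: since $\pi$ is a smooth submersion, the central fibre $(X_0,\mathcal{L}_{|X_0})$ is a smooth polarized variety equivariantly K\"ahler-diffeomorphic to $(X,L)$, the coefficients $a_{\uu}^{(0)},a_{\uu}^{(1)},a_{\vv}^{(0)}$ are given by the displayed integrals of $h_\rho\uu(m_{\Omega_0})\Omega_0^{[n]}$, $h_\rho\,\Scal_{\uu}(\Omega_0)\Omega_0^{[n]}$ and $h_\rho\vv(m_{\Omega_0})\Omega_0^{[n]}$ respectively, and combining them according to the defining formula \eqref{Don-Fut-Alg} rewrites
\[
\DF_{\uu,\vv}(\mathcal{X},\mathcal{L}) \;=\; \frac{1}{4(2\pi)^n}\int_{X_0}h_\rho\bigl(\Scal_{\uu}(\Omega_0)-c_{\uu,\vv}(L)\,\vv(m_{\Omega_0})\bigr)\Omega_0^{[n]} \;=\;\frac{1}{4(2\pi)^n}\,\mathcal{F}^{\alpha}_{\uu,\vv}(V_\rho),
\]
where in the last equality I use Definition~\ref{Def-Fut} together with the fact that $h_\rho$ is the $\Omega_0$-Hamiltonian of $V_\rho$, hence differs from the imaginary part $f_{\Omega_0}^{V_\rho}$ of the normalized holomorphy potential by an additive constant which integrates to zero against $\Scal_{\uu}(\Omega_0)-c_{\uu,\vv}(\alpha)\vv(m_{\Omega_0})$ by \Cref{fut-obs}/\Cref{Def-Top-const}.

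Next I would invoke Proposition~\ref{submersion-case} to express the differential-geometric Futaki invariant as
\[
\mathcal{F}_{\uu,\vv}\bigl(\mathcal{X},2\pi c_1(\mathcal{L})\bigr) \;=\; \mathcal{F}^{\alpha}_{\uu,\vv}(V_\rho) \;-\; \frac{{\rm Vol}(\mathcal{X},\mathcal{A})}{{\rm Vol}(X,\alpha)}\int_{X}\bigl(\Scal_{\uu}(\omega)-c_{\uu,\vv}(\alpha)\vv(m_{\omega})\bigr)\omega^{[n]}.
\]
The correction term on the right vanishes identically: indeed, in the non-degenerate case $\int_X\vv(m_\omega)\omega^{[n]}\neq 0$ this is precisely the defining relation for $c_{\uu,\vv}(\alpha)$ in \Cref{Def-Top-const}, so the bracketed integrand has zero integral against $\omega^{[n]}$. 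Combining this with the identification obtained in the previous paragraph then gives $\mathcal{F}_{\uu,\vv}(\mathcal{X},2\pi c_1(\mathcal{L}))=4(2\pi)^n\,\DF_{\uu,\vv}(\mathcal{X},\mathcal{L})$, which is the claim.

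\medskip

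\noindent\textbf{Main obstacle.} The heavy lifting is already done in Proposition~\ref{submersion-case} and Corollary~\ref{DF-smooth}, and the remaining step is essentially bookkeeping. The one subtle point I would be careful about is the normalization of the Hamiltonian $h_\rho$ used in the two approaches, since the algebraic side produces $h_\rho$ via the identity \eqref{A-inf} (which fixes it up to an additive constant through the projection onto normalized holomorphy potentials), whereas the differential-geometric side uses the normalization $\int_X f_\omega^V\omega^{[n]}=0$; ensuring that the two conventions agree after applying the trace and passing to the fibre amounts to the observation that adding a constant to $h_\rho$ modifies $\DF_{\uu,\vv}$ only through a multiple of $\int_{X_0}(\Scal_{\uu}(\Omega_0)-c_{\uu,\vv}(L)\vv(m_{\Omega_0}))\Omega_0^{[n]}$, which vanishes by \Cref{Def-Top-const} (in the generic case $\int_{X_0}\vv(m_{\Omega_0})\Omega_0^{[n]}\neq 0$).
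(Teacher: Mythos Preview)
Your proposal is correct and follows exactly the same route as the paper: the paper's proof is the single line ``We deduce from \Cref{DF-smooth} and \Cref{submersion-case}'', and you have simply spelled out how these two results combine. Your extra care about the normalization of $h_\rho$ (that an additive constant contributes a multiple of $\int_{X_0}(\Scal_{\uu}(\Omega_0)-c_{\uu,\vv}(\alpha)\vv(m_{\Omega_0}))\Omega_0^{[n]}$, which vanishes by the definition of $c_{\uu,\vv}(\alpha)$) is a point the paper leaves implicit; your caveat restricting to the non-degenerate case $\int_X\vv(m_\omega)\omega^{[n]}\neq 0$ is honest, and the paper does not address that edge case either.
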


\section{The $(\uu,\vv)$-Futaki invariant of a toric test configurations}\label{sec-9}

In this section we consider the special case when $X$ is a smooth toric variety i.e. $\T\subset\Autred$ with $\dim_\mathbb{R}\T=\dim_\mathbb{C}X=n$. Let $\omega\in\alpha$ be a fixed $\T$-invariant K\"ahler form, $m_\omega:X\rightarrow\tor^*$ a corresponding momentum map, and $\PP=m_\omega(X)$ the corresponding momentum polytope. By Delzant Theorem \cite{Delzant}, $(X,\alpha)$ can be recovered from the {\it labelled integral Delzant polytope} $(\PP,\Lab)$ where $\Lab=(L_j)_{j=1,d}$ is the collection of non-negative defining affine-linear functions for $\PP$, with $dL_j$ being primitive elements of the lattice $\Lambda$ of circle subgroups of $\T$. We denote by $\PP^{0}$ the interior of $\PP$ and by  $X^{0}:=m^{-1}_\omega(\PP^{0})$ the dense open set of $X$ of points with principle $\T$ orbits. Let us consider the momentum/angle coordinates $(p,t)\in\PP^{0}\times\mathbb{T}$ with respect to the K\"ahler metric $(g,J,\omega)$. By a result of Guillemin (see \cite{guillemin}) 
\begin{align}
\begin{split}\label{g-J}
&g=\langle dp,\bG^{u},dp\rangle+\langle dt,\bH^{u},dt\rangle,\\
&Jdt=-\langle \bG^{u},dp\rangle,\\
&\omega=\langle dp\wedge dt\rangle,
\end{split}
\end{align}
on $X^{0}$, where $u$ is a smooth, strictly convex function called the {\it symplectic potantial} of $(\omega,J)$, $\bG^{u}:\PP^{0}\rightarrow S^{2}\tor$ is the Hessian of $u$, $\bH^{u}:\PP^{0}\rightarrow S^{2}\tor^{*}$ is its point-wise inverse and $\langle\cdot,\cdot,\cdot\rangle$ denote the contraction $\tor^*\times S^{2}\tor\times\tor^{*}\rightarrow\R$ or the dual one. Conversely if $u$ is a strictly convex smooth function on $\PP^{0}$, \eqref{g-J} defines a K\"ahler structure on $X^{0}$ which extends to a global $\T$-invariant K\"ahler structure on $X$ iff $u$ satisfies the boundary conditions of Abreu (see \cite{abreu}). We denote by $\mathcal{S}(\PP,\Lab)$ the set of smooth strictly convex functions on $\PP^{0}$ satisfying these boundary conditions. For $u\in\mathcal{S}(\PP,\Lab)$, we have the following expression for the scalar curvature of $(g,J)$ (see \cite{abreu0}),
\begin{equation*}
\Scal(g)=-\sum_{i,j=1}^{n}\bH_{ij,ij}^{u},
\end{equation*} 
where $\bH^{u}=(\bH_{ij}^{u})$ in a basis of $\tor$. Let $\uu\in C^{\infty}(\PP,\mathbb{R}_{>0})$. By the calculations in \cite[Section 3]{AM}, the following expression for the $\uu$-scalar curvature of $(g,J)$ is straightforward
\begin{equation}\label{Scal-u-tor}
\Scal_{\uu}(g)=-\sum_{i,j=1}^{n}\left(\uu\bH_{ij}^{u}\right)_{,ij}.
\end{equation} 
We recall that by the maximality of $\T$, any $\T$-invariant Killing potential of \eqref{g-J} is the pull-back by $m_\omega$ of an affine-linear function on $\PP$.  
\begin{lem}\label{lem-Fut-tor}
Let $\uu\in C^{\infty}(\PP,\mathbb{R}_{>0})$ and $\vv\in C^{\infty}(\PP,\mathbb{R})$. For any affine-lirear function $f$ on $\PP$, the $(\uu,\vv)$-Futaki invariant corresponding to the $\T$-invariant Hamiltonian Killing vector field $\xi:=df$ is given by
\begin{equation}\label{Fut-tor}
(2\pi)^{-n}\mathcal{F}_{\uu,\vv}^{\alpha}(\xi)=2\int_{\partial\PP}f\uu d\sigma-c_{(\uu,\vv)}(\alpha)\int_{\PP}f\vv dp,
\end{equation}
where $dp$ is a Lebesgue measure on $\tor^*$, $d\sigma$ is the induced measure on each face $F_i\subset\partial\PP$ by letting $dL_i\wedge d\sigma=-dp
$ and the constant $c_{(\uu,\vv)}(\alpha)$ is given by
\begin{equation}\label{Top-const-tor}
c_{(\uu,\vv)}(\alpha)=2\left(\frac{\int_{\partial\PP}\uu d\sigma}{\int_{\PP}\vv dp}\right).
\end{equation}
\end{lem}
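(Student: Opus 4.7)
The plan is to reduce the integrals defining $\mathcal{F}^{\alpha}_{\uu,\vv}(\xi)$ and $c_{(\uu,\vv)}(\alpha)$ to integrals over the Delzant polytope $\PP$ via Guillemin's momentum/angle coordinates, and then to apply a weighted version of the Abreu--Donaldson integration-by-parts formula.

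First, since $\T$ is maximal in $\Autred$, every $\T$-invariant Killing potential of $\omega$ is the pull-back $f\circ m_\omega$ of an affine-linear function $f$ on $\tor^*$; in particular, for $\xi=df$ one has $f_\omega^\xi = f(m_\omega)$. Next, by the Guillemin description \eqref{g-J}, on the open dense set $X^0=m_\omega^{-1}(\PP^0)\cong \PP^0\times \T$ the volume form is $\omega^{[n]}=dp\wedge dt$; since $\vol(\T)=(2\pi)^n$, Fubini yields
$$\int_X F(m_\omega)\,\omega^{[n]} = (2\pi)^n \int_\PP F(p)\,dp$$
for any $\T$-invariant integrand $F\circ m_\omega$.

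The proof will then reduce to establishing the identity
$$\int_\PP f(p)\,\Scal_{\uu}(g)\,dp \;=\; 2\int_{\partial\PP} f\,\uu\,d\sigma \qquad (\star)$$
for every affine-linear function $f$ on $\tor^*$. Granting $(\star)$ and applying it with $f\equiv 1$ gives $\int_X \Scal_{\uu}(\omega)\,\omega^{[n]} = 2(2\pi)^n\int_{\partial\PP}\uu\,d\sigma$, which combined with $\int_X\vv(m_\omega)\,\omega^{[n]}=(2\pi)^n\int_\PP \vv\,dp$ produces the expression \eqref{Top-const-tor} for $c_{(\uu,\vv)}(\alpha)$. The formula \eqref{Fut-tor} then drops out directly from the definition \eqref{Fut-uv}.

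The main technical step, and the one I expect to be the heart of the argument, is thus the weighted Abreu identity $(\star)$. To prove it I would substitute the formula \eqref{Scal-u-tor} for $\Scal_{\uu}(g)$ and integrate by parts twice against $f$. Because $f$ is affine-linear, $f_{,ij}\equiv 0$, so the would-be interior double-divergence term vanishes and one is left with two boundary contributions involving $\uu\bH^u_{ij}$ and its first normal derivatives. The Abreu boundary conditions on $u\in\mathcal{S}(\PP,\Lab)$---namely that on each facet $F_i$ the matrix $\bH^u$ annihilates $dL_i$ and that the normal derivative of $\bH^u(dL_i,dL_i)$ is canonically pinned down by the primitivity of $dL_i$ relative to the lattice $\Lambda$---force one boundary term to vanish while collapsing the other to $2\int_{F_i} f\,\uu\,d\sigma$ on each facet, the factor $2$ arising from the normalization $dL_i\wedge d\sigma=-dp$ together with the primitivity of $dL_i$. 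Summing over facets yields $(\star)$. This is the weighted analogue of the classical Abreu--Donaldson computation and is carried out in detail (for arbitrary smooth weight $\uu>0$) in the cited references \cite{LLS1,LSZ}.
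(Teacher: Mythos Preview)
Your proposal is correct and follows essentially the same route as the paper's proof: reduce the defining integrals to the polytope via momentum/angle coordinates and then apply the weighted Abreu--Donaldson integration-by-parts identity (the paper invokes \cite[Lemma~2]{AM} for this, whereas you sketch the boundary-condition argument directly and point to \cite{LLS1,LSZ}). Your observation that the affine-linearity of $f$ kills the interior $\sum_{ij}\bH^u_{ij}f_{,ij}$ term is exactly what the paper's more general formula \eqref{eq-f-x} makes explicit.
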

\begin{proof}
Let $u\in\mathcal{S}(\PP,\Lab)$ and $(g,J)$ be the corresponding $\omega$-compatible K\"ahler structure $X$ given by \eqref{g-J}. The $(\uu,\vv)$-Futaki invariant of the K\"ahler class $\alpha=[\omega]$ is given by
\begin{equation*}
\mathcal{F}_{\uu,\vv}^{\alpha}(\xi)=\int_X\Scal_{\uu}(g)f(m_\omega)\omega^{[n]}-c_{(\uu,\vv)}(\alpha)\int_{X}f(m_\omega)\vv(m_\omega)\omega^{[n]},
\end{equation*}
where $f$ is an affine linear function on $\tor^*$ with $\xi=df\in \tor$. In the momentum-action coordinates $(p,t)\in\PP^{0}\times\T$ we have $\omega^{[n]}=\langle dp\wedge dt\rangle^{[n]}=dp_1\wedge dt_1\wedge\cdots\wedge dp_n\wedge dt_n$. Then, using \eqref{Scal-u-tor} and \cite[Lemma 2]{AM}, we get
\begin{align*}
(2\pi)^{-n}\mathcal{F}_{\uu,\vv}^{\alpha}(\xi)=&-\int_{\PP}\bigg(\sum_{i,j=1}^{n}\big(\uu\bH_{ij}^{u}\big)_{,ij}\bigg) f dp-c_{(\uu,\vv)}(\alpha)\int_{\PP}f\vv dp\\
=&2\int_{\partial\PP}f\uu d\sigma-c_{(\uu,\vv)}(\alpha)\int_{\PP}f\vv dp.
\end{align*} 
Similarly we deduce \eqref{Top-const-tor}.
\end{proof}
For any $f\in C^{0}(\PP,\mathbb{R})$ we define 
\begin{equation}\label{Eq-Fut-P}
\mathcal{F}^{\PP}_{\uu,\vv}(f):=2\int_{\partial\PP}f\uu d\sigma-c_{(\uu,\vv)}(\alpha)\int_{\PP}f\vv dp.
\end{equation}
Using again \cite[Lemma 2]{AM} we obtain 
\begin{equation}\label{eq-f-x}
(2\pi)^{-n}\int_X(\Scal_{\uu}(g_u)-c_{\uu,\vv}(\alpha)\vv(m_\omega))f\omega^{[n]}=\mathcal{F}^{\PP}_{\uu,\vv}(f)-\int_{\PP}\Bigg(\sum_{i,j=1}^{n}\bH_{ij}f_{,ij}\Bigg)\uu dp,
\end{equation}
for any $u\in\mathcal{S}(\PP,\Lab)$ and $f\in C^{\infty}(\PP,\mathbb{R})$. It follows that
\begin{lem}\cite{AM,Do-02}
If there exist $u\in \mathcal{S}(\PP,\Lab)$ such that the corresponding $\omega$-compatible K\"ahler structure $(g,J)$ solves $\Scal_{\uu}(g)=c_{(\uu,\vv)}(\alpha)\vv(m_\omega)$ then $\mathcal{F}^{\PP}_{\uu,\vv}(f)\geq 0$ for any smooth convex function $f$ on $\PP$.
\end{lem}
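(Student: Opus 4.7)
The plan is to read the statement directly off the identity \eqref{eq-f-x}, which is already established in the excerpt. Fix $u\in\mathcal{S}(\PP,\Lab)$ whose associated $\omega$-compatible K\"ahler structure $(g,J)$ satisfies $\Scal_{\uu}(g)=c_{(\uu,\vv)}(\alpha)\vv(m_\omega)$. Then the integrand on the left-hand side of \eqref{eq-f-x} vanishes identically, and \eqref{eq-f-x} collapses to
\begin{equation*}
\mathcal{F}^{\PP}_{\uu,\vv}(f)=\int_{\PP}\Big(\sum_{i,j=1}^{n}\bH^{u}_{ij}f_{,ij}\Big)\uu(p)\,dp
\end{equation*}
for every $f\in C^{\infty}(\PP,\mathbb{R})$.

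The main task is then to verify that the right-hand side is non-negative whenever $f$ is convex on $\PP$. Because $u$ is strictly convex on $\PP^{0}$, its Hessian $\bG^{u}\in S^{2}\tor$ is positive definite pointwise, and so is its pointwise inverse $\bH^{u}\in S^{2}\tor^{*}$. If $f$ is convex, then $\Hess(f)=(f_{,ij})\in S^{2}\tor^{*}$ is positive semidefinite on $\PP$. Choosing at each point a basis of $\tor$ that diagonalizes $\bH^{u}$ (or invoking the standard fact that the trace of the product of a positive definite and a positive semidefinite symmetric matrix is non-negative), one obtains
\begin{equation*}
\sum_{i,j=1}^{n}\bH^{u}_{ij}(p)\,f_{,ij}(p)\geq 0
\end{equation*}
pointwise on $\PP^{0}$. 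Combined with $\uu(p)>0$, this yields a non-negative integrand and hence $\mathcal{F}^{\PP}_{\uu,\vv}(f)\geq 0$.

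Finally, one should check that convex functions $f$ on $\PP$ that are merely continuous (not necessarily smooth) are handled by a standard approximation: any continuous convex $f$ on $\PP$ is a uniform limit of smooth convex functions on a slight enlargement, so the inequality for smooth convex $f$ extends by continuity of both sides of \eqref{Eq-Fut-P} in $f$ under uniform convergence on $\PP$ (the boundary measure $d\sigma$ and the Lebesgue measure $dp$ both being finite). There is no real obstacle here; the only point requiring care is that \eqref{eq-f-x} was derived for smooth $f$, so the convex but non-smooth case genuinely needs the approximation step.
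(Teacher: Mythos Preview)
Your proof is correct and follows exactly the approach implied by the paper, which simply writes ``It follows that'' after \eqref{eq-f-x}: vanishing of the left-hand side, positivity of $\bH^{u}$ and $\uu$, and positive semidefiniteness of $\Hess(f)$ for convex $f$. Note that the lemma as stated is only for \emph{smooth} convex $f$, so your final approximation paragraph, while correct, is not needed here.
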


\subsection{Toric test configuration}\label{sec-9.1}
We start by recalling the construction of toric test configurations introduced by Donaldson in \cite[Section 4]{Do-02}. Let $(X,L)$ be a smooth polarized toric manifold with integral momentum polytope $\PP\subset\tor^{*}\cong\mathbb{R}^{n}$ with respect to the lattice $\mathbb{Z}^n\subset\mathbb{R}^n$ and 
\begin{equation}\label{PL-func}
f:=\max(f_1,\cdots,f_r),
\end{equation}
a convex piece-wise affine-linear function with integer coefficients, i.e. we assume that each $f_j$ in \eqref{PL-func} is an affine-linear function $f_j(p):=\langle v_j,p\rangle+\lambda_j$ with $v_j\in \mathbb{Z}^n$ and $\lambda_j\in\mathbb{Z}$. We also assume that the polytope $\mathrm{Q}$ defined by
\begin{equation}\label{Q-testconf}
\mathrm{Q}=\{(p,\pprim)\in\PP\times \mathbb{R}\,:\,0\leq \pprim\leq R-f(p)\},
\end{equation}
has integral vertices in $\mathbb{Z}^{n+1}$, where $R$ is an integer such that $f\leq R$ on $\PP$. By \cite[Proposition 4.1.1]{Do-02} there exist an $(n+1)$-dimensional projective toric variety $(\mathcal{X}_{\mathrm{Q}},\mathbb{G})$ and a polarization $\mathcal{L}_{\mathrm{Q}}\rightarrow \mathcal{X}_{\mathrm{Q}}$ corresponding to the labelled integral Delzant polytope $\mathrm{Q}\subset\mathbb{R}^{n+1}$ and the lattice $\mathbb{Z}^{n+1}\subset\mathbb{R}^{n+1}$. In general, $\mathcal{X}_{\mathrm{Q}}$ is a compact toric orbifold (see \cite{LT}), but $\mathcal{X}_{\mathrm{Q}}$ can be smooth for a suitable choice of $f$. There is an embedding $\iota:X\hookrightarrow \mathcal{X}_{\mathrm{Q}}$ such that $\iota(X)$ is the pre-image of the face $\PP=\mathrm{Q}\cap\big(\mathbb{R}^{n}\times\{0\}\big)$ of $\mathrm{Q}$, and the restriction of $\mathcal{L}_{\mathrm{Q}}$ to $\iota(X)$ is isomorphic to $L$. Notice that by the Delzant Theorem \cite{Delzant, LT} the stabilizer of $\iota(X)\subset\mathcal{X}_{\mathrm{Q}}$ in $\mathbb{G}$ is $\mathbb{S}^{1}_\rho=\mathbb{S}^{1}_{(n+1)}$, where $\mathbb{S}^{1}_{(n+1)}$ is the $(n+1)$-th factor of $\mathbb{G}=\mathbb{R}^{n+1}/2\pi\mathbb{Z}^{n+1}$ so that $\mathbb{G}/\mathbb{S}^{1}_\rho$ is identified with the torus action $\mathbb{T}=\mathbb{R}^{n}/2\pi\mathbb{Z}^{n}$ on $X$. Furthermore, Donaldson shows in \cite{Do-02} that there exist a $\mathbb{C}^{\star}$-equivariant map $\pi:\mathcal{X}_{\mathrm{Q}}\rightarrow \mathbb{P}^{1}$ such that $(\mathcal{X}_{\mathrm{Q}},\mathbb{S}_{\rho}^1,\mathcal{L}_{\mathrm{Q}})$ is a $\T$-compatible polarized test configuration. We consider the Futaki-invariant $\mathcal{F}_{\uu,\vv}(\mathcal{X}_{\mathrm{Q}},2\pi c_1(\mathcal{L}_{\mathrm{Q}}))$ given by \eqref{Fut-TC} corresponding to $(\mathcal{X}_{\mathrm{Q}},2\pi c_1(\mathcal{L}_{\mathrm{Q}}))$, and notice that it makes sense even when $\mathcal{X}_{\mathrm{Q}}$ is an orbifold. 
\begin{prop}\label{DF=F-toric}
Let $f=\max(f_1,\cdots,f_r)$ be a convex piece-wise linear function on $\PP$, with integer coefficients and $\mathcal{X}_{\mathrm{Q}}$ the toric test configuration constructed as above. Then the $(\uu,\vv)$-Futaki invariant \eqref{Fut-TC} of $(\mathcal{X}_{\mathrm{Q}},2\pi c_1(\mathcal{L}_{\mathrm{Q}}))$ is given by
\begin{equation}\label{eq-fut-111}
\mathcal{F}_{\uu,\vv}(\mathcal{X}_{\mathrm{Q}},2\pi c_1(\mathcal{L}_{\mathrm{Q}}))=(2\pi)^{n+1}\mathcal{F}^{\PP}_{\uu,\vv}(f),
\end{equation}
where $\mathcal{F}^{\PP}_{\uu,\vv}(f)$ is the integral defined in \eqref{Eq-Fut-P}. Furthermore, the $(\uu,\vv)$-Donaldson-Futaki invariant \eqref{Don-Fut-Alg} corresponding to $(\mathcal{X}_{\mathrm{Q}},\mathcal{L}_{\mathrm{Q}})$ is well-defined, and is given by
\begin{equation}\label{eq-fut-222}
\DF_{\uu,\vv}(\mathcal{X}_{\mathrm{Q}},\mathcal{L}_{\mathrm{Q}})=4\mathcal{F}^{\PP}_{\uu,\vv}(f).
\end{equation}
\end{prop}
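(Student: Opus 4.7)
The strategy is to compute both invariants directly by using that $\mathcal{X}_{\mathrm{Q}}$ is itself a toric (possibly orbifold) variety with polytope $\mathrm{Q}\subset\mathbb{R}^{n+1}$. The weight functions $\uu,\vv$ on $\PP$ extend to $\mathrm{Q}$ through the projection $(p,\pprim)\mapsto p$, while $V_\rho$ corresponds under Delzant to $\partial/\partial\pprim$ with momentum map equal to the last coordinate $\pprim$.

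For \eqref{eq-fut-111}, I would begin from the equivalent expression \eqref{Fut-int+intX} of the K\"ahler Futaki invariant and apply the toric integration identity \cite[Lem.~2]{AM}, together with \eqref{Scal-u-tor}, to the toric variety $\mathcal{X}_{\mathrm{Q}}$. This rewrites the integrals appearing in \eqref{Fut-int+intX} as
\begin{equation*}
\int_{\mathcal{X}_{\mathrm{Q}}}\!\Scal_{\uu}(\Omega)\,\Omega^{[n+1]}=2(2\pi)^{n+1}\!\int_{\partial\mathrm{Q}}\!\uu\,d\sigma_{\mathrm{Q}},\quad \int_{\mathcal{X}_{\mathrm{Q}}}\!\vv(m_\Omega)\,\Omega^{[n+1]}=(2\pi)^{n+1}\!\int_{\mathrm{Q}}\!\vv\,dp\,d\pprim.
\end{equation*}
The polytope $\mathrm{Q}$ fibres over $\PP$; its boundary decomposes into the bottom face $\PP\times\{0\}$, the top ``roof'' consisting of pieces $\{(p,R-f_j(p)):p\in\PP_j\}$ where $\PP_j:=\{p\in\PP:f(p)=f_j(p)\}$, and the side facets $\{(p,\pprim):p\in F_i,\,0\le \pprim\le R-f(p)\}$ over each facet $F_i$ of $\PP$. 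On the bottom and each top piece the induced surface measure $d\sigma_{\mathrm{Q}}$ coincides with $dp$ (because the integer defining functions $\pprim$ and $R-f_j-\pprim$ have a unit last coordinate, hence are primitive in $\mathbb{Z}^{n+1}$), and on a side facet Fubini gives $d\sigma_{\mathrm{Q}}=d\sigma_{\PP}\,d\pprim$. Direct evaluation yields
\begin{align*}
\int_{\partial\mathrm{Q}}\uu\,d\sigma_{\mathrm{Q}} &= 2\int_{\PP}\uu\,dp+R\int_{\partial\PP}\uu\,d\sigma-\int_{\partial\PP}f\uu\,d\sigma,\\
\int_{\mathrm{Q}}\vv\,dp\,d\pprim &= R\int_{\PP}\vv\,dp-\int_{\PP}f\vv\,dp.
\end{align*}
Plugging these into \eqref{Fut-int+intX}, the $R$-dependent contributions cancel by means of the defining identity \eqref{Top-const-tor}, namely $c_{\uu,\vv}(\alpha)\int_\PP\vv\,dp=2\int_{\partial\PP}\uu\,d\sigma$, and the $\int_{\PP}\uu\,dp$ terms cancel against those coming from $8\pi\int_X\uu(m_\omega)\omega^{[n]}$; what survives is precisely $(2\pi)^{n+1}\mathcal{F}_{\uu,\vv}^{\PP}(f)$.

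For \eqref{eq-fut-222}, the toric structure of $\mathcal{X}_{\mathrm{Q}}$ combined with Donaldson's analysis of the central fibre in \cite[Sec.~4.2]{Do-02} yields an explicit $\T\times\mathbb{S}^1_\rho$-weight decomposition of $H^0(X_0,\mathcal{L}_{\mathrm{Q}}^k|_{X_0})$ into one-dimensional summands in bijection (by flatness) with lattice points $p\in k\PP\cap\mathbb{Z}^n$, the $\mathbb{T}$-weight of the summand at $p$ being $p$ itself and the $\mathbb{S}^1_\rho$-weight being an integer prescribed combinatorially by $f$. The trace \eqref{W-0} thus becomes a weighted Riemann sum over $k\PP\cap\mathbb{Z}^n$, and a two-term Euler--Maclaurin expansion over the integral Delzant polytope $\PP$ (or, when $\mathcal{X}_{\mathrm{Q}}$ is only an orbifold, the leading two terms of the toric Riemann--Roch--Kawasaki formula, which coincide with the smooth Euler--Maclaurin formula through sub-leading order) establishes the expansions \eqref{star} and identifies the coefficients $a^{(0)}_\uu,a^{(1)}_\uu,a^{(0)}_\vv$ as integrals over $\PP$ and $\partial\PP$ involving $\uu,\vv,f$. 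Substitution into \eqref{Don-Fut-Alg} and comparison with \eqref{Eq-Fut-P}, again with $R$-dependence cancelling by means of \eqref{Top-const-tor}, then produces \eqref{eq-fut-222}.

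The main obstacle I expect is the explicit identification of the $\mathbb{S}^1_\rho$-weights on the central-fibre sections in part (ii); this requires unwinding Donaldson's construction of $\pi$ from the polytope $\mathrm{Q}$ and pinning down the weight attached to each lattice point $p\in k\PP\cap\mathbb{Z}^n$ as (essentially) $kf(p/k)$ rounded appropriately. A lesser but non-trivial issue in part (i) is the careful surface-measure bookkeeping on the boundary facets of $\mathrm{Q}$, in particular verifying primitivity of the integer defining functions of the top and side facets and correctly parameterizing the side facets as products over each $F_i\subset\partial\PP$.
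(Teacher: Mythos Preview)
Your proposal is correct and follows essentially the same route as the paper: for \eqref{eq-fut-111} the paper also starts from \eqref{Fut-int+intX}, converts the $\Scal_{\uu}$ and $\vv$ integrals on $\mathcal{X}_{\mathrm{Q}}$ into boundary and bulk integrals over $\mathrm{Q}$ via the toric formula, decomposes $\partial\mathrm{Q}$ into bottom, roof, and side pieces exactly as you do, and cancels the $R$-dependent terms; for \eqref{eq-fut-222} the paper likewise identifies the lattice-point description of $H^0(X_0,\mathcal{L}_{\mathrm{Q}}^k|_{X_0})$ and applies the two-term Euler--Maclaurin expansion on $\PP$ (piecewise, since $(R-f)\uu$ is only piecewise smooth).

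One small correction to your anticipated obstacle: the $\mathbb{S}^1_\rho$-weight attached to $\lambda\in k\PP\cap\mathbb{Z}^n$ is exactly $k(R-f)(\lambda/k)$, not $kf(\lambda/k)$; since the $f_j$ have integer coefficients this is already an integer and no rounding is needed. Also, the Euler--Maclaurin step is carried out on the smooth Delzant polytope $\PP$, so the orbifold nature of $\mathcal{X}_{\mathrm{Q}}$ plays no role there.
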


\begin{proof}
We start by proving the first claim \eqref{eq-fut-111}. Let $\Omega\in 2\pi c_1(\mathcal{L}_{\mathrm{Q}})$ be a $\mathbb{G}$-invariant K\"ahler form on $\mathcal{X}_{\q}$ and $\omega\in 2\pi c_1(L)$ be the induced $\mathbb{T}$-invariant K\"ahler form on $\iota(X)\subset\mathcal{X}_{\q}$. We have by \Cref{rem-Fut-sym} \ref{rem-Fut-sym-ii}
\begin{align}
\begin{split}\label{eq-fut-scal}
\mathcal{F}_{\uu,\vv}(\mathcal{X},2\pi c_1(\mathcal{L}_{\mathrm{Q}}))=&-\int_{\mathcal{X}}\big(\Scal_{\uu}(\Omega)-c_{(\uu,\vv)}(2\pi c_1(L))\vv(m_{\Omega})\big)\Omega^{[n+1]}\\&+(8\pi)\int_X\uu(m_{\omega})\omega^{n}.
\end{split}
\end{align}
Let $(p,\pprim,t,\tprim)\in \mathrm{Q}\times\T\times\mathbb{S}^{1}_\rho$ be the momentum/angular coordinates on $\mathcal{X}_{\mathrm{Q}}^0$ such that $(p,t)\in \mathrm{P}\times\T$ are the momentum/angular coordinates on $X^0$. Then,
\begin{equation}\label{z1}
(8\pi)\int_X\uu(m_{\omega})\omega^{n}=4(2\pi)^{n+1}\int_{\PP}\uu(p)dp.
\end{equation}
and 
\begin{equation}\label{z2}
\int_{\mathcal{X}_{\mathrm{Q}}} \vv(m_{\Omega})\Omega^{[n+1]}=(2\pi)^{n+1}\int_{\mathrm{Q}} \vv(p)dp \wedge d\pprim=(2\pi)^{n+1}\int_{\PP} \vv(p)(R-f(p))dp.
\end{equation}
For the remaining term in \eqref{eq-fut-scal}, using \eqref{eq-f-x} we have
\begin{align}
\begin{split}\label{z3}
(2\pi)^{-(n+1)}&\int_{\mathcal{X}_{\mathrm{Q}}}\Scal_{\uu}(\Omega)\Omega^{[n+1]}=2\int_{\partial\mathrm{Q}}\uu d\sigma_{\mathrm{Q}}\\
=&2\int_{\mathrm{P}}\uu dp+2\int_{(R-f)(\PP)}\uu d\mu_{(R-f)(\PP)}+2\int_{\partial \PP}(R-f)\uu d\sigma_{\PP}\\
=&4\int_{\PP}\uu dp+2\int_{\partial \PP}(R-f)\uu d\sigma_{\PP},
\end{split}
\end{align}
where the measure $d\mu_{(R-f)(\PP)}$ is defined by $df\wedge d\mu_{(R-f)(\PP)}=dp\wedge d\pprim$. Substituting \eqref{z1}--\eqref{z3} into \eqref{eq-fut-scal} yields
\begin{align*}
(2\pi)^{-(n+1)}\mathcal{F}_{\uu,\vv}(\mathcal{X}_{\mathrm{Q}},2\pi c_1(\mathcal{L}_{\mathrm{Q}}))=& -2\int_{\partial \PP}(R-f)\uu d\sigma_{\PP}+c_{\uu,\vv}(\alpha)\int_{\mathrm{P}}(R-f)\vv dp\\=&\mathcal{F}^{\PP}_{\uu,\vv}(f).
\end{align*}

Now we give the proof of the second claim \eqref{eq-fut-222}. The central fiber $X_0$ is the reduced divisor on $\mathcal{X}_{\mathrm{Q}}$ associated to the preimage of the union of facets of $\mathrm{Q}$ corresponding to the graph of $R-f$. By a well-known fact in toric geometry (see e.g. \cite{Do-02}) the set of weights for the complexified torus $\mathbb{G}^{c}$ on $H^{0}(\mathcal{X},\mathcal{L}^{k}_{\mathrm{Q}})$ is $k\mathrm{Q}\cap\mathbb{Z}^{n+1}$. It thus follows that the weights for the $\mathbb{C}^{\star}_\rho$-action on $H^{0}(X_0,L_{0}^{k})$ are $k(R-f)(k\PP)\cap \mathbb{Z}$. We conclude that 
\begin{equation*}
W_{\uu}^{(k)}(\xxi,\rho)=\sum_{\lambda\in k\PP\cap\mathbb{Z}^{n}}(R-f)\Big(\frac{\lambda}{k}\Big)\uu\Big(\frac{\lambda}{k}\Big),
\end{equation*}
where $W_{\uu}^{(k)}(\xxi,\rho)$ is the $\uu$-weight defined by \eqref{W-0}. By \cite{Guil-Stern-1,Zel}, for any smooth function $\Phi$ on $\tor^*$ and $k$ large enough we have
\begin{equation*}
\sum_{\lambda\in k\PP\cap\mathbb{Z}^{n}}\Phi\Big(\frac{\lambda}{k}\Big)=k^{n}\int_{\PP}\Phi dp+\frac{k^{n-2}}{2}\int_{\partial\PP}\Phi d\sigma_{\PP}+\mathcal{O}(k^{n-2}).
\end{equation*}
Taking $\Phi:=(R-f)\uu$ and using the above formula for any affine-linear piece of $\Phi$, we get
\begin{equation*}
W_{\uu}^{(k)}(\xxi,\rho)=k^{n}\int_{\PP}(R-f)\uu dp+\frac{k^{n-2}}{2}\int_{\partial\PP}(R-f)\uu d\sigma_{\PP}+\mathcal{O}(k^{n-2}).
\end{equation*}
Analogously, for $W_{\vv}^{(k)}(\xxi,\rho)$ we obtain
\begin{equation*}
W_{\vv}^{(k)}(\xxi,\rho)=k^{n}\int_{\PP}(R-f)\vv dp+\mathcal{O}(k^{n-1}).
\end{equation*}
Using \eqref{Don-Fut-Alg}, it follows that
\begin{equation*}
\DF_{\uu,\vv}(\mathcal{X}_{\mathrm{Q}},\mathcal{L}_{\mathrm{Q}})=4\mathcal{F}^{\PP}_{\uu,\vv}(f).
\end{equation*}
\end{proof}

\begin{rem}
Instead of a convex piece-wise affine-linear function $f$ with integer coefficients we can take a convex piece-wise affine-linear functions with rational differentials, i.e. assuming that each $f_j$ in \eqref{PL-func} is of the form with $f_j(p)=\langle v_j,p\rangle+\lambda_j$ with $v_j\in \mathbb{Q}^n$. The polytope $\mathrm{Q}$ such a function defines is not longer with rational vertices, but still defines a toric K\"ahler orbifold $(\mathcal{X}_{\mathrm{Q}},\mathcal{A}_{\mathrm{Q}})$, see \cite{LT}. This gives rise to a toric K\"ahler test configuration compatible with $\T$ and the formula \eqref{eq-fut-111} in \Cref{DF=F-toric} computes the corresponding $(\uu,\vv)$-Futaki invariant of $(\mathcal{X}_{\mathrm{Q}},\mathcal{A}_{\mathrm{Q}})$.
\end{rem}

\section{The $(\uu, \vv)$-Futaki invariant of rigid semisimple toric fibrations}\label{sec-10} This is the case relevant to  the example (iv) from the Introduction.  Following \cite{ACGT}, we consider $X= V\times_{\T} K \xrightarrow{\pi} B$ to be the total space of a fibre-bundle associated to a principle $\T$-bundle  $K \to B$ over the product  $B = \prod_{j=1}^N (B_j, \omega_j, g_j)$ of compact cscK manifolds $(B_j, \omega_j, g_j)$ of complex dimension $d_j$,  satisfying the Hodge condition $[\omega_j/2\pi] \in H^2(B_j, \Z)$,  and a compact  $2\ell$-dimensional toric K\"ahler manifold $(V, \omega_V,  g_V, J_V, \T)$ corresponding to a labelled Delzant polytope $(\PP, {\bf L})$  in $\tor^*$.  We assume that $K$ is endowed with a connection $1$-form ${\boldsymbol \theta} \in \Omega^1(K, \tor)$ satisfying
\begin{equation*}
d{\boldsymbol \theta} = \sum_{j=1}^N {\xi}_j \otimes \omega_j,  \  {\xi}_j \in  \tor, \  j=1, \cdots,  N. 
\end{equation*}
and that the toric K\"ahler metric $(g_V, \omega_V, J_V)$ on $V$ is given by \eqref{g-J} for a symplectic potential $u \in {\mathcal S}(\PP, {\bf L})$.   As shown in \cite{ACGT}, $X$ admits a bundle-adapted K\"ahler  metric  $(g, \omega)$ which, on the open dense subset $X^0= K \times \PP^0 \subset X$,  takes  the form
\begin{equation}\label{generalized-calabi}
\begin{split}
g &= \sum_{j=1}^N \Big(\langle \xi_j, p \rangle + c_j\Big)\pi^* g_j  + \langle dp,  {\boldsymbol G}^{u}, dp \rangle + \langle \boldsymbol{\theta},  {\boldsymbol H}^u,  \boldsymbol{\theta} \rangle, \\
\omega &= \sum_{j=1}^N \Big(\langle \xi_j, p \rangle + c_j\Big)\pi^* \omega_j  + \langle dp \wedge \boldsymbol{\theta} \rangle,
\end{split}
\end{equation}
where $p \in \PP^0$ and $c_j$ are real constants such that $(\langle \xi_j, p \rangle + c_j)>0$ on $\PP$. Such K\"ahler metrics, parametrized by $u\in {\mathcal S}(\PP, \bf L)$ and the real constants $c_j$,  are referred to in \cite{ACGT} as given by {\it the generalized Calabi ansatz} in reference to the well-known construction of Calabi~\cite{calabi} of extremal K\"ahler metrics on ${\mathbb P}^1$-bundles.

We notice that the K\"ahler manifold $(X, \omega, g)$ is invariant under the $\T$-action with momentum map identified with $p \in \PP$. Furthermore,  it is shown in \cite[(7)]{ACGT} that the scalar curvature of \eqref{generalized-calabi} is given by
\begin{equation*}
\begin{split}
\Scal(g)  &= \sum_{j=1}^N \frac{\Scal_j}{\langle \xi_j, p\rangle + c_j} - \frac{1}{\w(p)} \sum_{r, s=1}^{\ell} \frac{\partial^2}{\partial p_r \partial p_s} \Big(\w(p) {\boldsymbol H}^u_{rs}\Big) \\
              &=  \sum_{j=1}^N \frac{\Scal_j}{\langle \xi_j, p\rangle + c_j}  + \frac{1}{\w(p)} \Scal_{\w}(g_V),
              \end{split}
\end{equation*}
where we have put $\w(p) := \prod_{j=1}^N(\langle \xi_j , p \rangle + c_j)^{d_j}$ and we have used \eqref{Scal-u-tor} for passing from the first line to the second. Similarly, by \cite[equation (12)]{ACGT}, the $g$-Laplacian of (the pull-back to $X$) of a smooth function $f(p)$ on $\PP$ is given by
              \begin{equation*}
              \Delta_g f = - \frac{1}{\w(p)} \sum_{r,s=1}^{\ell}\frac{\partial}{\partial p_r}\Big(\w(p) \frac{\partial f}{\partial p_s} {\boldsymbol H}^u_{rs}\Big).
              \end{equation*}
Using the above formulae, we check by a direct computation that for any positive smooth function $\uu$ on $\PP$  we have
\begin{equation}\label{u-scalar}
\Scal_{\uu}(g)  = \uu(p)\Big(\sum_{j=1}^N \frac{\Scal_j}{\langle \xi_j, p\rangle + c_j}\Big)  + \frac{1}{\w(p)} \Scal_{\w \uu}(g_V)
\end{equation}
Using that the volume form of \eqref{generalized-calabi} is 
\begin{equation*} 
\omega^{[n]} = \w(p) \Big(\bigwedge_{j=1}^{N} \omega_j^{[d_j]}\Big) \wedge \langle dp \wedge \boldsymbol{\theta}\rangle^{[\ell]},  
\end{equation*}
and the integration by parts formula  \cite[Lemma 2]{AM}, we compute that the $(\uu,\vv)$-Futaki invariant on $X$ acts on a vector field $\xi \in \tor$ by
\begin{equation}\label{(u,v)-Futaki-bundle}
\begin{split}
 \frac{{\mathcal F}^{[\omega]}_{\uu, \vv}(\xi)}{(2\pi)^{\ell} \Big(\prod_{j=1}^N {\rm Vol}(B_j, [\omega_j])\Big) } =&  2 \int_{\partial \PP} f  \uu \w d\sigma  +  \int_{\PP}\Big(\sum_{j=1}^N \frac{\Scal_j}{\langle \xi_j, p\rangle + c_j}\Big) f \uu\w dp\\
  & - c_{\uu, \vv}([\omega]) \int_{\PP} f \vv \w dp,
 \end{split}
\end{equation}
where $f= \langle \xi, p \rangle + \lambda$ is a Killing potential of $\xi$.

\smallskip
As in \Cref{sec-9.1}, we can construct a $\T$-compatible smooth K\"ahler test configuration  associated to $X$ defined by a convex piece-wise linear function $f= {\rm max}(f_1, \cdots, f_k)$ on $\tor^*$ such that the polytope $\q \subset \R^{\ell+1}$ given by \eqref{Q-testconf} is Delzant with respect to the the lattice $\Z^{\ell+1}$. Denote by $(\mathcal V_{\q}, \mathcal A_{\q})$ the corresponding smooth toric variety, and by $\mathcal K= K \times {\mathbb S}^1_{(\ell+1)} \to B$ the principal $\T^{\ell+1}$-bundle over $B$ with trivial $(\ell+1)$-factor, and let $\mathcal X= \mathcal V \times _{\T^{\ell+1}} \mathcal K \to B$ be the resulting $\mathcal V$-bundle over $B$.  We can now consider a K\"ahler form $\Omega$ on $\mathcal X$ obtained by the generalized Calabi ansatz~\eqref{generalized-calabi}; as the connection $1$-form on $\mathcal K$ has a curvature $\sum_{j=1}^N \xi_j \otimes \omega_j$ with  $\xi_j  \in \tor= {\rm Lie}(\T^{\ell}) \subset {\rm Lie}(\T^{\ell+1})$,  $\Omega$  induces on the pre-image $X \subset \mathcal X$ of the facet $\PP\subset \q$ a K\"ahler form $\omega$ given by \eqref{generalized-calabi} with the same affine linear functions $(\langle \xi_j, p\rangle + c_j)$. A similar computation to \eqref{(u,v)-Futaki-bundle}, performed on the total space $(\mathcal X_{\q}, \Omega)$ by using Definition 11 (see also the proof of \Cref{lem-Fut-tor} above) leads to the expression \eqref{(u,v)-Futaki-bundle} for the $(\uu,\vv)$-Futaki invariant associated to $(\mathcal X_{\q}, \mathcal A_{\q})$ with $f$ being the piece-wise linear convex function defining $\q$.

\smallskip  Let us now  suppose that  $X = {\mathbb P}(\cO \oplus \cL) \xrightarrow{\pi} B$  with $B$ as above,  where $\cO$ stands for the trivial holomorphic line bundle over $B$ and $\cL$ is a holomorphic line bundle of the form  $\cL = \bigotimes_{j=1}^N \cL_j$ for  $\cL_j$  being the pull-back to $B$ of a holomorphic line bundle over $B_j$ with $c_1(\cL_j) = \xi_j [\omega_j/2\pi]$, $\xi_j \in \Z$.  This is the so-called {\it admissible setting} (without blow-downs) of \cite{ACGT3}, pioneered in \cite{calabi} and studied in many works. In our setting above, such an $X$  is a $\mathbb P^1$-bundle obtained from the principle ${\mathbb S}^1$-bundle over $B$ associated to $\cL^{-1}$. 
We can take $\PP=[-1,1] \subset \R$, and suppose that $\uu(z)>0$ and $\vv(z)$ are smooth functions  defined over $[-1,1]$. A K\"ahler metric  $(\omega, g)$ on $X$ of the form \eqref{generalized-calabi} can be equivalently written as
\begin{equation}\label{calabi}
\begin{split}
g &= \sum_{j=1}^N (\xi_jz+ c_j)\pi^* g_j  +  \frac{dz^2}{\Theta(z)}+ \Theta(z)\theta^2 \\
\omega &= \sum_{j=1}^N (\xi_j z + c_j)\pi^* \omega_j  + dz\wedge \theta, \ d\theta = \sum_{j=1}^N \xi_j \pi^*\omega_j,
\end{split}
\end{equation}
for positive affine-linear functions $\xi_j z + c_j$  on $[-1,1]$. This is the more familiar Calabi ansatz, written in terms of the {\it  profile function} $\Theta(z)$ (see e.g. \cite{Hwang-Singer}) which must be smooth on $[-1,1]$ and satisfy
\begin{equation}\label{boundary}
\Theta(\pm 1)=0, \ \ \Theta'(\pm 1) = \mp 2,
\end{equation}
and 
\begin{equation}\label{positive}
\Theta(z)>0 \ \ \textrm{on} \ \ (-1,1),
\end{equation}
for \eqref{calabi} to define a smooth K\"ahler metric on $X$. We let  $\w(z)= \prod_{j=1}^N (\xi_j z + c_j)^{d_j}$ be the corresponding polynomial in $z$.

We now take  $\q$  be the chopped rectangle with base $\PP$, corresponding to the convex piece-wise affine linear function $f_{z_0}(z) = {\rm max}(z+1-z_0, 1)$ where $z_0\in (-1,1)$ is a given point. We can construct as above an ${\mathbb S}^1$-compatible K\"ahler test configuration $(\mathcal X_{\q}, \mathcal A_\q)$ associated to $(X, [\omega], {\mathbb S}^1)$. It is not difficult to see that the complex manifold $\mathcal X_{\q}$ is the degenaration to the normal cone with respect to the infinity section $S_{\infty}\subset X$, see \cite{Ross-Thomas, ACGT3} but the K\"ahler class ${\mathcal A}_{\q}$ on $\mathcal X_{\q}$ defines a polarization only  for rational values of $z_0$. Formula \eqref{(u,v)-Futaki-bundle} shows that the  $(\uu, \vv)$-Futaki invariant of $(\mathcal X_{\q}, \mathcal A_\q)$ is a positive multiple of the quantity
\begin{equation}\label{futaki-z}
\begin{split}
F(z_0) &:= 2\Big(f_{z_0}(1)\uu(1)\w(1) - f_{z_0}(-1)\uu(-1)\w(-1) \Big) \\
            & + \int_{-1}^{1} f_{z_0}(z)\Big(\uu(z)\w(z)\Big(\sum_{j=1}^N \frac{\Scal_j}{\xi_jz+ c_j}\Big) - c_{\uu,\vv}([\omega])\vv(z)\w(z)\Big) dz .
\end{split}
\end{equation}
Let us now assume that there exists a smooth function $\Theta(z)$ on $[-1,1]$, which satisfies  \eqref{boundary} and 
\begin{equation}\label{formal-solution}
\big(\uu\w \Theta\big)''(z) =   \uu(z)\w(z) \Big(\sum_{j=1}^N \frac{\Scal_j}{\xi_jz+ c_j}\Big)  - c_{\uu,\vv}([\omega]) \vv(z) \w(z).  
\end{equation}
Substituting in the RHS of \eqref{futaki-z} and integrating by parts over the intervals $[-1, z_0]$ and $[z_0, 1]$  gives
\begin{equation}\label{F-z-0}
F(z_0)= \uu(z_0) \w(z_0) \Theta(z_0).
\end{equation}
As $\uu(z)$ and $\w(z)$ are positive functions on $[-1,1]$, we conclude that if $(X, [\omega], {\mathbb S}^1)$ is $(\uu,\vv)$-K-stable on smooth $\mathbb{S}^1$-compatible  K\"ahler test configurations with reduced central fibre, then $\Theta(z)$ must also satisfy \eqref{positive}. By the formula \eqref{u-scalar}, the corresponding K\"ahler metric \eqref{calabi} will be then $(\uu,\vv)$-cscK. 

The existence of a solution of \eqref{formal-solution}  satisfying \eqref{boundary} is in general overdetermined.  Following \cite{AMT},  in the case when $\vv(z)>0$ on $[-1,1]$ one can resolve the over-determinacy by introducing an affine-linear function $\vv_{\rm ext}(z)= A_1z + A_2$, such that  
\begin{equation}\label{formal-solution-ext}
\big(\uu\w \Theta\big)''(z) =   \uu(z)\w(z) \Big(\sum_{j=1}^N \frac{\Scal_j}{\xi_jz+ c_j}\Big)  -   \vv(z) \vv_{\rm ext}(z) \w(z) 
\end{equation}
admits a unique solution $\Theta_{\rm ext}^{\uu,\vv}(z)$ satisfying \eqref{boundary}:  the coefficients $A_1$ and $A_2$, as well as two constants of integration in \eqref{formal-solution}, are then uniquely determined from the four boundary conditions in \eqref{boundary}. Furthermore, a straightforward generalization of \cite[Lemma 2.4]{AMT} shows that $\vv_{\rm ext}(z)$ corresponds to the affine-linear  function  introduced in Section 3.2, i.e. solutions of \eqref{formal-solution-ext} introduce $(\uu,\vv\vv_{\rm ext})$-cscK metrics of Calabi type, which are equivalently $(\uu,\vv)$-extremal. The methods of this article allow us to obtain the following generalization of \cite[Theorem~3]{AMT}.
\begin{thm}\label{p:calabi-type} Let $X= {\mathbb P}(\cO  \oplus \cL) \to B$ be a projective ${\mathbb P}^1$-bundle as above, endowed with the ${\mathbb S}^1$-action by multiplication on $\cO$,  and $\alpha=[\omega/2\pi]$ be the K\"ahler class of a K\"ahler metric in the form  \eqref{calabi}. We let $\PP=[-1,1]$ be the momentum polytope of $(X, \alpha, {\mathbb S}^1)$,  $\uu, \vv$ be smooth positive functions on $[-1,1]$ and $\Theta_{\rm ext}^{\uu,\vv}(z)$ the unique solution  of \eqref{formal-solution-ext} satisfying \eqref{boundary}. Then, 
\begin{enumerate}
\item[$\bullet$] If $(X, \alpha, \Sph^1)$ is $(\uu, \vv\vv_{\rm ext})$-K-stable on $\Sph^1$-compatible smooth K\"ahler test configurations with reduced central fibre, then  $\Theta_{\rm ext}^{\uu,\vv}(z)>0$ on $(-1,1)$ and $\alpha$ admits a $(\uu,\vv)$-extremal K\"ahler metric of the form \eqref{calabi} with $\Theta= \Theta^{\uu,\vv}_{\rm ext}$.
\item[$\bullet$] If $(X, \alpha, \Sph^1)$ admits a $(\uu,\vv)$-extremal K\"ahler metric, then $(X, \alpha, \Sph^1)$ is $(\uu, \vv\vv_{\rm ext})$-K-semistable on $\Sph^1$-compatible smooth K\"ahler test configurations with reduced central fibre, and $\Theta_{\rm ext}^{\uu,\vv}(z) \ge 0$.
\end{enumerate}
\end{thm}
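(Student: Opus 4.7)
The plan is to reduce the theorem, in both directions, to testing the sign of the $(\uu,\vv\vv_{\rm ext})$-Futaki invariant on a one-parameter family of explicit $\Sph^1$-compatible smooth K\"ahler test configurations indexed by $z_0\in(-1,1)$. For each such $z_0$, I would take the convex piece-wise affine-linear function $f_{z_0}(z)=\max(z+1-z_0,1)$ on the momentum interval $\PP=[-1,1]$ and build, via the chopped-rectangle recipe of \Cref{sec-9.1} applied in the admissible setting of \Cref{sec-10}, the corresponding test configuration $(\mathcal{X}_\q,\mathcal{A}_\q)$; this is the degeneration to the normal cone along the infinity section $S_\infty\subset X$, and its central fibre is reduced.

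The next step is to evaluate $\mathcal{F}_{\uu,\vv\vv_{\rm ext}}(\mathcal{X}_\q,\mathcal{A}_\q)$ using formula \eqref{(u,v)-Futaki-bundle} with $\vv$ replaced by $\vv\vv_{\rm ext}$. Because $c_{\uu,\vv\vv_{\rm ext}}(\alpha)=1$ (see \Cref{s:(u,v)-extremal}), the resulting expression is a positive multiple of
\begin{equation*}
F(z_0)=2\bigl(f_{z_0}(1)\uu(1)\w(1)-f_{z_0}(-1)\uu(-1)\w(-1)\bigr)+\int_{-1}^{1}f_{z_0}(z)\Bigl(\uu\w\sum_{j=1}^{N}\tfrac{\Scal_j}{\xi_j z+c_j}-\vv\vv_{\rm ext}\w\Bigr)dz.
\end{equation*}
Substituting the ODE \eqref{formal-solution-ext} for $\Theta=\Theta_{\rm ext}^{\uu,\vv}$, integrating by parts twice on $[-1,z_0]$ and $[z_0,1]$, and using the four boundary conditions in \eqref{boundary}, I expect to recover the clean identity $F(z_0)=\uu(z_0)\w(z_0)\Theta_{\rm ext}^{\uu,\vv}(z_0)$ already displayed as \eqref{F-z-0}.

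With this identity in place, the two halves of the theorem follow quickly. For the obstruction direction, a $(\uu,\vv)$-extremal K\"ahler metric in $\alpha$ is precisely a $(\uu,\vv\vv_{\rm ext})$-cscK metric by \Cref{s:(u,v)-extremal}, so \Cref{thm:main} (or, if $\alpha$ is not integral, the Mabuchi slope formula \Cref{Mabuchi-Slope} combined with \Cref{thm:mabuchi-bounded}) delivers $(\uu,\vv\vv_{\rm ext})$-K-semistability on the family above, hence $\Theta_{\rm ext}^{\uu,\vv}(z_0)\ge 0$ for every $z_0\in(-1,1)$. Conversely, the hypothesis of $(\uu,\vv\vv_{\rm ext})$-K-stability, applied to the test configurations $(\mathcal{X}_\q,\mathcal{A}_\q)$ (which are visibly non-product for each $z_0\in(-1,1)$), forces the strict inequality $\Theta_{\rm ext}^{\uu,\vv}(z_0)>0$ throughout $(-1,1)$; combined with \eqref{boundary}, this means $\Theta_{\rm ext}^{\uu,\vv}$ is a legitimate profile function, so \eqref{calabi} defines a smooth K\"ahler metric $g\in\alpha$, and a direct check via \eqref{u-scalar}, \eqref{Scal-u-tor} and \eqref{formal-solution-ext} yields $\Scal_\uu(g)=\vv(z)\vv_{\rm ext}(z)$, i.e.\ $g$ is $(\uu,\vv)$-extremal.

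The main obstacle I anticipate is the bookkeeping in the double integration by parts producing \eqref{F-z-0}: the boundary terms at $z=\pm 1$ rely on the Dirichlet conditions $\Theta(\pm 1)=0$, the jump of $f_{z_0}'$ at the corner $z=z_0$ is what produces the single contribution $\uu(z_0)\w(z_0)\Theta(z_0)$, and the Neumann conditions $\Theta'(\pm 1)=\mp 2$ are needed exactly to cancel the residual boundary terms coming from $(\uu\w\Theta)'$ against the corner term $2(f_{z_0}(1)\uu(1)\w(1)-f_{z_0}(-1)\uu(-1)\w(-1))$. A secondary subtlety is the non-polarized case for part (ii), where one must invoke the Mabuchi slope rather than the algebraic K-semistability statement; and the passage from $(\uu,\vv\vv_{\rm ext})$-K-stability to strict positivity of $F(z_0)$ requires verifying that $(\mathcal{X}_\q,\mathcal{A}_\q)$ is never a product test configuration in the sense of \Cref{Test-triv-prod}, which follows from the observation that the varying slopes encoded by $z_0$ produce genuinely distinct K\"ahler classes on the same complex total space.
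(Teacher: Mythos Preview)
Your strategy matches the paper's for the first bullet and for the rational case of the second bullet: the same test configurations $(\mathcal X_\q,\mathcal A_\q)$ are used, and the identity $F(z_0)=\uu(z_0)\w(z_0)\Theta_{\rm ext}^{\uu,\vv}(z_0)$ does the work in both directions exactly as you describe.

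The genuine gap is in your handling of the second bullet when $\alpha$ is not a Hodge class. You propose to invoke ``the Mabuchi slope formula \Cref{Mabuchi-Slope} combined with \Cref{thm:mabuchi-bounded}'' in the non-integral case, but \Cref{thm:mabuchi-bounded} (and hence \Cref{thm:main}) is stated and proved only for polarized $(X,L)$ with $\alpha=2\pi c_1(L)$; the boundedness of $\mathcal M_{\uu,\vv\vv_{\rm ext}}$ is not available for general K\"ahler classes when $\uu\not\equiv 1$. So your argument does not go through for irrational $(c_1,\ldots,c_N)$.

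The paper closes this gap by a perturbation argument rather than by a direct Mabuchi-energy bound: it invokes the LeBrun--Simanca type openness result \Cref{thm:LeBrun-Simanca} to obtain, for rational $(\tilde c_1,\ldots,\tilde c_N)$ arbitrarily close to $(c_1,\ldots,c_N)$, a $(\uu,\vv)$-extremal metric in the corresponding rational class $\tilde\alpha$. On each such $\tilde\alpha$ one can apply \Cref{thm:main}, and via the identity \eqref{F-z-0} this yields $\widetilde\Theta_{\rm ext}^{\uu,\vv}\ge 0$. Since the solution of \eqref{formal-solution-ext} with \eqref{boundary} depends smoothly on the constants $c_j$, passing to the limit gives $\Theta_{\rm ext}^{\uu,\vv}\ge 0$ in the original class. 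You should replace your non-polarized step by this perturbation-and-limit argument.
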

\begin{proof}
The first part follows from the identity \eqref{F-z-0} which shows that $\Theta^{\uu, \vv}_{\rm ext}$ must satisfy both \eqref{boundary} and \eqref{positive}.  The second part follows from formula \eqref{F-z-0} and \Cref{thm:main}, if the constants $(c_1, \ldots, c_N)$ in \eqref{calabi} are rational as in this case the corresponding K\"ahler class $\alpha$ is rational. To treat the case when $(c_1,\ldots, c_N)$ are not necessarily rational, we can use \Cref{thm:LeBrun-Simanca} below (with fixed $\uu, \vv$ and varying the constants $c_j$). Accordingly, for any rational constants $(\tilde c_1, \ldots, \tilde c_N)$ sufficiently close to $(c_1, \ldots, c_N)$ the corresponding K\"ahler class $\tilde \alpha$ will admit a $(\uu,\vv)$ extremal K\"ahler metric, and hence the corresponding function $\widetilde \Theta_{\rm ext}^{\uu,\vv}(z)$ will be non-negative on $(-1,1)$ by virtue of \Cref{thm:main}. As $\Theta_{\rm ext}^{\uu,\vv}(z)$ depends smoothly on $(c_1, \ldots, c_N)$, it follows that $\Theta_{\rm ext}^{\uu,\vv} (z) \ge 0$ too.
\end{proof}

\begin{rem} (i) As already mentioned in the Introduction, we expect  that Theorem 2 can be improved by showing that the existence of $(\uu, \vv)$-cscK metric in $\alpha$ implies $(\uu,\vv)$-K-stability, not only $(\uu, \vv)$-K-semi-stability.  Accordingly, we expect Theorem~\ref{p:calabi-type} to be improved to a complete Yau--Tian--Donaldson type correspondence between $(\uu, \vv\vv_{\rm ext})$-K-stable and $(\uu,\vv)$-extremal K\"ahler classes on $X$ of the form \eqref{calabi}, in which either notion corresponds to the positivity condition \eqref{positive} for $\Theta^{\uu,\vv}_{\rm ext}(z)$.

(ii) In \cite{AMT},  the analogous statement of Theorem~\ref{p:calabi-type} is achieved by considering polarized test configuration $(\mathcal X_\q, \mathcal L_\q)$ as above (corresponding to rational values of $z_0$), and computing the relative version of the algebraic $(\uu, \vv)$-Donaldson--Futaki invariant ${\rm DF}_{\uu,\vv}(\mathcal X_{\q}, \mathcal L_{\q})$. This provides a yet another instance where the differential-geometric definition coincides with the algebraic definition of  the $(\uu,\vv)$-Futaki invariant.

\end{rem}

\appendix

\section{The $(\uu,\vv)$-equivariant Bergman kernels and boundedness of the $(\uu,\vv)$-Mabuchi energy}
Let $(X,L)$ be a polarized manifold, $\alpha=2\pi c_1(L)$ the corresponding K\"ahler class, and $\T\subset {\rm Aut}(X,L)$ a real $\ell$-dimensional torus with momentum polytope $\PP$ as in Section 8.1. Let $h$ be a $\T$-invariant Hermitian metric on $L$ with curvature $2$-form $\omega\in\alpha$. We identify the space of $\T$-invariant Hermitian metrics $h_\phi:=e^{-2\phi}h$ with positive curvature forms $\omega_\phi$ with the space $\mathcal{K}^{\T}_\omega$ of $\T$-invariant K\"ahler potentials $\phi$ on $X$.\\

Let $\xxi:=(\xi_1,\cdots,\xi_\ell)$ be a chosen basis of $\mathbb{S}^{1}$-generators of $\T$ and $\bA_{\xxi}^{(k)}:=(A^{(k)}_{\xi_1},\ldots,A^{(k)}_{\xi_\ell})$ the induced infinitesimal actions of $\xi_i$ on the space $\mathcal{H}_k$ given by \eqref{A-inf}. For $\uu\in C^{\infty}(\PP,\mathbb{R}_{>0})$ we consider the following weighted $L^2$-inner product on $C^{\infty}(X,L^{k})$ 
\begin{equation}\label{L2-inner}
\langle s, s^{\prime}\rangle_{\uu,k\phi}:=k^{n}\int_X (s,s^\prime)_{k\phi} \uu(m_{\phi})\omega^{[n]}_{\phi}.
\end{equation}
where $(s,s^\prime)_{k\phi}:=h^{k}_\phi(s,s^\prime)$. The operators $(A_{\xi_j}^{(k)})_{j=1,\cdots,\ell}$ are Hermitian with respect to $\langle\cdot,\cdot\rangle_{\uu,k\phi}$, with spectrum contained in the momentum polytope  $\PP$ (see \cref{{W-P}}).

\begin{defn}\cite{B-N,Sz-book, ZZ}
Let  $\phi\in \mathcal{K}^{\T}_\omega$, $\{s_i\mid i=0,\cdots,N_k\}$ be a $\langle \cdot,\cdot\rangle_{\uu,k\phi}$-orthonormal basis of $\mathcal{H}$ and $\vv\in C^{\infty}(\PP,\mathbb{R})$. Then the $(\uu,\vv)$-equivariant Bergman kernel of the Hermitian metric $h^{k}_\phi$ on $L^{k}$, is the function defined on $X$ by,
\begin{equation}\label{Bergman}
B_{\vv}(\uu,k\phi):=\uu(m_{\phi})\sum_{i=0}^{N_k}\Big(\vv\big(k^{-1}\bA_{\xxi}^{(k)}\big)(s_i),s_i\Big)_{k\phi}.
\end{equation}
where $\vv\big(k^{-1}\bA_{\xxi}^{(k)}\big)$ is given by \eqref{w(A)-eq}.
\end{defn}

Equivalently, $B_{\vv}(\uu,k\phi)$ is the restriction to the diagonal $\{x=x^{\prime}\}\subset X\times X$ of the Schwartz kernel of the operator $\vv\big(k^{-1}\bA_{\xxi}^{(k)}\big)\Pi_{\uu}^{k\phi}$, where $\Pi_{\uu}^{k\phi}:L^{2}(X,L^{k})\rightarrow \mathcal{H}_k$ denote the orthogonal projection with respect to the inner product $\langle\cdot,\cdot\rangle_{\uu,k\phi}$ (see \cite{ZZ}).\\

\smallskip
Asymptotic expansions of \eqref{Bergman} in $k\gg 1$ are known to exist in many special cases, see e.g. \cite{B-N, ma-ma, Sz-book, ZZ}. We need the following general result, which follows essentially from \cite{charles}, with a ramification from \cite{ma-ma}.

\begin{prop}\label{prop-Charl}
Let $(T^{(k)}_j)_{j=1,\cdots,\ell}$ be a family of $\langle\cdot,\cdot\rangle_{\uu,k\phi}$-self adjoint commuting Toeplitz operators, such that the set of joint eigenvalues of $(T^{(k)}_j)_{j=1,\ell}$ is contained in $\PP$. Suppose that the symbol of $T^{(k)}_j$, $j=1,\cdots,\ell$ is given by
\begin{equation*}
\sigma(T^{(k)}_j):=\sum_{i\geq0} \hbar^{i} f^{(j)}_i\in C^{\infty}(X)[[\hbar]].
\end{equation*}
Then for any smooth function $\vv$ with compact support containing $\PP$, the operator $\vv(T^{(k)}_1,\cdots,T^{(k)}_\ell)$ is a Toeplitz operator with symbol
\begin{equation*}\label{sigma-u-TT}
\sigma(\vv(T^{(k)}_1,\cdots,T^{(k)}_\ell))=s_0(\uu,\vv)+s_1(\uu,\vv)\hbar+\mathcal{O}(\hbar^{2}),
\end{equation*}
where $s_0(\uu,\vv),s_1(\uu,\vv)$ are given by
\begin{align*}
s_0(\uu,\vv)=&\vv(f^{(1)}_0,\cdots,f^{(\ell)}_0),\\
s_1(\uu,\vv)=&\vv(f^{(1)}_0,\cdots,f^{(\ell)}_0)S_{\uu}(\phi)+\sum_{j=1}^{\ell} \vv_{,j}(f^{(1)}_0,\cdots,f^{(\ell)}_0)(f^{(j)}_1-f^{(j)}_0S_{\uu}(\phi))\\&+\frac{1}{4}\sum_{i,j=1}^{\ell}\vv_{,ij}(f^{(1)}_0,\cdots,f^{(\ell)}_0)(df^{(i)}_0,df^{(j)}_0)_\phi,
\end{align*}
with $S_{\uu}(\phi):=\frac{1}{4}\big(\Scal_\phi+2\Delta_\phi(\log(\uu(m_\phi)))\big)$.
\end{prop}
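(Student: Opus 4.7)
The plan is to deduce the statement from Charles' functional calculus for Berezin--Toeplitz operators \cite{charles}, adapted to the $\uu$-weighted inner product \eqref{L2-inner} and extended to several commuting variables following \cite{ma-ma}. The main ingredient from \cite{charles} is that if $T^{(k)}$ is a self-adjoint Toeplitz operator on $\mathcal{H}_k$ (with respect to some smooth positive weight) with symbol $\sigma(T^{(k)}) = f_0 + f_1 \hbar + \cdots$ and whose spectrum lies in a compact set $K$, then for $\vv \in C^\infty_c$ on a neighborhood of $K$, the operator $\vv(T^{(k)})$ is again Toeplitz, whose principal symbol is $\vv(f_0)$ and whose subprincipal symbol is computable explicitly in terms of the Laplacian-type contribution coming from the Bergman projector.

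First, I would fix the dictionary between the weighted and unweighted settings. The $\uu$-weighted projector $\Pi_\uu^{k\phi}$ and the unweighted one are related by conjugation with the operator of multiplication by $\uu(m_\phi)^{1/2}$ at the level of principal and subprincipal symbols; equivalently, the $\uu$-weighted Toeplitz calculus is the ordinary Berezin--Toeplitz calculus performed with the volume form $\uu(m_\phi)\omega_\phi^{[n]}$ instead of $\omega_\phi^{[n]}$. A standard computation (as in \cite[Sect.~5]{ma-ma}) shows that the net effect on the subprincipal term of a self-adjoint Toeplitz operator is to replace the scalar curvature contribution $\tfrac14 \Scal_\phi$ by
\[
S_\uu(\phi) := \tfrac14\big(\Scal_\phi + 2\Delta_\phi \log \uu(m_\phi)\big).
\]
In particular, the \emph{identity} operator on $\mathcal{H}_k$ is Toeplitz with symbol $1 + \hbar\, S_\uu(\phi) + \mathcal{O}(\hbar^2)$ in this framework, and a Toeplitz operator with principal symbol $f_0$ has subprincipal contribution $f_1 - f_0 S_\uu(\phi)$ once one normalizes the leading order. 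This is the source of all occurrences of $S_\uu(\phi)$ in the stated formulas.

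Next I would extend Charles' argument to the joint functional calculus for commuting tuples. Since $T_1^{(k)},\ldots,T_\ell^{(k)}$ commute, one can apply Charles' Fourier/almost-analytic extension argument to the operator family $\exp(i\,\tt\cdot (T_1^{(k)},\ldots,T_\ell^{(k)}))$, using that products of commuting Toeplitz operators are Toeplitz with composition-of-symbols formula
\[
\sigma(T_i T_j) = f_0^{(i)} f_0^{(j)} + \hbar\Big(f_0^{(i)} f_1^{(j)} + f_1^{(i)} f_0^{(j)} - \tfrac12 (df_0^{(i)}, df_0^{(j)})_\phi\Big) + \mathcal{O}(\hbar^2),
\]
where the holomorphic/antiholomorphic splitting of $d$ produces the $\tfrac14$ coefficient (after symmetrization over the Hermitian product) in the final formula. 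Writing $\vv$ via the Fourier inversion formula $\vv(\bA) = (2\pi)^{-\ell}\int \hat{\vv}(\tt)\, e^{i\tt\cdot \bA}\,d\tt$, expanding $\exp(i\tt\cdot T)$ to second order in $\hbar$ and integrating against $\hat{\vv}$ produces exactly the three terms $\vv(f_0^{(\cdot)})$, $\sum_j \vv_{,j}(f_0^{(\cdot)})(f_1^{(j)} - f_0^{(j)} S_\uu(\phi))$ and $\tfrac14 \sum_{i,j} \vv_{,ij}(f_0^{(\cdot)}) (df_0^{(i)}, df_0^{(j)})_\phi$, along with the overall $\vv(f_0^{(\cdot)}) S_\uu(\phi)$ coming from the subprincipal symbol of the identity.

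Finally, I would verify that all remainders assemble into a Toeplitz operator at order $\hbar^2$ and that the compact support assumption on $\vv$, together with the assumption that the joint spectrum lies in $\PP$, legitimizes the use of the Fourier inversion argument globally (the tails are spectrally negligible). The main technical obstacle is to justify the joint functional calculus rigorously with uniform control of the off-diagonal decay of the kernels of $e^{i\tt\cdot T^{(k)}}$ in $k$; for this I would follow the almost-analytic extension/Helffer--Sj\"ostrand argument in \cite{charles} (which handles one variable) and the multi-dimensional generalization developed for commuting operators in \cite{ma-ma}. Once this is in place, identifying the coefficients $s_0(\uu,\vv)$ and $s_1(\uu,\vv)$ is a direct computation from the symbolic expansion above.
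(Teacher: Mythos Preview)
Your proposal is essentially correct and follows the same overall strategy as the paper: both invoke Charles' functional calculus \cite{charles}, extend to several commuting variables via the multivariable Helffer--Sj\"ostrand formula, and use Ma--Marinescu's Bergman kernel expansion with a twisted line bundle $(E,|\cdot|_E)=(X\times\mathbb{C},\uu(m_\phi)|\cdot|)$ to identify the subprincipal correction $S_\uu(\phi)$ and the unit $1_{\star_\uu}=1+\hbar S_\uu(\phi)+\mathcal{O}(\hbar^2)$.

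The difference lies in how the symbol of $\vv(T_1^{(k)},\ldots,T_\ell^{(k)})$ is actually computed. The paper works in the star-product algebra $(C^\infty(X)[[\hbar]],\star_\uu)$, with
\[
f\star_\uu g = fg + \hbar\Big[\tfrac12(df,dg)_\phi - S_\uu(\phi)fg\Big] + \mathcal{O}(\hbar^2),
\]
and expands $\vv$ in a \emph{Taylor series} at the point $\ba=(f_0^{(1)}(x),\ldots,f_0^{(\ell)}(x))$, then evaluates the iterated star products $(\sigma(T_p)-\ba_p 1_{\star_\uu})\star_\uu(\sigma(T_q)-\ba_q 1_{\star_\uu})$ at $y=x$. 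This makes the bookkeeping of the $S_\uu(\phi)$ contributions transparent, since the subtraction of $\ba_j 1_{\star_\uu}$ (rather than the constant $\ba_j$) automatically produces the combination $f_1^{(j)}-f_0^{(j)}S_\uu(\phi)$ in the linear term and kills the zeroth-order pieces in the quadratic term, leaving exactly $\tfrac12(df_0^{(p)},df_0^{(q)})_\phi\cdot\hbar$. Your Fourier-inversion route is in principle equivalent, but your displayed composition formula $\sigma(T_iT_j)=f_0^{(i)}f_0^{(j)}+\hbar(f_0^{(i)}f_1^{(j)}+f_1^{(i)}f_0^{(j)}-\tfrac12(df_0^{(i)},df_0^{(j)})_\phi)+\mathcal{O}(\hbar^2)$ has the wrong sign on the derivative term and omits the $-S_\uu(\phi)f_0^{(i)}f_0^{(j)}$ contribution; without that term the three pieces you list do not assemble into $s_1(\uu,\vv)$ as stated. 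Also, the multivariable Helffer--Sj\"ostrand formula you need is in \cite[Eq.~(8.18)]{DSJ}, not in \cite{ma-ma}.
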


\begin{proof}
In the case of one $\langle\cdot,\cdot\rangle_{\uu,k\phi}$-self adjoint Toeplitz operator $T^{(k)}$ and a smooth function of one variable $\vv$, the fact that $\vv(T^{(k)})$ is again a Toeplitz operator is established in \cite[Proposition 12]{charles}. The proof given in \cite{charles} relies on the Helffer-Sjostrand formula, see e.g. \cite[Theorem 8.1]{DSJ}. Using its multivariable generalization \cite[Equation 8.18]{DSJ} the proof in \cite{charles} readily generalizes to show that $\vv(T^{(k)}_1,\cdots,T^{(k)}_\ell)$ is a Toeplitz operator for any smooth function on $\PP$, and family of $\langle\cdot,\cdot\rangle_{\uu,k\phi}$-self adjoint commuting Toeplitz operators $(T^{(k)}_j)_{j=1,\cdots,\ell}$ such that the set of joint eigenvalues of $(T^{(k)}_j)_{j=1,\ell}$ is contained in $\PP$. 

Following \cite{charles}, one can define a map $\sigma:\mathcal{T} \rightarrow C^{\infty}(X)[[\hbar]]$ called {\it the full symbol map} from the set $\mathcal{T}$ of Toeplitz operators, to the set of formal series $C^{\infty}(X)[[\hbar]]$, which we introduce as follows. For a general Toeplitz operator $T^{(k)}\in\mathcal{T}$, the restriction to the diagonal $T^{(k)}(x,x)$ of its Shwartz kernel admits an asymptotic expansion in $C^{\infty}$ of the form,
\begin{equation*}
(2\pi)^n T^{(k)}(x,x)=\sum_ik^{-i}a_i(x)+\mathcal{O}(k^{-\infty}),
\end{equation*}
and the symbol of $T^{(k)}$ is given  by the formal series 
\begin{equation*}
\sigma(T^{(k)}):=\sum_i\hbar^{i}a_i(x).
\end{equation*}
The full symbol map $\sigma$ is a morphism from the the algebra $(\mathcal{T},\circ)$ with respect to the composition $\circ$ of operators to the algebra $(C^\infty(X)[[\hbar]],\star_{\uu})$ endowed with an associative product $\star_\uu$ called the {\it star product}
\begin{equation*}
\big(\sum_{j\geq0}f_j\hbar^j\big)\star_{\uu}\big(\sum_{j\geq0}g_j\hbar^j\big):=\sum_{s\geq0}\big(\sum_{j=0}^sf_j\star_{\uu}g_{s-j}\big)\hbar^s,
\end{equation*}
where the product $f\star_\uu g$ for two smooth functions is also an element of $C^{\infty}(X)[[\hbar]]$ which we define as follows. 

We denote by $\Pi_{\uu}^{k\phi}$ the $\langle\cdot,\cdot\rangle_{\uu,k\phi}$-orthogonal projection on $\mathcal{H}_k$ with respect to the weighted $L^2$-inner product \eqref{L2-inner}. Using \cite[Theorem 0.2]{ma2012berezintoeplitz}, for any $f,g\in C^\infty(X,\mathbb{R})$, we have $\Pi_{\uu}^{k\phi}f\Pi_{\uu}^{k\phi}g\Pi_{\uu}^{k\phi}\in\mathcal{T}$, and the restriction to the diagonal $\{x=x^{\prime}\}\subset X\times X$ of its Schawrtz kernel admits a $C^\infty$-asymptotic expansion given by
\begin{equation*}
\big(\Pi_{\uu}^{k\phi}f\Pi_{\uu}^{k\phi}g\Pi_{\uu}^{k\phi}\big)(x,x)=fg+\big[\frac{1}{2}(df,dg)_\phi-S_{\uu}(\phi)fg\big]k^{-1}+\mathcal{O}(k^{-2}).
\end{equation*} 
We then let 
\begin{equation*}
f\star_{\uu}g:=\sigma\big(\Pi_{\uu}^{k\phi}f\Pi_{\uu}^{k\phi}g\Pi_{\uu}^{k\phi}\big)=fg+\hbar\left[\frac{1}{2}(df,dg)_\phi-S_{\uu}(\phi)fg\right]+\mathcal{O}(\hbar^2).
\end{equation*}
The star product considered in \cite{charles} is $\star_1$ but the theory extends without difficulty for arbitrary weight $\uu$, by our definition above. For instance, the unit $1_{\star_\uu}$ of $(C^{\infty}(X)[[\hbar]],\star_{\uu})$ is identified with the symbol $1_{\star_{\uu}}:=\sigma(\Pi^{k\phi}_{\uu})$. Let $\Pi_{\uu}^{k\phi}(x,x^{\prime})$ be the usual Bergman kernel associated to the projection $\Pi_{\uu}^{k\phi}$, i.e.
\begin{equation}\label{Berg}
\Pi_{\uu}^{k\phi}(x,x)=\uu(m_{\phi}(x))\sum_{i=0}^{N_k}\left|s_i\right|_{k\phi}^{2}(x).
\end{equation}
We claim that \eqref{Berg} admits the following $C^{\infty}$-asymptotic expansion in $k$ 
\begin{equation}\label{exp-ker}
(2\pi)^n\Pi_{\uu}^{k\phi}(x,x)=1+\frac{1}{k}S_{\uu}(\phi)+\mathcal{O}\left(\frac{1}{k^{2}}\right),
\end{equation}
so that $1_{\star_\uu}=1+\hbar S_{\uu}(\phi)+\mathcal{O}(\hbar^2)$. The expansion  \eqref{exp-ker} follows from \cite[Theorem 4.1.1]{ma-ma} by taking the twisting bundle $E:=X\times \mathbb{C}$ (in the
notation of \cite{ma-ma}) being the trivial line bundle over $X$, endowed with Hermitian metric $|\cdot|_E:=\uu(m_\phi)|\cdot|$, where $|\cdot|$ is the norm of $\mathbb{C}$. According to \cite[Theorem 4.1.1]{ma-ma}, \eqref{Berg} admits an asymptotic expansion \eqref{exp-ker} with coefficient before $k^{-1}$ being equal to 
\begin{equation*}
\frac{1}{4}(\Scal_\phi+2\Lambda_{\phi}F_{E})=\frac{1}{4}\big(\Scal_\phi+2\Delta_{\phi}(\log(\uu(m_\phi)))\big)=S_{\uu}(\phi), 
\end{equation*}
where $F_E=-dd^{c}\log(\uu(m_\phi))$ is the curvature form of the Chern connection of $(E,|\cdot|_{E})$.

We shall now compute the symbol of the Toeplitz operator $\vv(T^{(k)}_1,\cdots,T^{(k)}_\ell)$. Following \cite{charles}, the full calculus of the symbol of $\vv(T^{(k)}_1,\cdots,T^{(k)}_\ell)$ is given by the Taylor series expansion of $\vv$ at the point $\ba:=(f^{(1)}_0(x),\cdots,f^{(\ell)}_0(x))$, $\ba_j:=f^{(j)}_0(x)$ as follows: 
\begin{align}
\begin{split}\label{xx-1}
&\sigma(\vv(T^{(k)}_1,\cdots,T^{(k)}_\ell))=\vv(\ba)1_{\star_{\uu}}(x)+\sum_{j=1}^{\ell} \vv_{,j}(\ba)\left(\sum_{i\geq0} \hbar^{i} f^{(j)}_i(y)-\ba_j1_{\star_{\uu}}(y)\right)_{\mid y=x}\\
&+\frac{1}{2!}\sum_{p,q=1}^{\ell}\vv_{,pq}(\ba)\left(\sum_{i\geq0} \hbar^{i} f^{(p)}_i(y)-\ba_p1_{\star_{\uu}}(y)\right)\star_{\uu}\left(\sum_{i\geq0} \hbar^{i} f^{(q)}_i(y)-\ba_q1_{\star_{\uu}}(y)\right)_{\mid y=x}+\cdots
\end{split}
\end{align}
On the other hand, we compute 
\begin{align*}
&\Big(\sum_{i\geq0} \hbar^{i} f^{(p)}_i(y)-\ba_p1_{\star_{\uu}}(y)\Big)\star_{\uu}\Big(\sum_{i\geq0} \hbar^{i} f^{(q)}_i(y)-\ba_q1_{\star_{\uu}}(y)\Big)_{\mid y=x}\\
=&\left((f^{(p)}_0(y)-\ba_p)+\hbar(f^{(p)}_1(y)-S_{\uu}(y))\right)\star_{\uu}\left((f^{(q)}_0(y)-\ba_q)+\hbar(f^{(q)}_1(y)-S_{\uu}(y))\right)_{\mid y=x}+\mathcal{O}(\hbar^{2})\\
=&(f^{(p)}_0(y)-\ba_p)\star_{\uu}(f^{(q)}_0(y)-\ba_q)_{\mid y=x}+\hbar(f^{(p)}_0(x)-\ba_p)(f^{(q)}_1(x)-S_{\uu}(x))\\&+\hbar(f^{(q)}_0(x)-\ba_q)(f^{(p)}_1(x)-S_{\uu}(x))+\mathcal{O}(\hbar^{2})\\
=&\frac{\hbar}{2}(df^{(p)}_0,df^{(q)}_0)_\phi+\mathcal{O}(\hbar^{2}).
\end{align*}
Substituting back in \eqref{xx-1}, we obtain the claimed formula for the symbol $\sigma(\vv(T^{(k)}_1,\cdots,T^{(k)}_\ell))$ up to $\mathcal{O}(\hbar^2)$.
\end{proof}

\begin{thm}\label{TYZ}
Let $\vv\in C^\infty(\PP,\mathbb{R})$. The $(\uu,\vv)$-equivariant Bergman kernel of the $\T$-invariant Hermitian metric $h^{k}_\phi$ on $L^{k}$ admits an asymptotic expansion when $k\gg 1$, given by
\begin{equation*}
\begin{split}
(2\pi)^{n}B_{\vv}(\uu,k\phi)=\begin{cases}
\vv(m_{\phi})+\mathcal{O}(\frac{1}{k}),\\
 \uu(m_{\phi})+\frac{1}{4k}\Scal_{\uu}(\phi)+\mathcal{O}(\frac{1}{k^{2}}),
 &\mbox{if } {\vv=\uu.} 
 \end{cases}.
\end{split}
\end{equation*}
Moreover, the above expansions holds in $C^{\infty}$, i.e. for any integer $\ell\geq 0$ there exist a constant $C_\ell(\uu,\vv)>0$ such that,
\begin{align*}
\begin{split}
&\left\Vert(2\pi)^{n}B_{\vv}(\uu,k\phi)-\vv(m_{\phi})\right\Vert_{C^{\ell}}\leq \frac{C_{\ell}(\uu,\vv)}{k},\\
&\left\Vert(2\pi)^{n}B_{\uu}(\uu,k\phi)-\uu(m_{\phi})-\frac{1}{4k}\Scal_{\uu}(\phi)\right\Vert_{C^{\ell}}\leq \frac{C_{\ell}(\uu,\uu)}{k^{2}}.
\end{split}
\end{align*}
\end{thm}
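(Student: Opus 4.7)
The strategy is to apply \Cref{prop-Charl} to the commuting family $T_j^{(k)} := k^{-1}A_{\xi_j}^{(k)}$, $j=1,\ldots,\ell$, together with the smooth function $\vv$, and to read off the expansion of $B_\vv(\uu,k\phi)$ from the resulting Toeplitz symbol.

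First, the $T_j^{(k)}$ satisfy the hypotheses of \Cref{prop-Charl}: commutativity is inherited from the torus action on $\mathcal{H}_k$, $\langle\cdot,\cdot\rangle_{\uu,k\phi}$-self-adjointness follows from the $\T$-invariance of both $h^k_\phi$ and the weighted volume form $\uu(m_\phi)\omega_\phi^{[n]}$, and the joint spectrum lies in $\PP$ by \Cref{W-P}. To recognize each $T_j^{(k)}$ as a $\uu$-Toeplitz operator, I would use \eqref{A-inf}, which on $\mathcal{H}_k$ reads
\[
T_j^{(k)} = m_\phi^{\xi_j}\,\Id_{\mathcal{H}_k} - \sqrt{-1}\,k^{-1}\,\nabla_{\xi_j};
\]
composing with $\Pi_\uu^{k\phi}$ on both sides and applying the weighted Bergman expansion \eqref{exp-ker} of \cite{ma-ma} (viewed, as in the proof of \Cref{prop-Charl}, as an asymptotic for the trivial line bundle with Hermitian norm $\uu(m_\phi)|\cdot|$) exhibits $T_j^{(k)}$ as a Toeplitz operator with symbol $\sigma(T_j^{(k)}) = m_\phi^{\xi_j} + \hbar f_1^{(j)} + O(\hbar^2)$ for a specific subsymbol $f_1^{(j)}$.

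Applying \Cref{prop-Charl} with $f_0^{(j)} = m_\phi^{\xi_j}$ then yields $\sigma(\vv(\bA_\xxi^{(k)}/k)) = \vv(m_\phi) + \hbar\,s_1(\uu,\vv) + O(\hbar^2)$, where $s_1(\uu,\vv)$ is given by the explicit formula in \Cref{prop-Charl}. Since the normalization in \eqref{Berg} is precisely the one for which $(2\pi)^n$ times the restriction to the diagonal of a Toeplitz operator's Schwartz kernel reproduces its full symbol, this translates into
\[
(2\pi)^n B_\vv(\uu,k\phi) = \vv(m_\phi) + k^{-1}\,s_1(\uu,\vv) + O(k^{-2}),
\]
which proves the first expansion. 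In the case $\vv=\uu$, the Hessian contribution $\tfrac14\sum_{i,j}\uu_{,ij}(m_\phi)(dm_\phi^{\xi_i},dm_\phi^{\xi_j})_\phi$ in $s_1(\uu,\uu)$ exactly matches $\tfrac14\sum_{i,j}\uu_{,ij}(m_\phi)g_\phi(\xi_i,\xi_j)$, which is one quarter of the third summand of $\Scal_\uu(\phi)$ in \eqref{Scal-v}. The remaining identity
\[
\uu(m_\phi)S_\uu(\phi) + \sum_j \uu_{,j}(m_\phi)\bigl(f_1^{(j)} - m_\phi^{\xi_j}\,S_\uu(\phi)\bigr) = \tfrac14\uu(m_\phi)\Scal(\phi) + \tfrac12\Delta_\phi(\uu(m_\phi))
\]
pins down $f_1^{(j)}$, and will be verified directly from \eqref{A-inf} combined with the $k^{-1}$-coefficient in \eqref{exp-ker}.

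The uniform $C^\ell$-bounds are automatic, since every ingredient --- the weighted Bergman expansion of \cite{ma-ma}, the Helffer--Sj\"ostrand based functional calculus underlying \Cref{prop-Charl}, and the multilinear substitution into $\vv$ --- produces $C^\infty$-asymptotic expansions with explicit bounds on each remainder. The main obstacle I anticipate is the explicit determination of the subsymbol $f_1^{(j)}$ of $k^{-1}A_{\xi_j}^{(k)}$: this requires a careful analysis of how the weighted projection $\Pi_\uu^{k\phi}$ interacts with the first-order differential operator $\nabla_{\xi_j}$, and is the genuine geometric content that produces $\Scal_\uu(\phi)/4$ as the subleading coefficient of $B_\uu(\uu,k\phi)$.
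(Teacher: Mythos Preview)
Your proposal is correct and follows essentially the same route as the paper: apply \Cref{prop-Charl} to the Toeplitz operators $T_j^{(k)}=k^{-1}A_{\xi_j}^{(k)}\Pi_\uu^{k\phi}$, compute their symbols via \eqref{A-inf} and the weighted Bergman expansion \eqref{exp-ker}, and then check that $s_1(\uu,\uu)=\tfrac14\Scal_\uu(\phi)$. The step you flag as the main obstacle is handled in the paper by a direct computation yielding the explicit subsymbol
\[
f_1^{(j)}=m_\phi^{\xi_j}S_\uu(\phi)-\tfrac12\sum_{i=1}^{\ell}(\log\circ\uu)_{,i}(m_\phi)\,(\xi_i,\xi_j)_\phi,
\]
after which substitution into the formula for $s_1(\uu,\uu)$ and the expression $S_\uu(\phi)=\tfrac14\bigl(\Scal_\phi+2\Delta_\phi(\log\uu(m_\phi))\bigr)$ gives the result.
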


\begin{proof}
Since the symbol map $\sigma$ is surjective with kernel given by the ideal of negligible Toeplitz operators $\mathcal{O}(k^{-\infty})\cap \mathcal{T}$ (see \cite[Proposition 3]{charles}), it suffices to calculate $\sigma\big(\vv\big(k^{-1}\bA_{\xxi}^{(k)}\big)\Pi_{\uu}^{k\phi}\big)$. We consider the special case of self-adjoint Toeplitz operators $T^{(k)}_j:=k^{-1}A^{(k)}_{\xi_j}\Pi^{k\phi}_{\uu}$.  We have  
\begin{equation*}
T^{(k)}_j(x,x)=\uu(m_\phi)\sum_{i=0}^{N_k}\big(k^{-1}A_{j}^{(k)}s_i,s_i\big)_{k\phi}.
\end{equation*}
By a straightforward calculation using \eqref{A-inf} the symbol of $T^{(k)}_j$ is given by
\begin{align*}
\sigma(T^{(k)}_j)=m_{\phi}^{\xi_j}+\Big[m_{\phi}^{\xi_j}S_{\uu}(\phi)-\frac{1}{2}\sum_{i=1}^{\ell}(\log\circ\uu)_{,i}(m_{\phi})(\xi_i,\xi_j)_\phi\Big]\hbar+\cdots
\end{align*}
Using \Cref{sigma-u-TT} we get
\begin{equation*}
\sigma(\vv(\bA_{\xxi}^{(k)})\Pi_{\uu}^{k\phi})=s_0(\uu,\vv)+s_1(\uu,\vv)\hbar+\cdots
\end{equation*}
where
\begin{align*}
s_0(\uu,\vv)=&\vv(m_\phi),\\
s_1(\uu,\uu)=&\uu(m_\phi)S_{\uu}(\phi)-\frac{1}{2}\sum_{i,j=1}^{\ell} \frac{\uu_{,i}(m_\phi)\uu_{,j}(m_\phi)}{\uu(m_\phi)}(\xi_i,\xi_j)_\phi+\frac{1}{4}\sum_{i,j=1}^{\ell}\uu_{,ij}(m_\phi)(\xi_i,\xi_j)_\phi.
\end{align*}
Replacing $S_{\uu}(\phi)$ by its expression in \Cref{prop-Charl} we obtain $s_1(\uu,\uu)=\Scal_{\uu}(\phi)$. 
\end{proof}

\subsection{The quantization maps}

Let $W_k$ denote the set of weights of the complexified action of $\T$, for $\lambda^{(k)}_i\in W_k$. We consider the following direct sum decomposition of the space $\mathcal{B}^{\T}(\mathcal{H}_k)$ of $\T$-invariant positive definite Hermitian forms on $\mathcal{H}_k$,
\begin{equation*}
\mathcal{B}^{\T}(\mathcal{H}_k):=\bigoplus_{\lambda^{(k)}_i\in W_k}\mathcal{B}^{\T}(\mathcal{H}(\lambda^{(k)}_i)),
\end{equation*}
where $\mathcal{B}^{\T}(\mathcal{H}(\lambda^{(k)}_i))$ is the space of $\T$-invariant positive definite Hermitian forms on $\mathcal{H}(\lambda^{(k)}_i)$
\begin{defn}\label{Quant-map}
Let $\uu\in C^{\infty}(\PP,\mathbb{R}_{>0})$, $\vv\in C^{\infty}(\PP,\mathbb{R})$. We introduce the following quantization maps:
\begin{enumerate}
\item The $(\uu,\vv)$-Hilbert map ${\rm Hilb}_{\uu,\vv}^{k}:\mathcal{K}^{\T}_\omega\rightarrow\mathcal{B}^{\T}(\mathcal{H}_k)$ which associates to every $\T$-invariant K{\"a}hler potential, the $\T$-invariant Hermitian inner product on $\mathcal{H}_k$, given by 
\begin{equation*}\label{Hilb}
\big({\rm Hilb}_{\uu,\vv}^{k}(\phi)\big)(\cdot,\cdot):=\sum_{\lambda^{(k)}_i\in W_k}\frac{\left(\langle \cdot,\cdot\rangle_{\uu,k\phi}\right)_{\mid \mathcal{H}_k(\lambda^{(k)}_i)}}{\uu(\lambda^{(k)}_i)-\frac{c_{(\uu,\vv)}(\alpha)}{4k}\vv(\lambda^{(k)}_i)},
\end{equation*}
where $c_{(\uu,\vv)}(\alpha)$ is given by \eqref{Top-const}

\item The $(\uu,\vv)$-Fubini--Study map ${\rm FS}^{k}_{\uu,\vv}:\mathcal{B}^{\T}(\mathcal{H}_k)\rightarrow\mathcal{K}^{\T}_\omega$ given by
\begin{equation*}
{\rm FS}^{k}_{\uu,\vv}(H):=\frac{1}{2k}\log{\left(\sum_{i=0}^{N_k}|s_i|_{h^{k}}^{2}\right)}-\frac{\log (c_k(\uu,\vv))}{2k},
\end{equation*}
where $\{s_i\}$ is an adapted $H$-orthonormal basis of $\mathcal{H}_k$ and $c_k(\uu,\vv)$ is a constant given by:
\begin{equation}\label{C_k}
c_k(\uu,\vv):=\frac{1}{k^n\int_X \uu(m_{\omega})\omega^{[n]}}\Big[W_{\uu}(L^{k})-\frac{c_{(\uu,\vv)}(\alpha)}{4k}W_{\vv}(L^{k})\Big],
\end{equation}
with $W_{\uu}(L^{k})$ the $\uu$-weight of the action of $\T$ on $L^{k}$ given by \eqref{W-u}.
\end{enumerate}
\end{defn}
\Cref{TYZ} yields
\begin{lem}\label{lem-dev-rho}
For $\phi\in\mathcal{K}^{\T}_\omega$, the Bergman kernel $\rho_{\uu,\vv}(k\phi)$ of $\Hilb_{\uu,\vv}(k\phi)$ satisfies
\begin{equation*}
\rho_{\uu,\vv}(k\phi)=B_{\uu}(\uu,k\phi)-\frac{c_{(\uu,\vv)}(\alpha)}{4k}B_{\vv}(\uu,k\phi),
\end{equation*}
and it has an asymptotic expansion,
\begin{equation*}
(2\pi)^{n}\rho_{\uu,\vv}(k\phi)=\uu(m_{\phi})+\frac{1}{4k}\left(\Scal_{\uu}(\phi)-c_{(\uu,\vv)}(\alpha)\vv(m_{\phi})\right)+\mathcal{O}\left(\frac{1}{k^{2}}\right).
\end{equation*}
The above asymptotic expansion holds in $C^{\infty}$, i.e. for any integer $\ell\geq 0$ we have, 
\begin{equation*}
\left\Vert(2\pi)^{n}\rho_{\uu,\vv}(k\phi)-\uu(m_{\phi})-\frac{1}{4k}\left(\Scal_{\uu}(\phi)-c_{(\uu,\vv)}(\alpha)\vv(m_{\phi})\right)\right\Vert_{C^{\ell}}\leq \frac{C_{\ell}(\uu,\vv)}{k^{2}}.
\end{equation*}
where $C_{\ell}(\uu,\vv)>0$.
\end{lem}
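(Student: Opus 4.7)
The plan is to prove the identity
$\rho_{\uu,\vv}(k\phi)=B_{\uu}(\uu,k\phi)-\frac{c_{(\uu,\vv)}(\alpha)}{4k}B_{\vv}(\uu,k\phi)$
by a direct unpacking of definitions on the weight space decomposition of $\mathcal{H}_k$, and then deduce the asymptotic expansion as a formal linear combination of the two expansions already established in Theorem \ref{TYZ}. No substantial analytic obstacle remains, as all the hard work (Bouche--Catlin--Zelditch type asymptotics and their equivariant generalization via the functional calculus of commuting Toeplitz operators) has been performed in Proposition \ref{prop-Charl} and Theorem \ref{TYZ}; the present lemma is essentially a bookkeeping statement.

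First I would fix for each weight $\lambda^{(k)}_i\in W_k$ a $\langle\cdot,\cdot\rangle_{\uu,k\phi}$-orthonormal basis $\{t_{i,j}\}_j$ of the weight subspace $\mathcal{H}(\lambda^{(k)}_i)\subset \mathcal{H}_k$. By the very definition of $\Hilb^{k}_{\uu,\vv}(\phi)$ in Definition \ref{Quant-map}, the rescaled sections
\[
s_{i,j}:=\Bigl(\uu(\lambda^{(k)}_i)-\tfrac{c_{(\uu,\vv)}(\alpha)}{4k}\vv(\lambda^{(k)}_i)\Bigr)^{1/2}t_{i,j}
\]
form an orthonormal basis of $\mathcal{H}_k$ with respect to $\Hilb^{k}_{\uu,\vv}(\phi)$ (the coefficient is positive for $k$ large, since $\uu>0$ on the compact set $\PP\supset W_k$). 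By the convention fixed in \eqref{Berg}, the Bergman kernel of $\Hilb^{k}_{\uu,\vv}(\phi)$ is
\[
\rho_{\uu,\vv}(k\phi)=\uu(m_\phi)\sum_{i,j}|s_{i,j}|^2_{k\phi}
=\uu(m_\phi)\sum_{i,j}\Bigl(\uu(\lambda^{(k)}_i)-\tfrac{c_{(\uu,\vv)}(\alpha)}{4k}\vv(\lambda^{(k)}_i)\Bigr)|t_{i,j}|^2_{k\phi}.
\]
Since $t_{i,j}\in\mathcal{H}(\lambda^{(k)}_i)$, the defining relation \eqref{w(A)-eq} gives $\uu(k^{-1}\bA^{(k)}_{\xxi})t_{i,j}=\uu(\lambda^{(k)}_i)t_{i,j}$ and similarly for $\vv$, so the right-hand side equals
\[
\uu(m_\phi)\sum_{i,j}\bigl(\uu(k^{-1}\bA^{(k)}_{\xxi})t_{i,j},t_{i,j}\bigr)_{k\phi}-\tfrac{c_{(\uu,\vv)}(\alpha)}{4k}\uu(m_\phi)\sum_{i,j}\bigl(\vv(k^{-1}\bA^{(k)}_{\xxi})t_{i,j},t_{i,j}\bigr)_{k\phi},
\]
which by \eqref{Bergman} is precisely $B_{\uu}(\uu,k\phi)-\frac{c_{(\uu,\vv)}(\alpha)}{4k}B_{\vv}(\uu,k\phi)$. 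This proves the first assertion.

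For the second assertion, I substitute into this identity the two expansions provided by Theorem \ref{TYZ}:
\[
(2\pi)^n B_\uu(\uu,k\phi)=\uu(m_\phi)+\tfrac{1}{4k}\Scal_\uu(\phi)+\mathcal{O}(k^{-2}),\qquad
(2\pi)^n B_\vv(\uu,k\phi)=\vv(m_\phi)+\mathcal{O}(k^{-1}),
\]
both holding in $C^\ell$ for every $\ell\ge 0$ with constants depending on $\uu,\vv,\ell$. Multiplying the second expansion by $\frac{c_{(\uu,\vv)}(\alpha)}{4k}$ contributes $\frac{c_{(\uu,\vv)}(\alpha)}{4k}\vv(m_\phi)+\mathcal{O}(k^{-2})$, and subtracting it from the first gives
\[
(2\pi)^n\rho_{\uu,\vv}(k\phi)=\uu(m_\phi)+\tfrac{1}{4k}\bigl(\Scal_\uu(\phi)-c_{(\uu,\vv)}(\alpha)\vv(m_\phi)\bigr)+\mathcal{O}(k^{-2}),
\]
with the remainder bounded in every $C^\ell$-norm by a constant times $k^{-2}$, just as stated. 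This completes the proof.
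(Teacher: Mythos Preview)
Your proof is correct and follows exactly the approach the paper indicates: the paper simply states that the lemma follows from \Cref{TYZ}, and you have spelled out the routine verification of the identity on each weight space together with the linear combination of the two asymptotic expansions from \Cref{TYZ}.
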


\begin{comment}
For $\phi\in \mathcal{K}^{\T}_\omega$ we have:
\begin{eqnarray*}
{\rm FS}^{k}_{\uu,\vv}\circ {\rm Hilb}_{\uu,\vv}(k\phi)&=&\phi+\frac{1}{2k}\log{\left(\frac{\rho_{\uu,\vv}(k\phi)}{c_k(\uu,\vv)\uu(m_{\phi})}\right)}\\
&=&\phi+\mathcal{O}(k^{-2}).
\end{eqnarray*}
\end{comment}
Following \cite{donaldson0, ST, Zhang}, we give the following definition
\begin{defn}
We say that a metric $\phi\in\mathcal{K}^{\T}_\omega$ is $(\uu,\vv)$-balanced of order $k$ if it satisfies:
\begin{equation*}
{\rm FS}^{k}_{\uu,\vv}\circ {\rm Hilb}_{\uu,\vv}^{k}(\phi)=\phi.
\end{equation*}
or equivalently 
\begin{equation*}
\rho_{\uu,\vv}(k\phi)=c_k (\uu,\vv)\uu(m_{\phi}),
\end{equation*}
where $c_k (\uu,\vv)$ is given by \eqref{C_k}. 
\end{defn}

Similarly to \cite{donaldson0} we have
\begin{prop}
Let $(\phi_j)_{j\geq0}$ be a sequence in $\mathcal{K}^{\T}_\omega$ such that every $\phi_j$ is a $(\uu,\vv)$-balanced metric of order $j$ and $\phi_j$ converge in $C^{\infty}$ to $\phi$. Then $\omega_\phi$ is $(\uu,\vv)$-cscK metric.
\end{prop}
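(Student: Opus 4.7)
The plan is to mimic Donaldson's classical argument \cite{donaldson0} in the $\uu=\vv=1$ case by comparing, pointwise on $X$, the two $C^\infty$-asymptotic expansions produced by (i) the asymptotic formula of \Cref{lem-dev-rho} for the Bergman density $\rho_{\uu,\vv}(k\phi)$, and (ii) the $(\uu,\vv)$-balanced equation $\rho_{\uu,\vv}(k\phi_k)=c_k(\uu,\vv)\,\uu(m_{\phi_k})$. The key point is that the normalizing constant $c_k(\uu,\vv)$ is chosen so that the $k^{-1}$ coefficient in its expansion is precisely the topological average of $\bigl(\Scal_{\uu}-c_{(\uu,\vv)}(\alpha)\vv\bigr)$, which vanishes by the very definition of $c_{(\uu,\vv)}(\alpha)$ in \Cref{Def-Top-const}.

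The steps I would carry out are as follows. First, using the asymptotic expansion \eqref{w-u-exp} of the $\uu$- and $\vv$-weights together with the definition \eqref{C_k} of $c_k(\uu,\vv)$, I would obtain
\begin{equation*}
(2\pi)^{n}c_k(\uu,\vv)\;=\;1\;+\;\frac{1}{4k\int_X\uu(m_\omega)\omega^{[n]}}\Bigl[\int_X\Scal_\uu(\omega)\omega^{[n]}-c_{(\uu,\vv)}(\alpha)\int_X\vv(m_\omega)\omega^{[n]}\Bigr]+\mathcal{O}(k^{-2}).
\end{equation*}
By \Cref{Def-Top-const} (in the generic case $\int_X\vv(m_\omega)\omega^{[n]}\neq 0$; the remaining case is analogous since the presence of a balanced sequence forces the corresponding topological identity), the bracket is zero, so $(2\pi)^{n}c_k(\uu,\vv)=1+\mathcal{O}(k^{-2})$.

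Second, specialising \Cref{lem-dev-rho} to $\phi=\phi_k$ with $k=j$, and inserting the balanced identity $\rho_{\uu,\vv}(j\phi_j)=c_j(\uu,\vv)\,\uu(m_{\phi_j})$, I would obtain the pointwise equation
\begin{equation*}
\uu(m_{\phi_j})+\frac{1}{4j}\bigl(\Scal_{\uu}(\phi_j)-c_{(\uu,\vv)}(\alpha)\vv(m_{\phi_j})\bigr)+\mathcal{O}(j^{-2})\;=\;\uu(m_{\phi_j})+\mathcal{O}(j^{-2}),
\end{equation*}
which yields
\begin{equation*}
\Scal_{\uu}(\phi_j)-c_{(\uu,\vv)}(\alpha)\vv(m_{\phi_j})\;=\;\mathcal{O}(j^{-1})
\end{equation*}
uniformly on $X$. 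Since $\phi_j\to\phi$ in $C^\infty$, both $\Scal_{\uu}(\phi_j)$ and $\vv(m_{\phi_j})$ converge in $C^0$ to $\Scal_{\uu}(\phi)$ and $\vv(m_{\phi})$ respectively, so letting $j\to\infty$ delivers $\Scal_{\uu}(\phi)=c_{(\uu,\vv)}(\alpha)\vv(m_\phi)$, i.e. $\omega_\phi$ is $(\uu,\vv)$-cscK.

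The main subtlety in this plan is to ensure that the $\mathcal{O}(k^{-2})$ remainder in \Cref{lem-dev-rho} is uniform along the sequence $\phi_j$, rather than only for fixed $\phi$. This follows because the constants $C_\ell(\uu,\vv)$ in the $C^\ell$-estimate of \Cref{lem-dev-rho} depend continuously on finitely many derivatives of the potential (through the geometric data of $\omega_\phi$, $m_\phi$, and the Hermitian metric $h_\phi^k$), and $C^\infty$-convergence of $\phi_j$ to $\phi$ provides uniform bounds on these derivatives. This is the step that needs the full strength of $C^\infty$ convergence, rather than mere $C^0$ or $C^2$ convergence, and is what makes the argument work in the weighted setting exactly as in the classical case.
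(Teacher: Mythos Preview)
The paper does not supply a proof of this proposition; it merely states it after the phrase ``Similarly to \cite{donaldson0} we have'', leaving the argument implicit. Your proposal is precisely the expected adaptation of Donaldson's argument to the weighted setting, using \Cref{lem-dev-rho} in place of the classical Tian--Yau--Zelditch expansion, and you correctly identify the uniformity of the $\mathcal{O}(k^{-2})$ remainder along the $C^\infty$-convergent sequence as the one point requiring care.

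One small remark: in the degenerate case $\int_X\vv(m_\omega)\omega^{[n]}=0$ your parenthetical claim that ``the presence of a balanced sequence forces the corresponding topological identity'' is slightly glib. If one keeps the $k^{-1}$ coefficient $a$ of $(2\pi)^nc_k(\uu,\vv)$ in the computation, the limiting equation reads $\Scal_\uu(\phi)-c_{(\uu,\vv)}(\alpha)\vv(m_\phi)=4a\,\uu(m_\phi)$ with $4a=\bigl(\int_X\Scal_\uu(\omega)\omega^{[n]}\bigr)/\bigl(\int_X\uu(m_\omega)\omega^{[n]}\bigr)$; integrating this identity against $\omega_\phi^{[n]}$ and using \Cref{top-c} is tautological, so it does not by itself force $a=0$. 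In the generic case $\int_X\vv(m_\omega)\omega^{[n]}\neq 0$ one has $a=0$ by \Cref{Def-Top-const} and your argument is complete; the degenerate case is an artefact of the convention in \Cref{Def-Top-const} and is not the situation of interest.
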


\subsection{Proof of \Cref{thm:mabuchi-bounded}}
Here outline the proof of \Cref{thm:mabuchi-bounded} from the Introduction, which follows from a straightforward extension of the arguments of \cite{donaldson1, lahdili2, li, ST} that has been given in the special cases \ref{1i}, \ref{ii}, \ref{iii}. We start by defining the finite dimensional analogues of the $(\uu,\vv)$-Mabuchi energy defined on the spaces ${\rm FS}^{k}_{\uu,\vv}\left(\mathcal{B}^{\T}(\mathcal{H}_k)\right)$ and $\mathcal{B}^{\T}(\mathcal{H}_k)$ as follows,
\begin{align}
\begin{split}\label{L-Z}
\mathcal{L}^{k}_{\uu,\vv}&:=\mathcal{E}_{\uu,\vv}^{k }\circ \Hilb^{k}_{\uu,\vv}+2k^{n+1}c_k(\uu,\vv)\mathcal{E}_{\uu},\\
Z^{k}_{\uu,\vv}&:=2k^{n+1}c_k(\uu,\vv)\mathcal{E}_{\uu} \circ \fs^{k}_{\uu,\vv}+\mathcal{E}^{k}_{\uu,\vv},
\end{split}
\end{align}
where $\mathcal{E}_{\uu}$ is given by \eqref{E} and $\mathcal{E}_{\uu,\vv}^{k}:\mathcal{B}^{\T}(\mathcal{H}_k)\rightarrow \mathbb{R}$ is the function
\begin{equation*}\label{i}
\mathcal{E}_{\uu,\vv}^{k}(H)=\sum_{\lambda^{(k)}_i\in W_k}\Big(\uu(\lambda^{(k)}_i)-\frac{c_{(\uu,\vv)}(\alpha)}{4k}\vv(\lambda^{(k)}_i)\Big)\log{\left(\det H_{\lambda^{(k)}_i}\right)}.
\end{equation*}
Using \Cref{lem-dev-rho} one can show that
\begin{align}
\begin{split}\label{(a)}
&\underset{k\rightarrow\infty}{\lim}\Big[\frac{2}{k^{n}}\mathcal{L}^{k}_{\uu,\vv}+b_k\Big]= \mathcal{M}_{\uu,\vv},\\
&\underset{k\rightarrow\infty}{\lim}k^{-n}\Big[\mathcal{L}_{\uu,\vv}^{k}(\phi)-Z_{\uu,\vv}^{k}\circ \Hilb_{\uu,\vv}^{k}(\phi)\Big]=0,
\end{split}
\end{align} 
where the convergence holds in the $C^{\infty}$-sense. Suppose that $\mathcal{K}^{\T}_\omega$ contains a $(\uu,\vv)$-cscK metric $\phi^{*}\in \mathcal{K}^{\T}_\omega$. One can show as in \cite{li, ST, lahdili2} that the metrics $\Hilb_{\uu,\vv}^{k}(\phi^{*})$ are almost balanced in the sense that there exists a smooth function $\varepsilon_\phi(k)$, such that $\underset{k\rightarrow\infty}{\lim}\varepsilon_\phi(k)=0$ in $C^{\ell}(X,\mathbb{R})$ and, 
\begin{equation}\label{(b)}
k^{-n}Z_{\uu,\vv}^{k}\circ \Hilb_{\uu,\vv}^{k}(\phi)\geq k^{-n}Z_{\uu,\vv}^{k}\circ \Hilb_{\uu,\vv}^{k}(\phi^{*})+\varepsilon_\phi(k).
\end{equation}
for all $\phi\in\mathcal{K}^{\T}_\omega$. Using \eqref{(a)} and \eqref{(b)} the proof is identical to the one of \cite[Theorem 3.4.1]{ST}.
\subsection{Proof of \Cref{c:extremal}}
This is a direct consequence of \Cref{rem-Mab-rel} and \Cref{thm:mabuchi-bounded}.

\section{The structure of the automorphism group and stability of the $(\uu,\vv)$-cscK metrics under deformations}
The following result follows from straightforward calculations along the lines of \cite[Section 2.5]{Gauduchon} and is left to the reader.
\begin{lem}\cite{Gauduchon, lahdili1}\label{lem-str}
Let $(X,\alpha,\T)$ be a compact K\"ahler manifold with K\"ahler class $\alpha$ and $\T\subset\Autred$ a real torus with $\PP$ the $\T$-momentum image of $X$. Suppose that $\uu,\vv\in C^{\infty}(\PP,\mathbb{R}_{>0})$ are positive smooth functions on $\PP$. Then we have,
\begin{enumerate}
\item For any $\T$-invariant K\"ahler metric $\omega\in \alpha$ and any variation $\dot{\phi}\in T_\phi\mathcal{K}^{\T}_\omega$ we have
\begin{equation}\label{var-Scal-u-v}
{\boldsymbol \delta}\Big(\frac{\Scal_{\uu}(\omega)}{\vv(m_\omega)}\Big)(\dot{\phi})=-2\mathbb{L}^{\omega}_{\uu,\vv}(\dot{\phi})+d^{c}\dot{\phi}(\Xi_{\uu,\vv}),
\end{equation}
where $\Xi_{\uu,\vv}:=J\grad_g\left(\frac{\Scal_{\uu}(\omega)}{\vv(m_\omega)}\right)$ and $\mathbb{L}^{\omega}_{\uu,\vv}$ is the elliptic fourth order differential operator given by 
$$\mathbb{L}^{\omega}_{\uu,\vv}\dot{\phi}=\frac{\delta\delta\big(\uu(m_\omega)(D^{-}d)\dot{\phi}\big)}{\vv(m_\omega)},$$
$D$ is the Levi-Civita connection of $\omega$ and $D^-(d\dot{\phi})$ is the $J$-anti-invariant part of the tensor $D(d\dot{\phi})$.

\item For any $f\in C^{\infty}(X,\mathbb{R})^{\T}$ we have
\begin{equation*}
\mathcal{L}_{\Xi_{\uu,\vv}}f=\frac{-2\delta\delta\big(\uu(m_\omega)(D^{-}d^{c})f\big)}{\vv(m_\omega)}.
\end{equation*}

\item The operator $\mathbb{L}_{\uu,\vv}$ acting on $C^{\infty}(X,\mathbb{C})^{\T}$ admits a decomposition $\mathbb{L}_{\uu,\vv}=\mathbb{L}_{\uu,\vv}^++\mathbb{L}_{\uu,\vv}^-$ with $\mathbb{L}_{\uu,\vv}^\pm :=\mathbb{L}_{\uu,\vv}\pm\frac{\sqrt{-1}}{2}\mathcal{L}_{\Xi_{\uu,\vv}}$.

\item Let $V=\grad_g(h)+J\grad_{g}(f)$ with $h,f\in C^{\infty}(X,\mathbb{R})^{\T}$.   Then $V\in\mathfrak{h}_{\rm red}^{\T}$ if and only if $\mathbb{L}_{\uu,\vv}^+(h+if)=0$.
\end{enumerate}
\end{lem}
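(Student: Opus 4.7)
My plan is to treat the four claims in order, since (i) is the analytic core and the remaining items are formal consequences. The statement is a weighted generalization of the standard facts in Gauduchon's monograph (Section 2.5 of \cite{Gauduchon}) and the earlier work \cite{lahdili1}, so the main task is to check that the weights can be inserted at each step and that no new obstruction arises.

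For (i), the plan is to compute ${\boldsymbol \delta}\Scal_{\uu}(\omega)(\dot\phi)$ term by term using the definition \eqref{Scal-v}. The variation of the usual scalar curvature is
\[
{\boldsymbol \delta}\Scal(\omega)(\dot\phi) = -2(D^-d)^{*}(D^-d)\dot\phi + (d\Scal(\omega),d\dot\phi)_{\omega},
\]
and by \Cref{lem-mom-norm}\ref{m3} the variation of the moment map is ${\boldsymbol \delta}m_\phi^{\xi}(\dot\phi)= d^{c}\dot\phi(\xi)$. The variations of the Laplacian term $2\Delta_\phi(\uu(m_\phi))$ and of the trace term $\Tr(\bG_\phi\circ(\Hess(\uu)\circ m_\phi))$ are then computed by chain rule, using the standard formula ${\boldsymbol \delta}\Delta_\phi f(\dot\phi)= -(dd^{c}\dot\phi,dd^{c}f)_{\omega}+(d\Delta_\phi f,d\dot\phi)_{\omega}$ for any smooth $f$, and the identity $g_\phi(\xi_i,\xi_j)=(dm_\phi^{\xi_i},dm_\phi^{\xi_j})_\phi$. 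Grouping terms, one obtains an expression of the form $-2\delta\delta(\uu(m_\omega)(D^-d)\dot\phi)+(d(\Scal_{\uu}(\omega)/\vv(m_\omega)),\dot\phi)$-type transport contribution. Dividing by $\vv(m_\omega)$, the first piece is $-2\mathbb{L}^{\omega}_{\uu,\vv}(\dot\phi)$, while the transport piece is exactly $d\dot\phi(J\grad_g(\Scal_{\uu}/\vv))=d^{c}\dot\phi(\Xi_{\uu,\vv})$. I expect the cleanest organization is to treat the $\uu$ weight on the $\Scal(g_\phi)$ piece first, then combine the remaining $2\Delta_\phi\uu(m_\phi)+\Tr(\bG_\phi\circ\Hess(\uu)\circ m_\phi)$ terms, since these are precisely the quantities that produce the weighted Lichnerowicz structure already seen in \cite{AM, lahdili1}.

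For (ii), the plan is to specialize (i). Since $\Xi_{\uu,\vv}$ is a real vector field, for any $\T$-invariant $f$ one has $\mathcal{L}_{\Xi_{\uu,\vv}}f = df(\Xi_{\uu,\vv}) = d^{c}f(J\Xi_{\uu,\vv})$, up to sign conventions. The Lie derivative of $\Scal_{\uu}/\vv$ in the direction $\Xi_{\uu,\vv}$ vanishes because $\Xi_{\uu,\vv}$ is the Hamiltonian vector field of $\Scal_{\uu}/\vv$; combining this with (i) applied with $\dot\phi=f$ yields the claimed formula $\mathcal{L}_{\Xi_{\uu,\vv}}f\cdot\vv(m_\omega) = -2\delta\delta(\uu(m_\omega)(D^-d^{c})f)$, where $D^-d^{c}f$ appears instead of $D^-df$ because of the anti-invariant $J$-twist. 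For (iii), the decomposition $\mathbb{L}_{\uu,\vv}=\mathbb{L}_{\uu,\vv}^{+}+\mathbb{L}_{\uu,\vv}^{-}$ with $\mathbb{L}_{\uu,\vv}^{\pm}:=\mathbb{L}_{\uu,\vv}\pm\tfrac{\sqrt{-1}}{2}\mathcal{L}_{\Xi_{\uu,\vv}}$ is purely formal once (ii) is established: it is the identity $\mathbb{L}_{\uu,\vv}=\tfrac12(\mathbb{L}_{\uu,\vv}^{+}+\mathbb{L}_{\uu,\vv}^{-})$ on the complex-linear extension, so one just verifies that the two halves act correctly on $h+\sqrt{-1}f$.

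For (iv), I would use the classical characterization: $V=\grad_g h+J\grad_g f\in\mathfrak{h}_{\rm red}$ iff $\bar\partial(\grad^{1,0}(h+\sqrt{-1}f))=0$, which is equivalent to $(D^-d)(h+\sqrt{-1}f)=0$. The plan is to show that the weighted operator $\mathbb{L}_{\uu,\vv}^{+}$ has the same kernel as $(D^-d)$ on $\T$-invariant functions when $\uu>0$: the inequality $\langle \mathbb{L}_{\uu,\vv}^{+}\psi,\psi\rangle_{\vv}\ge 0$ obtained by integration by parts against $\vv(m_\omega)\omega^{[n]}$ shows that $\mathbb{L}_{\uu,\vv}^{+}\psi=0$ forces $(D^-d)\psi=0$, and conversely $(D^-d)\psi=0$ trivially implies $\mathbb{L}_{\uu,\vv}\psi=0$ and, by the $\T$-invariance argument combined with (ii), also $\mathcal{L}_{\Xi_{\uu,\vv}}\psi=0$. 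The main obstacle throughout will be keeping track of signs and normalizations when the weights $\uu,\vv$ are inserted into the adjoint identities; once the formula \eqref{var-Scal-u-v} is pinned down, the remaining claims follow by the same algebraic manipulations as in the unweighted case.
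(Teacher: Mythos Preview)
Your proposal is correct and follows exactly the approach the paper intends: the paper gives no proof at all, merely stating that the result ``follows from straightforward calculations along the lines of \cite[Section 2.5]{Gauduchon} and is left to the reader,'' and your plan is precisely to carry out those weighted Gauduchon--Lichnerowicz computations term by term. Your outline for (i) via the variation of each summand in \eqref{Scal-v}, and for (iv) via the kernel identification $\ker\mathbb{L}_{\uu,\vv}^{+}=\ker(D^{-}d)$ using the $\vv$-weighted pairing, is the standard route; the only place your sketch is slightly imprecise is the phrase ``(i) applied with $\dot\phi=f$'' for (ii), which is really an integration-by-parts/adjointness argument (or equivalently the moment-map symmetry) rather than a direct specialization, but the underlying idea is right.
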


The next Theorem is established using \Cref{lem-str} and the same arguments as in the proof of \cite[Theorem 3.4.1]{Gauduchon} and \cite{FO1}.

\begin{thm}\cite{FO1,lahdili1}\label{thm-str}
If $X$ admits a $(\uu,\vv)$-extremal K\"ahler metric with $\uu,\vv\in C^{\infty}(\PP,\mathbb{R}_{>0})$. Then the complex Lie algebra of $\T$-equivariant automorphisms of $X$ admits the following decomposition
\begin{equation}\label{h-split}
\mathfrak{h}^{\T}= \big(\mathfrak{a}\oplus\mathfrak{k}_{\rm ham}^{\T}\oplus J\mathfrak{k}_{\rm ham}^{\T}\big)\oplus\left(\bigoplus_{\lambda>0}\mathfrak{h}^{\T}_{(\lambda)}\right),
\end{equation} 
where $\mathfrak{a}$ is the abelian Lie algebra of parallel vector fields, $\mathfrak{k}_{\rm ham}^{\T}$ is the real Lie algebra of $\T$-equivariant Hamiltonian isometries of $X$ and $\mathfrak{h}^{\T}_{(\lambda)}$, $\lambda>0$ denote the subspace of elements $V\in\mathfrak{h}^{\T}$ such that $\mathcal{L}_{\Xi_{\uu,\vv}}V=\lambda JV$. Moreover, the Lie algebra of $\T$-equivariant isometries of $X$ admits the following decomposition
\begin{equation}\label{k-split}
\mathfrak{k}^{\T}=\mathfrak{a}\oplus \mathfrak{k}_{\rm ham}^{\T}.
\end{equation}
\end{thm}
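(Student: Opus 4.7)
The strategy is to adapt the classical Calabi--Lichnerowicz--Matsushima argument, as carried out for the cscK/extremal case in \cite[Thm.~3.4.1]{Gauduchon} and for the conformally K\"ahler, Einstein--Maxwell case in \cite[Thm.~3]{FO1}, using the fourth-order elliptic operator $\mathbb{L}^+_{\uu,\vv}$ provided by \Cref{lem-str} as the key tool. Let $\omega$ be the given $(\uu,\vv)$-extremal K\"ahler metric.

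The first step is to locate the vector field $\Xi_{\uu,\vv}$. By the $(\uu,\vv)$-extremal hypothesis and the discussion of Section~3.2, $\Scal_{\uu}(\omega)/\vv(m_\omega) = \vv_{\rm ext}(m_\omega) = \langle \xi_{\rm ext}, m_\omega\rangle + c_{\rm ext}$ is a Killing potential for $\omega$, with $\xi_{\rm ext}\in\tor$. Consequently $\Xi_{\uu,\vv} = J\grad_g(\vv_{\rm ext}(m_\omega))$ lies (up to sign) in $\tor$, hence is a real holomorphic Killing vector field, central in the centralizer $\mathfrak{h}^{\T}$ of $\T$. In particular $\mathcal{L}_{\Xi_{\uu,\vv}}$ acts linearly on $\mathfrak{h}^{\T}$ and commutes with multiplication by $J$.

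The second step is to set up the spectral problem on the space of holomorphy potentials. By \Cref{lem-str}\,(iv), the map $F = h + \sqrt{-1}f \mapsto V_F := \grad_g h + J\grad_g f$ identifies the kernel of $\mathbb{L}^+_{\uu,\vv}$ in $C^{\infty}_{0,\omega}(X,\C)^{\T}$ with $\mathfrak{h}^{\T}_{\rm red}$. On the weighted $L^2$ inner product $\langle F, G\rangle_{\vv} := \int_X F\bar G\, \vv(m_\omega)\omega^{[n]}$, the operator $\mathbb{L}_{\uu,\vv}$ is formally self-adjoint (an integration by parts using that $\uu(m_\omega)$ is $\T$-invariant transforms $\delta\delta(\uu(m_\omega)(D^-d)\cdot)/\vv(m_\omega)$ into a symmetric bilinear form), while $\sqrt{-1}\mathcal{L}_{\Xi_{\uu,\vv}}$ is skew-adjoint since $\Xi_{\uu,\vv}$ is Killing and preserves the weight. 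The decomposition $\mathbb{L}_{\uu,\vv} = \mathbb{L}^+_{\uu,\vv} + \mathbb{L}^-_{\uu,\vv}$ from \Cref{lem-str}\,(iii) then shows that $\mathbb{L}^+_{\uu,\vv}$ is itself a real self-adjoint elliptic operator, and that $\mathcal{L}_{\Xi_{\uu,\vv}}$ commutes with it.

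The third step is the spectral decomposition. Because $\mathcal{L}_{\Xi_{\uu,\vv}}$ commutes with $\mathbb{L}^+_{\uu,\vv}$ and is skew-adjoint, it is diagonalizable on the finite-dimensional real vector space $\mathfrak{h}^{\T} \cong \ker \mathbb{L}^+_{\uu,\vv}$ with eigenvalues of the form $\mathcal{L}_{\Xi_{\uu,\vv}}V = \lambda JV$, $\lambda \in \R$. Replacing $\Xi_{\uu,\vv}$ by $-\Xi_{\uu,\vv}$ if necessary, one restricts to $\lambda\ge 0$; these eigenspaces are the $\mathfrak{h}^{\T}_{(\lambda)}$ in \eqref{h-split}. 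The only remaining task is the identification of the zero-eigenspace $\mathfrak{h}^{\T}_{(0)}$: an element $V\in \mathfrak{h}^{\T}_{(0)}$ has a $\T$- and $\Xi_{\uu,\vv}$-invariant complex potential $h+\sqrt{-1}f$, so separately $h$ and $f$ lie in $\ker\mathbb{L}^+_{\uu,\vv}\cap\ker\mathcal{L}_{\Xi_{\uu,\vv}}$, which (via the same Hodge-type argument as in the classical case) forces them either to be constant—producing parallel vector fields in $\mathfrak{a}$—or to be Killing potentials, yielding $\mathfrak{k}^{\T}_{\rm ham}\oplus J\mathfrak{k}^{\T}_{\rm ham}$. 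Intersecting \eqref{h-split} with the real form $\mathfrak{k}^{\T}$ of Killing vector fields rules out the $\lambda>0$ summands (where the complex potential is never purely real) and yields \eqref{k-split}.

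The main obstacle is the rigorous verification that $\mathbb{L}^+_{\uu,\vv}$ is self-adjoint with respect to the $\vv$-weighted inner product and that $\mathcal{L}_{\Xi_{\uu,\vv}}$ commutes with it: this requires tracking the two different weight functions $\uu,\vv$ through several integrations by parts, and is the point at which the weighted setting departs non-trivially from the cscK/extremal case. Once these algebraic identities are established, the spectral decomposition and the identification of each summand proceed exactly as in \cite{Gauduchon,FO1}, with $\omega^{[n]}$ replaced by $\uu(m_\omega)\omega^{[n]}$ and $\vv(m_\omega)\omega^{[n]}$ in the appropriate places.
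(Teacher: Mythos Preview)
Your approach is exactly what the paper indicates (the paper's own proof is a one-line reference to \cite[Thm.~3.4.1]{Gauduchon} and \cite{FO1} via \Cref{lem-str}), and the overall architecture—identify $\mathfrak{h}^{\T}_{\rm red}$ with $\ker\mathbb{L}^+_{\uu,\vv}$, diagonalize $\mathcal{L}_{\Xi_{\uu,\vv}}$ on this finite-dimensional space, and analyze the zero eigenspace—is correct.

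Two points in your third step should be tightened, however. First, the non-negativity of the eigenvalues $\lambda$ is \emph{not} a matter of replacing $\Xi_{\uu,\vv}$ by $-\Xi_{\uu,\vv}$: that flips every $\lambda$ to $-\lambda$, so it cannot turn a mixed-sign spectrum into a non-negative one. The correct mechanism is that on $\ker\mathbb{L}^+_{\uu,\vv}$ the relation $\mathbb{L}^+_{\uu,\vv}=\mathbb{L}_{\uu,\vv}+\tfrac{\sqrt{-1}}{2}\mathcal{L}_{\Xi_{\uu,\vv}}$ forces $\mathcal{L}_{\Xi_{\uu,\vv}}F=2\sqrt{-1}\,\mathbb{L}_{\uu,\vv}F$; since $\mathbb{L}_{\uu,\vv}$ is non-negative self-adjoint with respect to $\langle\cdot,\cdot\rangle_{\vv}$ (by the integration by parts you already outlined), every eigenvalue $\lambda$ satisfies $\lambda/2\ge 0$. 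This is the substantive step in the Calabi argument and is where the positivity of $\uu$ is used. Second, your description of the zero eigenspace is slightly off: a \emph{constant} potential gives the zero vector field, not a parallel one. The summand $\mathfrak{a}$ does not arise from $\ker\mathbb{L}^+_{\uu,\vv}$ at all; it comes from the prior Hodge-type splitting $\mathfrak{h}^{\T}=\mathfrak{a}\oplus\mathfrak{h}^{\T}_{\rm red}$ (holomorphic vector fields with a zero are exactly those admitting a holomorphy potential). On $\mathfrak{h}^{\T}_{\rm red}$ the zero eigenspace is identified by $\mathbb{L}_{\uu,\vv}F=0\Rightarrow D^-dF=0$, so both $\Re F$ and $\Im F$ are Killing potentials, yielding $\mathfrak{k}^{\T}_{\rm ham}\oplus J\mathfrak{k}^{\T}_{\rm ham}$. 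With these two corrections your outline goes through verbatim.
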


Using \Cref{thm-str} and the arguments in the proof of \cite[Theorem 3.5.1]{Gauduchon} we get the following generalization of the structure theorem for the group of holomorphic automorphisms of a $(\uu,\vv)$-extremal K\"ahler manifold.

\begin{cor}\cite{calabi,FO1,lahdili1}\label{thm:Calabi} 
Let $(X,\omega,g)$ be a compact $(\uu,\vv)$-extremal K\"ahler manifold $\omega$ with $\uu,\vv\in C^{\infty}(\PP,\mathbb{R}_{>0})$. Then the group ${\rm Isom}^{\T}_0(X,g)$ of $\T$-equivariant isometries of $X$ is a maximal compact connected subgroup of the identity component of the $\T$-equivariant automorphisms ${\rm Aut}^{\T}_0(X)$ of $X$. In particular, $(g,\omega)$ is invariant under the action of a maximal torus $\T_{\max}$ in ${\rm Aut}_{\rm red}(X)$. Furthermore, if $(g,\omega)$ is $(\uu,\vv)$-cscK, then ${\rm Aut}_{0}^{\T}(X)$ is a reductive complex Lie group.
\end{cor}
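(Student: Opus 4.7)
The plan is to deduce all three assertions directly from the decompositions \eqref{h-split} and \eqref{k-split} supplied by Theorem B.2, following Calabi's classical argument in the extremal case (see \cite[Theorem 3.5.1]{Gauduchon} and also \cite{FO1,lahdili1}). No essentially new input beyond replacing the extremal vector field by the field $\Xi_{\uu,\vv}$ from Lemma B.1 is required.

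For the maximality of ${\rm Isom}^{\T}_0(X,g)$ as a compact connected subgroup of ${\rm Aut}^{\T}_0(X)$, I would first identify its Lie algebra with $\mathfrak{k}^{\T} = \mathfrak{a} \oplus \mathfrak{k}_{\rm ham}^{\T}$ using \eqref{k-split}. Suppose $H$ were a compact connected subgroup of ${\rm Aut}^{\T}_0(X)$ strictly containing ${\rm Isom}^{\T}_0(X,g)$; then by \eqref{h-split}, $\mathrm{Lie}(H)$ would contain a nonzero element $V$ from some eigenspace $\mathfrak{h}^{\T}_{(\lambda)}$ with $\lambda>0$. By the characterization in Theorem B.2, $V$ satisfies $\mathcal{L}_{\Xi_{\uu,\vv}}V = \lambda JV$, and because $\Xi_{\uu,\vv}\in \mathfrak{k}^{\T}$ the adjoint action of the flow $\exp(t\Xi_{\uu,\vv})\subset {\rm Isom}^{\T}_0(X,g)\subset H$ on $V$ would produce an unbounded one-parameter orbit in $\mathrm{Lie}(H)$, contradicting the compactness of $H$.

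For the existence of a maximal torus $\T_{\max}\subset {\rm Aut}_{\rm red}(X)$ preserving $(g,\omega)$, I would pick any maximal torus of ${\rm Isom}^{\T}_0(X,g)$ containing $\T$; this is already a maximal torus of ${\rm Aut}^{\T}_0(X)$ by the previous step, together with the standard Lie-theoretic fact that a maximal torus of a maximal compact subgroup of a connected complex Lie group is a maximal torus of the ambient group. Because ${\rm Aut}^{\T}_0(X)$ is the centralizer of $\T$ in ${\rm Aut}_0(X)$ and maximal tori of the centralizer of a torus in a connected linear complex Lie group are maximal tori of the ambient group, $\T_{\max}$ is in turn a maximal torus of ${\rm Aut}_{\rm red}(X)$.

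For the reductivity statement in the $(\uu,\vv)$-cscK case, I would use that $\Scal_{\uu}(\omega)/\vv(m_\omega)\equiv c_{\uu,\vv}(\alpha)$ is constant, which gives $\Xi_{\uu,\vv} = J\grad_g(c_{\uu,\vv}(\alpha)) = 0$. The defining equation $\mathcal{L}_{\Xi_{\uu,\vv}}V = \lambda JV$ then forces $V=0$ for every $V\in \mathfrak{h}^{\T}_{(\lambda)}$ with $\lambda>0$, so \eqref{h-split} degenerates to $\mathfrak{h}^{\T} = \mathfrak{a} \oplus \mathfrak{k}_{\rm ham}^{\T} \oplus J\mathfrak{k}_{\rm ham}^{\T}$. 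Since $\mathfrak{a}$ is itself $J$-invariant (parallel holomorphic vector fields form a complex subalgebra), this exhibits $\mathfrak{h}^{\T}$ as the complexification of the compact real Lie algebra $\mathfrak{k}^{\T}$, so ${\rm Aut}_0^{\T}(X)$ is reductive as a complex Lie group. The most delicate step is the first one: converting the eigenspace relation $\mathcal{L}_{\Xi_{\uu,\vv}}V = \lambda JV$ into a genuine contradiction with compactness requires checking that the adjoint orbit of $V$ under $\exp(t\Xi_{\uu,\vv})$ is truly unbounded, which follows from the positivity of $\lambda$ and the nilpotent behaviour of $J$ relative to $\mathrm{ad}(\Xi_{\uu,\vv})$ on $\mathfrak{h}^{\T}_{(\lambda)}$, exactly as in Calabi's original treatment.
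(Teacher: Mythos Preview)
Your overall strategy matches the paper's exactly: the paper gives no details beyond citing the decompositions \eqref{h-split}--\eqref{k-split} of \Cref{thm-str} and referring to \cite[Theorem~3.5.1]{Gauduchon}, and your arguments for the maximal-torus statement and for reductivity in the $(\uu,\vv)$-cscK case are correct.

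There is, however, a genuine gap in your sketch of the maximality of ${\rm Isom}^{\T}_0(X,g)$. From $\mathcal{L}_{\Xi_{\uu,\vv}}V=\lambda JV$ one computes $\mathrm{ad}(\Xi_{\uu,\vv})^2 V=-\lambda^2 V$, so on the real span of $V,JV$ the operator $\mathrm{ad}(\Xi_{\uu,\vv})$ has purely imaginary eigenvalues $\pm i\lambda$ and $\mathrm{Ad}(\exp t\Xi_{\uu,\vv})$ acts by rotations: the orbit you describe is \emph{bounded}, and no contradiction with compactness of $H$ follows. The real positive eigenvalue $\lambda$ belongs to $\mathrm{ad}(-J\Xi_{\uu,\vv})=\mathrm{ad}\big(\grad_g(\Scal_{\uu}/\vv)\big)$, but $-J\Xi_{\uu,\vv}$ is not Killing and need not lie in $\mathrm{Lie}(H)$. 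The argument in \cite[Theorem~3.5.1]{Gauduchon} proceeds differently: the grading by $\mathrm{ad}(-J\Xi_{\uu,\vv})$ gives $[\mathfrak{h}^{\T}_{(\lambda)},\mathfrak{h}^{\T}_{(\mu)}]\subset\mathfrak{h}^{\T}_{(\lambda+\mu)}$, so $\mathfrak n:=\bigoplus_{\lambda>0}\mathfrak{h}^{\T}_{(\lambda)}$ is a nilpotent ideal and $\mathfrak{h}^{\T}=\mathfrak{h}^{\T}_{(0)}\ltimes\mathfrak n$ with $\mathfrak{h}^{\T}_{(0)}=(\mathfrak{k}^{\T})^{\mathbb C}$ reductive; one then invokes the standard fact that maximal compact subgroups of such a semidirect product are conjugate into the reductive Levi factor, whose maximal compact is ${\rm Isom}^{\T}_0(X,g)$. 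Your phrase ``nilpotent behaviour of $J$ relative to $\mathrm{ad}(\Xi_{\uu,\vv})$'' does not capture this. A smaller issue: you also assume without justification that $\mathrm{Lie}(H)$ contains a \emph{pure} eigenvector $V\in\mathfrak{h}^{\T}_{(\lambda)}$; a priori $\mathrm{Lie}(H)$ need not split along \eqref{h-split}, and the case $\mathrm{Lie}(H)\cap J\mathfrak{k}_{\rm ham}^{\T}\neq 0$ must also be handled.
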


Now we consider the stability of the $(\uu,\vv)$-extremal metrics under deformations of the K\"ahler class $\alpha$ and the weight functions $\uu,\vv\in C^{\infty}(\PP,\mathbb{R}_{>0})$.\\
\smallskip

Let $X$ be a compact K\"ahler manifold, $\alpha$ a K\"ahler class, $\T_{\rm max}\subset\Autred$ a maximal torus and $\PP_{\alpha}\subset\tor^{*}$ a momentum polytope for $\alpha$ as in \Cref{lem-mom-norm}. Let $\beta\in H^{1,1}(X)$ and $U$ an open subset of $\tor^*$ with $\PP_\alpha\subset U$. Then there exist $a>0$ such that for any $|r|<a$ we can choose $\PP_{\alpha+r\beta}\subset U$ to be the momentum polytope of $\T_{\rm max}$ with respect to $\alpha+r\beta$. With these choices, we now suppose that $\uu,\vv$ are positive smooth functions on $U$ and $\tilde\uu,\tilde\vv$ are arbitrary smooth functions on $U$. We then have

\begin{thm}\label{thm:LeBrun-Simanca} 
Suppose that $\omega\in\alpha$ is a $\T_{\rm max}$-invariant $(\uu,\vv)$-extremal K\"ahler metric associated to $(\PP_\alpha,\uu,\vv)$. Then there exist $\varepsilon >0$, such that for any $|s|<\varepsilon, |t|<\varepsilon$, $|r|<\varepsilon$, there exists a $(\uu+t\tilde{\uu} ,\vv+s\tilde{\vv})$-extremal K\"ahler metric in the K\"ahler class $\alpha + r\beta$, associated to $(\uu+t\tilde{\uu} ,\vv+s\tilde{\vv})$ and $\PP_{\alpha+r\beta}\subset U$.
\end{thm}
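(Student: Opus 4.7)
The plan is to apply the implicit function theorem on a suitable space of $\T_{\max}$-invariant K\"ahler potentials, following the LeBrun--Simanca scheme adapted to the weighted setting as in \cite{AMT, lahdili1}. By Section~3.2, finding a $(\uu', \vv')$-extremal K\"ahler metric is equivalent to finding a $(\uu', \vv'\vv_{\rm ext}')$-cscK metric for an affine-linear function $\vv_{\rm ext}'$ on $\tor^*$ determined by $(\alpha', \uu', \vv')$. I fix a smooth family $\omega_r$ of $\T_{\max}$-invariant K\"ahler forms in $\alpha + r\beta$ with $\omega_0 = \omega$ and momentum map $m_r$ normalised as in \Cref{lem-mom-norm}, work in $C^{k,a}$ H\"older completions for $k \ge 5$, $a \in (0,1)$, and consider the smooth map
\begin{equation*}
F(\phi, f, C, r, s, t) := \Scal_{\uu+t\tilde{\uu}}(\omega_{r,\phi}) - (\vv + s\tilde{\vv})(m_{r,\phi})\big(\langle f, m_{r,\phi}\rangle + C\big),
\end{equation*}
defined on a neighbourhood of the base point $(0, \xi_{\rm ext}, c_{\rm ext}, 0, 0, 0)$, where $\omega_{r,\phi} = \omega_r + dd^c\phi$ has momentum map $m_{r,\phi}$ and $(f, C) \in \tor_{\max}\oplus \R$ parametrises affine-linear functions on $\tor^*$. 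By the $(\uu,\vv)$-extremal condition, $F$ vanishes at the base point.

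Next, I compute the linearisation of $F$ in $(\phi, f, C)$ at the base point. By \Cref{lem-str} \eqref{var-Scal-u-v} (applied after rewriting the equation in the form $\Scal_{\uu}(\omega)/(\vv(m_\omega)\vv_{\rm ext}(m_\omega)) = 1$, so that $\Xi_{\uu,\vv\vv_{\rm ext}} = 0$ at the base point), the principal part of the $\phi$-derivative is $-2\vv(m_\omega)\vv_{\rm ext}(m_\omega)\mathbb{L}^{\omega}_{\uu,\vv\vv_{\rm ext}}(\dot{\phi})$, and the $(f, C)$-derivative is $-\vv(m_\omega)\big(\langle \dot f, m_\omega\rangle + \dot C\big)$. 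The operator $\mathbb{L}^{\omega}_{\uu,\vv\vv_{\rm ext}}$ is elliptic and self-adjoint with respect to the $\vv\vv_{\rm ext}(m_\omega)\omega^{[n]}$-weighted $L^2$ pairing (well-defined because $\vv, \vv_{\rm ext} > 0$ on $\PP$), hence Fredholm of index zero on $C^{k,a}(X,\R)^{\T_{\max}}$. By \Cref{lem-str} (iv) together with \Cref{thm-str} applied to $\omega$, every real element of $\ker \mathbb{L}^{\omega}_{\uu,\vv\vv_{\rm ext}}$ is the real part of a $\T_{\max}$-invariant holomorphy potential for a vector field in $\mathfrak{h}^{\T_{\max}}$; by the decomposition \eqref{k-split} and the maximality of $\T_{\max}$, any such real-holomorphic vector field lies in the complexification of $\tor_{\max}$. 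It follows that $\ker\mathbb{L}^{\omega}_{\uu,\vv\vv_{\rm ext}} \cap C^{k,a}(X,\R)^{\T_{\max}}$ coincides with the $(\ell+1)$-dimensional space of functions $\langle \xi, m_\omega\rangle + c$ with $\xi\in\tor_{\max}$, $c\in\R$, and the $(f, C)$-derivative maps $\tor_{\max}\oplus\R$ isomorphically onto $\vv(m_\omega)\cdot\ker\mathbb{L}^{\omega}_{\uu,\vv\vv_{\rm ext}}$.

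Restricting $\phi$ to the $L^2_{\vv\vv_{\rm ext}(m_\omega)\omega^{[n]}}$-orthogonal complement of $\ker\mathbb{L}^{\omega}_{\uu,\vv\vv_{\rm ext}}$ and letting $(f, C)$ vary freely, the total derivative of $F$ in $(\phi|_{\ker^\perp}, f, C)$ becomes a continuous isomorphism of Banach spaces (the ellipticity handles the $\ker^\perp$ direction, the previous step handles the kernel direction, and the lower-order contributions arising from the variation of $m_{r,\phi}$ in the RHS do not affect the conclusion by a standard perturbation argument). The implicit function theorem then yields smooth-in-$(r,s,t)$ solutions $\phi_{r,s,t}\in C^{k,a}$, $f_{r,s,t}\in\tor_{\max}$, and $C_{r,s,t}\in\R$ of $F = 0$ for $(r, s, t)$ near $0$, with standard elliptic bootstrapping upgrading $\phi_{r,s,t}$ to $C^\infty$. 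Setting $\vv_{\rm ext}^{r,s,t}(p) := \langle f_{r,s,t}, p\rangle + C_{r,s,t}$, the metric $\omega_r + dd^c \phi_{r,s,t}$ solves the $(\uu + t\tilde{\uu}, (\vv + s\tilde{\vv})\vv_{\rm ext}^{r,s,t})$-cscK equation, hence is $(\uu + t\tilde{\uu}, \vv + s\tilde{\vv})$-extremal. The principal obstacle is the precise identification of $\ker\mathbb{L}^{\omega}_{\uu,\vv\vv_{\rm ext}}$ with the space of affine-linear functions of the moment map; this relies crucially on the maximality of $\T_{\max}$ through \Cref{thm-str}, without which the dimension count would not match the parameter space $\tor_{\max}\oplus\R$ and the Fredholm argument would collapse.
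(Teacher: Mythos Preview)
Your proposal is correct and follows exactly the approach the paper indicates: the paper's own proof is the single sentence ``the proof follows the lines of that for LeBrun--Simanca stability theorem in \cite{LS} (see also \cite{FS}),'' and you have faithfully unpacked that scheme using \Cref{lem-str} for the linearisation and \Cref{thm-str} together with the maximality of $\T_{\max}$ to identify the kernel of the Lichnerowicz-type operator with the affine-linear functions of the moment map. One small caveat: you assert that $\vv_{\rm ext}>0$ on $\PP$ in order to use the $\vv\vv_{\rm ext}$-weighted pairing, but this is not among the hypotheses (and fails already in the classical extremal case $\uu=\vv=1$); the fix is immediate---work instead with the operator $\dot\phi\mapsto(D^{-}d)^{*}\big(\uu(m_\omega)D^{-}d\dot\phi\big)$ (equivalently $\vv(m_\omega)\mathbb{L}^{\omega}_{\uu,\vv}$), which is elliptic and self-adjoint for the unweighted $L^2$ pairing, has the same kernel, and for which the residual first-order term $d^c\dot\phi(\Xi_{\uu,\vv})$ is handled exactly as in the classical LeBrun--Simanca argument since $\Xi_{\uu,\vv}\in\tor_{\max}$ at the base point.
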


The proof follows the lines of that for LeBrun--Simanca stability theorem in \cite{LS} (see also \cite{FS}).

\end{document}